\newtheorem{definition}{Definition}[section]
\newtheorem{theorem}[definition]{Theorem}
\newtheorem{lemma}[definition]{Lemma}
\newtheorem{proposition}[definition]{Proposition}
\newtheorem{corollary}[definition]{Corollary}
\newtheorem{remark}[definition]{Remark}
\newtheorem{condition}[definition]{Assumption}
\newtheorem{example}[definition]{Example}
\newtheorem{question}{Question}
\numberwithin{equation}
{section}
\def \a{\alpha}   \def \d{\delta}
\def \t{\theta} \def \T{\Theta}  \def \e{\epsilon}
\def \s{\sigma} \def \l{\lambda}  \def \o{\omega}
\def \O{\Omega}   
  \def \G{\Gamma}
\def \E{\mathbb{E}}
\def\deq{\stackrel{d}{=}}
\def\one{\mathbf 1}
\begin{document}
	\title{Ergodicity and Mixing of Sublinear Expectation System and Applications}
	\author [1] {Wen Huang}
    \author [2] {Chunlin Liu}
    \author [3] {Shige Peng}
	\author [4] {Baoyou Qu}
    \affil[1] {School of Mathematical Sciences, University of Science and Technology of China, Hefei, Anhui, 230026, P.R. China}
     \affil[2] {School of Mathematical Sciences, Dalian University of Technology, Dalian, 116024, P.R. China}
    \affil[3] {School of Mathematics, Zhongtai Securities Institute for Financial Studies, Shandong University, Jinan, 250100, P.R. China}
	\affil[4] {Department of Mathematical Sciences, Durham University, Durham, DH1 3LE, UK}
 \affil[ ]{wenh@mail.ustc.edu.cn, chunlinliu@mail.ustc.edu.cn, peng@sdu.edu.cn, baoyou.qu@durham.ac.uk}
	\date{}
	
	\maketitle
	
	\begin{abstract}
We utilize an ergodic theory framework to explore sublinear expectation theory.  Specifically, we investigate the pointwise Birkhoff's ergodic theorem for invariant sublinear expectation systems.  By further assuming that these sublinear expectation systems are ergodic, we derive stronger results.  Furthermore, we relax the conditions for the law of large numbers and the strong law of large numbers under sublinear expectations from independent and identical distribution to $\alpha$-mixing. These results can be applied to a class of stochastic differential equations driven by $G$-Brownian motion (i.e., $G$-SDEs), such as $G$-Ornstein-Uhlenbeck processes.

As  byproducts, we also obtain a series of applications for classical ergodic theory and capacity theory.

\noindent
{\bf MSC2020 subject classifications:} Primary 37A25, 60G65; secondary 28A12, 60F17

\noindent
{\bf Keywords:} invariant sublinear expectation, ergodicity, mixing, $G$-Brownian motion, law of large numbers, sublinear Markovian semigroup 
		
\end{abstract}
	
	{
\hypersetup{linkcolor=black}
\tableofcontents
}

	\section{Introduction}
The laws of large numbers (LLN) are used in various fields, including statistics, probability theory, economics, and insurance, establishing that empirical averages converge to expectations. Under suitable conditions, these laws ensure that as data accumulates, inferences about consistent phenomena become increasingly accurate. The origins of the laws of large numbers trace back to  Bernoulli \cite{Bernoulli1713}. The combined work of these mathematicians resulted in the creation of two main types of the laws of large numbers: the weak law, which establishes conditions for convergence in distribution, and the strong law, which establishes conditions for almost sure convergence of empirical averages. These developments have been foundational in the progress of statistical inference and probability theory (see Seneta \cite{Eugene2013} for an overview of the historical development).

However, while probability theory is a crucial tool for measuring uncertain quantities, it often falls short in real-world scenarios where exact probabilities are difficult to determine. Often, the uncertainty of the probability itself poses significant challenges. For example, in the field of economics, this higher level of uncertainty is referred to as Knight uncertainty \cite{Knight1921}.

One significant concept in addressing this type of uncertainty is sublinear expectation, which was inspired by the challenges of probability and statistics under uncertainty, as well as risk measures and super-hedging in finance (cf. El Karoui, Peng and Quenez \cite{ElKarouiPengQuenez1997}; Artzner, Delbaen, Eber and Heath \cite{ArtznerDelbaenEberHeath2002}; Chen and Epstein \cite{ChenEpstein2002}; Föllmer and Schied \cite{FllmerSchied2011}). Compared to the well-accepted classical notion of capacity (or called non-additive probability), the systematic stochastic analysis of sublinear expectation and $G$-Brownian motion, as extensively developed in the third author's substantial work, provides a framework for studying uncertainty \cite{Peng2005,Pengbook,Peng2007,Peng2004Fil,Peng2008LLN,Peng2008multi}. In the framework of $G$-expectation, Hu, Ji, Peng, and Song \cite{Hu-Ji-Peng-Song2014a,Hu-Ji-Peng-Song2014b} also studied the well-posedness of backward stochastic differential equations driven by $G$-Brownian motion (G-BSDEs) and their corresponding Feynman-Kac formulas for fully nonlinear partial differential equations (PDEs).

Unlike the extensive studies on linear cases, the investigation of the dynamical properties and long-term behavior of $G$-diffusion processes has been relatively limited (see Feng and Zhao \cite{FengZhao2021} from  a dynamical system viewpoint and Hu, Li, Wang and Zheng \cite{Hu-Li-Wang-Zheng2015} from an equation viewpoint).

 In this paper, we further explore sublinear expectation systems and introduce the concept of mixing for these systems. Specifically, we begin by establishing some essential notations.

Let $\Omega$ be a nonempty given set and $\mathcal{H}$ denote a linear space of real-valued functions defined on $\Omega$. We refer to $\mathcal{H}$ as a vector lattice if it satisfies two conditions: (1) for any constant $c \in \mathbb{R}$, the function $c$ is in $\mathcal{H}$, and (2) for any $X \in \mathcal{H}$, the function $|X|$ is also in $\mathcal{H}$. Throughout this paper, we will assume that $\mathcal{H}$ is always a vector lattice. It is straightforward to verify that for any $X, Y \in \mathcal{H}$, both $X \vee Y := \max\{X, Y\}$ and $X \wedge Y := \min\{X, Y\}$ are elements of $\mathcal{H}$. Additionally, we denote $X^+ = X \vee 0$ and $X^- = - (X \wedge 0)$.

We also assume that for any $X_1, X_2, \dots, X_n \in \mathcal{H}$, the function $\phi(X_1, X_2, \dots, X_n)$ belongs to $\mathcal{H}$ for any $\phi \in C_{lip}(\mathbb{R}^n)$, where $C_{lip}(\mathbb{R}^n)$ represents the space of Lipschitz continuous functions. This assumption holds throughout the paper, except in Theorem \ref{thm:main1} in the introduction, and in Sections \ref{sec:ergodic} and \ref{sec:applications}.

	\begin{definition}
		A sublinear expectation $\hat{\mathbb{E}}$ on  $\mathcal{H}$ is a functional $\hat{\mathbb{E}}: \mathcal{H}\rightarrow \mathbb{R}$ satisfying the following properties: for all $X,Y\in \mathcal{H}$, we have
		\begin{description}
			\item[(a)] Monotonicity: $\hat{\mathbb{E}}[X]\leq \hat{\mathbb{E}}[Y]$ if $X\leq Y$.
			\item[(b)]Constant preserving: $\hat{\mathbb{E}}[c]=c$ for $c\in\mathbb{R}$.
			\item[(c)]Sub-additivity: $\hat{\mathbb{E}}[X+Y]\leq \hat{\mathbb{E}}[X]+\hat{\mathbb{E}}[Y]$.
			\item[(d)]Positive homogeneity: $\hat{\mathbb{E}}[\lambda X]=\lambda\hat{\mathbb{E}}[X]$ for
			$\lambda\geq 0$.
		\end{description}
  If (c) and (d) is replaced by the the following:
  	\begin{description}
			\item[(e)] Linearity:  $\hat{\mathbb{E}}[\l_1 X+\l_2 Y]= \l_1\hat{\mathbb{E}}[X]+\l_2\hat{\mathbb{E}}[Y]$ for $\l_1,\l_2\in\mathbb{R}$,
		\end{description}
  then $\hat{\mathbb{E}}$ is called a linear expectation.
	\end{definition}
In the following, we call the triple $(\Omega, \mathcal{H}, \hat{\mathbb{E}})$  a sublinear expectation space and denote by $\sigma(\mathcal{H})$  the $\sigma$-algebra generated by $\mathcal{H}$.  It follows from \cite[Theorem 1.2.1]{Pengbook} that the   convex set
\begin{equation}\label{eq:set Theta}
    \T:=\{\mathbb{E}:\mathbb{E}\text{ is a linear expectation on $\mathcal{H}$ with }\mathbb{E}[X]\le \hat{\mathbb{E}}[X]\text{ for any }X\in\mathcal{H}\}
\end{equation}
is nonempty, and it satisfies 
the following property:
\begin{equation}\label{eq:max-in-Theta}
    \hat{\mathbb{E}}[X]=\max_{\mathbb{E}\in\T}\mathbb{E}[X]\text{ for any }X\in\mathcal{H}.
\end{equation}

In this context, the third author \cite{Peng2019b} established the following LLN within the sublinear expectation framework:

\emph{Let $\{X_k\}_{k \geq 1}$ be a sequence of independent and identically distributed (i.i.d.) $1$-dimensional random variables on $\mathcal{H}$ under a sublinear expectation $\hat{\mathbb{E}}$ with $\hat{\mathbb{E}}[|X_1|^2]<\infty$. Then for any $\phi \in C_{lip}(\mathbb{R})$, 
$$
\lim _{n\to\infty} \hat{\mathbb{E}}\left[\phi\left(\frac{1}{n}\sum_{k=1}^nX_k\right)\right]=\max_{y \in[\underline{\mu}, \bar{\mu}]} \phi(y),
$$
where
 $\underline{\mu}=-\hat{\mathbb{E}}\left[-X_1\right]$, and $\bar{\mu}=\hat{\mathbb{E}}\left[X_1\right]$.}

Subsequently, under the same conditions, Song provided error estimates for Peng’s LLN using Stein’s method \cite{Song2021}. We refer to \cite{Chen2016,FANGPENGSHAOSONG2019,Zhang2023} for more work on LLN under the sublinear expectation. However, many processes (see Section \ref{subsec:application to GSDE} for examples induced by 
$G$-SDEs) are not i.i.d., meaning the above LLN cannot be directly applied.

Motivated by the mixing condition in linear case (see e.g. \cite{Bradley2007i-iii,Doukhan1994}), we introduce the definition of $\alpha$-mixing for a sequence of random variables on sublinear expectation spaces, which is a kind of ``asymptotic independence".

\begin{definition}\label{Def:alpha-mixing}
    A sequence $\{X_k\}_{k\geq 1}$ of $d$-dimensional random vectors on a sublinear expectation space $(\Omega, \mathcal{H}, \hat{\mathbb{E}})$ is called $\alpha$-mixing if there exist $c_X>0,\alpha>0$ such that for any non-empty finite subset $\Lambda^1,\Lambda^2\subset \mathbb{N}$ with $\Lambda^1_{max}:=\max_{u_1\in \Lambda^1}u_1\leq \min_{u_2\in \Lambda^2}u_2=:\Lambda^2_{min}$ and $\phi\in C_{lip}(\mathbb{R}^{2d})$ with Lipschitz constant $l_{\phi}>0$,
		\begin{equation*}
			\begin{split}
				|\hat{\mathbb{E}}[\phi(\bar{X}_{\Lambda^1}, \bar{X}_{\Lambda^2})] - \hat{\mathbb{E}}[\hat{\mathbb{E}}[\phi(x, \bar{X}_{\Lambda^2})]|_{x=\bar{X}_{\Lambda^1}}]| \leq c_X l_{\phi} e^{-\alpha |\Lambda^2_{min}-\Lambda^1_{max}|},
			\end{split}
		\end{equation*}
		where $\bar{X}_{\Lambda^i}=\frac{1}{|\Lambda^i|}\sum_{u_i\in \Lambda^i}X_{u_i}$  and $|\Lambda^i|$ is the number of elements in $\Lambda^i$, $i=1,2$. 
\end{definition}
\begin{remark}
      It is obvious that an  i.i.d. sequence must be an $\a$-mixing stationary sequence.   
      
     However, the converse is not true. For instance, consider an i.i.d. sequence $\{\xi_n\}_{n \geq 1}$ defined on a sublinear expectation space $(\Omega, \mathcal{H}, \hat{\mathbb{E}})$. Define $X_n := \xi_n + \xi_{n+1}$ for all $n \geq 1$. It is straightforward to verify that the sequence $\{X_n\}_{n \geq 1}$ is $\alpha$-mixing for any $\alpha > 0$, yet it is not i.i.d.

More generally, there exists an $\a$-mixing sequence $\{X_t\}_{t\in\mathbb{R}}$ induced by a class of nonlinear SDEs in which $X_t$ is not independent from $X_s$ for any $t>s$ (see Theorem \ref{Thm:mixing of G-SDE} and Theorem \ref{Thm:not independent}). 
\end{remark}

 Under this weaker condition, we also can obtain the LLN as follows.
\begin{theorem}\label{thm:LLN}
     Suppose that $\{X_k\}_{k\geq 1}$ is a d-dimensional $\alpha$-mixing stationary sequence on a sublinear expectation space $(\Omega,\mathcal{H},\hat{\mathbb{E}})$ for some $\alpha>0$ with $\hat{\mathbb{E}}[|X_1|^{2}]<\infty$. Let
	\begin{equation}\label{eq:Gamma*}
	    \Gamma_n:=\Big\{\frac{1}{n}\sum_{k=1}^n\mathbb{E}[X_k]: \mathbb{E}\in \Theta\Big\}\subset \mathbb{R}^d, \ \text{ and } \ \Gamma_*:=\bigcap_{n\geq 1}\Gamma_n.
	\end{equation}
         Then for any $\d>0$, there exists $C>0$ such that for any $\phi\in C_{lip}(\mathbb{R}^d)$ with Lipschitz constant $l_{\phi}>0$ and for all $n\in \mathbb{N}$,
 \begin{equation}\label{eq:thm-LLN}
	\left|\hat{\mathbb{E}}\left[\phi\left(\frac{1}{n}\sum_{k=1}^nX_k\right)\right]-\max_{x\in \Gamma_*}\phi(x) \right|\le C  l_{\phi}n^{-1/4+\delta}.
\end{equation}
 \end{theorem}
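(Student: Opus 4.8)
The plan is to separate the two distinct effects hidden in $\hat{\mathbb{E}}[\phi(S_n)]$, where $S_n:=\frac1n\sum_{k=1}^n X_k$: the \emph{mean uncertainty}, encoded by the convex sets $\Gamma_n$ and their limit $\Gamma_*$, and the \emph{fluctuation} of $S_n$ about its (uncertain) mean, which the law of large numbers should annihilate. I first record the structure of $\Gamma_*$. For a fixed direction $\xi\in\mathbb{R}^d$ the support function of $\Gamma_n$ is $h_{\Gamma_n}(\xi)=\max_{x\in\Gamma_n}\langle\xi,x\rangle=\frac1n\hat{\mathbb{E}}[\sum_{k=1}^n\langle\xi,X_k\rangle]$ by the max-representation \eqref{eq:max-in-Theta}. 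Writing $a_n(\xi):=\hat{\mathbb{E}}[\sum_{k=1}^n\langle\xi,X_k\rangle]$, sub-additivity of $\hat{\mathbb{E}}$ together with stationarity gives $a_{m+n}(\xi)\le a_m(\xi)+a_n(\xi)$, so Fekete's lemma yields $h_{\Gamma_n}(\xi)=a_n(\xi)/n\downarrow\inf_n a_n(\xi)/n=:h_*(\xi)$. Being a pointwise limit of sub-linear functions of $\xi$, $h_*$ is again sub-linear, hence the support function of a convex set, and one checks that $\Gamma_*=\bigcap_n\Gamma_n$ is exactly that set, with $\Gamma_*\subseteq\Gamma_n$ for every $n$. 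Consequently $|\max_{\Gamma_n}\phi-\max_{\Gamma_*}\phi|\le l_\phi\,d_H(\Gamma_n,\Gamma_*)=l_\phi\sup_{|\xi|\le1}(h_{\Gamma_n}(\xi)-h_*(\xi))$, and the first task is to show that $\alpha$-mixing upgrades Fekete's qualitative convergence to a quantitative one: inserting gaps of length $O(\log n)$ between sub-blocks and using the resulting near-additivity of $a_n$, the residual $\sup_{|\xi|\le1}(h_{\Gamma_n}(\xi)-h_*(\xi))$ decays faster than $n^{-1/4}$, so this contribution is harmless.

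The heart of the argument is the fluctuation estimate, for which I would use a big-block/small-block decomposition. Fix $\delta>0$ and split $\{1,\dots,n\}$ into $m$ consecutive ``big blocks'' $I_1,\dots,I_m$ of common length $p$, separated by ``gaps'' of length $q$, with $m(p+q)\approx n$. Set $\bar Y_j:=\frac1p\sum_{k\in I_j}X_k$ and $R_n:=\frac1n\sum_{k\in\mathrm{gaps}}X_k$, so that $S_n=\frac{mp}{n}\cdot\frac1m\sum_{j=1}^m\bar Y_j+R_n$. By stationarity each $\bar Y_j\deq\bar Y_1$, and the mean set of $\bar Y_1$ is exactly $\Gamma_p$ (same support-function computation as above). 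The gap remainder is controlled by $\hat{\mathbb{E}}[|R_n|]\le\frac{|\mathrm{gaps}|}{n}\hat{\mathbb{E}}[|X_1|]\lesssim\frac{q}{p+q}\hat{\mathbb{E}}[|X_1|]$, which, together with $\frac{mp}{n}=1-O(\frac{q}{p+q})$, lets me replace $\phi(S_n)$ by $\phi(\frac1m\sum_j\bar Y_j)$ at a cost $O(l_\phi\frac{q}{p+q})$. Next, applying the $\alpha$-mixing inequality of Definition \ref{Def:alpha-mixing} iteratively—peeling off one big block at a time and tracking the Lipschitz constant of the partially evaluated function—I reduce $\hat{\mathbb{E}}[\phi(\frac1m\sum_j\bar Y_j)]$ to the corresponding iterated (i.e.\ independent) expectation, with total error $\lesssim m\,c_X l_\phi e^{-\alpha q}$; choosing $q\sim\frac{c}{\alpha}\log n$ makes this polynomially small. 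Finally the multidimensional version of Peng's law of large numbers for the now-independent, identically distributed blocks $\{\bar Y_j\}$ produces $\max_{\Gamma_p}\phi$ in the limit, and a quantitative version of that i.i.d.\ law (obtainable by the interpolation/finite-difference method for the first-order Hamilton--Jacobi limit, or by Stein's method as in \cite{Song2021}) supplies an explicit rate in the number of blocks $m\approx n/(p+q)$.

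Assembling the error contributions—the mean-set discrepancies $d_H(\Gamma_n,\Gamma_*)$ and $d_H(\Gamma_p,\Gamma_*)$, the gap remainder $\frac{q}{p+q}$, the accumulated mixing error $m\,e^{-\alpha q}$, and the i.i.d.\ block-LLN rate—each is of the form $l_\phi\times(\text{power of }n)$ once $q\sim\log n$ is fixed, and the exponents are governed by the single free parameter $p$. Optimizing $p$ against $n$ balances the gap term $q/p$ against the block-LLN term (a negative power of $m\approx n/p$); this balance is exactly what produces the exponent $-\tfrac14+\delta$, with the logarithmic gap absorbed into the arbitrarily small $\delta$, and yields a constant $C=C(\delta,\alpha,c_X,d,\hat{\mathbb{E}}[|X_1|^2])$ uniform in $\phi$ and $n$. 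The hypothesis $\hat{\mathbb{E}}[|X_1|^2]<\infty$ is used throughout to bound block-average second moments, truncation errors in the mixing step, and the consistency error of the quantitative i.i.d.\ law.

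I expect the main obstacle to lie in the fluctuation step rather than in the mean-set bookkeeping. The genuine difficulty is that under a sub-linear expectation one cannot simply bound a variance to obtain concentration: for a fixed dominating linear expectation $\mathbb{E}\in\Theta$ the blocks need not be uncorrelated, and $\hat{\mathbb{E}}[|S_n-c|^2]$ conflates the non-vanishing mean uncertainty with the vanishing fluctuation, so it does not tend to $0$. The big-block/small-block decomposition combined with the iterated mixing inequality is precisely the device that disentangles these two effects, and the delicate points are (i) controlling the growth of the effective Lipschitz constant as blocks are peeled, so that the per-step mixing error stays $O(c_X l_\phi e^{-\alpha q})$, and (ii) making the quantitative i.i.d.\ law of large numbers interact with the block length $p$ so that the final optimization closes at the rate $n^{-1/4+\delta}$.
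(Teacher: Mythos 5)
Your proposal is correct in strategy, but it takes a genuinely different and more modular route than the paper. The paper never separates the mean-set analysis from the fluctuation analysis: its core lemma (proved via the block construction in Corollary \ref{Thm: 1224} together with Song's PDE/Stein machinery and Krylov mollification, with the ``peeling'' performed on the mollified $G$-equation solution $v$ rather than on $\phi$ itself) compares $\hat{\mathbb{E}}[\phi(S_n)]$ with $\max_{\Gamma_{n_\gamma}}\phi$, the mean set of a single block; the identification of $\Gamma_*$ and the rate of convergence of $\max_{\Gamma_m}\phi$ to $\max_{\Gamma_*}\phi$ are then extracted \emph{from the LLN estimate itself} by an overlapping dyadic-window argument (\eqref{eq:1117}--\eqref{eq:1117-6}), and it is precisely this entanglement --- block size forced to be $\approx n^{1/2}$, polynomial gaps $n^{\gamma\gamma'}$ --- that produces the exponent $-1/4+\delta$. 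You instead decouple the two effects: (i) the mean sets are handled by support functions, where sub-additivity plus stationarity gives Fekete, and the mixing inequality applied to linear test functions over two blocks with logarithmic gaps gives near-superadditivity, hence, by a doubling iteration, the quantitative bound $d_H(\Gamma_n,\Gamma_*)\lesssim (\log n)/n$; (ii) the fluctuations are handled by peeling big blocks directly off $\phi$ of the block average --- this works because $\phi\bigl(\frac1m\sum_j\bar Y_j\bigr)$ depends on the first $i-1$ blocks only through their average $\bar X_{\Lambda_i^-}$, so the two-variable mixing inequality of Definition \ref{Def:alpha-mixing} applies at every step with effective Lipschitz constant $\leq l_\phi$ --- reducing the problem to the iterated expectation, which is exactly the i.i.d. quantity, to which a quantitative i.i.d. law is applied as a black box. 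Your route buys a cleaner structure, an independent Hausdorff rate for the $\Gamma_n$'s, and in fact a better exponent: with $q\sim\log n$ the balance of the gap term $q/p$ against the block-LLN term $(p/n)^{1/2}$ closes at $p\approx n^{1/3}$ with error $n^{-1/3}$ up to logarithms, not at $-1/4$ as you assert (the balance you describe corresponds to the paper's polynomial gaps $q\sim p^{1/2}$); since the target is only $n^{-1/4+\delta}$, this overshoot is harmless. What the paper's approach buys in exchange is that it needs no a priori control of $d_H(\Gamma_n,\Gamma_*)$ at all, and its machinery is set up once and reused verbatim for Theorem \ref{Thm: LLN for subsequence} and for the SLLN.

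Two caveats, neither a genuine gap. First, your assertion that $a_n(\xi)/n\downarrow\inf_n a_n(\xi)/n$ is not literally correct: subadditivity does not make $a_n/n$ monotone; it only gives the limit-equals-infimum conclusion, with monotonicity along doubling ($a_{2n}\leq 2a_n$, i.e., $\Gamma_{2k}\subset\Gamma_k$, which is also the paper's observation) --- and doubling is all your quantitative Fekete iteration and the identification $h_{\Gamma_*}=\inf_n h_{\Gamma_n}$ actually require. Second, the quantitative i.i.d. LLN in dimension $d$ under second moments cannot be cited verbatim: Song's result is one-dimensional, and the present paper has to redevelop it (Lemma \ref{Lem: Song3}, Theorem \ref{Thm: 0127}, Lemma \ref{lem:est for iid}, Lemma \ref{Lem: song4.1} and the mollification lemmas), so your argument is complete only modulo this extension, which the paper itself treats as a routine repetition of Song's proof.
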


\begin{remark}\label{rem:smaller}
    (i) Fix $X\in\mathcal{H}^d$ for some $d\geq 1$. If $\hat{\E}[\phi(X)]=\max_{y\in \Gamma} \phi(y)$ for all $\phi\in C_{lip}(\mathbb{R})$ and for some bounded closed convex subset $\Gamma$ of $\mathbb{R}^d$, then $X$ is said to have a $\Gamma$-maximal distribution. Thus, \eqref{eq:thm-LLN} shows that the average of an $\alpha$-mixing sequence converges to a $\Gamma_*$-maximal distribution.
    
    (ii) If $\{X_k\}_{k\geq 1}$ is an i.i.d. sequence on a sublinear expectation space $(\O,\mathcal{H},\hat{\E})$ (so it is $\alpha$-mixing for all $\alpha>0$), it is easy to see that $\Gamma_n=\Gamma_1$ for all $n\geq 1$, and hence, $\Gamma_*=\Gamma_1$. However, it is possible that $\Gamma_*\subsetneq \Gamma_1$ in general. Indeed, let $\{\xi_k\}_{k\geq 1}$ be a 1-dimensional i.i.d. $[-1,1]$-maximally distributed sequence and $X_k:=-\xi_k(\xi_{k+1}+2)$ for all $k\geq 1$. As $X_{k+2}$ is independent of $(X_1,\ldots,X_k)$ for each $k\in\mathbb{N}$,  $\{X_k\}_{k\geq 1}$ is $\alpha$-mixing for all $\alpha>0$. Note that
    \begin{equation*}
        \hat{\E}[X_1]=\hat{\E}[-\xi_1(\xi_{2}+2)]=\hat{\E}[3\xi_1^--\xi_1^+]=3,
    \end{equation*}
    and
    \begin{equation*}
        -\hat{\E}[-X_1]=-\hat{\E}[\xi_1(\xi_{2}+2)]=-\hat{\E}[3\xi_1^+-\xi_1^-]=-3,
    \end{equation*}
    where $x^+:=x\vee 0=\max\{x,0\}$ and $x^-:=(-x)\vee 0=\max\{-x,0\}$. Then $\Gamma_1=[-3,3]$. However,
    \begin{equation*}
        \hat{\E}\Big[\frac{X_1+X_2}{2}\Big]=\frac{1}{2}\hat{\E}[-\xi_1(\xi_{2}+2)-\xi_2(\xi_{3}+2)]=\frac{1}{2}\hat{\E}[-\xi_1(\xi_{2}+2)+3\xi_2^--\xi_2^+]=\frac{1}{2}\hat{\E}[3-\xi_1]=2,
    \end{equation*}
    and
    \begin{equation*}
        -\hat{\E}\Big[-\frac{X_1+X_2}{2}\Big]=-\frac{1}{2}\hat{\E}[\xi_1(\xi_{2}+2)+\xi_2(\xi_{3}+2)]=-\frac{1}{2}\hat{\E}[\xi_1(\xi_{2}+2)+3\xi_2^+-\xi_2^-]=-\frac{1}{2}\hat{\E}[3\xi_1+3]=-3.
    \end{equation*}
    Hence $\Gamma_2=[-3,2]\subsetneq [-3,3]=\Gamma_1$. Therefore, $\Gamma_*\subset \Gamma_2\subsetneq\Gamma_1$.
\end{remark}

     An element $X\in \mathcal{H}$ is said to have no mean-uncertainty if $\hat{\mathbb E}[-X]=-\hat{\mathbb E}[X]$. In this case, the sets $\Gamma^*$ in Theorem \ref{thm:LLN} are the same singleton $\{\mathbb E[X]\}$ for all $n\geq 1$. By choosing $\phi(x)=|x-\mathbb E[X]|$, we have the following corollary.

 \begin{corollary}\label{lem-L^1 convergence under alpha}
     Assume that $\{X_n\}_{n=1}^\infty$ is a $d$-dimensional $\a$-mixing stationary sequence on a sublinear expectation space $(\O,\mathcal{H},\hat{\mathbb E})$ for some $\a>0$. If $X_1\in \mathcal{H}$ has no mean-uncertainty, then 
     \[\lim_{n\to\infty}\hat{\mathbb E}\Big[\Big|\frac{1}{n}\sum_{i=1}^nX_i-\hat{\mathbb E}[X_1]\Big|\Big]=0.\]
 \end{corollary}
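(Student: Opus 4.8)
The plan is to deduce the corollary directly from Theorem \ref{thm:LLN} with the test function $\phi(x)=|x-\mu|$, where $\mu:=\hat{\mathbb E}[X_1]\in\mathbb R^d$, exactly as indicated in the preceding remark. Since $\phi$ is $1$-Lipschitz and $\phi(\mu)=0$, everything hinges on identifying the limiting set $\Gamma_*$ as the singleton $\{\mu\}$; once this is done, \eqref{eq:thm-LLN} gives $\hat{\mathbb E}\big[\big|\frac1n\sum_{i=1}^nX_i-\mu\big|\big]\le C n^{-1/4+\delta}$, and choosing $\delta\in(0,1/4)$ forces the right-hand side to $0$.

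So the main step is to prove $\Gamma_*=\{\mu\}$, which I would argue in two stages. First, recall from \eqref{eq:max-in-Theta} that for each coordinate $j$ one has $\hat{\mathbb E}[X_1^{(j)}]=\max_{\mathbb E\in\Theta}\mathbb E[X_1^{(j)}]$ and, applying the same identity to $-X_1^{(j)}$, that $-\hat{\mathbb E}[-X_1^{(j)}]=\min_{\mathbb E\in\Theta}\mathbb E[X_1^{(j)}]$. The no-mean-uncertainty hypothesis $\hat{\mathbb E}[-X_1]=-\hat{\mathbb E}[X_1]$ makes these upper and lower bounds coincide, so every $\mathbb E\in\Theta$ satisfies $\mathbb E[X_1]=\mu$.

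Second, I would propagate this to all $X_k$ using stationarity. Stationarity of the sequence under $\hat{\mathbb E}$ yields $\hat{\mathbb E}[X_k]=\hat{\mathbb E}[X_1]=\mu$ and $\hat{\mathbb E}[-X_k]=\hat{\mathbb E}[-X_1]=-\mu$ for every $k$, so each $X_k$ also has no mean-uncertainty with the same mean $\mu$. Repeating the bracketing argument above for $X_k$ shows $\mathbb E[X_k]=\mu$ for all $k$ and all $\mathbb E\in\Theta$. Hence $\frac1n\sum_{k=1}^n\mathbb E[X_k]=\mu$ independently of $\mathbb E$, giving $\Gamma_n=\{\mu\}$ for every $n$ and therefore $\Gamma_*=\bigcap_{n\ge 1}\Gamma_n=\{\mu\}$.

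With $\Gamma_*=\{\mu\}$ in hand, I would apply Theorem \ref{thm:LLN} (whose moment requirement $\hat{\mathbb E}[|X_1|^2]<\infty$ I take as inherited from the standing hypotheses; should it be dropped, a standard truncation of the $X_k$ at a level growing slowly in $n$ would recover the same $L^1$ conclusion). The estimate then reads $\big|\hat{\mathbb E}[\phi(\frac1n\sum_{i=1}^nX_i)]-\phi(\mu)\big|\le C n^{-1/4+\delta}$, and since $\phi(\mu)=0$ the claimed limit follows. I expect the only genuine obstacle to be the bookkeeping in the first two stages, namely verifying cleanly that no mean-uncertainty transfers from $X_1$ to every $X_k$ via stationarity and then collapses all the linear expectations in $\Theta$ to the common value $\mu$; once $\Gamma_*$ is pinned down, the quantitative LLN does all the analytic work.
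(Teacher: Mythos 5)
Your proposal is correct and follows essentially the same route as the paper: the paper's own (very brief) argument is precisely that no mean-uncertainty forces every $\Gamma_n$ to be the singleton $\{\hat{\mathbb E}[X_1]\}$, after which Theorem \ref{thm:LLN} applied to $\phi(x)=|x-\hat{\mathbb E}[X_1]|$ gives the conclusion. Your filling-in of the details (collapsing $\Theta$ to a common mean via \eqref{eq:max-in-Theta}, propagating to all $X_k$ by identical distribution, and noting the implicit moment hypothesis $\hat{\mathbb E}[|X_1|^2]<\infty$ inherited from Theorem \ref{thm:LLN}) matches the intended reading of the paper's sketch.
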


Theorem \ref{thm:LLN} indicates that 
$\alpha$-mixing sequences satisfy the LLN. However, in many cases, the focus may not be on the state of the process at every moment but rather on specific points in time. Consequently, we wish to consider the statistical behavior of the process along certain subsequences. 
To address this question, we introduce the concept of admissible sequences (see Definition \ref{Def: admissible}) and demonstrate that the LLN holds along such sequences (see Theorem \ref{Thm: LLN for subsequence}). Specifically, we prove that the LLN remains valid for any subsequence defined by polynomials of degree greater than one.

	\begin{theorem}\label{thm:main2}
		Assume that  $\{X_k\}_{k\geq 1}$ is a $d$-dimensional $\alpha$-mixing identical sequence on a sublinear expectation space $(\Omega,\mathcal{H},\hat{\mathbb{E}})$ for some $\alpha>0$ with $\hat{\mathbb{E}}[|X_1|^{2}]<\infty$. Let $t_k=\lfloor P(k)\rfloor\vee 1$ for each $k\in\mathbb N$, where $P(x)=a_mx^m+\cdots+a_1x+a_0$ is a real-valued polynomial with $a_m>0$ for some $m\geq 2$, and 	$\Gamma:=\{\mathbb{E}[X_1]: \mathbb{E}\in \Theta\}.$  
		Then there exists $C>0$ such that for any $\phi\in C_{lip}(\mathbb{R}^d)$ with Lipschitz constant $l_{\phi}>0$ and for all $n\in \mathbb{N}$,
		\begin{equation*}
			\left|\hat{\mathbb{E}}\left[\phi\left(\frac{1}{n}\sum_{k=1}^n X_{t_k}\right)\right]-\max_{x\in \Gamma}\phi(x) \right|\leq C  l_{\phi}n^{-1/2}.
		\end{equation*}
    \end{theorem}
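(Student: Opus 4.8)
\emph{Strategy.} The plan is to prove the two-sided estimate by first replacing the correlated subsequence $\{X_{t_k}\}$ by an idealized \emph{independent} identically distributed sequence at a cost that is $o(n^{-1/2})$, and then to analyze the independent case by recognizing $\hat{\mathbb{E}}[\phi(\frac1n\sum_{k=1}^n X_{t_k})]$ as a monotone finite-difference scheme for a first-order Hamilton--Jacobi equation whose Hamiltonian is the support function of $\Gamma$. Throughout I write $\sigma_\Gamma(p):=\max_{x\in\Gamma}\langle p,x\rangle$ for the support function of $\Gamma$; since $\Gamma=\{\mathbb{E}[X_1]:\mathbb{E}\in\Theta\}$, identity \eqref{eq:max-in-Theta} gives the crucial relation $\sigma_\Gamma(p)=\hat{\mathbb{E}}[\langle p,X_1\rangle]$ for every $p\in\mathbb{R}^d$, and identical distribution gives $\hat{\mathbb{E}}[\langle p,X_{t_k}\rangle]=\sigma_\Gamma(p)$ for all $k$.

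\emph{Step 1: reduction to an i.i.d. sequence.} Because $P$ has degree $m\ge 2$ with $a_m>0$, the gaps satisfy $t_k-t_{k-1}\ge c\,k^{m-1}\ge c\,k$ for all large $k$ and some $c>0$; this superlinear growth is what ultimately produces the improved exponent compared with Theorem \ref{thm:LLN}. I first discard the initial block: fixing $k_0:=\lceil(\alpha c)^{-1}\log n\rceil$, the Lipschitz bound and identical distribution give
\[
\Big|\hat{\mathbb{E}}\big[\phi(\tfrac1n\sum_{k=1}^nX_{t_k})\big]-\hat{\mathbb{E}}\big[\phi(\tfrac1n\sum_{k=k_0+1}^nX_{t_k})\big]\Big|\le \tfrac{l_\phi k_0}{n}\,\hat{\mathbb{E}}[|X_1|]=O(l_\phi n^{-1}\log n).
\]
On the remaining indices I peel off one variable at a time from the largest index using Definition \ref{Def:alpha-mixing}: writing the partial average in the form $\Phi(\bar X_{\Lambda^1},\bar X_{\Lambda^2})$ with $\Lambda^2$ the current top index, the joint Lipschitz constant of $\Phi$ stays $\le l_\phi$, so each replacement costs at most $c_X l_\phi e^{-\alpha(t_k-t_{k-1})}$. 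Summing over $k>k_0$ and using $t_k-t_{k-1}\ge ck$ bounds the total peeling error by $c_Xl_\phi\sum_{k>k_0}e^{-\alpha ck}=O(l_\phi n^{-1})$. After all peelings the nested expression is exactly $\hat{\mathbb{E}}^{\otimes}[\phi(\frac1n\sum_{k=k_0+1}^nY_k)]$ for independent copies $Y_k\deq X_1$, and both reduction errors are $o(n^{-1/2})$.

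\emph{Step 2: the independent case as an HJ scheme.} For the i.i.d. sequence define the one-step operator $(\mathcal{T}\psi)(x):=\hat{\mathbb{E}}[\psi(x+\tfrac1n Y_1)]$ on $C_{lip}(\mathbb{R}^d)$, so that $\hat{\mathbb{E}}^{\otimes}[\phi(\frac1n\sum_{k=1}^NY_k)]=(\mathcal{T}^N\phi)(0)$. The operator $\mathcal{T}$ is monotone, satisfies $\mathcal{T}(\psi+c)=\mathcal{T}\psi+c$, is non-expansive in the sup-norm, and does not increase Lipschitz constants. A second-order Taylor expansion together with $\hat{\mathbb{E}}[\langle p,Y_1\rangle]=\sigma_\Gamma(p)$ and $\hat{\mathbb{E}}[|Y_1|^2]=\hat{\mathbb{E}}[|X_1|^2]<\infty$ yields, for $\psi\in C^2$ with bounded Hessian,
\[
\mathcal{T}\psi(x)=\psi(x)+\tfrac1n\,\sigma_\Gamma(\nabla\psi(x))+O\!\Big(\tfrac{\|\nabla^2\psi\|_\infty\,\hat{\mathbb{E}}[|X_1|^2]}{n^2}\Big),
\]
so $\mathcal{T}^N$ is a monotone, consistent scheme of time-step $1/n$ for the Hamilton--Jacobi equation $\partial_t u=\sigma_\Gamma(\nabla u)$, $u(0,\cdot)=\phi$. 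Since $\sigma_\Gamma$ is convex and positively homogeneous, the unique Lipschitz viscosity solution is readily checked to be $u(t,x)=\max_{y\in\Gamma}\phi(x+ty)$ (a Hopf-type formula for the support-function Hamiltonian); in particular $u(1,0)=\max_{y\in\Gamma}\phi(y)$, consistent with the fact that $(\max_{y\in\Gamma}\psi(\cdot+\tfrac1n y))^{\circ n}$ evaluated at $0$ collapses to $\max_\Gamma\phi$ by convexity of $\Gamma$.

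\emph{Step 3 and the main obstacle.} The rate now follows from the Crandall--Lions error estimate for monotone, consistent schemes of first-order HJ equations with Lipschitz Hamiltonian: mollifying $\phi$ at scale $\eta$ and balancing the data error $O(l_\phi\eta)$ against the accumulated consistency error $O(l_\phi\hat{\mathbb{E}}[|X_1|^2]/(n\eta))$ gives the optimal choice $\eta\sim n^{-1/2}$ and the sup-norm bound $\|\mathcal{T}^N\phi-u(1,\cdot)\|_\infty\le C l_\phi N^{-1/2}$, where $C$ depends only on $d$, $\hat{\mathbb{E}}[|X_1|^2]$ and $\sigma_\Gamma$ but not on $\phi$. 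Evaluating at $x=0$ with $N=n-k_0=n(1-o(1))$, and absorbing the $O(\log n/n)$ shift of the terminal time into the $n^{-1/2}$ term, yields the claim after combining with Step 1. The main obstacle is precisely this discretization estimate: because the viscosity solution $u$ is only Lipschitz (the max over $\Gamma$ creates gradient discontinuities) the iterates $\mathcal{T}^k\phi$ are not uniformly $C^{1,1}$, so one cannot close a naive Taylor recursion and must instead run the doubling-of-variables comparison argument, tracking all constants through $l_\phi$ and $\hat{\mathbb{E}}[|X_1|^2]$ to keep them uniform in $\phi$ and to obtain exactly the exponent $1/2$. A secondary technical point is to verify rigorously that the iterated peeling of Step 1 reproduces the independent expectation $\hat{\mathbb{E}}^{\otimes}$ with the claimed Lipschitz bookkeeping, which is where the finite-second-moment hypothesis and the superlinear growth of the gaps $t_k-t_{k-1}$ are both essential.
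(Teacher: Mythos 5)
Your proposal is correct, and its architecture is genuinely different from the paper's. You factor the problem into two clean, independent halves: first, a symmetric reduction of the mixing problem to the exact i.i.d.\ (nested-expectation) value, obtained by peeling indices from the top --- valid because each partially nested function $H_k=\mathcal{T}^{\,n-k}\phi$ keeps Lipschitz constant $\le l_\phi$, and because the polynomial gaps $t_k-t_{k-1}\gtrsim k$ (here is where $m\ge 2$ enters) make the mixing errors sum geometrically to $O(l_\phi n^{-1})$ once the first $O(\log n)$ indices are discarded at cost $O(l_\phi n^{-1}\log n)$; second, the two-sided $N^{-1/2}$ rate for the i.i.d.\ scheme $\mathcal{T}^N\phi$ against $u(t,x)=\max_{y\in\Gamma}\phi(x+ty)$. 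The paper never performs this factorization: it proves the more general Theorem \ref{Thm: LLN for subsequence} for $(\gamma,\delta)$-admissible sequences, and there the mixing errors are interleaved with the viscosity-solution comparison in two asymmetric one-sided estimates --- from above against the mollified continuous solution $v_\epsilon$ (Corollary \ref{Coro: 0127} with Lemmas \ref{Lem: Song3} and \ref{Lem: song4.1}), from below against the discrete scheme $v^Z_n$ via Lemma \ref{Lem: identical sequence}, the mixing bound \eqref{0222-4}, and an auxiliary $\Gamma$-maximally distributed i.i.d.\ sequence with Lemma \ref{lem:est for iid}. Your peeling identity does appear in the paper, but only as a one-sided ingredient (Lemma \ref{Lem: identical sequence} plus \eqref{0222-4}); the observation that the mixing bound is two-sided, so a single reduction serves both directions at once, is your genuine simplification. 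What the paper's route buys in exchange is generality: admissibility requires only gaps of order $(\ln n)^{1+\delta}$ outside at most $c_1n^{\gamma}$ exceptional indices, and Theorem \ref{thm:main2} is then a corollary of the wider statement (your reduction would in fact also survive under admissibility, since $ne^{-\alpha c_2(\ln n)^{1+\delta}}=o(n^{-1/2})$, but you only claim the polynomial case). Two points you should make explicit in a full write-up: (i) your Steps 2--3, the $d$-dimensional i.i.d.\ rate with constant uniform in $\phi$, is not an off-the-shelf citation --- it is essentially Song's theorem extended to $\mathbb{R}^d$, and it is where the paper spends most of its technical effort (Lemmas \ref{Lem: Song3}--\ref{Lem: 1226}); your outline of the mollification/doubling argument is the right one, but it constitutes the bulk of the remaining work rather than a finished step; (ii) small $n$, and the finitely many initial indices where $P$ may fail to be increasing or where consecutive $t_k$ coincide, should be absorbed into the constant $C$ exactly as the paper does in \eqref{eq:new1117}.
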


  \begin{remark}
The convergence rate $n^{-\frac{1}{2}}$ is consistent with that of the i.i.d. scenario (see Song \cite{Song2021}). For the linear case, references such as \cite{LeonardMelvin1965} or standard texts like \cite{Durrettbook} provide further details on the convergence rate.
  
  Moreover, our LLN  is only under $\a$-mixing condition, which is applied to sublinear expectation systems generated by the solution flow of $G$-SDEs (see Theorem \ref{Thm: alpha-mixing GSDE}).
	\end{remark}

 The strong law of large numbers (SLLN) is also a fundamental concept in probability theory. Recently, the SLLN for capacities or sublinear expectations has emerged as a challenging topic that has garnered increasing interest. In the context of capacities, several notable works have been published, including those by Chen, Wu, and Li \cite{ChenWuLi2013}, Epstein and Schneider \cite{EpsteinSchneider2003}, Marinacci \cite{Marinacci1999}, and Maccheroni and Marinacci \cite{MaccheroniMarinacci2005}.
In the framework of sublinear expectations, the SLLN has been explored under independence conditions by Chen \cite{Chen2016}, Song \cite{Song2022}, and Zhang \cite{Zhang2022} in one-dimensional cases on regular sublinear expectation spaces (see Section \ref{sec:pre-sub} for the definition of regularity).

It is important to note that under the regularity condition (see Definition \ref{def:regular}), there exists a set $\mathcal{P}$ of probability measures on $(\Omega, \sigma(\mathcal{H}))$ such that

\begin{equation}\label{P of sub exp} \hat{\mathbb{E}}[X] = \sup_{P \in \mathcal{P}} \int X  dP \quad \text{for any } X \in \mathcal{H}. 
\end{equation}
Following ideas in \cite{Pengbook}, on a regular sublinear expectation space $(\O,\hat{\E},\mathcal{H})$, we define a capacity with respect to $\hat{\mathbb{E}}$ via
	\[C(A)=\sup_{P\in\mathcal{P}}P(A),\text{ for any }A\in\s(\mathcal{H}),\]
where $\mathcal{P}$ is from \eqref{P of sub exp}.

 A property holds “$\hat{\E}$-quasi surely”
($\hat{\E}$-q.s.) if it holds on a set $A\in\sigma(\mathcal{H})$ with $C(A^c)=0$, where $A^c:=\O\setminus A.$

In this paper, we will demonstrate that the SLLN also holds for all $\alpha$-mixing sequences.
 \begin{theorem}\label{Thm:SLLN}
     Suppose that $\{X_k\}_{k\geq 1}$ is a d-dimensional $\alpha$-mixing stationary sequence on a regular sublinear expectation space $(\Omega,\mathcal{H},\hat{\mathbb{E}})$ for some $\alpha>0$ with $\hat{\mathbb{E}}[|X_1|^{2}]<\infty$. Let
   $\Gamma_*$ be defined as in \eqref{eq:Gamma*}.
     Then 
     \begin{equation*}
         \lim_{n\to\infty}dist\left(\frac{1}{n}\sum_{k=1}^nX_k,\Gamma_*\right)=0, \ \text{$\hat{\mathbb{E}}$-q.s.,}
     \end{equation*}
     where $dist(x,\Gamma_*):=\inf_{y\in \Gamma_*}|x-y|$. In particular, this implies that all limit points of the sequence $\{\frac{1}{n}\sum_{k=1}^nX_k\}_{n=1}^\infty$ are contained within the set  $\Gamma_*$, $\hat\E$-q.s. For the case when 
 $d=1$,
     \begin{equation*}
       -\hat{\mathbb E}[-X_1] \le \inf_{x\in \Gamma_*}x\leq \liminf_{n\to\infty}\frac{1}{n}\sum_{k=1}^nX_k\leq \limsup_{n\to\infty}\frac{1}{n}\sum_{k=1}^nX_k\leq \sup_{x\in \Gamma_*}x\leq\hat{\mathbb E}[X_1], \ \text{$\hat\E$-q.s.} 
     \end{equation*}
 \end{theorem}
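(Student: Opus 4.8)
The plan is to deduce the quasi-sure statement from the quantitative law of large numbers in Theorem \ref{thm:LLN} by a Borel--Cantelli argument along a polynomial subsequence, together with a second-moment control of the oscillation of the Ces\`aro averages inside each block. Write $S_n=\frac1n\sum_{k=1}^nX_k$, and recall that, since $|\mathbb{E}[X_1]|\le\hat{\mathbb{E}}[|X_1|]<\infty$ for every $\mathbb{E}\in\Theta$, the set $\Gamma_1$, and hence $\Gamma_*\subseteq\Gamma_1$, is bounded; being an intersection of closed convex sets, $\Gamma_*$ is closed and convex (and nonempty, as in the setup of Theorem \ref{thm:LLN}). The first step is to apply Theorem \ref{thm:LLN} to the $1$-Lipschitz function $\phi=\mathrm{dist}(\cdot,\Gamma_*)$, for which $\max_{x\in\Gamma_*}\phi(x)=0$; fixing some $\delta\in(0,1/4)$, this yields a constant $C>0$ with
\[
\hat{\mathbb{E}}\big[\mathrm{dist}(S_n,\Gamma_*)\big]\le C\,n^{-1/4+\delta}\qquad\text{for all }n\ge1 .
\]

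Next I would pass to the capacity. Regularity gives the representation \eqref{P of sub exp}, from which $C(\cdot)=\sup_{P\in\mathcal{P}}P(\cdot)$ is countably subadditive and obeys the Markov inequality $C(Y>\varepsilon)\le\varepsilon^{-1}\hat{\mathbb{E}}[Y]$ for any nonnegative $Y\in\mathcal{H}$. Choosing the subsequence $n_j=j^p$ with an integer $p$ so large that $p(1/4-\delta)>1$, the estimate above gives $\sum_j C(\mathrm{dist}(S_{n_j},\Gamma_*)>\varepsilon)\le\varepsilon^{-1}C\sum_j j^{-p(1/4-\delta)}<\infty$ for every $\varepsilon>0$. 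Borel--Cantelli (valid by countable subadditivity) applied with $\varepsilon=1/m$ and intersected over $m$ then yields $\mathrm{dist}(S_{n_j},\Gamma_*)\to0$ $\hat{\mathbb{E}}$-q.s.; in particular $\limsup_j|S_{n_j}|\le R:=\sup_{y\in\Gamma_*}|y|<\infty$ q.s.

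The main obstacle is to upgrade convergence along $\{n_j\}$ to convergence along the full sequence, i.e. to control the oscillation of $S_n$ over each block $n_j\le n<n_{j+1}$. For such $n$ I would write $S_n-S_{n_j}=\frac{n_j-n}{n}S_{n_j}+\frac1n\sum_{k=n_j+1}^{n}X_k$, so that, using that $\mathrm{dist}(\cdot,\Gamma_*)$ is $1$-Lipschitz,
\[
\mathrm{dist}(S_n,\Gamma_*)\le \mathrm{dist}(S_{n_j},\Gamma_*)+\frac{n_{j+1}-n_j}{n_j}\,|S_{n_j}|+D_j,\qquad D_j:=\frac{1}{n_j}\sum_{k=n_j+1}^{n_{j+1}-1}|X_k| .
\]
The first two terms tend to $0$ q.s. on the event above, since $(n_{j+1}-n_j)/n_j\sim p/j\to0$ and $|S_{n_j}|$ is bounded q.s. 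For the decisive term $D_j$, the Minkowski inequality for the norm $\|\cdot\|_2=\hat{\mathbb{E}}[|\cdot|^2]^{1/2}$ under a sublinear expectation, together with stationarity, gives $\hat{\mathbb{E}}[D_j^2]\le n_j^{-2}(n_{j+1}-n_j)^2\,\hat{\mathbb{E}}[|X_1|^2]$, whence $\sum_j C(D_j>\varepsilon)\le\varepsilon^{-2}\hat{\mathbb{E}}[|X_1|^2]\sum_j (n_{j+1}-n_j)^2/n_j^2<\infty$ because $(n_{j+1}-n_j)^2/n_j^2\sim p^2/j^2$ is summable. A second application of Borel--Cantelli gives $D_j\to0$ q.s. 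Intersecting the (finitely many) q.s. events, $\sup_{n_j\le n<n_{j+1}}\mathrm{dist}(S_n,\Gamma_*)\to0$, i.e. $\mathrm{dist}(S_n,\Gamma_*)\to0$ $\hat{\mathbb{E}}$-q.s., which proves the first assertion and the statement on limit points.

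Finally, for $d=1$ the deterministic inclusions $\inf_{x\in\Gamma_*}x\ge\inf\Gamma_1=-\hat{\mathbb{E}}[-X_1]$ and $\sup_{x\in\Gamma_*}x\le\sup\Gamma_1=\hat{\mathbb{E}}[X_1]$ follow from $\Gamma_*\subseteq\Gamma_1=[-\hat{\mathbb{E}}[-X_1],\hat{\mathbb{E}}[X_1]]$, while $\Gamma_*$ being a closed interval means that $\mathrm{dist}(S_n,\Gamma_*)\to0$ q.s. forces $\liminf_n S_n\ge\inf\Gamma_*$ and $\limsup_n S_n\le\sup\Gamma_*$ q.s., which gives the displayed chain. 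I expect the genuine difficulty to be concentrated entirely in the block-oscillation estimate: the $L^1$ Markov bound on $D_j$ is not summable along a polynomial subsequence, and it is the $L^2$ (Minkowski) bound, available precisely because $\hat{\mathbb{E}}[|X_1|^2]<\infty$, that renders $\sum_j C(D_j>\varepsilon)$ convergent.
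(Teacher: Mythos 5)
Your proposal is correct, but it follows a genuinely different route from the paper's own proof. The paper first fixes $P\in\mathcal{P}$ and proves the $P$-a.s.\ statement for $d=1$ by importing Song's machinery from \cite{Song2022}: shifted block averages $S^{m_n}_{\beta_n}$ along a \emph{geometric} subsequence $\beta_n\approx\beta^n$, a Borel--Cantelli step, and then Song's STEP~1/STEP~2 interpolation with $\beta\downarrow 1$; the case $d>1$ is then reduced to $d=1$ by projecting onto countably many rational directions $\lambda\in\mathbb{Q}^d_1$ and invoking the hyperplane-separation characterization of $\Gamma_*$ (Lemma \ref{Lem:equiv Gamma and Gamma-lambda}). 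You instead work in all dimensions at once with the single test function $\phi=\mathrm{dist}(\cdot,\Gamma_*)$, take a \emph{polynomial} subsequence $n_j=j^p$, and replace both Song's interpolation and the projection step by a direct $L^2$ (Minkowski) bound on the in-block oscillation $D_j$, which is summable precisely because $(n_{j+1}-n_j)/n_j\sim p/j$; two capacity Borel--Cantelli applications (valid since $C=\sup_{P\in\mathcal{P}}P$ is countably subadditive) finish the argument, and the $d=1$ chain of inequalities falls out of $\Gamma_*\subseteq\Gamma_1=[-\hat{\mathbb{E}}[-X_1],\hat{\mathbb{E}}[X_1]]$. Your approach buys self-containedness and dimension-independence (no external STEP~1/STEP~2, no separation lemma), at the cost of needing the second moment twice — as you correctly note, the $L^1$ bound on $D_j$ is not summable along polynomial subsequences, whereas the paper's geometric-block route exploits stationarity of the shifted averages and delivers the one-dimensional $\liminf$/$\limsup$ sandwich more directly. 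One minor point you should make explicit: since $x\mapsto|x|^2$ is not Lipschitz, the identities $\hat{\mathbb{E}}[|X_k|^2]=\hat{\mathbb{E}}[|X_1|^2]$ and the Chebyshev bound $C(D_j>\varepsilon)\le\varepsilon^{-2}\hat{\mathbb{E}}[D_j^2]$ require a truncation $|x|^2\wedge N$ (Lipschitz and bounded) followed by $N\to\infty$ using regularity; this is routine and consistent with how the paper itself handles second moments, but it is a step your write-up currently glosses over.
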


\begin{remark}
When the case
$\Gamma_*\subsetneq \Gamma_1$
  occurs (as the example in  Remark \ref{rem:smaller}), we can actually obtain more precise bound.   
\end{remark}

 Thus, as a corollary of Theorem \ref{Thm:SLLN}, we have the following result.
 \begin{corollary}
       Suppose that $\{X_k\}_{k\geq 1}$ is a d-dimensional $\alpha$-mixing stationary sequence on a regular sublinear expectation space $(\Omega,\mathcal{H},\hat{\mathbb{E}})$ for some $\alpha>0$ with $\hat{\mathbb{E}}[|X_1|^{2}]<\infty$. If $X_1$ has no mean-uncertainty, then 
        \begin{equation*}
         \lim_{n\to\infty}\frac{1}{n}\sum_{k=1}^nX_k=\hat{\mathbb E}[X_1], \ \hat{\mathbb{E}}\text{-q.s.}
     \end{equation*}
 \end{corollary}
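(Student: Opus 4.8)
The plan is to deduce this directly from Theorem~\ref{Thm:SLLN} by showing that the no-mean-uncertainty hypothesis forces the limit set $\Gamma_*$ to collapse to the single point $\hat{\mathbb{E}}[X_1]$. Once that is established, the distance estimate in Theorem~\ref{Thm:SLLN} becomes precisely the asserted convergence.

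First I would read the no-mean-uncertainty condition coordinatewise. Writing $X_1=(X_1^1,\dots,X_1^d)$, the hypothesis gives $\hat{\mathbb{E}}[-X_1^i]=-\hat{\mathbb{E}}[X_1^i]$ for each $i$. For any linear $\mathbb{E}\in\Theta$ one has the sandwich $-\hat{\mathbb{E}}[-X_1^i]\le \mathbb{E}[X_1^i]\le \hat{\mathbb{E}}[X_1^i]$ coming from $\mathbb{E}\le\hat{\mathbb{E}}$ and $\mathbb{E}[Y]=-\mathbb{E}[-Y]$; under the hypothesis the two outer bounds coincide, so $\mathbb{E}[X_1^i]=\hat{\mathbb{E}}[X_1^i]$ for every $\mathbb{E}\in\Theta$ and every $i$. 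Thus $\mathbb{E}[X_1]=\hat{\mathbb{E}}[X_1]$ as vectors, independently of the choice of $\mathbb{E}\in\Theta$. Next I would propagate this to every index using stationarity: since each $X_k$ has the same sublinear distribution as $X_1$, we get $\hat{\mathbb{E}}[\pm X_k^i]=\hat{\mathbb{E}}[\pm X_1^i]$, and the same sandwich argument yields $\mathbb{E}[X_k]=\hat{\mathbb{E}}[X_1]$ for every $k$ and every $\mathbb{E}\in\Theta$. Consequently, for each $n$,
\begin{equation*}
\frac{1}{n}\sum_{k=1}^n \mathbb{E}[X_k]=\hat{\mathbb{E}}[X_1]\quad\text{for all }\mathbb{E}\in\Theta,
\end{equation*}
so that $\Gamma_n=\{\hat{\mathbb{E}}[X_1]\}$ is a singleton for every $n$, and therefore $\Gamma_*=\bigcap_{n\ge 1}\Gamma_n=\{\hat{\mathbb{E}}[X_1]\}$.

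Finally I would invoke Theorem~\ref{Thm:SLLN}. With $\Gamma_*=\{\hat{\mathbb{E}}[X_1]\}$ the distance in that theorem simplifies to $\mathrm{dist}\big(\frac{1}{n}\sum_{k=1}^n X_k,\Gamma_*\big)=\big|\frac{1}{n}\sum_{k=1}^n X_k-\hat{\mathbb{E}}[X_1]\big|$, and the theorem gives that this tends to $0$ $\hat{\mathbb{E}}$-quasi surely, which is exactly the claim. The only genuine point requiring care is the collapse of $\Gamma$ to a point: one must confirm that pinning down each coordinate projection $\mathbb{E}[X_1^i]$ simultaneously (which the coordinatewise no-mean-uncertainty does, whereas a single linear functional would not) forces the whole image $\{\mathbb{E}[X_1]:\mathbb{E}\in\Theta\}\subset\mathbb{R}^d$ to be a singleton, since any two distinct points of $\Gamma$ would have to differ in some coordinate. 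Everything else is a mechanical specialization of Theorem~\ref{Thm:SLLN}.
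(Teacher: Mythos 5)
Your proposal is correct and follows essentially the same route as the paper: the paper treats this as an immediate consequence of Theorem \ref{Thm:SLLN}, having already observed (in the discussion preceding Corollary \ref{lem-L^1 convergence under alpha}) that the no-mean-uncertainty condition forces every $\Gamma_n$, and hence $\Gamma_*$, to collapse to the singleton $\{\hat{\mathbb{E}}[X_1]\}$. Your write-up merely fills in the details the paper leaves implicit — the sandwich argument $-\hat{\mathbb{E}}[-X_1^i]\le\mathbb{E}[X_1^i]\le\hat{\mathbb{E}}[X_1^i]$ for $\mathbb{E}\in\Theta$ and the propagation to all $k$ via stationarity — all of which are sound.
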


However, in general, we cannot expect that for any $X\in\mathcal{H},$ the following equation holds (see \cite{Gilboa1987,MaccheroniMarinacci2005} for examples where such equations hold under very special assumptions)
 \begin{equation}\label{eq:13.38}
     \liminf_{n\to\infty}\frac{1}{n}\sum_{k=1}^nX_k=\limsup_{n\to\infty}\frac{1}{n}\sum_{k=1}^nX_k,\ \hat{\E}\text{-q.s.}
 \end{equation}
Therefore, there exists a natural question as follows.
\begin{question}\label{q1}
    What conditions for $X\in \mathcal{H}$ are necessary for the limit in  \eqref{eq:13.38} to exist?
\end{question}
To study this question, we employ the tools from ergodic theory, which is a mathematical branch that explores the long-term behavior of a dynamical system preserving a probability. Specifically, a measure-preserving system can be described by  $(\O,\mathcal{F},P,f)$, where  $(\O,\mathcal{F})$ is a measurable space,  $f:\O\to\O$ is a measurable transformation and $P$ is an invariant probability on $(\O,\mathcal{F})$, i.e., $P(f^{-1}A)=P(A)$ for any $A\in\mathcal{F}$.
  
A cornerstone of ergodic theory is Birkhoff's ergodic theorem \cite{Birkhoff1}, which has profound implications not only within ergodic theory itself, but also in related fields such as number theory (cf. Einsiedler and Ward \cite{Ward2011}; Furstenberg \cite{Furstenberg1981}), stationary processes (cf. Doob \cite{Doob1953}), and harmonic analysis (cf. Rosenblatt and Wierdl \cite{Rosenblatt1995}). 
Using Birkhoff's ergodic theorem, we derive a law of large numbers applicable to ergodic sequences (note that i.i.d. sequences are a special case of ergodic sequences) (see \cite[Theorem 2.1, Chapter X]{Doob1953}). This is why we use tools from ergodic theory to study Question \ref{q1}.

As research in ergodic theory has progressed, many generalizations and applications of Birkhoff's ergodic theorem have been developed. However, the majority of this research has been conducted within the framework of probability theory. 
This topic in nonlinear setting  has been studied in the ergodic framework from the capacity viewpoint by Cerreia-Vioglio, Maccheroni, and Marinacci \cite{CMM2016}; Feng, Wu, and Zhao \cite{FWZ2020}; and Huang, Feng, Liu, and Zhao \cite{FHLZ2023,FHLZ2024}. Additionally, it has been studied from the sublinear expectation viewpoint by Feng and Zhao \cite{FengZhao2021}. However, when described in the language of sublinear expectations, the above works can be viewed as results concerning regular sublinear expectations on the vector lattice of all bounded measurable functions. Consequently, these results cannot be directly applied to systems generated by general 
$G$-Brownian motion or the solutions of related nonlinear equations. Therefore, in this paper, we provide a condition to ensure the existence of limits within systems composed of arbitrary vector lattices and regular sublinear expectations defined on them.
Additionally, we  provide examples to demonstrate the necessity of this condition.

More precisely, for a given sublinear expectation space $(\Omega, \mathcal{H}, \hat{\mathbb{E}})$, if $(U_t)_{t\in \mathbb T}$, $\mathbb T=\mathbb R_+$ or $\mathbb N$ (resp. $\mathbb T=\mathbb R$ or $\mathbb Z$) is a semigroup (resp. group) of transformations on $\mathcal{H}$ preserving the sublinear expectation $\hat{\mathbb{E}}$, then
        we call the quadruple $(\Omega, \mathcal{H}, (U_t)_{t\in \mathbb T}, \hat{\mathbb{E}})$ a sublinear expectation system. If $\mathbb T=\mathbb Z$ or $\mathbb Z_+$, then $U_n=U_1^n$ for $n\in \mathbb T$, and we denote the sublinear expectation system by $(\Omega, \mathcal{H}, U_1, \hat{\mathbb{E}})$.
Furthermore, if $\mathbb T=\mathbb R$ or $\mathbb R_+$, and  
        \begin{equation}\label{eq:continuous}
            \lim\limits_{|t|\to 0,t\in\mathbb{T}}\hat{\mathbb{E}}[|U_tX-X|]=0, \ \text{ for all } \ X\in \mathcal{H},
        \end{equation}
the sublinear expectation system is said to be continuous.

Firstly, we present a sublinear version of Birkhoff's pointwise ergodic theorem. 
For convenience, in the introduction, we will state the results for $\mathbb T=\mathbb Z_+$ only; the corresponding results for other cases are detailed in Section \ref{sec:ergodic}. Let $(\O,\mathcal{H},U_f,\hat{\mathbb{E}})$ be a sublinear expectation system, where $U_f$ is the operator induced by a measurable transform $f:\O\to\O$. Denote 
\[
	\overline{X}:=\limsup_{n\to\infty}\frac{1}{n}\sum_{i=0}^{n-1}U_f^iX\text{ and }\underline{X}:=\liminf_{n\to\infty}\frac{1}{n}\sum_{i=0}^{n-1}U_f^iX\text{ for any }X\in\mathcal{H}. 
\]

	\begin{theorem}\label{thm:main1}
		Let $(\O,\mathcal{H},U_f,\hat{\mathbb{E}})$ be a sublinear expectation system, where $U_f$ is the operator induced by a measurable transform $f:\O\to\O$. Then 
		for any  $X\in \mathcal{H}$ if $\overline{X},\underline{X}\in\mathcal{H}$, then there exists $X^*\in\mathcal{H}$ with $U_fX^*=X^*$, $\hat{\mathbb{E}}$-q.s.  such that 
		\[\lim_{n\to\infty}\frac{1}{n}\sum_{i=0}^{n-1}U_f^iX=X^*\text{, }\hat{\mathbb{E}}\text{-q.s.}\]
		Moreover, the following two statements are equivalent:
		\begin{enumerate}[(i)]
			\item $\hat{\mathbb{E}}$ is ergodic (see Definition \ref{Definition of ergodicity});
			\item  for any  $X\in \mathcal{H}$ if
			$\overline{X},\underline{X}\in\mathcal{H},$ then there exists a constant $c_X\in\mathbb{R}$ such that 
			\[\lim_{n\to\infty}\frac{1}{n}\sum_{i=0}^{n-1}U_f^iX=c_X,\text{ }\hat{\mathbb{E}}\text{-q.s.}\]
		\end{enumerate}
If one of two equivalent statements above holds, then for any  invariant probability $P\in\mathcal{P}$ and  $X\in \mathcal{H}$ with
			$\overline{X},\underline{X}\in\mathcal{H}$
		\[\int XdP=c_X.\]
	\end{theorem}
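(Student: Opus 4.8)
The plan is to work throughout with the representation \eqref{P of sub exp} and the basic translation that a property holds $\hat{\mathbb{E}}$-q.s. if and only if it holds $P$-a.s. for every $P\in\mathcal{P}$ (since $C(A^c)=\sup_{P\in\mathcal{P}}P(A^c)$). The engine of the proof is a reduction to the classical Birkhoff theorem through $f$-invariant measures. First I would record that pushing forward preserves domination: for $P\in\mathcal{P}$ and $Y\in\mathcal{H}$ one has $\int Y\,d(f_\ast^i P)=\int U_f^i Y\,dP\le\hat{\mathbb{E}}[U_f^i Y]=\hat{\mathbb{E}}[Y]$, so $f_\ast^i P\in\mathcal{P}$ for every $i\ge0$ (taking $\mathcal{P}$ to be the maximal representing set). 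Using the convexity and the weak compactness of $\mathcal{P}$ supplied by regularity, a Krylov–Bogolyubov/Ces\`aro argument then shows that every $P\in\mathcal{P}$ has a $\sigma(\mathcal{P},\mathcal{H})$-cluster point $Q\in\mathcal{P}$ of the averages $Q_n:=\frac1n\sum_{i=0}^{n-1}f_\ast^i P$ which is a genuine $f$-invariant probability measure; invariance as a functional on $\mathcal{H}$, namely $\int U_f Y\,dQ=\int Y\,dQ$, passes to $Q(f^{-1}A)=Q(A)$ on $\sigma(\mathcal{H})$ by a monotone-class argument, and $X\in L^1(Q)$ because $\int|X|\,dQ\le\hat{\mathbb{E}}[|X|]<\infty$.

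Next I would prove the q.s. convergence. Set $D:=\overline{X}-\underline{X}\ge0$, which lies in $\mathcal{H}$ by hypothesis and is $f$-invariant q.s.; the invariance uses only the routine fact that $\frac1n U_f^n X\to0$ q.s., so that the limsup and liminf of the Ces\`aro averages are unchanged under one application of $U_f$. To show $D=0$ q.s. it suffices to show $\hat{\mathbb{E}}[D]=0$, i.e. $\int D\,dP=0$ for every $P\in\mathcal{P}$. Fixing $P$ and its invariant cluster point $Q$, the q.s. invariance of $D$ (which propagates along $f_\ast^i P\in\mathcal{P}$) gives $\int U_f^i D\,dP=\int D\,dP$ for all $i$, hence $\int D\,dQ_n=\int D\,dP$; since $D\in\mathcal{H}$ and $Q_{n_k}\to Q$ in $\sigma(\mathcal{P},\mathcal{H})$, we get $\int D\,dP=\int D\,dQ$. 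Finally classical Birkhoff applied to the invariant measure $Q$ yields $\overline{X}=\underline{X}$ $Q$-a.s., so $\int D\,dQ=0$ and therefore $\int D\,dP=0$. Thus $\overline{X}=\underline{X}=:X^\ast$ q.s.; $X^\ast=\overline{X}\in\mathcal{H}$ satisfies $U_fX^\ast=X^\ast$ q.s., and $\frac1n\sum_{i=0}^{n-1}U_f^iX\to X^\ast$ q.s.

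For the equivalence, I would read off both directions from Definition \ref{Definition of ergodicity}, which amounts to every $f$-invariant element of $\mathcal{H}$ being q.s. constant. If $\hat{\mathbb{E}}$ is ergodic, then the invariant limit $X^\ast$ is q.s. equal to a constant $c_X$, giving (ii). Conversely, if (ii) holds and $Y\in\mathcal{H}$ is invariant (so $\overline{Y}=\underline{Y}=Y\in\mathcal{H}$), then $\frac1n\sum_{i=0}^{n-1}U_f^iY=Y\to c_Y$ q.s. forces $Y=c_Y$ q.s., which is precisely the ergodicity condition (i). For the final addendum, let $P\in\mathcal{P}$ be $f$-invariant and assume (ii). Invariance gives $\int \frac1n\sum_{i=0}^{n-1}U_f^iX\,dP=\int X\,dP$ for every $n$, while $\frac1n\sum_{i=0}^{n-1}U_f^iX\to c_X$ in $L^1(P)$ by the $L^1$ part of classical Birkhoff (as $X\in L^1(P)$). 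Passing to the limit yields $\int X\,dP=c_X$.

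The main obstacle I anticipate is the reduction to invariant measures under these weak working hypotheses: because $\mathcal{H}$ need not contain indicator functions, one cannot argue set-by-set, and every transfer between measures must be routed through the representation and through test functions in $\mathcal{H}$ (here $D$ and $X$). Making the Krylov–Bogolyubov construction produce a countably additive, $f$-invariant $Q\in\mathcal{P}$, and guaranteeing $\int D\,dQ_{n_k}\to\int D\,dQ$, is exactly where the weak compactness furnished by regularity is indispensable; and verifying the q.s. invariance of $\overline{X},\underline{X}$ (equivalently $\frac1n U_f^n X\to0$ q.s.) is the remaining technical point, which must be handled through the same invariant-measure reduction.
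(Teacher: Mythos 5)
Your proposal is correct, but it reaches the crucial invariant measure by a genuinely different construction than the paper's. Where you run a Krylov--Bogolyubov argument --- pushing a fixed $P\in\mathcal{P}$ forward, forming the Ces\`aro averages $Q_n=\frac1n\sum_{i=0}^{n-1}f_\ast^iP$, and extracting an invariant cluster point $Q$ with $\int D\,dQ=\int D\,dP$ for the single invariant test function $D=\overline{X}-\underline{X}$ --- the paper instead applies a Banach--Mazur limit to the sequence $n\mapsto\mathbb{E}\bigl[\frac1n\sum_{i=0}^{n-1}U_f^iY\bigr]$ (Theorem \ref{thm:existence of inv}) to manufacture, for each dominated linear expectation $\mathbb{E}\in\Theta$, an invariant skeleton $\mathbb{E}_{\operatorname{inv}}\in\Theta$ agreeing with $\mathbb{E}$ on \emph{all} invariant elements of $\mathcal{H}$; regularity then represents $\mathbb{E}_{\operatorname{inv}}$ by a probability via \eqref{eq:representation of probability}, and a Dynkin $\pi$--$\lambda$ argument (Proposition \ref{re:existence of invariant measure}) upgrades functional invariance to measure invariance --- the same monotone-class step you flag. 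From there the two proofs coincide: classical Birkhoff (Theorem \ref{thm:Birkhoff for measures}) applied to the invariant measure forces $\int(\overline{X}-\underline{X})\,dP=0$ for every $P\in\mathcal{P}$, and the ergodicity equivalence and the identity $\int X\,dP=c_X$ are obtained exactly as you describe. Two calibration points. First, your compactness appeal is misplaced as stated: in this abstract vector-lattice setting regularity does \emph{not} yield weak compactness of $\mathcal{P}$ (Lemma \ref{lem:regualr<=>compact} only covers $\mathcal{H}=C_b(\O)$); what is compact is $\Theta\subset\prod_{Y\in\mathcal{H}}\bigl[-\hat{\mathbb{E}}[-Y],\hat{\mathbb{E}}[Y]\bigr]$ in the product topology by Tychonoff, so the cluster point must be taken among dominated linear functionals, with regularity invoked only afterwards to convert the limiting functional into a countably additive $Q\in\mathcal{P}$ --- precisely the division of labor in the paper's route, of which yours is then a variant. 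Second, the q.s.-invariance of $\overline{X},\underline{X}$ that you list as a remaining technical point is actually a pointwise identity: since $\frac1n\sum_{i=1}^{n}X(f^i\omega)=\frac{n+1}{n}\cdot\frac{1}{n+1}\sum_{i=0}^{n}X(f^i\omega)-\frac1nX(\omega)$ and $\frac1nX(\omega)\to0$ for each fixed $\omega$, one has $U_f\overline{X}=\overline{X}$ everywhere, so the auxiliary claim $\frac1nU_f^nX\to0$ q.s.\ (which is not obvious in general) is never needed. What your approach buys is economy --- no Banach--Mazur machinery, and preservation of the expectation only at the one function $D$, which suffices here; what the paper's construction buys is a skeleton agreeing with $\mathbb{E}$ on all of $\mathcal{I}$ simultaneously, which it reuses elsewhere (e.g.\ Corollary \ref{cor;8.28} and the continuous-time results).
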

\begin{remark}
  Let $(\O,\mathcal{F})$ be a measurable space.  When we consider the vector lattice $B_b(\O,\mathcal{F})$\footnote{If there is no ambiguity, we will omit $\mathcal{F}$ and denote it simply as $B_b(\O)$.} consisting of all bounded $\mathcal{F}$-measurable functions, it is straightforward to observe that
    \begin{equation}\label{0614-1}
        \overline{X},\underline{X}\in B_b(\O), \ \text{ for any } \ X\in B_b(\O).
    \end{equation}
    Thus, this result implies the correspnding results in \cite{FengZhao2021,FWZ2020,FHLZ2023}.
 In particular, if we consider the linear case, that is, $\hat{\mathbb E}$ is a linear expectation with respect to some invariant probability, then this result implies classical Birkhoff's ergodic theorem. 
    
However, when we consider the general vector lattice, the condition \eqref{0614-1} becomes necessary (see Example \ref{ex:not quasi}).
\end{remark}
\begin{remark}
 We will prove that the sublinear expectation systems generated by 
$G$-Brownian motion are ergodic (see Theorem \ref{ergodicity of G-Brownian motion}), which ensures that Theorem \ref{thm:main1} can be applied.
\end{remark}

Now we apply our results to investigate a class of nonlinear equations. Let $G: \mathbb{S}_d\to \mathbb{R}$ be a given monotone (i.e., $G(A)\le G(B)$ if $B-A$ is a semidefinite matrix) and sublinear function, where $\mathbb{S}_d$ is the collection of all $d\times d$ symmetric matrices and $B_t=\{B^i_t\}_{i=1}^d$ be the corresponding $d$-dimensional $G$-Brownian motion. We consider the following $G$-SDE: for any $x\in \mathbb{R}^n$,
	\begin{equation}\label{G-SDEintro}
		X_t^x=x+\int_0^tb(X_s^x)ds+\sum_{i,j=1}^d\int_0^th_{ij}(X_s^x)d\langle B^i,B^j\rangle_s+\int_0^t\sigma(X_s^x)dB_s,
	\end{equation}
	where $b,h_{ij}=h_{ji}: \mathbb{R}^n\to \mathbb{R}^n$ and $\sigma: \mathbb{R}^n\to \mathbb{R}^{n\times d}$ are deterministic continuous functions and  $\langle B^i,B^j\rangle_t$ is the mutual variation process of $B^i, B^j$ (see \eqref{eq:23.15} for details).

 \begin{condition}\label{Assumption G-SDE}
		Assume that $b,h_{ij},\sigma$ are locally Lipschitz continuous satisfying
  \begin{itemize}
      \item functions $\sigma$ is bounded and $b,h_{ij}$ are $\kappa$-polynomial growth for some $\kappa\geq 1$, i.e., there exists $C>0$ such that
      \begin{equation*}
          |b(x)|+\sum_{i,j=1}^d|h_{ij}(x)|\leq C(1+|x|^{\kappa}), \ \text{ for any } \ x\in \mathbb{R}^n;
      \end{equation*}
      \item there exists $\alpha>0$ such that 
		\begin{equation}\label{eq:dissipative assump}
			\begin{split}
				&\langle x_1-x_2, b(x_1)-b(x_2)\rangle\\
				&+G\bigl((\sigma(x_1)-\sigma(x_2))^{\top}(\sigma(x_1)-\sigma(x_2))+2[\langle x_1-x_2, h_{ij}(x_1)-h_{ij}(x_2)\rangle]_{i,j=1}^d\bigr)\\
				&\leq -\alpha|x_1-x_2|^2, \ \text{ for any } \ x_1,x_2\in \mathbb{R}^n.
			\end{split}
		\end{equation}
  \end{itemize}
\end{condition}
 Under Assumption \ref{Assumption G-SDE}, it is well-known that for any $x\in \mathbb{R}^n$, $G$-SDE \eqref{G-SDEintro} has a unique solution $\{X^x_t\}_{t\geq 0}$ (see e.g. \cite[Theorem 4.5]{Li-Lin-Lin2016}).
	We define $T_t$ for any $t\geq 0$ by
	\begin{equation}\label{def: T_t by G-SDEintroduction}
		T_tf(x):=\hat{\mathbb{E}}[f(X_t^x)], \ \text{ for all } \ x\in \mathbb{R}^n \ \text{ and  } \ f\in C_{lip}(\mathbb{R}^n).
	\end{equation}
	Then $T_t: C_{lip}(\mathbb{R}^n)\to C_{lip}(\mathbb{R}^n)$ and $\{T_t\}_{t\geq 0}$ is a sublinear Markovian system introduced by Peng  \cite{Peng2005} (see Definition \ref{def: markov}).

    \begin{remark}\label{Rem:example G-OU}
        Consider the following Ornstein–Uhlenbeck process $\{X_t\}_{t\geq 0}$ driven by $G$-Brownian mothion ($G$-OU process):
        \begin{equation}\label{G-OU process}
            dX_t=-AX_tdt+\Sigma dB_t,
        \end{equation}
        where $A\in \mathbb R^{n\times n}, \Sigma\in \mathbb{R}^{n\times d}$. If $A$ is positive definite, it is straightforward to check that $G$-SDE \eqref{G-OU process} satisfies Assumption \ref{Assumption G-SDE}.

        In the case that $n=d=1$, it can be also verified that the following $G$-SDE:
         \begin{equation*}
            dX_t=-(X_t+X_t^3)dt+dB_t,
        \end{equation*}
        satisfies Assumption \ref{Assumption G-SDE}.
    \end{remark}

 	\begin{theorem}\label{Thm: alpha-mixing GSDE}
		If Assumption \ref{Assumption G-SDE} holds, then the sublinear Markovian system $\{T_t\}_{t\geq 0}$ defined by \eqref{def: T_t by G-SDEintroduction} has a unique invariant sublinear expectation $\tilde{T}$, and for any $m\geq 1$, the $m$-dimensional sublinear expectation system $(\Omega, (Lip(\Omega))^m, (U_t)_{t\geq 0}, \hat{\mathbb{E}}^{\tilde{T}})$ is $\alpha$-mixing, continuous and regular.
  
        Moreover, let $L_G^1(\Omega)$ be
        the completion of $Lip(\Omega)$ under $\hat{\mathbb E}[|\cdot|]$. Then $(\Omega, L_G^1(\Omega), \hat{\mathbb{E}}^{\tilde{T}})$ is a regular mixing sublinear expectation space. 

        Denote by $\mathcal{P}$ the set of probabilities on $(\Omega,\sigma(L_G^1(\Omega)))$ generated by $(\Omega, L_G^1(\Omega), \hat{\mathbb{E}}^{\tilde{T}})$ as in \eqref{P of sub exp}. Then for any fixed $X\in (Lip(\Omega))^m$ and $\tau>0$,
\begin{enumerate}
    \item for any $\phi\in C_{lip}(\mathbb{R}^d)$,
		\begin{equation*}
			\lim_{n\to\infty}\left|\hat{\mathbb{E}}\left[\phi\left(\frac{1}{n}\sum_{k=0}^{n-1}U_\tau^kX\right)\right]-\max_{x\in \Gamma_{*}}\phi(x) \right|=0, \ \Gamma_*:=\bigcap_{n\geq 1}\left\{\frac{1}{n}\sum_{k=0}^{n-1}\mathbb{E}_P[U_{\tau}^kX]: P\in \mathcal{P}\right\};
		\end{equation*}
\item it holds
     \begin{equation*}
         \lim_{n\to\infty}dist\left(\frac{1}{n}\sum_{k=0}^{n-1}U_\tau^kX,\Gamma_*\right)=0, \ \text{$\hat{\E}$-q.s.};
         \end{equation*}
  \item  if in addition, assume that $\overline{X},\underline{X}\in (L_G^1(\Omega))^m$ or $X$ has no mean-uncertainty, there exists a constant $c_X\in\mathbb{R}$ such that 
			\[\lim_{n\to\infty}\frac{1}{n}\sum_{k=0}^{n-1}U_\tau^kX=c_X,\text{ }\hat{\mathbb{E}}\text{-q.s.}\]
\end{enumerate}
	\end{theorem}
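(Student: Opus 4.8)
The plan rests on a single quantitative input: the contraction estimate furnished by the dissipativity condition \eqref{eq:dissipative assump}. Applying the $G$-Itô formula to $|X_t^{x_1}-X_t^{x_2}|^2$ and using \eqref{eq:dissipative assump} to absorb the drift and mutual-variation terms, I would first establish
\[
\hat{\mathbb{E}}\big[|X_t^{x_1}-X_t^{x_2}|^2\big]\le e^{-2\alpha t}|x_1-x_2|^2,\qquad t\ge 0,
\]
together with a Lyapunov bound $\sup_{t\ge0}\hat{\mathbb{E}}[|X_t^x|^2]<\infty$ (again from \eqref{eq:dissipative assump} and the growth hypotheses on $b,h_{ij},\sigma$). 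These two facts drive everything else.

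For the existence of $\tilde{T}$, the contraction shows that for $f\in C_{lip}(\mathbb{R}^n)$ the functions $T_tf$ flatten in space, $|T_tf(x_1)-T_tf(x_2)|\le l_f e^{-\alpha t}|x_1-x_2|$; a monotone bracketing argument (the quantities $\inf_x T_tf(x)$ and $\sup_x T_tf(x)$ are monotone in $t$ by the semigroup property $T_{t+s}=T_sT_t$ together with constant preservation, and the contraction forces their gap to zero on the tight region isolated by the Lyapunov bound) yields a constant limit $\tilde{T}[f]:=\lim_{t\to\infty}T_tf(x)$, independent of $x$. One checks directly that $\tilde{T}$ is a sublinear expectation with $\tilde{T}[T_sf]=\tilde{T}[f]$, i.e. invariant; uniqueness follows because any invariant $\tilde{T}'$ obeys $\tilde{T}'[f]=\tilde{T}'[T_tf]\to\tilde{T}[f]$. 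The invariant expectation $\tilde{T}$ induces $\hat{\mathbb{E}}^{\tilde{T}}$ on the path space and, by construction, the shift semigroup $(U_t)$ preserves it, giving both stationarity and the system structure. Continuity \eqref{eq:continuous} reduces, for Lipschitz cylinder functionals $X\in Lip(\Omega)$, to the $L^1$-continuity in $t$ of the $G$-SDE solution, a standard consequence of the moment estimates; regularity is inherited from the Denis–Hu–Peng representation of $G$-expectation, which supplies the weakly compact family $\mathcal{P}$ and the downward continuity required by Definition \ref{def:regular}.

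The heart of the proof, and the step I expect to be the main obstacle, is the $\alpha$-mixing estimate of Definition \ref{Def:alpha-mixing} for the sequence $\{U_{u}X\}$. Writing $g:=\Lambda^2_{min}-\Lambda^1_{max}$, I would use the sublinear Markov property to condition on the information up to time $\Lambda^1_{max}$: since $\bar{X}_{\Lambda^1}$ is measurable up to that time it freezes to the plugged-in variable $x$, while the future average $\bar{X}_{\Lambda^2}$ involves only times $\ge\Lambda^1_{max}+g$. The dependence of the conditional evaluation of $\phi(x,\bar{X}_{\Lambda^2})$ on the state at time $\Lambda^1_{max}$ is routed entirely through the $G$-SDE flow over the gap of length $g$, so the contraction estimate bounds the Lipschitz dependence on that state by $e^{-\alpha g}$; replacing the conditional (state-dependent) evaluation by the stationary one therefore costs at most $c_X l_\phi e^{-\alpha g}$, which is exactly the desired inequality. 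The delicate points are that one works with block \emph{averages} rather than single marginals, so the contraction must be applied to the whole future functional rather than to a one-time kernel, and that the ``asymptotic independence'' is encoded by the nested sublinear expectation $\hat{\mathbb{E}}[\hat{\mathbb{E}}[\phi(x,\bar{X}_{\Lambda^2})]|_{x=\bar{X}_{\Lambda^1}}]$; reconciling this nested structure with the sublinear tower property while keeping the error linear in $l_\phi$ is where care is needed. Since the exponential rate is available for every exponent below $\alpha$, the system is in particular mixing, hence ergodic.

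Finally, I would pass from $Lip(\Omega)$ to its completion $L_G^1(\Omega)$: the $\alpha$-mixing inequality, the invariance, and regularity all extend by density in $\hat{\mathbb{E}}[|\cdot|]$, yielding the asserted regular mixing sublinear expectation space. Conclusions (1)–(3) are then direct applications of the general theorems to the stationary $\alpha$-mixing sequence $\{U_\tau^kX\}_{k\ge0}$: part (1) is Theorem \ref{thm:LLN}, part (2) is Theorem \ref{Thm:SLLN} (using regularity), and part (3) is Theorem \ref{thm:main1}, whose hypothesis is met either directly when $\overline{X},\underline{X}\in (L_G^1(\Omega))^m$ or, in the no-mean-uncertainty case, via the accompanying corollary; ergodicity, coming from mixing, is what upgrades the $\Gamma_*$-valued limit to the constant $c_X$.
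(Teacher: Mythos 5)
Most of your plan tracks the paper's actual proof. The contraction estimate and Lyapunov bound are the paper's Lemma \ref{lem:est for G-SDE}; the exponential flattening $|T_t\phi(x)-\tilde T[\phi]|\le C l_\phi(1+|x|)e^{-\alpha t}$ is exactly \eqref{0617-2}, and your uniqueness argument for $\tilde T$ is the paper's verbatim. Your description of the mixing step --- routing the dependence of the nested expectation through the semigroup over the gap, while controlling how the Lipschitz constants of the iterated conditional functions accumulate against the $1/|\Lambda^2|$ averaging factor --- is precisely what the paper's general criterion (Theorem \ref{Thm: alpha-mixing system by T_t}, conditions (i) and (ii)) formalizes, and Theorem \ref{Thm:mixing of G-SDE} verifies those conditions from the same two estimates you start with. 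The density extension to $L_G^1(\Omega)$ and the derivation of conclusions (1)--(3) from Theorems \ref{thm:LLN}, \ref{Thm:SLLN} and \ref{thm:main1} (with mixing $\Rightarrow$ ergodicity supplying the constant limit) also match the paper. Constructing $\tilde T$ as a forward-in-time limit of $T_t f$, rather than via the pullback stationary solution, is a legitimate variant and works with the estimate you derive.

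There is, however, one genuine gap, and it is created by that very shortcut: your regularity argument. You assert that regularity of $(\Omega,(Lip(\Omega))^m,\hat{\mathbb{E}}^{\tilde T})$ is ``inherited from the Denis--Hu--Peng representation,'' but that representation supplies a weakly compact family of probabilities on the noise space $\hat\Omega=C_0(\mathbb{R};\mathbb{R}^d)$, whereas regularity of $\hat{\mathbb{E}}^{\tilde T}$ requires (by Lemma \ref{lem:regualr<=>compact}) a relatively weakly compact representing family on the \emph{solution} path space $\Omega=C(\mathbb{R};\mathbb{R}^n)$. The solution map is defined only quasi-surely (in $L_G^p$), not pathwise continuously, so weak compactness does not push forward for free. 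The paper closes this hole by constructing the continuous stationary solution $\{\xi_t\}_{t\in\mathbb{R}}$ (Theorem \ref{Thm:stationary solution}), identifying $\hat{\mathbb{E}}^{\tilde T}$ on cylinder functionals with $\sup_{\hat P}\mathbb{E}_{\hat P}[\phi(\xi_{t_1},\dots,\xi_{t_k})]$, proving the fourth-moment increment estimate \eqref{eq:0617n}, and then applying the Kolmogorov tightness criterion (Lemma \ref{lem:Kolmogorov criterion}) to get weak relative compactness of the laws $\{\hat P\circ\xi_\cdot^{-1}\}$ on $C(\mathbb{R};\mathbb{R}^n)$, before invoking Lemma \ref{lem:regualr<=>compact} and Lemma \ref{lem:extend-sub-expectation}. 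The same device is what turns your informal ``$L^1$-continuity of the flow'' into the increment bound under $\hat{\mathbb{E}}^{\tilde T}$ needed for continuity of the system, and stationarity of the sequence $\{U_\tau^k X\}$ feeding Theorems \ref{thm:LLN} and \ref{Thm:SLLN} also rests on it. So the stationary solution is not an optional alternative construction of $\tilde T$; it is the load-bearing object for regularity, continuity and stationarity, and your proposal needs it (or an equivalent tightness argument on $\Omega$) to be complete.
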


 It is important to note that our results extend beyond applications to nonlinear equations; they are also applicable in classical ergodic theory and topological dynamical systems (see Section \ref{application:ergodic}) and in the context of capacity theory (see Section \ref{application:capacity}).

\medskip
	\medskip
	This manuscript is organized as follows. In Section \ref{sec:pre},
	we introduce main definitions and present  some necessary lemmas. Section \ref{sec:ergodic} investigates  the Birkhoff's ergodic theorem for sublinear expectation systems. 
In Section \ref{sec:mix}, we introduce and study the  mixing for sublinear expectation systems.   Section \ref{Sec: LLN} is dedicated to proving the LLN and the SLLN under $\alpha$-mixing conditions. In Section \ref{sec:GSDE}, we demonstrate that a class of processes induced by $G$-SDEs are $\a$-mixing. The final section discusses various applications in classical ergodic theory and capacity theory. In Appendix, we prove the classical Brownian motion is mixing.

\section{Preliminaries}\label{sec:pre}
	\subsection{Sublinear expectations}\label{sec:pre-sub}
	Let $X=(X_1,\cdots, X_n), X_i\in \mathcal{H}$, denoted by $X\in \mathcal{H}^n$, be a given n-dimensional random vector on a sublinear expectation space $(\Omega, \mathcal{H}, \hat{\mathbb{E}})$, and denote $|X|=\sum_{i=1}^n|X_i|$. Following Peng \cite{Pengbook}, given a sublinear expectation space $(\Omega, \mathcal{H}, \hat{\mathbb{E}})$, we assume that 
 \begin{equation}\label{eq:C_b_lip}
     \text{$\phi(X)\in \mathcal{H}$ for any $X\in \mathcal{H}^n$ and $\phi\in C_{b,lip}(\mathbb{R}^n)$},
 \end{equation}
 where $\phi\in C_{b,lip}(\mathbb{R}^n)$ is the collection of all bounded Lipschitz functions from $\mathbb{R}^n$ to $\mathbb{R}$.

 Let $\{X_i\}_{i=1}^\infty$ be a sequence in $\mathcal{H}$. Throughout this paper, we will use ``$X_i \downarrow \text{ (resp. $\uparrow$)} X$ as $i \to \infty$'' to represent ``$X_{i+1}(\omega) \le\text{ (resp. $\ge$)} X_i(\omega)$ for $i \in \mathbb{N}$, and $\lim_{i \to \infty} X_i(\omega) = X$ for each $\omega \in \Omega$.''
 \begin{definition}\label{def:regular}
A sublinear expectation space $(\Omega, \mathcal{H}, \hat{\mathbb{E}})$ is called regular if for each sequence $\{X_i\}_{i=1}^\infty$ in $\mathcal{H}$ with $X_i \downarrow 0$ as $i \to \infty$, one has $\lim_{i\to\infty}\hat{\mathbb{E}}[X_i]=0$.
 \end{definition}

 For a given regular sublinear expectation space $(\Omega, \mathcal{H}, \hat{\mathbb{E}})$, we consider the following set
 \begin{equation}\label{eq:extend-vector-lattice}
     \tilde{\mathcal{H}}:=\{\tilde{X}:\text{ there exist } X\in \mathcal{H}^n, \ \phi\in C_{lip}(\mathbb{R}^n)  \text{ for some } n\geq 1, \text{ such that } \tilde{X}=\phi(X)\}.
 \end{equation}
 It is easy to check that $\tilde{\mathcal{H}}\supset \mathcal{H}$ is a vector lattice. Moreover, for any $\tilde{X}\in \tilde{\mathcal{H}}$ and any $M,N>0$, we know that $\tilde{X}_M^N:=(\tilde{X}\wedge N)\vee (-M)=\psi(X)\in \mathcal{H}$ for some $X\in \mathcal{H}^n$ and $\psi=(\phi\wedge N)\vee (-M)\in C_{b,lip}(\mathbb{R}^n)$. Then we extend  the sublinear expectation $\hat{\mathbb{E}}$ to $\tilde{\mathcal{H}}$ by the following way:
 \begin{equation}\label{eq:extend-sub-expectation}
     \hat{\mathbb{E}}[\tilde{X}]:=\lim_{N\to\infty}\lim_{M\to\infty}\hat{\mathbb{E}}[\tilde{X}_M^N], \ \text{ for any } \ \tilde{X}\in \tilde{\mathcal{H}}.
 \end{equation}
 Then we have the following result.
 \begin{lemma}\label{lem:extend-sub-expectation}
     Assume that $(\Omega, \mathcal{H}, \hat{\mathbb{E}})$ is a regular sublinear expectation space. Let $\tilde{\mathcal{H}}$ be given by \eqref{eq:extend-vector-lattice} and the sublinear expectation $\hat{\mathbb{E}}$ on $\tilde{\mathcal{H}}$ be given as in \eqref{eq:extend-sub-expectation}. Then $(\Omega, \tilde{\mathcal{H}}, \hat{\mathbb{E}})$ is a regular sublinear expectation space.
 \end{lemma}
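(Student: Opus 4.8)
The plan is to reduce the four sublinear‑expectation axioms and the regularity of $(\Omega,\tilde{\mathcal H},\hat{\mathbb E})$ to two ingredients: a domination bound coming from the Lipschitz structure, and a single uniform tail estimate that is powered by the regularity of the \emph{original} space. Throughout write $\tilde X=\phi(X)$ with $X\in\mathcal H^n$ and $\phi\in C_{lip}(\mathbb R^n)$ of Lipschitz constant $L$; then $|\tilde X|\le W_X$, where $W_X:=|\phi(0)|+L|X|\in\mathcal H$ is nonnegative with $\hat{\mathbb E}[W_X]<\infty$. First I would check that \eqref{eq:extend-sub-expectation} is well posed: as $M\uparrow\infty$ one has $\tilde X_M^N\downarrow \tilde X\wedge N$ and $-W_X\le \tilde X_M^N\le W_X$, so by monotonicity $\{\hat{\mathbb E}[\tilde X_M^N]\}_M$ is nonincreasing and bounded below by $-\hat{\mathbb E}[W_X]$, giving a finite inner limit; this limit is then nondecreasing and bounded above in $N$, giving the outer limit. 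Since $\tilde X_M^N$ depends only on the function $\tilde X$, the value is independent of the representation $(\phi,X)$. Monotonicity, constant preservation, and positive homogeneity then pass to the limit directly from the axioms on $\mathcal H$ (for homogeneity use $(\lambda\tilde X)_M^N=\lambda\,\tilde X_{M/\lambda}^{N/\lambda}$ for $\lambda>0$).

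The only delicate axiom is sub‑additivity, because truncation is not additive: the upper clamp of a sum can strictly exceed the sum of the clamps when one summand is large and the other very negative. To bypass this I would prove the representation
\begin{equation}\label{eq:rep-on-tilde}
\hat{\mathbb E}[\tilde X]=\sup_{P\in\mathcal P}\int \tilde X\,dP\qquad\text{for all }\tilde X\in\tilde{\mathcal H},
\end{equation}
where $\mathcal P$ is the family of (countably additive) probability measures furnishing \eqref{P of sub exp} on $\mathcal H$. Each $P\in\mathcal P$ satisfies $\int W_X\,dP\le\hat{\mathbb E}[W_X]<\infty$, so $\tilde X\in L^1(P)$ and the right‑hand side is finite. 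The engine of \eqref{eq:rep-on-tilde} is a uniform tail estimate: for $W\in\mathcal H$ with $W\ge 0$,
\[
\sup_{P\in\mathcal P}\int_{\{W>M\}}W\,dP\;\le\;2\,\hat{\mathbb E}\bigl[(W-\tfrac{M}{2})^{+}\bigr]\xrightarrow[M\to\infty]{}0,
\]
where the pointwise bound $W\,\one_{\{W>M\}}\le 2(W-\tfrac M2)^{+}$ and \eqref{P of sub exp} give the inequality, and the convergence is precisely regularity of $(\Omega,\mathcal H,\hat{\mathbb E})$ applied to $(W-\tfrac M2)^{+}\downarrow 0$ in $\mathcal H$.

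With this estimate I would establish \eqref{eq:rep-on-tilde} by commuting $\sup_P$ with the two truncation limits. For the inner limit, $\tilde X_M^N\ge \tilde X\wedge N$ gives ``$\ge$'', while $\tilde X_M^N-\tilde X\wedge N\le W_X\,\one_{\{\tilde X<-M\}}\le W_X\,\one_{\{W_X>M\}}$ combined with the tail estimate gives ``$\le$'', so $\lim_M\hat{\mathbb E}[\tilde X_M^N]=\sup_P\int(\tilde X\wedge N)\,dP$; the identical argument with $(\tilde X-N)^{+}\le W_X\,\one_{\{W_X>N\}}$ handles the outer limit $N\to\infty$ and yields \eqref{eq:rep-on-tilde}. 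Sub‑additivity is then immediate, since $\int(\tilde X+\tilde Y)\,dP=\int\tilde X\,dP+\int\tilde Y\,dP$ and the supremum of a sum is at most the sum of suprema. Finally, for regularity of $(\Omega,\tilde{\mathcal H},\hat{\mathbb E})$, given $\tilde X_i\downarrow 0$ with $\tilde X_i\le W\in\mathcal H$, I would split $\int\tilde X_i\,dP=\int(\tilde X_i\wedge M)\,dP+\int(\tilde X_i-M)^{+}\,dP$: the tail term is $\le\sup_P\int_{\{W>M\}}W\,dP$ uniformly in $i$, while $\tilde X_i\wedge M\downarrow 0$ in $\mathcal H$ (a bounded Lipschitz image of the representing vector) forces $\hat{\mathbb E}[\tilde X_i\wedge M]\to 0$ as $i\to\infty$ for each fixed $M$. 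Taking $\limsup_i$ and then $M\to\infty$ gives $\hat{\mathbb E}[\tilde X_i]\to 0$.

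The main obstacle is exactly the non‑additivity of truncation, i.e.\ the interchange of $\sup_{P\in\mathcal P}$ with the iterated limits defining \eqref{eq:extend-sub-expectation}. Everything hinges on the uniform tail estimate, which is where regularity of the original space is indispensable, and where one genuinely needs the countably additive representation $\mathcal P$ (so that dominated convergence is available for each fixed $P$) rather than only the finitely additive family $\Theta$ of \eqref{eq:set Theta}.
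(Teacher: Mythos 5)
Your proof is correct, but it takes a genuinely different route from the paper's. The paper never introduces the family $\mathcal{P}$ at all: it notes that the iterated limit exists because $|\tilde{X}_M^N|\leq C(1+|X|)\in\mathcal{H}$, dismisses the sublinear-expectation axioms as ``easy to verify,'' and then proves regularity directly by the pointwise identity $\tilde{X}_i=(\tilde{X}_i\vee N-N)+\tilde{X}_i\wedge N$, bounding $\hat{\mathbb{E}}[\tilde{X}_1\vee N-N]$ by $\hat{\mathbb{E}}[Y\vee N-N]$ with $Y=C(1+|X_1|)\in\mathcal{H}$ and invoking regularity of the original space twice (in $N$ for the tail, in $i$ for the bounded part $\tilde X_i\wedge N$). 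You instead prove the stronger statement $\hat{\mathbb{E}}[\tilde X]=\sup_{P\in\mathcal P}\int\tilde X\,dP$ on all of $\tilde{\mathcal{H}}$, powered by the uniform tail estimate $\sup_P\int_{\{W>M\}}W\,dP\le 2\hat{\mathbb{E}}[(W-\tfrac M2)^+]\to 0$, and then read off both sub-additivity and regularity as corollaries. Each approach has merits: the paper's is shorter and stays entirely within the sublinear-expectation calculus; yours is more informative, since (a) it honestly addresses sub-additivity, which is \emph{not} trivial pointwise (truncation is genuinely not sub-additive, as you observe — the paper's ``easy to verify'' glosses over this, and indeed the paper's own regularity argument in \eqref{eq:23.24} silently uses sub-additivity of the extended expectation), and (b) the representation \eqref{P of sub exp} extended to $\tilde{\mathcal{H}}$ is a structural fact of independent use, e.g.\ for the capacity-theoretic statements later in the paper. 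The price is that your route leans on the countably additive representation of Theorem 1.2.2 in Peng's book, which is legitimately available here precisely because the original space is assumed regular.
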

 \begin{proof}
     For any $\tilde{X}\in \tilde{\mathcal{H}}$, it follows from \eqref{eq:extend-vector-lattice} that there exist $C>0$ and $X\in \mathcal{H}^n$ for some $n\geq 1$ such that $|\tilde{X}_M^N|\leq |\tilde{X}|\leq C(1+|X|)\in \mathcal{H}$ for any $M,N>0$. Hence, the limit in \eqref{eq:extend-sub-expectation} exists and it is easy to verify that $\hat{\mathbb{E}}:\tilde{\mathcal{H}}\to \mathbb{R}$ is a sublinear expectation. 

     It remains to show that $(\Omega, \tilde{\mathcal{H}}, \hat{\mathbb{E}})$ is regular, i.e., for each sequence $\{\tilde{X}_i\}_{i=1}^{\infty}\subset \tilde{\mathcal{H}}$ with $\tilde{X}_i\downarrow 0$ as $i\to \infty$, one has $\lim_{i\to \infty}\hat{\mathbb{E}}[\tilde{X}_i]=0$. Note that $0\leq \tilde{X}_1\leq C(1+|X_1|):=Y\in \mathcal{H}$ for some $C>0$ and $X_1\in \mathcal{H}^n$. Since $(Y\vee N-N)\in \mathcal{H}$, $(Y\vee N-N)\downarrow 0$ as $N\to \infty$ and $(\Omega, \mathcal{H}, \hat{\mathbb{E}})$ is regular, we know that
     \begin{equation}\label{eq:23.23}
         \lim_{N\to \infty}\hat{\mathbb{E}}[\tilde{X}_1\vee N-N]\leq \lim_{N\to \infty}\hat{\mathbb{E}}[Y\vee N-N]=0.
     \end{equation}
     On the other hand, $\tilde{X}_i\wedge N\in \mathcal{H}$ for any $N>0$ and $\tilde{X}_i\wedge N\downarrow 0$ as $i\to \infty$, we conclude that for any $N>0$, $\lim_{i\to \infty}\hat{\mathbb{E}}[\tilde{X}_i\wedge N]=0$.
     Note that $\tilde{X}_i=(\tilde{X}_i\vee N-N)+\tilde{X}_i\wedge N\leq (\tilde{X}_1\vee N-N)+\tilde{X}_i\wedge N$ for any $i\geq 1$ and $N>0$. Then we have
     \begin{equation}\label{eq:23.24}
         \lim_{i\to \infty}\hat{\mathbb{E}}[\tilde{X}_i]\leq \hat{\mathbb{E}}[\tilde{X}_1\vee N-N]+\limsup_{i\to \infty}\hat{\mathbb{E}}[\tilde{X}_i\wedge N]=\hat{\mathbb{E}}[\tilde{X}_1\vee N-N], \ \text{ for all } \ N>0.
     \end{equation}
     Combining \eqref{eq:23.23} and \eqref{eq:23.24}, we obtain the desired result.
 \end{proof}
Thus, if $(\Omega, \mathcal{H}, \hat{\mathbb{E}})$ is regular, Lemma \ref{lem:extend-sub-expectation}, allows us to strengthen the assumption in \eqref{eq:C_b_lip} to the following form:
 \begin{equation}\label{eq:C_lip}
     \text{$\phi(X)\in \mathcal{H}$ for any $X\in \mathcal{H}^n$ and $\phi\in C_{lip}(\mathbb{R}^n)$}.
 \end{equation}
Consequently, this paper focuses on sublinear expectation spaces that meet the condition of \eqref{eq:C_lip}.

 Now for any $X\in \mathcal{H}^n$, we define a functional on $C_{lip}(\mathbb{R}^n)$ by
	$$\hat{\mathbb{F}}_X[\phi]:=\hat{\mathbb{E}}[\phi(X)], \text{ for all } \phi\in C_{lip}(\mathbb{R}^n).$$
	The triple $(\mathbb{R}^n, C_{lip}(\mathbb{R}^n), \hat{\mathbb{F}}_X)$ forms a sublinear expectation space and $\hat{\mathbb{F}}_X$ is called the distribution of X.
	
	\begin{definition}\label{Def: identically distributed}
		Let $X_1$ and $X_2$ be two n-dimensional random vectors defined respectively in sublinear expectation spaces $(\Omega_1, \mathcal{H}_1, \hat{\mathbb{E}}_1)$ and $(\Omega_2, \mathcal{H}_2, \hat{\mathbb{E}}_2)$. They are called identically distributed, denoted by $X_1\deq X_2$, if
        \begin{equation}\label{eq:def-distribution}
            \hat{\mathbb{E}}_1[\phi(X_1)]=\hat{\mathbb{E}}_2[\phi(X_2)], \text{ for all } \phi \in C_{lip}(\mathbb{R}^n).
        \end{equation}
		It is clear that $X_1\stackrel{d}{=}X_2$ if and only if their distributions coincide.
	\end{definition}
	
	The definition of independence under sublinear expectation framework is given by Peng (see  \cite{Pengbook}).
	\begin{definition}\label{def:independent}
		Let $(\Omega, \mathcal{H}, \hat{\mathbb{E}})$ be a sublinear expectation space. A random vector $Y\in \mathcal{H}^n$ is said to be independent from another random vector $X\in \mathcal{H}^m$ under $\hat{\mathbb{E}}$ if for each test function $\phi \in C_{lip}(\mathbb{R}^{n\times m})$ we have
        \begin{equation}\label{eq:def-independent}
            \hat{\mathbb{E}}[\phi(X,Y)]=\hat{\mathbb{E}}[\hat{\mathbb{E}}[\phi(x,Y)]_{x=X}].
        \end{equation}
  
        For two sets $\mathcal{A}_1\subset \mathcal{H}^m, \mathcal{A}_2\subset \mathcal{H}^n $, we call $\mathcal{A}_2$ is independent from $\mathcal{A}_1$ if  for any $X_i\in \mathcal{A}_i$, $i=1,2$,  $X_2$ is independent from $X_1$.
	\end{definition}
\begin{remark}
    In the context of a regular sublinear expectation space, the test function $\phi$ in Definition \ref{Def: identically distributed} can equivalently range over $C_{b,lip}(\mathbb{R}^n)$ instead of $C_{lip}(\mathbb{R}^n)$, and in Definition \ref{def:independent}, it can similarly range over $C_{b,lip}(\mathbb{R}^{n\times m})$ instead of $C_{lip}(\mathbb{R}^{n\times m})$.
\end{remark}
 
	\begin{remark}\label{rem:diff for inde}
	 Note that, different from the linear expectation case, the independence of $Y$ from $X$ does not generally imply the independence of $X$ from $Y$ (see \cite[Example 1.3.15]{Pengbook} for a counterexample).
	\end{remark}

    \begin{definition}\label{def:iid-sequence}
        Let $(\Omega, \mathcal{H}, \hat{\mathbb{E}})$ be a sublinear expectation space. A sequence $\{X_k\}_{k\geq 1}$ is called an i.i.d. sequence if $X_i\deq X_j$ for all $i,j\geq 1$ and $X_n$ is independent from $(X_{n-1},\cdots,X_1)$ for any $n\geq 2$.
    \end{definition}
    
	\begin{definition}\label{Def:q.s.via sub}
		Let $(\Omega, \mathcal{H}, \hat{\mathbb{E}})$ be a sublinear expectation space. For any $X, Y\in \mathcal{H}$, we say $X\geq Y$, $\hat{\mathbb{E}}$-quasi surely ($\hat{\mathbb{E}}$-q.s. for short) if
		$$\hat{\mathbb{E}}[(X-Y)^-]=0.$$
	We say $X=Y$, $\hat{\mathbb{E}}$-q.s. if $X\geq Y$ and $Y \geq X$, $\hat{\mathbb{E}}$-q.s.
	\end{definition}

	Let $\Theta$ be the family of all linear expectations dominated by $\hat{\mathbb{E}}$, i.e., $\mathbb{E}[X]\le \hat{\mathbb{E}}[X]$ for any $X\in\mathcal{H}$ and  $\mathbb{E}\in\Theta$.
	Then by Theorem  1.2.1 in \cite{Pengbook}, we have that 
	\begin{equation}\label{eq:represenation of sublinear}
		\hat{\mathbb{E}}[X]=\max_{\mathbb{E}\in\Theta}\mathbb{E}[X],\text{ for all }X\in\mathcal{H}.
	\end{equation}
	Moreover, if assume that $\hat{\mathbb E}$ is regular, then by Theorem 1.2.2 in \cite{Pengbook}, for any $\mathbb{E}\in\T$, there exists a probability $P$ on $(\O,\s(\mathcal{H}))$ such that 
	\begin{equation}\label{eq:representation of probability}
		{\mathbb{E}}[X]=\int XdP, \text{ for any }X\in\mathcal{H}.
	\end{equation}
	Denote by $\mathcal{P}$ the set of all probabilities $P$ obtained by \eqref{eq:representation of probability}  for all $\mathbb{E}\in \Theta$. Since for any $X\in\mathcal{H}$, $\hat{\E}[|X|]<\infty$, one has  $\mathcal{H}\subset L^1(\O,\s(\mathcal{H}),P)$ for all $P\in \mathcal{P}$.
	Define a capacity with respect to $\hat{\mathbb{E}}$ via
	\[C(A)=\sup_{P\in\mathcal{P}}P(A),\text{ for any }A\in\s(\mathcal{H}).\]
    \begin{remark}
        It is easy to check that $X\ge Y$, $\hat{\mathbb{E}}$-q.s. is equivalent to $C(\{\o\in\O:X(\o)\ge Y(\o)\}^c)=0$, when the sublinear expectation is regular.
    \end{remark}
	If in addition, we suppose that $\O$ is a complete separable metric space, and $\mathcal{H}=C_b(\O)$, the space of all bounded continuous functions, then we have more information about the corresponding $\mathcal{P}$.
	The following result can be found in \cite[Theorem 6.1.16]{Pengbook}.
	\begin{lemma}\label{lem:regualr<=>compact}
		Suppose that  $\O$ is a complete separable metric space, and $\mathcal{H}=C_b(\O)$.	The sublinear expectation $\hat{\mathbb{E}}$ is regular if and only if $\mathcal{P}$ is relatively compact  with respect to the weak convergence topology \footnote{A sequence of probabilities $\{P_n\}_{n=1}^\infty$ on $(\O,\mathcal{B}(\O))$  is said to be weakly converges to $P$ if for any bounded continuous function $X$ on $\O$ 
     $\lim_{n\to\infty}\int XdP_n=\int XdP.$ Then weak convergence topology is the topology induced by weak convergence.}.
	\end{lemma}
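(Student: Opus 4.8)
The plan is to recognize this as Prokhorov's theorem in disguise: on a Polish space a family of Borel probability measures is relatively compact in the weak-convergence topology if and only if it is tight, so the entire content is to prove that regularity of $\hat{\mathbb E}$ is equivalent to tightness of $\mathcal P$. Throughout I would use that on a metric space $\sigma(C_b(\Omega))=\mathcal B(\Omega)$, so that every $P\in\mathcal P$ is a genuine Borel measure and Prokhorov's theorem applies, and I would use the two bridge identities $\hat{\mathbb E}[X]=\sup_{P\in\mathcal P}\int X\,dP$ and $C(A)=\sup_{P\in\mathcal P}P(A)$ to pass between the analytic object $\hat{\mathbb E}$ and the family $\mathcal P$.

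For the forward implication (regular $\Rightarrow$ relatively compact) I would establish tightness directly. Fix a countable dense set $\{x_n\}\subset\Omega$ and $\epsilon>0$. For each $k\ge 1$ put $F_{k,N}=\bigcup_{n=1}^N\overline{B(x_n,1/k)}$ and define the bounded continuous functions $v_{k,N}:=1\wedge\big(k\,\mathrm{dist}(\cdot,F_{k,N})\big)$. Since $\{x_n\}$ is dense, $v_{k,N}\downarrow 0$ pointwise as $N\to\infty$, so regularity gives $\hat{\mathbb E}[v_{k,N}]\to 0$, and I choose $N_k$ with $\hat{\mathbb E}[v_{k,N_k}]<\epsilon 2^{-k}$. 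A short geometric check shows $v_{k,N}\ge \mathbf 1_{A_{k,N}}$ with $A_{k,N}=\big(\bigcup_{n=1}^N B(x_n,2/k)\big)^c$, whence $C(A_{k,N_k})\le \hat{\mathbb E}[v_{k,N_k}]<\epsilon 2^{-k}$. Setting $K:=\bigcap_k\overline{\bigcup_{n=1}^{N_k}B(x_n,2/k)}$ produces a closed, totally bounded, hence (by completeness) compact set, and countable subadditivity of $C$ gives $C(K^c)\le\sum_k C(A_{k,N_k})<\epsilon$. Thus $\inf_{P\in\mathcal P}P(K)>1-\epsilon$, i.e. $\mathcal P$ is tight, and Prokhorov's theorem yields relative compactness.

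For the converse (relatively compact $\Rightarrow$ regular) I would take $X_i\downarrow 0$ in $C_b(\Omega)$ with $0\le X_i\le M$ and fix $\epsilon>0$. Prokhorov's theorem produces a compact $K$ with $\sup_{P\in\mathcal P}P(K^c)<\epsilon$. On $K$ the sequence $X_i$ decreases continuously to $0$, so Dini's theorem gives $\sup_{\omega\in K}X_i(\omega)<\epsilon$ for all large $i$; splitting the integral over $K$ and $K^c$ then gives $\int X_i\,dP\le \epsilon+M\epsilon$ uniformly in $P\in\mathcal P$. Hence $\hat{\mathbb E}[X_i]=\sup_{P\in\mathcal P}\int X_i\,dP\to 0$, which is exactly regularity.

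The main obstacle is the forward direction: producing, from the purely analytic regularity hypothesis, a single compact set that controls every member of $\mathcal P$ at once. This is precisely where the sup-representation of the capacity is essential, since it converts the uniform-over-$\mathcal P$ tail estimate into a single scalar limit $\hat{\mathbb E}[v_{k,N}]\to 0$ to which regularity applies, and it is where one must calibrate the radii (here $1/k$ inside the distance against $2/k$ in the covering) so that $v_{k,N}$ genuinely dominates the indicator of the complement of the finite ball-union. A secondary point to handle with care is the bridge identity $\hat{\mathbb E}[X]=\sup_{P\in\mathcal P}\int X\,dP$: this is exactly the representation furnished by \cite[Theorems 1.2.1 and 1.2.2]{Pengbook} for the family $\mathcal P$, and it is what ties relative compactness of $\mathcal P$ back to regularity of $\hat{\mathbb E}$ in the converse.
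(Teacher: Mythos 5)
Your proof is correct, and it is worth noting that the paper itself offers no argument for this lemma at all: it is quoted verbatim from \cite[Theorem 6.1.16]{Pengbook}. What you have written is, in substance, a correct self-contained reconstruction of the standard proof behind that citation, routing both implications through tightness via Prokhorov's theorem. The forward direction is sound: your radius calibration makes $v_{k,N}\ge \mathbf{1}_{A_{k,N}}$ check out by the triangle inequality (points of $A_{k,N}$ are at distance at least $2/k-1/k=1/k$ from $F_{k,N}$), the set $K$ is closed and totally bounded, hence compact by completeness, and the countable subadditivity you invoke for $C=\sup_{P\in\mathcal{P}}P$ follows from countable subadditivity of each $P$. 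The converse is the routine Prokhorov--Dini argument and is also fine.

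One point deserves sharper wording than you give it. In the converse direction you cannot ground the bridge identity $\hat{\mathbb{E}}[X]=\sup_{P\in\mathcal{P}}\int X\,dP$ in \cite[Theorem 1.2.2]{Pengbook}, because that representation theorem presupposes regularity --- precisely what you are trying to prove; indeed, in this paper's own conventions $\mathcal{P}$ is only \emph{defined} (via that theorem) when $\hat{\mathbb{E}}$ is regular, so read literally the ``if'' direction is circular. The statement is only meaningful under the reading of \cite[Theorem 6.1.16]{Pengbook}, which is also how the present paper actually applies the lemma (e.g.\ in Example \ref{ex:not quasi}, Example \ref{ex:not continuous} and Theorem \ref{thm:char. unique ergodic}): one starts from a given family $\mathcal{P}$ of Borel probabilities and defines $\hat{\mathbb{E}}$ as its upper expectation, so that the bridge identity holds by definition. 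Under that reading your argument goes through verbatim; you should simply state the identity as part of the hypothesis rather than cite a theorem that would beg the question.
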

 
	\subsection{Invariant probabilities}
Let $(\Omega,\mathcal{F})$ be a measurable space, and $f:\Omega\to \Omega$ be a measurable transformation, i.e., $f^{-1}(\mathcal{F})\subset \mathcal{F}$.
     Denote
    \[
    \mathcal{I}:=\{A\in \mathcal{F}: f^{-1}A=A\}.
    \]
    A probability measure $P$ on $(\Omega,\mathcal{F})$ is said to be $f$-invariant if $P(f^{-1}A)=P(A)$ for all $A\in \mathcal{F}$. We say $P$ is $f$-ergodic if it is $f$-invariant and $P(A)=0$ or $1$ for all $A\in \mathcal{I}$.    Denote by $\mathcal{M}(f)$ the set of all $f$-invariant probabilities on $(\Omega,\mathcal{F})$, and by $ \mathcal{M}^e(f)$ the set of all $f$-ergodic probabilities on $(\Omega,\mathcal{F})$. 

	The following result should be classical (see \cite[Lemma 2.9]{FHLZ2023} for a proof).
	\begin{lemma}\label{lem:invariant meausre }
		Let $(\O,\mathcal{F})$ be a measurable space, and $f:\O\to \O$ be a measurable transformation. Given $P,Q\in\mathcal{M}(f)$, if $P(A)=Q(A)$ for any $A\in\mathcal{I}$, then $P=Q$. 
	\end{lemma}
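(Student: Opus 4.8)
The plan is to invoke the classical Birkhoff pointwise ergodic theorem together with the elementary observation that a bounded measurable function which is \emph{everywhere} $f$-invariant is exactly an $\mathcal{I}$-measurable function, so that its integral is determined by the restriction of the measure to $\mathcal{I}$. Since by hypothesis $P$ and $Q$ agree on $\mathcal{I}$, it then suffices to realize both $P(B)$ and $Q(B)$ as the integral of one and the same $\mathcal{I}$-measurable function.

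First I would fix an arbitrary $B\in\mathcal{F}$ and set $g_n:=\frac1n\sum_{i=0}^{n-1}\mathbf{1}_B\circ f^i$, the Birkhoff averages of the bounded function $\mathbf{1}_B$, and define $\bar g:=\limsup_{n\to\infty}g_n$, a $[0,1]$-valued $\mathcal{F}$-measurable function. The key preliminary step is to check that $\bar g\circ f=\bar g$ holds at every point of $\Omega$: from $g_n\circ f=g_n+\frac1n(\mathbf{1}_B\circ f^n-\mathbf{1}_B)$ and $|\mathbf{1}_B|\le 1$, the correction term tends to $0$ uniformly, so passing to $\limsup$ gives the claimed everywhere-invariance. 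Because a measurable $h$ satisfies $h\circ f=h$ everywhere if and only if $h^{-1}(C)\in\mathcal{I}$ for every Borel set $C$ (that is, $h$ is $\mathcal{I}$-measurable), I conclude that $\bar g$ is $\mathcal{I}$-measurable.

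Next I would apply Birkhoff's ergodic theorem in the two systems $(\Omega,\mathcal{F},P,f)$ and $(\Omega,\mathcal{F},Q,f)$. Under $P$ the averages $g_n$ converge $P$-a.s.\ to the conditional expectation $\mathbb{E}_P[\mathbf{1}_B\mid\mathcal{I}]$; in particular the $\limsup$ agrees with the genuine limit $P$-a.s., so $\bar g=\mathbb{E}_P[\mathbf{1}_B\mid\mathcal{I}]$ $P$-a.s.\ and hence $\int_\Omega\bar g\,dP=\int_\Omega\mathbf{1}_B\,dP=P(B)$. The identical argument in $(\Omega,\mathcal{F},Q,f)$ yields $\int_\Omega\bar g\,dQ=Q(B)$. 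It is essential here that $\bar g$ is a single function defined independently of the measure; the two ergodic theorems serve only to evaluate its integral against each measure.

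Finally, since $\bar g$ is bounded and $\mathcal{I}$-measurable while $P|_{\mathcal{I}}=Q|_{\mathcal{I}}$, a routine approximation (indicators of sets in $\mathcal{I}$, then simple $\mathcal{I}$-measurable functions, then bounded convergence) gives $\int_\Omega\bar g\,dP=\int_\Omega\bar g\,dQ$. Combining the three identities yields $P(B)=Q(B)$, and as $B\in\mathcal{F}$ was arbitrary, $P=Q$. The only genuinely delicate point is the passage establishing everywhere-invariance of $\bar g$, so that its $\mathcal{I}$-measurability holds exactly rather than merely almost everywhere: it is precisely the $\mathcal{I}$-measurability of one fixed function that allows the hypothesis $P|_{\mathcal{I}}=Q|_{\mathcal{I}}$ to be brought to bear on both integrals simultaneously.
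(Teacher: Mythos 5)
Your proof is correct, and its backbone is the same as the paper's: Birkhoff's pointwise ergodic theorem plus the fact that the limit object is $\mathcal{I}$-measurable, so that the hypothesis $P|_{\mathcal{I}}=Q|_{\mathcal{I}}$ determines both integrals. (The paper itself only cites a reference for this discrete lemma, but proves the continuous-time analogue, Lemma~\ref{lem:invariant meausre for flow}, "by a similar argument.") The one genuine difference is technical and works in your favor: the paper's argument takes \emph{two} measure-dependent limit functions $X^*_P$ and $X^*_Q$ furnished by the ergodic theorem (and, in the flow case, needs an extra lemma to replace them by $\mathcal{I}$-measurable versions), then uses an invariant set of full $P$-measure, transferred to $Q$ by the hypothesis, to conclude $X^*_P=X^*_Q$ $Q$-a.s.\ before integrating. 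You instead work with the single, measure-free function $\bar g=\limsup_n \frac1n\sum_{i=0}^{n-1}\mathbf{1}_B\circ f^i$, whose \emph{everywhere} invariance follows from the uniform bound $|g_n\circ f-g_n|\le 1/n$; this makes $\mathcal{I}$-measurability automatic, eliminates the need for invariant versions and for the comparison step $X^*_P=X^*_Q$, and leaves Birkhoff's theorem with the sole job of evaluating $\int\bar g\,dP=P(B)$ and $\int\bar g\,dQ=Q(B)$. The cost is negligible (the $1/n$ estimate), and the payoff is a cleaner, self-contained argument; the paper's two-function route, on the other hand, is the one that generalizes verbatim to the flow setting where the averages are integrals over $[0,T]$ and the same uniform estimate still holds.
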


    Correspondingly,   let $(\Omega,\mathcal{F})$ be a measurable space and $(\t_t)_{t\ge0}$ be a semigroup of measurable transformations  from $ \Omega$ to  $\Omega$ satisfying the map $\mathbb{R}_+\times \O\to\O,~(t,\o)\mapsto \t_t\o$ is measurable. A probability $P$ on $(\Omega,\mathcal{F})$ is called $\t$-invariant if  $P(\t_t^{-1}A)=P(A)$ for any $t\in\mathbb{R}_+$. We also call $(\Omega,\mathcal{F},(\t_t)_{t\ge0},P)$ a measure-preserving system. For convenience, we also denote 
    \[
    \mathcal{I}:=\{A\in \mathcal{F}: \t_t^{-1}A=A\text{ for any }t\in\mathbb{R}_+\}.
    \]
    We say $P$ is $\t$-ergodic if $P(A)=0$ or $1$ for all $A\in \mathcal{I}$. Denote by $\mathcal{M}(\t)$ the set of all $\t$-invariant probabilities on $(\Omega,\mathcal{F})$, and by $ \mathcal{M}^e(\t)$ the set of all $\t$-ergodic probabilities on $(\Omega,\mathcal{F})$.

We recall the classical Birkhoff's ergodic theorem for continuous time systems, which is proved by combining \cite[Page 7]{DaPratoZabczyk1996} and \cite[Lemma 1.2]{BekkaMayer2000}.
 \begin{lemma}\label{lem:classical ergodic theorem for flow}
     Let $(\O,\mathcal{F},(\t_t)_{t\geq 0},P)$ be a measure-preserving system. Then for any $X\in B_b(\O,\mathcal{F})$, there exists $X^*\in B_b(\O,\mathcal{I})$ such that
     \[\lim_{T\to\infty}\frac{1}{T}\int_0^TX\circ \t_tdt=X^*,~~P\text{-a.s.}\]
    Furthermore, $\int X^*dP=\int XdP$.
 \end{lemma}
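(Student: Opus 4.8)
The plan is to reduce the continuous-time average to the classical discrete-time Birkhoff theorem for the time-one map $\t_1$, while handling the invariance of the limit under the full semigroup by defining the limit function everywhere via a $\limsup$. First I would record the measurability setup: since $(t,\o)\mapsto\t_t\o$ is jointly measurable and $X\in B_b(\O,\mathcal{F})$, the map $(t,\o)\mapsto X(\t_t\o)$ is measurable and bounded, so by Fubini the average $\o\mapsto\frac1T\int_0^TX(\t_t\o)\,dt$ is measurable for each $T>0$ and continuous in $T$ (hence its $\limsup$ over $T\to\infty$, being computable along a countable dense set of $T$, is measurable). This lets me define
\[
X^*(\o):=\limsup_{T\to\infty}\frac1T\int_0^TX(\t_t\o)\,dt
\]
everywhere on $\O$, as a measurable function with $\sup_\O|X^*|\le\sup_\O|X|$.

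The key structural step, and the one I expect to be the main obstacle, is to show that $X^*$ is \emph{genuinely} invariant, i.e. $X^*\circ\t_s=X^*$ for every $s\ge0$ and at every point of $\O$, which is exactly what is needed to place $X^*$ in $B_b(\O,\mathcal{I})$ rather than merely obtaining invariance $P$-a.s. For fixed $s$ I would compute, using the semigroup law $\t_{t+s}=\t_t\circ\t_s$,
\[
X^*(\t_s\o)=\limsup_{T\to\infty}\frac1T\int_0^TX(\t_{t+s}\o)\,dt=\limsup_{T\to\infty}\frac1T\int_s^{T+s}X(\t_u\o)\,du.
\]
Splitting off $\frac1T\int_0^s$, which is $O(1/T)$, and writing $\frac1T\int_0^{T+s}=\frac{T+s}{T}\cdot\frac{1}{T+s}\int_0^{T+s}$, the uniform boundedness of the averages together with $\frac{T+s}{T}\to1$ forces $X^*(\t_s\o)=X^*(\o)$ for all $\o$. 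The whole point of taking a $\limsup$ defined everywhere, instead of a limit on a full-measure set, is that this invariance then holds identically; the naive approach would only give invariance modulo null sets and would not yield $\mathcal{I}$-measurability without delicate exceptional-set bookkeeping.

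Next I would establish the a.s.\ convergence by passing to discrete time. Put $Y(\o):=\int_0^1X(\t_s\o)\,ds\in B_b(\O,\mathcal{F})$; the semigroup law gives $\int_k^{k+1}X(\t_t\o)\,dt=Y(\t_1^k\o)$, so with $n=\lfloor T\rfloor$,
\[
\frac1T\int_0^TX(\t_t\o)\,dt=\frac nT\cdot\frac1n\sum_{k=0}^{n-1}Y(\t_1^k\o)+\frac1T\int_n^TX(\t_t\o)\,dt.
\]
The remainder is bounded by $\sup_\O|X|/T\to0$ and $\frac nT\to1$. Since $P$ is $\t_1$-invariant, the classical discrete-time Birkhoff theorem applied to $\t_1$ and $Y$ produces a $\t_1$-invariant $Y^*$ with $\frac1n\sum_{k=0}^{n-1}Y(\t_1^k\o)\to Y^*(\o)$ $P$-a.s.\ and $\int Y^*\,dP=\int Y\,dP$. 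Hence the continuous average converges $P$-a.s.; on that full-measure set the $\limsup$ coincides with the genuine limit, so $X^*=Y^*$ $P$-a.s.\ and $\frac1T\int_0^TX\circ\t_t\,dt\to X^*$ $P$-a.s.

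Finally, for the mean I would invoke Fubini and invariance of $P$: $\int Y\,dP=\int_0^1\!\int X(\t_s\o)\,dP(\o)\,ds=\int_0^1\!\big(\int X\,dP\big)\,ds=\int X\,dP$. Combining with $X^*=Y^*$ $P$-a.s.\ and $\int Y^*\,dP=\int Y\,dP$ gives $\int X^*\,dP=\int X\,dP$, completing the argument. The routine parts are the Fubini measurability, the $O(1/T)$ remainder estimate, and the mean computation; the genuinely delicate point remains securing the everywhere-invariance in the second paragraph, which is what upgrades the conclusion to $X^*\in B_b(\O,\mathcal{I})$.
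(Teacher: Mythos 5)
Your proof is correct, and it takes a genuinely different route from the paper's. The paper disposes of this lemma by two citations: the continuous-time ergodic theorem from \cite{DaPratoZabczyk1996} yields $X^*_1\in B_b(\O,\mathcal{F})$ with $X^*_1\circ\t_t=X^*_1$ only $P$-a.s.\ for each $t$, and \cite[Lemma 1.2]{BekkaMayer2000} is then invoked to replace $X^*_1$ by a strictly invariant version $X^*\in B_b(\O,\mathcal{I})$ agreeing with it $P$-a.s. You instead give a self-contained argument: you reduce the continuous average to the discrete-time Birkhoff theorem (the paper's Theorem \ref{thm:Birkhoff for measures}) applied to the time-one map and $Y=\int_0^1X\circ\t_s\,ds$, with the sandwich $n=\lfloor T\rfloor$ and an $O(1/T)$ remainder, and you sidestep the modification lemma entirely by defining $X^*$ as an everywhere-defined $\limsup$: your shift computation, valid because the averages are uniformly bounded and $\frac{T+s}{T}\to1$, gives $X^*\circ\t_s=X^*$ at \emph{every} point, so the level sets satisfy $\t_s^{-1}\{X^*\in B\}=\{X^*\in B\}$ exactly and $X^*\in B_b(\O,\mathcal{I})$. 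This everywhere-invariance step is precisely the subtlety the paper outsources to Bekka--Mayer, and your $\limsup$ device is in effect an inlined proof of that lemma. What each approach buys: the paper's proof is two lines but defers the measurable-selection issue to the literature; yours is longer but uses only the classical discrete Birkhoff theorem together with the joint measurability of $(t,\o)\mapsto\t_t\o$ assumed in Section 2.2 (needed for Fubini, for measurability of the averages, and for the mean identity $\int Y\,dP=\int X\,dP$ via $\t_s$-invariance of $P$), and it makes transparent why the limit can be taken genuinely $\mathcal{I}$-measurable rather than merely a.s.-invariant.
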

 \begin{proof}
  By   \cite[Page 7]{DaPratoZabczyk1996}, there exists  $X^*_1\in B_b(\O,\mathcal{F})$ such that for any $t\in\mathbb{R}_+$, $X^*_1\circ \t_t=X^*_1$, $P$-a.s., and satisfies all requirements in this lemma. By \cite[Lemma 1.2]{BekkaMayer2000}, we obtain a function $X^*\in B_b(\O,\mathcal{I})$  such that $X^*=X^*_1$, $P$-a.s. The proof is completed.
 \end{proof}

  The following result can be proved by a similar argument of Lemma \ref{lem:invariant meausre }. For completeness, we provide a proof.
 \begin{lemma}\label{lem:invariant meausre for flow}
		Let $(\O,\mathcal{F},(\t_t)_{t\geq 0},P)$ be a measure-preserving system.  Given $P,Q\in\mathcal{M}(\t)$, if $P(A)=Q(A)$ for any $A\in\mathcal{I}$, then $P=Q$. 
	\end{lemma}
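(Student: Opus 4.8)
The plan is to reduce the equality of the two measures to the equality of integrals of all bounded measurable functions, and then to use Birkhoff's ergodic theorem for flows (Lemma~\ref{lem:classical ergodic theorem for flow}) to rewrite each such integral in terms of an $\mathcal{I}$-measurable limit function, on which $P$ and $Q$ agree by hypothesis. Since $P=Q$ follows once I know $\int X\,dP=\int X\,dQ$ for every $X\in B_b(\O,\mathcal{F})$ (taking $X=\mathbf{1}_A$ recovers $P(A)=Q(A)$ for all $A\in\mathcal{F}$), I fix an arbitrary such $X$ and aim at this single scalar identity.

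The key device is the auxiliary probability $R:=\tfrac12(P+Q)$. A direct check gives $R(\t_t^{-1}A)=\tfrac12\bigl(P(\t_t^{-1}A)+Q(\t_t^{-1}A)\bigr)=\tfrac12(P(A)+Q(A))=R(A)$, so $R\in\mathcal{M}(\t)$ and $(\O,\mathcal{F},(\t_t)_{t\ge0},R)$ is again a measure-preserving system. Applying Lemma~\ref{lem:classical ergodic theorem for flow} to $R$ produces $X^*\in B_b(\O,\mathcal{I})$ with $\tfrac1T\int_0^T X\circ\t_t\,dt\to X^*$, $R$-a.s. Because $R(N)=0$ forces $P(N)=Q(N)=0$, we have $P\ll R$ and $Q\ll R$, so the same time averages converge to $X^*$ both $P$-a.s. and $Q$-a.s.

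Next I apply Lemma~\ref{lem:classical ergodic theorem for flow} separately to $P$ and to $Q$, obtaining limits $X_P^*,X_Q^*\in B_b(\O,\mathcal{I})$ with $\int X_P^*\,dP=\int X\,dP$ and $\int X_Q^*\,dQ=\int X\,dQ$. By uniqueness of almost-sure limits, $X_P^*=X^*$ $P$-a.s. and $X_Q^*=X^*$ $Q$-a.s., whence $\int X\,dP=\int X^*\,dP$ and $\int X\,dQ=\int X^*\,dQ$. Finally, since $X^*$ is $\mathcal{I}$-measurable and $P=Q$ on $\mathcal{I}$, approximating $X^*$ by simple $\mathcal{I}$-measurable functions and using continuity of the integral gives $\int X^*\,dP=\int X^*\,dQ$. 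Chaining these equalities yields $\int X\,dP=\int X\,dQ$, which is what I wanted.

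The step I expect to be the crux—rather than a genuine obstacle—is the introduction of the dominating measure $R$. This is exactly what forces the two ergodic limits to be identified: Birkhoff applied to $P$ and to $Q$ individually only yields limits defined up to $P$- and $Q$-null sets respectively, which need not overlap, so one cannot compare $X_P^*$ and $X_Q^*$ pointwise without a common reference measure. Once the common limit $X^*$ is pinned down on a set of full measure for both $P$ and $Q$, the hypothesis $P|_{\mathcal{I}}=Q|_{\mathcal{I}}$ does the remaining work, and everything else is the routine approximation of bounded $\mathcal{I}$-measurable functions by simple functions. This mirrors the proof of Lemma~\ref{lem:invariant meausre } in the discrete-time case, with the single iterate $f$ replaced by the semigroup $(\t_t)_{t\ge0}$ and the Cesàro sums replaced by the time averages.
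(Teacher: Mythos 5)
Your proof is correct, but the mechanism at the crux differs from the paper's. Both arguments share the same skeleton: reduce to $\int X\,dP=\int X\,dQ$ for bounded measurable $X$ (the paper works directly with $X=\one_A$), invoke Lemma~\ref{lem:classical ergodic theorem for flow} to replace $X$ by an $\mathcal{I}$-measurable limit function, and finish using $P|_{\mathcal{I}}=Q|_{\mathcal{I}}$. Where you diverge is in how the two ergodic limits $X_P^*$ and $X_Q^*$ get identified. The paper applies the ergodic theorem only to $P$ and $Q$ and observes that the convergence event $\{\o\in\O:\lim_{T\to\infty}\frac{1}{T}\int_0^TX(\t_t\o)\,dt=X_P^*(\o)\}$ itself belongs to $\mathcal{I}$; since it has $P$-measure one and $P,Q$ agree on $\mathcal{I}$, it has $Q$-measure one, so $X_P^*=X_Q^*$ $Q$-a.s.\ and the chain of integral identities closes. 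You instead introduce the auxiliary invariant measure $R=\tfrac12(P+Q)$ and use $P\ll R$, $Q\ll R$ to pin both limits to a single $X^*$ coming from the ergodic theorem applied to $R$. Your route buys something concrete: it never requires checking that the convergence set is invariant --- a fact the paper asserts without proof, and which itself needs a small argument (Ces\`aro averages of a bounded function are asymptotically shift-invariant, and $\mathcal{I}$-measurable functions are pointwise invariant under each $\t_s$). The paper's route, in exchange, avoids the extra measure and one extra application of the ergodic theorem, making it marginally shorter. Both proofs are complete and correct; your identification step via uniqueness of a.s.\ limits under the common dominating measure $R$ is sound, as is the final approximation of the bounded $\mathcal{I}$-measurable $X^*$ by simple $\mathcal{I}$-measurable functions.
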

 \begin{proof}
Fix any $A\in\mathcal{F}$ and let $X=1_A$. By Lemma \ref{lem:classical ergodic theorem for flow}, there exist $X^*_{P},X^*_{Q}\in B_b(\O,\mathcal{I})$ such that 
	\[P(\{\o\in\O:\lim_{T\to\infty}\frac{1}{T}\int_0^TX( \t_t\o)dt=X^*_P(\o)\})=1,\]
	\[Q(\{\o\in\O:\lim_{T\to\infty}\frac{1}{T}\int_0^TX( \t_t\o)dt=X^*_Q(\o)\})=1,\]
and 
\begin{equation}\label{eq:22.41}
    \int XdP= \int X^*_PdP,~~ \int XdQ= \int X^*_QdQ.
\end{equation}
 Since $\{\o\in\O:\lim_{T\to\infty}\frac{1}{T}\int_0^TX( \t_t\o)dt=X^*_P(\o)\}\in \mathcal{I}$ and $P|_{\mathcal{I}}=Q|_{\mathcal{I}}$, it follows that 
	\[Q(\{\o\in\O:\lim_{T\to\infty}\frac{1}{T}\int_0^TX( \t_t\o)dt=X^*_P(\o)\})=1.\]
	Thus, $X^*_P(\o)=X^*_Q(\o)$, $Q$-a.s., and hence
 \begin{equation}\label{eq:22.43}
     \int X^*_PdQ=\int X^*_QdQ.
 \end{equation}
	Since $P|_{\mathcal{I}}=Q|_{\mathcal{I}}$ and $X^*_P(\o)$ is $\mathcal{I}$-measurable, it follows that 
	\begin{equation}\label{eq:22.45}
	    \int X^*_PdP=\int X^*_PdQ.
	\end{equation}
 Therefore, we have
 \begin{align*}
  P(A)=  \int XdP\overset{\eqref{eq:22.41}}{=}\int X^*_PdP\overset{\eqref{eq:22.45}}{=}\int X^*_PdQ\overset{\eqref{eq:22.43}}{=}\int X^*_QdQ\overset{\eqref{eq:22.41}}{=}\int XdQ=Q(A).  
 \end{align*}

	The proof is completed as $A\in\mathcal{F}$ is arbitrary.
\end{proof}
	\section{Ergodicity}\label{sec:ergodic} 
In this section, we first provide a framework to study ergodic theory under the sublinear expectation setting. Subsequently, we prove a sublinear version of Birkhoff's ergodic theorem. Similar to the classical case, if we further assume that this system is ergodic, then some observable functions' Birkhoff averages converge to the integral with respect to some ergodic measure. Finally, we prove that $G$-Brownian motion is ergodic, which plays a crucial role in sublinear stochastic analysis.

Let us begin with some basic notations and definitions.	For a given sublinear expectation space $(\Omega, \mathcal{H}, \hat{\mathbb{E}})$, we consider a semigroup (group) of    linear operators $(U_t)_{t\in \mathbb T}$ on $\mathcal{H}$ with $\mathbb T=\mathbb N, \mathbb R_+$ ($\mathbb T=\mathbb Z, \mathbb R$), which preserves the sublinear expectation $\hat{\mathbb{E}}$, i.e. 
	$$\hat{\mathbb{E}}[U_tX]=\hat{\mathbb{E}}[X], \text{ for all } X\in \mathcal{H}, \ \text{ for all } \ t\in \mathbb T.$$
	Given a sublinear expectation system $(\Omega, \mathcal{H},(U_t)_{t\in \mathbb T}, \hat{\mathbb{E}})$, set 
	$$\mathcal{I}:=\{X\in \mathcal{H} : U_tX=X, \ \hat{\mathbb{E}}\text{-q.s.}   \text{ for all } t\in \mathbb T\}.$$
	For convenience, in the following, we will use 
$\mathcal{I}$ to denote both invariant sets and invariant functions. This will not cause any ambiguity.

Now it is natural to give the following definition of ergodicity.
	\begin{definition}
		\label{Definition of ergodicity}
		We say the quadruple $(\Omega, \mathcal{H}, (U_t)_{t\in \mathbb T}, \hat{\mathbb{E}})$ is ergodic if for all $X\in \mathcal{I}$, $X$ is  constant, $\hat{\mathbb E}$-q.s.
	\end{definition}
	\begin{remark}
		\label{Ergodicity of U implies ergodicity of U_t}
		If $\mathbb T=\mathbb R_+, \mathbb R$, it is easy to check that for any fixed $\tau>0$, the ergodicity of $(\Omega, \mathcal{H}, U_{\tau}, \hat{\mathbb{E}})$   implies the ergodicity of $(\Omega, \mathcal{H}, (U_t)_{t\in \mathbb T}, \hat{\mathbb{E}})$.
	\end{remark}
	\subsection{Ergodic theorem}
	\subsubsection{Ergodic theorem for invariant sublinear expectation systems}
	In this subsection, we investigate the properties of invariant sublinear expectation. Furthermore, we provide a sublinear version of Birkhoff's ergodic theorem. 
	
Let $(\O,\mathcal{H},\hat{\E})$ be a sublinear expectation space.	According to $\eqref{eq:set Theta}$, there exists a family $\T$ of linear expectations  such that 
 \[\hat{\E}[X]=\sup_{\E\in\T}\E[X],\text{ for any }X\in\mathcal{H}.\]
Let $U:\mathcal{H}\to\mathcal{H}$ be a linear operator such that $\hat{\E}$ is invariant.  Now we prove that each $\E\in\T$ corresponds to an invariant linear expectation $\E_{\mathrm{inv}}$ such that $\E_{\mathrm{inv}}[X]=\E[X]$ for any $X\in\mathcal{I}$.  

	A positive linear functional $\phi:\ell^\infty\to \mathbb{R}$ is called a Banach-Mazur limit if 
	\begin{enumerate}
		\item $\phi(\textbf{e})=1$, where $\textbf{e}=(1,1,\cdots)$, and 
		\medskip
		\item $\phi(x_1,x_2,\cdots)=\phi(x_2,x_3,\cdots)$ for each $\mathbf{x}=(x_1,x_2,\cdots)\in \ell^\infty$,
	\end{enumerate}
	where $\ell^\infty$ is the space of all bounded sequences of $\mathbb{R}$ with the maximal norm. The existence of Banach-Mazur limit is proved in \cite[Theorem 16.47]{Aliprantis1999}.
	
	The following result can be found in \cite[Lemma 16.45]{Aliprantis1999}.
	\begin{lemma}\label{lem:limit and functional}
		If $\phi$ is a Banach-Mazur limit, then 
		\[\liminf_{n\to \infty}x_n\le \phi(\mathbf{x})\le\limsup_{n\to\infty}x_n\]
		for each $\mathbf{x}=(x_1,x_2,\cdots)\in \ell^\infty$. In particular, if the limit $\lim_{n\to\infty}x_n$ exists, then $\phi(\mathbf{x})= \lim_{n\to\infty}x_n$.
	\end{lemma}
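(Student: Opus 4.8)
The plan is to extract everything from the two defining properties of a Banach--Mazur limit, namely positivity together with $\phi(\mathbf{e})=1$, and shift-invariance. First I would record two elementary consequences of the hypotheses. Since $\phi$ is linear and positive, it is monotone: if $\mathbf{x}\le\mathbf{y}$ coordinatewise then $\mathbf{y}-\mathbf{x}\ge 0$, whence $\phi(\mathbf{y})-\phi(\mathbf{x})=\phi(\mathbf{y}-\mathbf{x})\ge 0$. Moreover, for any constant $c\in\mathbb{R}$, linearity and $\phi(\mathbf{e})=1$ give $\phi(c\mathbf{e})=c\phi(\mathbf{e})=c$. These two facts turn $\phi$ into an order-preserving functional that agrees with constants, which is exactly the structure needed for a squeeze argument.

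Next I would iterate the shift-invariance. Writing $S$ for the left shift $S(x_1,x_2,\dots)=(x_2,x_3,\dots)$, property (2) says $\phi(\mathbf{x})=\phi(S\mathbf{x})$, and by induction $\phi(\mathbf{x})=\phi(S^k\mathbf{x})$ for every $k\ge 0$. This is the key mechanism: it lets me discard any finite initial segment of $\mathbf{x}$ without changing the value of $\phi$.

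For the upper bound, fix $\epsilon>0$ and set $L=\limsup_{n\to\infty}x_n$. By definition of $\limsup$ there is $N$ with $x_n\le L+\epsilon$ for all $n\ge N$, i.e. $S^{N-1}\mathbf{x}\le (L+\epsilon)\mathbf{e}$ coordinatewise. Applying shift-invariance, monotonicity, and the constant-agreement fact in turn gives $\phi(\mathbf{x})=\phi(S^{N-1}\mathbf{x})\le\phi((L+\epsilon)\mathbf{e})=L+\epsilon$. Letting $\epsilon\downarrow 0$ yields $\phi(\mathbf{x})\le\limsup_{n\to\infty}x_n$. The lower bound is entirely symmetric: with $\ell=\liminf_{n\to\infty}x_n$ one finds $N$ with $S^{N-1}\mathbf{x}\ge(\ell-\epsilon)\mathbf{e}$ and concludes $\phi(\mathbf{x})\ge\ell-\epsilon$, hence $\phi(\mathbf{x})\ge\liminf_{n\to\infty}x_n$. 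Finally, when $\lim_{n\to\infty}x_n$ exists, $\liminf$ and $\limsup$ coincide with it, so the two inequalities squeeze $\phi(\mathbf{x})$ to that common value.

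I do not anticipate a genuine obstacle here, as the argument is a direct squeeze built from the defining axioms; the only point deserving care is to be explicit that shift-invariance may be iterated arbitrarily many times, so that an arbitrarily long initial segment can be removed, and that monotonicity really does follow from positivity plus linearity rather than being assumed separately.
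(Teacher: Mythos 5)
Your proof is correct and complete: monotonicity from positivity plus linearity, preservation of constants, iterated shift-invariance, and the squeeze argument are exactly what is needed. Note that the paper itself does not prove this lemma but simply cites it from \cite[Lemma 16.45]{Aliprantis1999}; your argument is the standard one underlying that reference, so there is no substantive difference in approach.
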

	Following ideas of Theorem 16.48 in \cite{Aliprantis1999} (a similar technique also was used in \cite{CMM2016,FHLZ2023}), we have the following result.
	\begin{theorem}\label{thm:existence of inv}
		Let $(\O,\mathcal{H},U,\hat{\mathbb{E}})$ be a sublinear expectation system. Then for any $\mathbb{E}\in\Theta$, there exists $\mathbb{E}_{\operatorname{inv}}\in\T$ such that 
		\[\mathbb{E}_{\operatorname{inv}}[UX]=\mathbb{E}_{\operatorname{inv}}[X],\text{ for any }X\in\mathcal{H},\]
		and 
		\[\mathbb{E}[X]=\mathbb{E}_{\operatorname{inv}}[X]\text{, for any }X\in\mathcal{I}.\]
	\end{theorem}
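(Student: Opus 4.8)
The plan is to reproduce the Banach--Mazur limit construction of Theorem 16.48 in \cite{Aliprantis1999}, using the orbit of $X$ under $U$ as the sequence to be averaged and exploiting the shift-invariance of the limit to force $U$-invariance. Concretely, fix $\mathbb{E}\in\T$ and for each $X\in\mathcal{H}$ consider the real sequence
\[
s_X:=\big(\mathbb{E}[U^nX]\big)_{n\ge 0}=\big(\mathbb{E}[X],\,\mathbb{E}[UX],\,\mathbb{E}[U^2X],\dots\big).
\]
First I would check $s_X\in\ell^\infty$: since $\mathbb{E}\in\T$ and $U$ preserves $\hat{\mathbb{E}}$, one has $\mathbb{E}[U^nX]\le\hat{\mathbb{E}}[U^nX]=\hat{\mathbb{E}}[X]$, and applying the same bound to $-X$ gives $\mathbb{E}[U^nX]=-\mathbb{E}[-U^nX]\ge-\hat{\mathbb{E}}[-X]$, so $|\mathbb{E}[U^nX]|\le\hat{\mathbb{E}}[|X|]$ for every $n$. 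Fixing a Banach--Mazur limit $\phi$, I then define $\mathbb{E}_{\operatorname{inv}}[X]:=\phi(s_X)$.

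Next I would verify that $\mathbb{E}_{\operatorname{inv}}\in\T$. Linearity of the map $X\mapsto s_X$ (from linearity of each $U^n$ and of $\mathbb{E}$) combined with linearity of $\phi$ gives linearity of $\mathbb{E}_{\operatorname{inv}}$. For domination, Lemma \ref{lem:limit and functional} yields $\mathbb{E}_{\operatorname{inv}}[X]=\phi(s_X)\le\limsup_{n}\mathbb{E}[U^nX]\le\hat{\mathbb{E}}[X]$. The remaining axioms follow from domination: if $X\le Y$ then $\mathbb{E}_{\operatorname{inv}}[X]-\mathbb{E}_{\operatorname{inv}}[Y]=\mathbb{E}_{\operatorname{inv}}[X-Y]\le\hat{\mathbb{E}}[X-Y]\le 0$, giving monotonicity, while $\mathbb{E}_{\operatorname{inv}}[c]\le\hat{\mathbb{E}}[c]=c$ together with $-\mathbb{E}_{\operatorname{inv}}[c]=\mathbb{E}_{\operatorname{inv}}[-c]\le\hat{\mathbb{E}}[-c]=-c$ forces $\mathbb{E}_{\operatorname{inv}}[c]=c$. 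Hence $\mathbb{E}_{\operatorname{inv}}$ is a linear expectation dominated by $\hat{\mathbb{E}}$.

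It then remains to establish the two defining properties. For invariance, observe that $s_{UX}=(\mathbb{E}[U^{n+1}X])_{n\ge 0}$ is precisely the left shift of $s_X$, so the shift-invariance $\phi(x_1,x_2,\dots)=\phi(x_2,x_3,\dots)$ of the Banach--Mazur limit gives $\mathbb{E}_{\operatorname{inv}}[UX]=\phi(s_{UX})=\phi(s_X)=\mathbb{E}_{\operatorname{inv}}[X]$. For agreement on $\mathcal{I}$, take $X\in\mathcal{I}$, so that $U^nX=X$, $\hat{\mathbb{E}}$-q.s., for every $n$; since $\mathbb{E}\in\T$ we have $|\mathbb{E}[U^nX]-\mathbb{E}[X]|\le\mathbb{E}[|U^nX-X|]\le\hat{\mathbb{E}}[|U^nX-X|]=0$, so $s_X$ is the constant sequence $\mathbb{E}[X]\,\mathbf{e}$ and $\mathbb{E}_{\operatorname{inv}}[X]=\mathbb{E}[X]\,\phi(\mathbf{e})=\mathbb{E}[X]$.

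The construction is the essential idea and the verifications are largely routine; the one point requiring genuine care is the membership $\mathbb{E}_{\operatorname{inv}}\in\T$, because a Banach--Mazur limit is only positive and finitely shift-invariant and is not a priori monotone or constant-preserving on $\mathcal{H}$. The argument above secures this purely through domination by $\hat{\mathbb{E}}$, which in turn relies on both $\mathbb{E}\le\hat{\mathbb{E}}$ and the $U$-invariance of $\hat{\mathbb{E}}$ to produce the two-sided bound that places $s_X$ in $\ell^\infty$ to begin with.
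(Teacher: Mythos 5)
Your proof is correct, and it takes a recognizably different route through the same Banach--Mazur machinery, so a comparison is worth making. The paper fixes $\mathbb{E}\in\Theta$ and applies the Banach--Mazur limit $\phi$ not to the orbit sequence $s_X=(\mathbb{E}[U^nX])_{n\ge0}$ but to the Ces\`aro sequence $x_X=(\mathbb{E}_n[X])_{n\ge1}$ with $\mathbb{E}_n[X]=\mathbb{E}\bigl[\frac{1}{n}\sum_{i=0}^{n-1}U^iX\bigr]$. The price of that choice is that $x_{UX}$ is not exactly the shift of $x_X$: the paper must compute $\mathbb{E}_n[UX]=\frac{n+1}{n}\mathbb{E}_{n+1}[X]-\frac{1}{n}\mathbb{E}[X]$ and verify that the discrepancy $z_n=\mathbb{E}_n[UX]-\mathbb{E}_{n+1}[X]$ obeys $|z_n|\le\frac{2}{n}\hat{\mathbb{E}}[|X|]\to0$, so that $\phi(z)=0$ by Lemma~\ref{lem:limit and functional} before shift-invariance of $\phi$ can be invoked. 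In your version $s_{UX}$ is literally the left shift of $s_X$, so invariance is immediate and no error estimate is needed; the remaining ingredients (boundedness of the sequence via $U$-invariance of $\hat{\mathbb{E}}$, domination via $\phi\le\limsup$, constancy of the sequence on $\mathcal{I}$ from $\hat{\mathbb{E}}[|U^nX-X|]=0$, and deriving monotonicity and constant preservation from linearity plus domination) are the same in both arguments. What the paper's Ces\`aro formulation buys is a uniform template: in the continuous-time analogue (Theorem~\ref{thm: existence of invariant skeleton for flow}) one is forced to average $\mathbb{E}[U_sX]$ over $s\in[0,n]$, where no exact-shift trick is available and an $O(1/n)$ estimate of exactly this kind is unavoidable, so the discrete and continuous proofs run in parallel. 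For the discrete statement as posed, your orbit-sequence argument is the leaner of the two and is complete as written.
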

	\begin{proof}
		Fix any $\mathbb{E}\in\T$.
		For any $X\in\mathcal{H}$, let $x_X=(\mathbb{E}_1[X],\mathbb{E}_2[X],\cdots)$, where
		\[\mathbb{E}_n[X]=\mathbb{E}\left[\frac{1}{n}\sum_{i=0}^{n-1}U^i X\right],\text{  for any }n\in\mathbb{N}.\]
		Since $\hat{\mathbb{E}}[|X|]<\infty$, it follows that $x_X\in\ell^\infty$. Define
		\[\mathbb{E}_{\operatorname{inv}}[X]=\phi(x_X)\text{ for any  }X\in\mathcal{H},\]
		where $\phi$ is a Banach-Mazur limit.
		Now we check that $\mathbb{E}_{\operatorname{inv}}\in\T$. Indeed, for any $\l_1,\l_2\in\mathbb{R}$ and $X_1,X_2\in\mathcal{H}$, as $\phi$ is a linear functional, it follows that
		\[\mathbb{E}_{\operatorname{inv}}[\l_1X_1+\l_2X_2]=\phi(x_{\l_1X_1+\l_2X_2})=\l_1\phi(x_{X_1})+\l_2\phi(x_{X_2})=\l_1\mathbb{E}_{\operatorname{inv}}[X_1]+\l_2\mathbb{E}_{\operatorname{inv}}[X_2].\]
		Meanwhile, by Lemma \ref{lem:limit and functional}, we have that 
		\[\mathbb{E}_{\operatorname{inv}}[c]=c,\text{ for any }c\in\mathbb{R}\text{ and }\mathbb{E}_{\operatorname{inv}}[X]\le \limsup_{n\to\infty}\mathbb{E}_n[X].\]
		Since for any $X\in \mathcal{H}$, $\limsup_{n\to\infty}\mathbb{E}_n[X]\le\hat{\mathbb E}[X]$, it follows that  $\mathbb{E}_{\operatorname{inv}}\in\T.$ 
		
		Now we prove it is $U$-invariant.  For any $n\in\mathbb{N}$, and $X\in\mathcal{H}$, 
		\[\mathbb{E}_n[UX]=\mathbb{E}\left[\frac{1}{n}\sum_{i=0}^{n-1}U^{i+1}X\right]=\frac{n+1}{n}\mathbb{E}_{n+1}[X]-\frac{1}{n}\mathbb{E}[X].\]
		Let $y=(\mathbb{E}_2[X],\mathbb{E}_3[X],\cdots)\in \ell^\infty$, and define $z=x_{UX}-y\in\ell^\infty$. Then
		\[|z_n|=|\mathbb{E}_n[UX]-\mathbb{E}_{n+1}[X]|\le\frac{1}{n}|\mathbb{E}_{n+1}[X]-\mathbb{E}[X]|\le \frac{2}{n}\hat{\mathbb{E}}[|X|],\]
		which, by Lemma \ref{lem:limit and functional}, shows that $\phi(z)=\lim_{n\to\infty}z_n=0$. Thus, by the second condition in the definition of Banach-Mazur limit, we have that
		\begin{align*}
			|\mathbb{E}_{\operatorname{inv}}[UX]-\mathbb{E}_{\operatorname{inv}}[X]|=|\phi(x_{UX})-\phi(x_X)|=|\phi(x_{UX})-\phi(y)|=|\phi(z)|=0.
		\end{align*}
        If $X\in \mathcal{I}$, then  $x_X=(\mathbb E[X], \mathbb E[X], \cdots)$, which together with Lemma \ref{lem:limit and functional}, yields $\mathbb{E}_{\operatorname{inv}}[X]=\phi(x_X)=\mathbb{E}[X]$.
		Now we finish the proof, as $X\in\mathcal{H}$ is arbitrary.
	\end{proof}
For sublinear expectation systems with continuous time, we have a similar result.
	\begin{theorem}\label{thm: existence of invariant skeleton for flow}
		Let $(\O,\mathcal{H},(U_t)_{t\geq 0},\hat{\mathbb{E}})$ be a continuous  sublinear expectation system  (see \eqref{eq:continuous} for the definition). Then for any $\mathbb{E}\in\Theta$, there exists $\mathbb{E}_{\operatorname{inv}}\in\T$ such that 
	\[\mathbb{E}_{\operatorname{inv}}[U_tX]=\mathbb{E}_{\operatorname{inv}}[X],\text{ for any }X\in\mathcal{H}\text{ and }t\ge0,\]
	and 
	\[\mathbb{E}[X]=\mathbb{E}_{\operatorname{inv}}[X]\text{, for any }X\in\mathcal{I}.\]
	\end{theorem}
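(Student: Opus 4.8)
The plan is to reduce the continuous-time statement (Theorem \ref{thm: existence of invariant skeleton for flow}) to the discrete-time result already established in Theorem \ref{thm:existence of inv}, exploiting the continuity hypothesis \eqref{eq:continuous} to upgrade invariance under a single time-$1$ map to invariance under the whole semigroup $(U_t)_{t\ge 0}$. First I would fix $\mathbb{E}\in\T$ and apply Theorem \ref{thm:existence of inv} to the discrete system $(\O,\mathcal{H},U_1,\hat{\mathbb{E}})$, obtaining a linear expectation $\mathbb{E}_{\operatorname{inv}}\in\T$ with $\mathbb{E}_{\operatorname{inv}}[U_1X]=\mathbb{E}_{\operatorname{inv}}[X]$ for all $X\in\mathcal{H}$ and $\mathbb{E}_{\operatorname{inv}}[X]=\mathbb{E}[X]$ for all $X\in\mathcal{I}$. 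The remaining task is to promote this $U_1$-invariance to $U_t$-invariance for every $t\ge 0$.

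The natural route is to re-run the Banach--Mazur averaging construction in continuous time, replacing the discrete Cesàro sums $\frac{1}{n}\sum_{i=0}^{n-1}U^iX$ by the time-averages $\frac{1}{T}\int_0^T U_sX\,ds$. Concretely, for $\mathbb{E}\in\T$ and $X\in\mathcal{H}$ I would set $\mathbb{E}_T[X]=\mathbb{E}\big[\frac{1}{T}\int_0^T U_sX\,ds\big]$ and define $\mathbb{E}_{\operatorname{inv}}[X]=\phi\big((\mathbb{E}_n[X])_{n\ge1}\big)$ using a Banach--Mazur limit $\phi$, exactly mirroring the proof of Theorem \ref{thm:existence of inv}. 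Linearity, constant-preservation, and domination by $\hat{\mathbb{E}}$ (hence membership in $\T$) follow as before from Lemma \ref{lem:limit and functional} together with $\limsup_{n\to\infty}\mathbb{E}_n[X]\le\hat{\mathbb{E}}[X]$. For a fixed $t>0$, the invariance $\mathbb{E}_{\operatorname{inv}}[U_tX]=\mathbb{E}_{\operatorname{inv}}[X]$ comes from the estimate
\[
\Big|\mathbb{E}_T[U_tX]-\mathbb{E}_T[X]\Big|=\frac{1}{T}\Big|\mathbb{E}\Big[\int_T^{T+t}U_sX\,ds-\int_0^{t}U_sX\,ds\Big]\Big|\le \frac{2t}{T}\,\hat{\mathbb{E}}[|X|],
\]
which tends to $0$ as $T\to\infty$, so the auxiliary sequence $z$ with $z_n=\mathbb{E}_n[U_tX]-\mathbb{E}_n[X]$ lies in $\ell^\infty$ with $z_n\to 0$ and therefore $\phi(z)=0$ by Lemma \ref{lem:limit and functional}. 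Finally, if $X\in\mathcal{I}$ then $U_sX=X$ $\hat{\mathbb{E}}$-q.s. for all $s$, so $\mathbb{E}[\frac{1}{T}\int_0^TU_sX\,ds]=\mathbb{E}[X]$ for every $T$, giving $\mathbb{E}_{\operatorname{inv}}[X]=\mathbb{E}[X]$.

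The main obstacle is a measurability/integrability issue hidden in the object $\int_0^T U_sX\,ds$: one must make sense of this Bochner-type average as an element of $\mathcal{H}$ (or at least ensure $\mathbb{E}[\frac{1}{T}\int_0^T U_sX\,ds]$ is well defined and equals $\frac{1}{T}\int_0^T \mathbb{E}[U_sX]\,ds$), and this is precisely where the continuity assumption \eqref{eq:continuous} must be invoked, since $\lim_{|t|\to0}\hat{\mathbb{E}}[|U_tX-X|]=0$ forces $s\mapsto U_sX$ to be continuous in the $\hat{\mathbb{E}}[|\cdot|]$-seminorm and hence strongly measurable, so the time-average exists and depends continuously on the endpoints. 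An alternative that sidesteps constructing the continuous average outright is to combine the already-obtained $U_1$-invariant $\mathbb{E}_{\operatorname{inv}}$ with a density argument: one shows $\mathbb{E}_{\operatorname{inv}}[U_tX]=\mathbb{E}_{\operatorname{inv}}[U_{t+1}X]$ for all rational (then real) $t$ and uses continuity \eqref{eq:continuous} to pass to arbitrary $t\ge 0$; but the cleanest and most faithful presentation is the continuous Banach--Mazur construction above, so I would carry that out and isolate the integrability of $\int_0^TU_sX\,ds$ as the one lemma requiring genuine care.
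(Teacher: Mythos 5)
Your proposal is correct and is essentially the paper's proof: both run the Banach--Mazur averaging construction on continuous-time Ces\`aro means, derive invariance of $\mathbb{E}_{\operatorname{inv}}$ from the $2t/T$-type estimate, and obtain $\mathbb{E}_{\operatorname{inv}}[X]=\mathbb{E}[X]$ on $\mathcal{I}$ because the averages are constant there. The measurability obstacle you isolate is resolved in the paper exactly along the lines you indicate: it never forms the Bochner integral $\int_0^T U_sX\,ds$ at all, but defines $\mathbb{E}_n[X]:=\frac{1}{n}\int_0^n\mathbb{E}[U_sX]\,ds$ as a scalar Riemann integral, which is well defined because $s\mapsto\mathbb{E}[U_sX]$ is continuous thanks to $|\mathbb{E}[U_sX]-\mathbb{E}[U_tX]|\le\hat{\mathbb{E}}[|X-U_{t-s}X|]$ and the continuity assumption \eqref{eq:continuous}.
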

	\begin{proof}
	Fix any $\mathbb{E}\in\T$. We firstly prove the map $s\mapsto \mathbb E[U_sX]$ is continuous for any fixed $X\in\mathcal{H}$. Indeed, for any $s<t$
 \begin{align*}
     |\mathbb E[U_sX]-\mathbb E[U_tX]|\le \hat{\mathbb E}[|U_sX-U_tX|]=\hat{\mathbb E}[|X-U_{t-s}X|],
 \end{align*}
 which together with the continuity of the system, implies the continuity of the map $s\mapsto \mathbb E[U_sX]$. 
 
	Given any $X\in\mathcal{H}$, let
	\[\mathbb{E}_n[X]=\frac{1}{n}\int_0^n\mathbb{E}\left[U_sX\right]ds,\text{  for any }n\in\mathbb{N},\]
where the integrability is well-defined, as  the map $s\mapsto \mathbb E[U_sX]$ is continuous.
 
	Let $x_X=(\mathbb{E}_1[X],\mathbb{E}_2[X],\cdots)$, and define
	\[\mathbb{E}_{\operatorname{inv}}[X]=\phi(x_X)\text{ for any  }X\in\mathcal{H},\]
	where $\phi$ is a Banach-Mazur limit.
	Similar to the discrete case, we can  check that $\mathbb{E}_{\operatorname{inv}}\in\T.$ 
	
	Now we prove that for any $t\ge0$, ${\mathbb{E}}_{\operatorname{inv}}[U_tX]={\mathbb{E}}_{\operatorname{inv}}[X]$. Indeed, we only need to prove this for $0<t\le 1$. Fix some $0<t\le 1$. For any $n\in\mathbb{N}$, and $X\in\mathcal{H}$, 
	\[\mathbb{E}_n[U_tX]=\frac{1}{n}\int_0^n\mathbb{E}\left[U_{s+t}X\right]ds=\frac{n+1}{n}\mathbb{E}_{n+1}[X]-\frac{1}{n}\int_0^t\mathbb{E}[U_sX]ds-\frac{1}{n}\int_{n+t}^{n+1}\mathbb{E}[U_sX]ds.\]
	Let $y=(\mathbb{E}_2[X],\mathbb{E}_3[X],\cdots)\in \ell^\infty$, and define $z=x_{U_tX}-y\in\ell^\infty$. Then
	\begin{align*}
	    |z_n|&=|\mathbb{E}_n[U_tX]-\mathbb{E}_{n+1}[X]|\le\frac{1}{n}(|\mathbb{E}_{n+1}[X]|+\int_0^t|\mathbb{E}[U_sX]|ds+\int_{n+t}^{n+1}|\mathbb{E}[U_sX]|ds)\\
     &\le\frac{1}{n}(\hat{\mathbb E}[|X|]+t\hat{\mathbb E}[|X|]+(1-t)\hat{\mathbb E}[|X|])\le \frac{2}{n}\hat{\mathbb E}[|X|],
	\end{align*}
	which together with Lemma \ref{lem:limit and functional}, shows that $\phi(z)=\lim_{n\to\infty}z_n=0$. By an argument similar to that of discrete cases, we finish the proof.
\end{proof}

Given a sublinear expectation system $(\Omega,\mathcal{H}, U, \hat{\E})$, from Theorem \ref{thm:existence of inv}, we know that for any $\mathbb{E}\in \T$, there exists $\mathbb{E}_{\mathrm{inv}}$ such that \[\mathbb{E}_{\operatorname{inv}}[UX]=\mathbb{E}_{\operatorname{inv}}[X],\text{ for any }X\in\mathcal{H},\]
 	and 
		\[\mathbb{E}[X]=\mathbb{E}_{\operatorname{inv}}[X]\text{, for any }X\in\mathcal{I}.\]
 If in addition, $\hat{\mathbb{E}}$ is regular, by \eqref{eq:representation of probability}, there exists  $P_{\operatorname{inv}}\in\mathcal{P}$ such that 
	$\mathbb{E}_{\mathrm{inv}}[X]=\int XdP_{\operatorname{inv}}$ for any $X\in\mathcal{H}$, 
	and hence
	\begin{equation}\label{eq:invariant}
		\int XdP_{\operatorname{inv}}=\int UXdP_{\operatorname{inv}}\text{ for any $X\in\mathcal{H}$}.
	\end{equation}
    A similar result is obtained for the continuous sublinear expectation system $(\Omega,\mathcal{H}, (U_t)_{t\geq 0}, \hat{\E})$.
 
In the following, both in the discrete and continuous time cases, we refer to 
 $\mathbb{E}_{\operatorname{inv}}$ or $P_{\operatorname{inv}}$ as the invariant skeleton of $\mathbb{E}\in \T$.
 
Now we prove $P_{\mathrm{inv}}$  corresponds to the invariant measure in the classical sense of ergodic theory. Given a measurable space $(\O,\mathcal{F})$, let $f:\O\to\O$  be a measurable transformation and $(\t_t)_{t\ge0}$ be a semigroup of measurable transformations such that the map $\mathbb{R}_+\times \O\to\O,~(t,\o)\mapsto \t_t\o$ is measurable.
		In the following, whenever we mention a semigroup $(\t_t)_{t\ge0}$, we will always assume it satisfies the aforementioned measurability condition. This assumption is standard in classical ergodic theory and is satisfied by the examples we will use later, such as $G$-Brownian motion.
  
 Now we begin showing that the invariant skeleton $P_{\mathrm{inv}}$ of each $\mathbb E\in \T$ is a classical invariant probability if $U_f$  is induced by a measurable map $f:\O\to\O$  or $(U_t)_{t\ge0}$ induced by a semigroup of measurable transformations $(\t_t)_{t\ge0}$.
	\begin{proposition}\label{re:existence of invariant measure}
        Let $(\Omega,\mathcal{H}, U_f, \hat{\E})$ be a regular sublinear expectation system, where $U_f$ is induced by a measurable map $f:\O\to\O$. Then for any $\mathbb{E}\in \T$, the invariant skeleton $P_{\mathrm{inv}}$ of $\mathbb{E}$ is an $f$-invariant probability on $(\O,\s(\mathcal{H}))$.  
	\end{proposition}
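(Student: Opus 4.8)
The plan is to reduce the $f$-invariance of $P_{\mathrm{inv}}$ to the already-established identity \eqref{eq:invariant}, using that $U_f$ is the Koopman (composition) operator $U_fX=X\circ f$, together with a change of variables and a functional monotone class argument. As a preliminary measurability check, I would observe that since $U_f$ maps $\mathcal{H}$ into $\mathcal{H}$, for every $X\in\mathcal{H}$ the function $X\circ f=U_fX$ is $\sigma(\mathcal{H})$-measurable; hence for any Borel set $B$ one has $f^{-1}\{X\in B\}=\{X\circ f\in B\}\in\sigma(\mathcal{H})$. Such sets generate $\sigma(\mathcal{H})$, and $\{A:f^{-1}A\in\sigma(\mathcal{H})\}$ is a $\sigma$-algebra, so $f$ is $\sigma(\mathcal{H})/\sigma(\mathcal{H})$-measurable and the pushforward $Q:=P_{\mathrm{inv}}\circ f^{-1}$ is a well-defined probability on $(\Omega,\sigma(\mathcal{H}))$.

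Next, combining \eqref{eq:invariant} with the change-of-variables formula gives, for every $X\in\mathcal{H}$,
\[
\int X\,dP_{\mathrm{inv}}\overset{\eqref{eq:invariant}}{=}\int U_fX\,dP_{\mathrm{inv}}=\int X\circ f\,dP_{\mathrm{inv}}=\int X\,dQ,
\]
where the use of change of variables is legitimate because $\int|X|\,dQ=\int |X|\circ f\,dP_{\mathrm{inv}}=\int U_f|X|\,dP_{\mathrm{inv}}<\infty$, using that $|X|\in\mathcal{H}$ (vector lattice), $U_f|X|\in\mathcal{H}$, and $\mathcal{H}\subset L^1(P_{\mathrm{inv}})$. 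Thus $P_{\mathrm{inv}}$ and $Q$ assign equal integrals to every element of $\mathcal{H}$.

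The main step, and the only nonroutine one, is to upgrade this agreement on $\mathcal{H}$ to the equality of measures $P_{\mathrm{inv}}=Q$ on $\sigma(\mathcal{H})$; this is where the care is needed, since $\mathcal{H}$ may contain unbounded functions. I would exploit the $C_{lip}$-stability of $\mathcal{H}$ by considering the class $\mathcal{K}:=\{g(X_1,\dots,X_n):n\ge1,\ X_i\in\mathcal{H},\ g\in C_{b,lip}(\mathbb{R}^n)\}$. By \eqref{eq:C_lip} we have $\mathcal{K}\subset\mathcal{H}$, so $P_{\mathrm{inv}}$ and $Q$ agree on all of $\mathcal{K}$; moreover $\mathcal{K}$ contains the constants, is closed under multiplication (a product of bounded Lipschitz functions is again bounded Lipschitz, realized on the concatenated vector $(X_1,\dots,X_n)$), and generates $\sigma(\mathcal{H})$ (the sets $\{X>a\}$ are monotone limits of $g_k(X)\in\mathcal{K}$). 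The functional monotone class theorem then propagates the agreement to all bounded $\sigma(\mathcal{H})$-measurable functions, in particular to indicators $\mathbf 1_A$, $A\in\sigma(\mathcal{H})$. Consequently $P_{\mathrm{inv}}(A)=Q(A)=P_{\mathrm{inv}}(f^{-1}A)$ for every $A\in\sigma(\mathcal{H})$, which is exactly the assertion that $P_{\mathrm{inv}}$ is $f$-invariant.

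The hard part is precisely this last upgrade from integrals to measures; an equivalent route, if one prefers to avoid the multiplicative-class formulation, is to approximate each $\mathbf 1_{\{X>a\}}$ by Lipschitz functions $g_k(X)\in\mathcal{H}$ with $g_k\uparrow\mathbf 1_{(a,\infty)}$, pass to the limit by monotone convergence to get $P_{\mathrm{inv}}\{X>a\}=Q\{X>a\}$, and then run a Dynkin ($\pi$–$\lambda$) argument on the $\pi$-system generated by such sublevel sets. I expect the multiplicative-class version to be the cleanest to write, and everything else in the argument is routine once \eqref{eq:invariant} and the interpretation $U_fX=X\circ f$ are in place.
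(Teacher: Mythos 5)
Your overall strategy coincides with the paper's: both arguments reduce $f$-invariance to the identity \eqref{eq:invariant} on $\mathcal{H}$ and then upgrade agreement of integrals to agreement of measures by a Dynkin-type argument. In fact, the fallback route you sketch in your last paragraph (approximate $\one_{\{X>a\}}$ monotonically from below by elements of $\mathcal{H}$, pass to the limit by monotone convergence, then run $\pi$--$\lambda$) is essentially the paper's own proof, which works with the $\pi$-system $\mathcal{C}$ of \eqref{eq:C} and the approximants $1\wedge\bigl(n(X-a)^+\bigr)\uparrow\one_{\{X>a\}}$. Your preliminary measurability check and the change-of-variables bookkeeping are correct and make explicit something the paper leaves implicit.

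However, your preferred implementation has a genuine gap: you invoke \eqref{eq:C_lip} to justify the inclusion $\mathcal{K}=\{g(X_1,\dots,X_n):g\in C_{b,lip}(\mathbb{R}^n)\}\subset\mathcal{H}$, but the paper explicitly suspends the standing assumptions \eqref{eq:C_b_lip}--\eqref{eq:C_lip} in Section \ref{sec:ergodic} (and Section \ref{sec:applications}), which is precisely where Proposition \ref{re:existence of invariant measure} lives. In this section $\mathcal{H}$ is assumed only to be a vector lattice (constants belong to $\mathcal{H}$, and $|X|\in\mathcal{H}$ whenever $X\in\mathcal{H}$); the proposition is later applied to lattices such as the one in \eqref{123}, for which stability under bounded Lipschitz compositions is not granted. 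Consequently neither $\mathcal{K}\subset\mathcal{H}$ nor the multiplicative-class form of the monotone class theorem is available as you state it. The repair is exactly what the paper does, and what your alternative route must make explicit: the approximating functions have to be built from lattice operations alone, e.g.\ $g_k(X)=1\wedge\bigl(k(X-a)^+\bigr)\in\mathcal{H}$, so that the agreement of $P_{\mathrm{inv}}$ and $Q=P_{\mathrm{inv}}\circ f^{-1}$ on $\mathcal{H}$ passes, via monotone convergence, to the $\pi$-system $\bigl\{\{X>a\}:X\in\mathcal{H},\,a\in\mathbb{R}\bigr\}$ (closed under intersection because $\{X>a\}\cap\{Y>b\}=\{(X-a)\wedge(Y-b)>0\}$), and then to all of $\sigma(\mathcal{H})$ by Dynkin's theorem. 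With that substitution your argument is complete and matches the paper's; as written, the multiplicative-class step rests on a hypothesis the proposition does not have.
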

 \begin{proof}
To prove it, we firstly introduce some notations.  Let 
		\[\widetilde{\mathcal{H}}_+=\{X:\exists X_n\in\mathcal{H}_+\text{ such that }X_n\uparrow X\},\]
		where $\mathcal{H}_+=\{X\in\mathcal{H}:X\ge0\}$. Furthermore, let 
		\begin{equation}\label{eq:C}
		    \mathcal{C}=\{C\in\s(\mathcal{H}):\one_C\in\widetilde{\mathcal{H}}_+\}.
		\end{equation}
  Then it is easy to check that $\widetilde{\mathcal{H}}_+$ satisfies that for any $X,Y\in\widetilde{\mathcal{H}}_+$,  $X\vee Y$ and $X\wedge Y \in\widetilde{\mathcal{H}}_+$.
  Since for any $A,B\in\mathcal{C}$, $\one_{A\cap B}=\one_A\wedge \one_B$, it follows that $A\cap B\in\mathcal{C}$, that is,  $\mathcal{C}$ is a $\pi$-system (see \cite[Page 41]{Billingsley1995book} for definition). We can directly check that $\mathcal{A}:=\{A\in\s(\mathcal{H}):P_{\mathrm{inv}}(f^{-1}A)=P_{\mathrm{inv}}(A)\}$ is a $\l$-system (or called Dynkin system (see \cite[Page 41]{Billingsley1995book} for definition)).  As $1\wedge(n(X-\a)^+)\uparrow \one_{\{X>\a\}}$, when $n\to\infty$, it follows that for any $X\in\mathcal{H}$ and $\a\in\mathbb{R}$,  
		${\{X>\a\}}\in\mathcal{C}$. Thus, $\s(\mathcal{C})=\s(\mathcal{H})$. According to Dynkin's $\pi$-$\l$ theorem (see \cite[Theorem 3.2]{Billingsley1995book}), to prove $P_{\mathrm{inv}}$ is $f$-invariant, we only need to prove that $P_{\mathrm{inv}}(f^{-1}C)=P_{\mathrm{inv}}(C)$ for any $C\in\mathcal{C}$. Given any $C\in\mathcal{C}$, there exists a sequence $\{X_n\}_{n=1}^\infty\subset \mathcal{H}_+$ such that $X_n\uparrow \one_C$, as $n\to\infty$. Thus,  by monotone convergence theorem, one has that
		\[P_{\mathrm{inv}}(f^{-1}C)=\lim_{n\to\infty}\int X_n\circ fdP_{\mathrm{inv}}=\lim_{n\to\infty}\int X_ndP_{\mathrm{inv}}=P_{\mathrm{inv}}(C).\]
	The proof is completed.
 \end{proof}
\begin{proposition}\label{re:existence of invariant measure for flow}
Let $(\Omega,\mathcal{H}, (U_t)_{t\geq 0}, \hat{\E})$ be a regular sublinear expectation system, where $(U_t)_{t\ge0}$ is induced by a semigroup of measurable transformations $(\t_t)_{t\ge0}$. Then for any $\mathbb{E}\in \T$, the invariant skeleton $P_{\mathrm{inv}}$ of $\mathbb{E}$ is a $\t$-invariant probability on $(\O,\s(\mathcal{H}))$.
\end{proposition}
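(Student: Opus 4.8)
The plan is to mirror the proof of Proposition \ref{re:existence of invariant measure}, running the Dynkin $\pi$-$\lambda$ argument separately for each fixed $t\geq 0$. First I would record the key invariance: by Theorem \ref{thm: existence of invariant skeleton for flow} the invariant skeleton satisfies $\mathbb{E}_{\operatorname{inv}}[U_tX]=\mathbb{E}_{\operatorname{inv}}[X]$ for all $X\in\mathcal{H}$ and $t\geq 0$, and since the system is regular, \eqref{eq:representation of probability} furnishes $P_{\mathrm{inv}}\in\mathcal{P}$ with $\int X\,dP_{\mathrm{inv}}=\mathbb{E}_{\operatorname{inv}}[X]$ on $\mathcal{H}$. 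Because $(U_t)_{t\ge0}$ is induced by $(\t_t)_{t\ge0}$, so that $U_tX=X\circ\t_t$, this reads $\int X\circ\t_t\,dP_{\mathrm{inv}}=\int X\,dP_{\mathrm{inv}}$ for every $X\in\mathcal{H}$ and $t\ge0$. The standing measurability assumption on the semigroup guarantees that $\t_t^{-1}A\in\sigma(\mathcal{H})$ whenever $A\in\sigma(\mathcal{H})$, so all the quantities below are well defined.

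Next I would fix $t\ge0$ and reuse, verbatim, the $\pi$-system $\mathcal{C}$ from \eqref{eq:C}: since $1\wedge(n(X-\a)^+)\uparrow\one_{\{X>\a\}}$ as $n\to\infty$, the collection $\mathcal{C}$ still generates $\sigma(\mathcal{H})$, and the family $\mathcal{A}_t:=\{A\in\sigma(\mathcal{H}):P_{\mathrm{inv}}(\t_t^{-1}A)=P_{\mathrm{inv}}(A)\}$ is a $\l$-system by the same elementary computation. It then suffices to check $\mathcal{C}\subset\mathcal{A}_t$. For $C\in\mathcal{C}$ pick $X_n\in\mathcal{H}_+$ with $X_n\uparrow\one_C$; then $X_n\circ\t_t\uparrow\one_{\t_t^{-1}C}$, and the monotone convergence theorem together with the invariance from the previous paragraph gives
\[
P_{\mathrm{inv}}(\t_t^{-1}C)=\lim_{n\to\infty}\int X_n\circ\t_t\,dP_{\mathrm{inv}}=\lim_{n\to\infty}\int X_n\,dP_{\mathrm{inv}}=P_{\mathrm{inv}}(C).
\]
By Dynkin's $\pi$-$\l$ theorem we conclude $\mathcal{A}_t=\sigma(\mathcal{H})$, and since $t\ge0$ was arbitrary, $P_{\mathrm{inv}}$ is $\t$-invariant.

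I do not expect a genuine obstacle here: the continuous-time case differs from Proposition \ref{re:existence of invariant measure} only in that the single transformation $f$ is replaced by the continuum $(\t_t)_{t\ge0}$. The one point requiring care is that no uniformity in $t$ is available or needed — the $\pi$-system $\mathcal{C}$ is independent of $t$, and Theorem \ref{thm: existence of invariant skeleton for flow} already supplies the invariance $\int X\circ\t_t\,dP_{\mathrm{inv}}=\int X\,dP_{\mathrm{inv}}$ simultaneously for all $t\ge0$, so running the Dynkin argument pointwise in $t$ closes the proof.
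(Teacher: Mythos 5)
Your proof is correct and takes essentially the same route as the paper: the paper's proof simply fixes $t>0$, views $U_t$ as the operator induced by the single measurable map $\theta_t$, and invokes Proposition \ref{re:existence of invariant measure}, whereas you inline that proposition's Dynkin $\pi$-$\lambda$ argument (same $\pi$-system $\mathcal{C}$, same monotone convergence step) pointwise in $t$. The key observation you rely on—that Theorem \ref{thm: existence of invariant skeleton for flow} gives the invariance $\int X\circ\theta_t\,dP_{\mathrm{inv}}=\int X\,dP_{\mathrm{inv}}$ simultaneously for all $t\ge 0$—is exactly what makes the paper's reduction to the discrete case legitimate.
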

\begin{proof}
    Fix any $t>0$. Since $U_t$ can be viewed as a linear operator induced by a measurable map $\t_t$, it follows from Proposition \ref{re:existence of invariant measure}, we have that 
    \[P_{\mathrm{inv}}(\t_t^{-1}C)=P_{\mathrm{inv}}(C),\text{ for any }C\in\sigma(\mathcal{H}).\]
    By the arbitrariness of $t>0$, we finish the proof.
\end{proof}

 Applying Lemma \ref{lem:invariant meausre } (resp. Lemma \ref{lem:invariant meausre for flow}) and Proposition \ref{re:existence of invariant measure} (resp. Proposition \ref{re:existence of invariant measure for flow}) on $\mathcal{H}=B_b(\O)$, we have the following corollary, which implies the corresponding  result in \cite{FHLZ2023} for capacities.
\begin{corollary}\label{cor;8.28}
Let $(\Omega,B_b(\O), U_f, \hat{\E})$ (resp. $(\Omega,B_b(\O), (U_t)_{t\geq 0}, \hat{\E})$) be a regular (resp. continuous) sublinear expectation system, where $U_f$ is induced by a measurable transformation $f:\O\to\O$  (resp. $(U_t)_{t\ge0}$ is induced by a semigroup of measurable transformations $(\t_t)_{t\ge0}$). Then each $P\in\mathcal{P}$ has a unique invariant skeleton $P_{\mathrm{inv}}$.
\end{corollary}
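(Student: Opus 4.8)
The existence of an invariant skeleton is already in hand: Theorem \ref{thm:existence of inv} (resp.\ Theorem \ref{thm: existence of invariant skeleton for flow}) produces $\mathbb{E}_{\mathrm{inv}}\in\T$, and regularity together with \eqref{eq:representation of probability} upgrades it to a probability $P_{\mathrm{inv}}\in\mathcal{P}$, which by Proposition \ref{re:existence of invariant measure} (resp.\ Proposition \ref{re:existence of invariant measure for flow}) is $f$-invariant (resp.\ $\t$-invariant). Hence the only substantive content is \emph{uniqueness}, which is genuinely a claim since the skeleton was built non-canonically via a Banach--Mazur limit. The plan is to reduce uniqueness to Lemma \ref{lem:invariant meausre } (resp.\ Lemma \ref{lem:invariant meausre for flow}): I will show that any invariant skeleton of $P$ agrees with $P$ itself on the classical invariant $\sigma$-algebra, so that two skeletons necessarily agree there, and then the classical lemma forces them to coincide.

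The crux is the bridge between the two notions of invariance. By definition an invariant skeleton $P_{\mathrm{inv}}$ of $\mathbb{E}$ satisfies $\int X\,dP_{\mathrm{inv}}=\mathbb{E}[X]$ for every $X$ in the \emph{sublinear} invariant set $\mathcal{I}=\{X\in B_b(\O):U_fX=X,\ \hat{\E}\text{-q.s.}\}$. Fix now any \emph{classically} invariant set $A\in\sigma(\mathcal{H})$, i.e.\ $f^{-1}A=A$. Then $U_f\one_A=\one_A\circ f=\one_{f^{-1}A}=\one_A$ holds pointwise, hence a fortiori $\hat{\E}$-q.s., so $\one_A\in\mathcal{I}$. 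Applying the skeleton's defining property to $X=\one_A$ and using $\mathbb{E}[\one_A]=\int\one_A\,dP=P(A)$, we obtain $P_{\mathrm{inv}}(A)=P(A)$ for every $A$ with $f^{-1}A=A$; in particular this value is independent of the chosen skeleton.

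Assembling the pieces: let $P^1_{\mathrm{inv}},P^2_{\mathrm{inv}}\in\mathcal{P}$ be two invariant skeletons of the same $P$. By Proposition \ref{re:existence of invariant measure} both belong to $\mathcal{M}(f)$, and by the previous paragraph $P^1_{\mathrm{inv}}(A)=P(A)=P^2_{\mathrm{inv}}(A)$ for every $A$ in $\mathcal{I}=\{A\in\sigma(\mathcal{H}):f^{-1}A=A\}$. Lemma \ref{lem:invariant meausre } then yields $P^1_{\mathrm{inv}}=P^2_{\mathrm{inv}}$. For the continuous-time system the argument is verbatim: Proposition \ref{re:existence of invariant measure for flow} places the skeletons in $\mathcal{M}(\t)$, the identity $\t_t^{-1}A=A$ for all $t$ gives $\one_A\in\mathcal{I}$ exactly as above, and Lemma \ref{lem:invariant meausre for flow} replaces Lemma \ref{lem:invariant meausre }.

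I expect the only delicate point to be the identification $\one_A\in\mathcal{I}$ for classically invariant $A$: this is precisely what lets the skeleton's agreement on invariant \emph{functions} be transferred to agreement on invariant \emph{sets}, after which the classical uniqueness lemmas close the argument. Everything else is routine bookkeeping with the representation \eqref{eq:representation of probability} and the two propositions already established.
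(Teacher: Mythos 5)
Your proposal is correct and is exactly the argument the paper intends: the paper's one-line justification ("applying Lemma \ref{lem:invariant meausre } and Proposition \ref{re:existence of invariant measure} on $\mathcal{H}=B_b(\O)$") unpacks precisely into your steps — existence from Theorem \ref{thm:existence of inv} plus regularity, classical invariance from Proposition \ref{re:existence of invariant measure}, and uniqueness by noting that for $f^{-1}A=A$ the indicator $\one_A\in B_b(\O)$ lies in $\mathcal{I}$, so any skeleton agrees with $P$ on invariant sets and Lemma \ref{lem:invariant meausre } applies. Your explicit identification of the bridge $\one_A\in\mathcal{I}$ (which is where the hypothesis $\mathcal{H}=B_b(\O)$ is genuinely used) is the only nontrivial detail, and you have it right.
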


 Now we want to prove a Birkhoff's ergodic theorem for discrete  and continuous time sublinear expectation systems. First, we consider the discrete case. We recall the Birkhoff's ergodic theorem for invariant probabilities \cite{Birkhoff1}.
 \begin{theorem}\label{thm:Birkhoff for measures}
     Let $(\O,\mathcal{F},f,P)$ be a measure-preserving system. Then for any $X\in L^1(\O,\mathcal{F},P)$, 
     \[\lim_{n\to\infty}\frac{1}{n}\sum_{i=0}^{n-1}X(f^i(\o))=\mathbb E_P[X|\mathcal{I}](\o),\text{ for }P\text{-a.s. } \o\in\O,\]
     where $\mathbb E_P[X|\mathcal{I}]$ is the conditional expectation of $X$ with respect to the $\sigma$-algebra $\mathcal{I}$. If in addition, we suppose $P$ is ergodic with respect to $f$, then  $\mathbb E_P[X|\mathcal{I}]=E_P[X]$, $P$-a.s.
 \end{theorem}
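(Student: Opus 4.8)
The statement is the classical Birkhoff pointwise ergodic theorem, so the plan is to follow the standard maximal-ergodic-theorem route (in the spirit of Garsia and Hopf). Write $S_nX:=\sum_{i=0}^{n-1}X\circ f^i$ and $A_nX:=\tfrac1n S_nX$. The keystone I would establish first is the \emph{maximal ergodic theorem}: for $X\in L^1(\O,\mathcal F,P)$, if $E:=\{\o:\sup_{n\ge1}S_nX(\o)>0\}$, then $\int_E X\,dP\ge0$. I would prove this by setting $\Phi_N:=\max_{0\le n\le N}S_nX\ge 0$ and using the cocycle identity $S_{n+1}X=X+(S_nX)\circ f$ to obtain, on $A_N:=\{\Phi_N>0\}$, the pointwise bound $X\ge \Phi_N-\Phi_N\circ f$. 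Integrating over $A_N$ and using $\int_{A_N}\Phi_N\,dP=\int_\O\Phi_N\,dP$ (since $\Phi_N=0$ off $A_N$) together with $\int_{A_N}\Phi_N\circ f\,dP\le\int_\O\Phi_N\circ f\,dP=\int_\O\Phi_N\,dP$ (by $f$-invariance and $\Phi_N\circ f\ge0$) yields $\int_{A_N}X\,dP\ge0$; letting $N\to\infty$ with dominated convergence gives the claim.

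With the maximal ergodic theorem in hand, almost sure convergence follows by a $\limsup/\liminf$ squeeze. Set $\overline X:=\limsup_n A_nX$ and $\underline X:=\liminf_n A_nX$; both are $f$-invariant, hence $\mathcal I$-measurable. For rationals $\alpha>\beta$ the set $E_{\alpha,\beta}:=\{\underline X<\beta<\alpha<\overline X\}$ is $f$-invariant, so I may apply the maximal ergodic theorem to $X-\alpha$ on the system restricted to $E_{\alpha,\beta}$: since $\overline X>\alpha$ there, $\sup_n S_n(X-\alpha)>0$ on all of $E_{\alpha,\beta}$, giving $\int_{E_{\alpha,\beta}}(X-\alpha)\,dP\ge0$. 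Applying it symmetrically to $\beta-X$ yields $\int_{E_{\alpha,\beta}}(\beta-X)\,dP\ge0$. Combining, $\alpha P(E_{\alpha,\beta})\le\int_{E_{\alpha,\beta}}X\,dP\le\beta P(E_{\alpha,\beta})$, which forces $P(E_{\alpha,\beta})=0$ because $\alpha>\beta$; taking the countable union over rationals gives $\overline X=\underline X=:X^*$, $P$-a.s., so $A_nX\to X^*$ almost surely.

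It remains to identify $X^*$ with $\mathbb E_P[X\mid\mathcal I]$. The limit $X^*$ is $\mathcal I$-measurable by construction, so I only need $\int_A X^*\,dP=\int_A X\,dP$ for every $A\in\mathcal I$. The clean way is to upgrade the a.s.\ convergence to $L^1$ convergence: approximating $X$ in $L^1$ by bounded functions and invoking the maximal inequality (a weak-type bound derived from the maximal ergodic theorem) shows $\{A_nX\}_n$ is uniformly integrable, whence $A_nX\to X^*$ in $L^1$. Then for $A\in\mathcal I$ one has $f^{-i}A=A$, so $\int_A A_nX\,dP=\tfrac1n\sum_{i=0}^{n-1}\int_{f^{-i}A}X\,dP=\int_A X\,dP$ for every $n$, and passing to the limit gives $\int_A X^*\,dP=\int_A X\,dP$; by the defining property of conditional expectation, $X^*=\mathbb E_P[X\mid\mathcal I]$, $P$-a.s. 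Finally, if $P$ is ergodic then $\mathcal I$ is $P$-trivial, so $\mathbb E_P[X\mid\mathcal I]$ equals the constant $\mathbb E_P[X]$, $P$-a.s. I expect the maximal ergodic theorem to be the main obstacle: the telescoping estimate, the correct treatment of the boundary terms, and the passage $N\to\infty$ are the only genuinely delicate points, while the squeeze argument and the identification of the limit are routine once it is available.
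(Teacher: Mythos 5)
Your proof is correct, but it is worth noting that the paper does not prove this statement at all: Theorem \ref{thm:Birkhoff for measures} is recalled there as the classical Birkhoff pointwise ergodic theorem, with a citation to Birkhoff's original paper, and is then used as a black box in the proofs of the sublinear results (Theorems \ref{thm:ergodic theorem for some function invariant} and \ref{thm:ergodic theorem for some function}). What you supply is the standard self-contained argument in the Hopf--Garsia style: maximal ergodic theorem via the telescoping bound $X\ge \Phi_N-\Phi_N\circ f$ on $\{\Phi_N>0\}$, the $\limsup/\liminf$ squeeze over the invariant sets $E_{\alpha,\beta}$, and identification of the limit with $\mathbb E_P[X\mid\mathcal I]$ through $L^1$ convergence. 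All three steps are sound. Two minor remarks: (1) the squeeze only rules out $\underline{X}<\overline{X}$ at finite rational levels, so finiteness of the common limit needs a word — your uniform integrability step does supply it, since a.s.\ convergence to an infinite limit on a set of positive measure would contradict $\sup_n\|A_nX\|_{L^1}<\infty$, but it is worth making this explicit; (2) the uniform integrability of $\{A_nX\}_n$ follows from the splitting $X=Y+Z$ with $Y$ bounded and $\|Z\|_1<\epsilon$ together with Markov's inequality alone, so the appeal to a weak-type maximal inequality is not actually needed there. Neither point is a gap; your argument is complete once these are spelled out.
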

 Note that $L^1(\O,\mathcal{F},P)$ is a vector lattice, and by the above theorem, it satisfies that  for $X\in L^1(\O,\mathcal{F},P)$, the following limit functions exist:
	\[\overline{X}:=\limsup_{n\to\infty}\frac{1}{n}\sum_{i=0}^{n-1}U_f^iX=\mathbb E_P[X|\mathcal{I}]=\liminf_{n\to\infty}\frac{1}{n}\sum_{i=0}^{n-1}U_f^iX=:\underline{X},\text{ }P\text{-a.s}.\]
 Thus, $\overline{X},\underline{X}\in L^1(\O,\mathcal{F},P).$
However, when consider the general vector lattices $\mathcal{H}$, there may exist $X\in \mathcal{H}$ such that $\overline{X}$ and $\underline{X}$ 
 are not in $\mathcal{H}$ (see Example \ref{ex:not quasi}).
 
 Now we prove the following nonlinear version of Birkhoff's ergodic theorem for $X\in\mathcal{H}$ with $\overline{X},\underline{X}\in\mathcal{H}$. Indeed, after the proof of the following theorem, we will provide an example to show this condition is necessary.
	\begin{theorem}\label{thm:ergodic theorem for some function invariant}
		Let $(\O,\mathcal{H},U_f,\hat{\mathbb{E}})$ be a regular sublinear expectation system, where $U_f$ is the linear operator induced by a measurable transform $f:\O\to\O$. Then 
		for any  $X\in \mathcal{H}$ with $\overline{X},\underline{X}\in\mathcal{H}$,  there exists $X^*\in\mathcal{I}$  such that 
		\[\lim_{n\to\infty}\frac{1}{n}\sum_{i=0}^{n-1}U_f^iX=X^*,\text{ }\hat{\mathbb{E}}\text{-q.s.}\]
	\end{theorem}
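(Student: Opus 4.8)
The plan is to prove the two required properties of the candidate limit $X^*:=\overline{X}$ separately: first, that $\overline{X}$ and $\underline{X}$ are invariant elements, i.e. $\overline{X},\underline{X}\in\mathcal{I}$; and second, that $\overline{X}=\underline{X}$ holds $\hat{\mathbb{E}}$-q.s., so that the Ces\`aro averages actually converge q.s. to $X^*$. Once both are in hand, $X^*=\overline{X}=\underline{X}$ ($\hat{\mathbb{E}}$-q.s.) is the desired invariant limit, and it lies in $\mathcal{H}$ by hypothesis.

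For the first step I would establish that $\overline{X}$ and $\underline{X}$ are invariant \emph{pointwise}, not merely $\hat{\mathbb{E}}$-q.s., and here the hypothesis $\overline{X},\underline{X}\in\mathcal{H}$ is decisive. Writing $A_n:=\frac1n\sum_{i=0}^{n-1}U_f^iX$ and using that $U_f$ is the composition operator with $f$, a direct computation gives $A_n\circ f=\frac{n+1}{n}A_{n+1}-\frac1n X$. Since $\overline{X},\underline{X}\in\mathcal{H}$ are finite real-valued functions, $\limsup_n A_n(\omega)$ and $\liminf_n A_n(\omega)$ are finite for every $\omega$, so the sequence $(A_n(\omega))_n$ is bounded; consequently $\frac1n A_{n+1}\to0$ and $\frac1n X\to0$ pointwise, and taking $\limsup_n$ (resp. $\liminf_n$) in the identity above yields $\overline{X}\circ f=\overline{X}$ and $\underline{X}\circ f=\underline{X}$ everywhere on $\Omega$. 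In particular $U_f\overline{X}=\overline{X}$ and $U_f\underline{X}=\underline{X}$, so $\overline{X},\underline{X}\in\mathcal{I}$ and hence $W:=\overline{X}-\underline{X}\in\mathcal{I}$ with $W\ge0$.

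For the second step I would show $\hat{\mathbb{E}}[W]=0$. By \eqref{eq:represenation of sublinear} it suffices to prove $\mathbb{E}[W]=0$ for every $\mathbb{E}\in\Theta$. Fix such an $\mathbb{E}$ and let $\mathbb{E}_{\operatorname{inv}}$ be its invariant skeleton from Theorem \ref{thm:existence of inv}; since $W\in\mathcal{I}$ we have $\mathbb{E}[W]=\mathbb{E}_{\operatorname{inv}}[W]$. By regularity and \eqref{eq:representation of probability}, $\mathbb{E}_{\operatorname{inv}}$ is represented by some $P_{\mathrm{inv}}\in\mathcal{P}$, which is $f$-invariant by Proposition \ref{re:existence of invariant measure}. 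Applying the classical Birkhoff theorem (Theorem \ref{thm:Birkhoff for measures}) to $(\Omega,\sigma(\mathcal{H}),f,P_{\mathrm{inv}})$ and to $X\in\mathcal{H}\subset L^1(P_{\mathrm{inv}})$, the averages $A_n$ converge $P_{\mathrm{inv}}$-a.s., whence $\overline{X}=\underline{X}$ and $W=0$ $P_{\mathrm{inv}}$-a.s.; therefore $\mathbb{E}[W]=\mathbb{E}_{\operatorname{inv}}[W]=\int W\,dP_{\mathrm{inv}}=0$. Taking the supremum over $\mathbb{E}\in\Theta$ gives $\hat{\mathbb{E}}[W]=0$, and since $W\ge0$ this forces $\overline{X}=\underline{X}$ $\hat{\mathbb{E}}$-q.s. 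Setting $X^*:=\overline{X}\in\mathcal{I}$ then yields $\lim_n A_n=X^*$ $\hat{\mathbb{E}}$-q.s.

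The main obstacle is the first step. A naive attempt to prove invariance of $\overline{X}$ would try to show $\frac1n U_f^nX\to0$ $\hat{\mathbb{E}}$-q.s. and run into trouble, because a Borel--Cantelli/capacity argument would need a summability that the sole integrability $\hat{\mathbb{E}}[|X|]<\infty$ does not provide (and indeed $\frac1n U_f^nX\to0$ can fail pointwise at points where the averages do not converge). The point is that one should not detour through $\frac1n U_f^nX\to0$ at all: the assumption $\overline{X},\underline{X}\in\mathcal{H}$ forces the averages $(A_n(\omega))_n$ to be bounded at every $\omega$, which is exactly what makes the boundary correction terms negligible and delivers \emph{exact} pointwise invariance of $\overline{X}$ and $\underline{X}$. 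This is precisely where the condition $\overline{X},\underline{X}\in\mathcal{H}$ is genuinely used, consistent with the example promised after the theorem to show its necessity.
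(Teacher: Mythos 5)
Your proposal is correct and follows essentially the same route as the paper's proof: everything reduces to showing $\hat{\mathbb{E}}[\overline{X}-\underline{X}]=0$ via the invariant skeleton $\mathbb{E}_{\mathrm{inv}}$ of Theorem \ref{thm:existence of inv}, its representing $f$-invariant probability $P_{\mathrm{inv}}$ from Proposition \ref{re:existence of invariant measure}, and the classical Birkhoff theorem applied under $P_{\mathrm{inv}}$. The only differences are cosmetic: you argue directly (each $\mathbb{E}[W]=0$, then take the supremum over $\Theta$) where the paper argues by contradiction, and you spell out the pointwise invariance $\overline{X}\circ f=\overline{X}$ that the paper uses implicitly when it asserts $|\overline{X}-\underline{X}|\in\mathcal{I}$.
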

	\begin{proof}
		Suppose to the contrary that $\hat{\mathbb{E}}[|\overline{X}-\underline{X}|]>0$. Thus, there exists $\mathbb{E}\in\T$ such that ${\mathbb{E}}[|\overline{X}-\underline{X}|]>0$.
		Since $|\overline{X}-\underline{X}|\in\mathcal{I}$, using Theorem \ref{thm:existence of inv}, we have that ${\mathbb{E}_{\mathrm{inv}}}[|\overline{X}-\underline{X}|]>0$, where $\mathbb{E}_{\mathrm{inv}}$ is the invariant skeleton of $\mathbb{E}$. Furthermore, Proposition \ref{re:existence of invariant measure} implies that there exists an invariant probability $P_{\mathrm{inv}}$ of $(\O,\sigma(\mathcal{H}))$ such that 
		\begin{equation}\label{eq:big}
			\int |\overline{X}-\underline{X}|dP_{\mathrm{inv}}>0.
		\end{equation}
		By Theorem \ref{thm:Birkhoff for measures}, one has that $\lim_{n\to\infty}\frac{1}{n}\sum_{i=0}^{n-1}U_f^iX$ exists, $P_{\mathrm{inv}}$-a.s.
		In particular, 
		\[\overline{X}=\lim_{n\to\infty}\frac{1}{n}\sum_{i=0}^{n-1}U_f^iX=\underline{X},\text{ }P_{\mathrm{inv}}\text{-a.s.},\]
		a contradiction with \eqref{eq:big}. Let $X^*=\overline{X}$, and we then finish the proof.
	\end{proof}

	\begin{remark}
		If we choose $\mathcal{H}=B_b(\O)$, the condition that $\overline{X},\underline{X}\in \mathcal{H}$ is naturally holds for any $X\in\mathcal{H}$. However, when $\O$ is a complete metric space  and $\mathcal{H}=L^1_{C_b(\O)}$ is the completion of the bounded continuous function space $C_b(\O)$  with the norm $\hat{\mathbb{E}}[|\cdot|]$ (see \cite{Pengbook} for more details), there exists an example that even if $\hat{\mathbb E}$ is ergodic and $\O$ is a compact metric space, there exists $X\in C_b(\O)\subset L^1_{C_b(\O)}$ such that $\overline{X},\underline{X}\notin L^1_{C_b(\O)}$ and the limit $\lim_{n\to\infty}\frac{1}{n}\sum_{i=0}^{n-1}U^iX$ does not exist, $\hat{\mathbb E}$-q.s., which shows our result is sharp. Here we consider the vector lattice $L^1_{C_b(\O)}$ is because it plays an important role in sublinear expectation theory (e.g. the construction of $G$-Brownian motion).
	\end{remark}
		\begin{example}\label{ex:not quasi}
			Let $\O=\{0,1\}^\mathbb{N}$ with a metric $d$, defined by 
   \[d((\o_i)_{i\in\mathbb{N}},(\o'_i)_{i\in\mathbb{N}})=2^{-\min\{i\in\mathbb{N}:\o_i\neq \o'_i\}}\text{ for any }\o=(\o_i)_{i\in\mathbb{N}},\o'=(\o'_i)_{i\in\mathbb{N}}\in\O.\] Under the topology induced by this metric, $\O$ is compact. Define the left-shift map by 
			\[\sigma((\o_i)_{i\in\mathbb{N}})=(\o_{i+1})_{i\in\mathbb{N}}.\]
			Consider $\mathcal{P}=\{\d_\o:\o\in\O\}$. Since $\O$ is compact, it follows that $\mathcal{P}$ is weakly compact, and hence the sublinear expectation  on $C_b(\O)$
			\begin{equation}\label{eq:18.56}
			    \hat{\mathbb{E}}[X]:=\sup_{P\in\mathcal{P}}\int XdP=\sup_{\omega\in\O}X(\o)\text{ for any }X\in C_b(\O)
			\end{equation}
			is regular by Lemma \ref{lem:regualr<=>compact}. Since $\s$ is surjective, we can check $\hat{\mathbb{E}}$ is $U_{\s}$-invariant. It is well known that $(\O,\s)$ is point transitive, i.e., there exists $\o^*\in\O$ such that $\overline{\{\s^n\o^*:n\in\mathbb{N}\}}=\O$ (see e.g. \cite{Walters1982}). By the construction of $\hat{\mathbb E}$, we know that $U_{\s}X=X$, $\hat{\mathbb{E}}$-q.s. is equivalent to that $U_{\s}X(\o)=X(\o)$ for any $\o\in\O$. In particular, 
			\[X(\s^n\o^*)=X(\o^*)\text{ for any }n\in\mathbb{N}.\]
			By the continuity of $X$, we have that $X$ is constant.
			Thus, $\hat{\mathbb{E}}$ is ergodic with respect to $U_{\s}$. By the construction of $\hat{\mathbb{E}}$ and the compactness of $\O$, we can see that $L^1_{C_b(\O)}=C(\O)$. Now we consider two Bernoulli probabilities 
			\[\mu=(P_{(1/2,1/2)})^\mathbb{N},\text{ and }\nu=(P_{(1/3,2/3)})^\mathbb{N},\]
			where $P_{(1/2,1/2)}(0)=P_{(1/2,1/2)}(1)=1/2$, and $P_{(1/3,2/3)}(0)=1/3,P_{(1/3,2/3)}(1)=2/3$. Then  $\mathrm{supp}\mu=\mathrm{supp}\nu=\O$, where $\mathrm{supp}\mu$ is the topological support of  probability $\mu$. 
			
			Now, we prove there exists a continuous function $X\in C_b(\O)$ such that $\overline{X},\underline{X}\notin  L^1_{C_b(\O)}$  and the limit $\lim_{n\to\infty}\frac{1}{n}\sum_{i=0}^{n-1}U_\s^iX$ does not exist, $\hat{\mathbb E}$-q.s. 
        Consider the function $X=\one_{\{\o\in\O:\o_0=0\}}$. Then $X$ is continuous, and $\int Xd\mu\neq \int Xd \nu$. It is well known that $\mu,\nu\in\mathcal{ M}^e(\s)$, as $\mu,\nu$ are Bernoulli probabilities. Thus, there exist Borel sets $A_\mu,A_\nu\in\mathcal{B}(\O)$ such that 
			\begin{enumerate}[(1)]
				\item $\mu(A_{\mu})=\nu(A_{\nu})=1$;
				\smallskip
				\item $A_\mu\cap A_\nu=\emptyset$;
				\smallskip
				\item $\overline{A}_\mu=\overline{A}_\nu=\O$;
				\smallskip
				\item $\overline{X}(\o)=\underline{X}(\o)=\int Xd\mu$ for any $\o\in A_{\mu}$ and $\overline{X}(\o)=\underline{X}(\o)=\int Xd\nu$ for any $\o\in A_{\nu}$.
			\end{enumerate} 
			For any $\o\in A_\nu$, by (3), there exists $\o^n\in A_\mu$ such that $\lim_{n\to\infty}\o^n=\o$, but by (4), $\lim_{n\to\infty}\overline{X}(\o^n)=\int  {X}d\mu\neq \int  {X}d\nu=\overline{X}(\o)$. Thus, $\overline{X}$ is not continuous. Similarly, $\underline{X}$ is not continuous. 

   Finally, we prove that the limit $\lim_{n\to\infty}\frac{1}{n}\sum_{i=0}^{n-1}U_\s^iX$ does not exist, $\hat{\mathbb E}$-q.s. By the construction of $\hat{\E}$ in \eqref{eq:18.56}, the limit $\lim_{n\to\infty}\frac{1}{n}\sum_{i=0}^{n-1}U_\sigma^iX$ exists, $\hat{\mathbb E}$-q.s. if and only if this limit exists at each point in $\O$. Now we prove this is impossible. Consider a point $\o=(\o_i)_{i\in\mathbb N}\in\O$ as follows:
   \begin{equation*}
       \omega_0=0, \omega_1=1, \ \omega_j=
       \begin{cases}
           0, & \text{if } \ 2^n\leq j\leq 2^{n}+2^{n-1}-1,\\
           1, & \text{if } \ 2^n+2^{n-1}\leq j\leq 2^{n+1}-1,
       \end{cases}
       \ \text{ for all } \ n\geq 1.
   \end{equation*}
   Then 
   \[\frac{1}{2^n}\sum_{i=0}^{2^n-1}X(\s^i\omega)=1/2, \ \text{ and } \ \frac{1}{2^n+2^{n-1}}\sum_{i=0}^{2^n+2^{n-1}-1}X(\s^i\omega)=\frac{2}{3}, \ \text{ for all } \ n\geq 2.
   \]
   Thus, $\underline X(\o)\le 1/2<2/3\le \overline X(\o)$, which implies that  the limit $\lim_{n\to\infty}\frac{1}{n}\sum_{i=0}^{n-1}U_\s^iX$ does not exist, $\hat{\mathbb E}$-q.s.
		\end{example}

	Furthermore, we also provide an example to show there exists sublinear expectation system $(\O,\mathcal{H},U,\hat{\E})$ which satisfies that for any $X\in C_b(\O)$, $\overline{X},\underline{X}\in C_b(\O)$.
	\begin{example}\label{ex:not continuous}
		Let $\mathbb{T}^1:=[0,1)$ be the 1-dimensional torus and $R_\a:\mathbb{T}^1\to \mathbb{T}^1$, $\o\mapsto \o+\a$ be the rotation on $\mathbb{T}^1$, where $\a$ is an irrational number. It is well known that there exists a unique  $R_\a$-ergodic probability, that is the Haar measure $m$ on $\mathbb{T}^1$. Denote by $\mathcal{P}$ the set of all Borel probabilities on $\mathbb{T}^1$. Since $\mathbb{T}^1$ is compact, it follows that $\mathcal{P}$ is weakly compact. 
		Consider the sublinear expectation 
		\[\hat{\mathbb{E}}[X]=\sup_{P\in\mathcal{P}}\int X dP,\text{ for any }X\in C(\mathbb{T}^1).\]	
		Then according to Lemma \ref{lem:regualr<=>compact}, $(\Omega,L^1_{C(\mathbb{T}^1)},\hat{\mathbb{E}})$ is a regular sublinear expectation space. Similar to the proof of Example \ref{ex:not quasi}, $L^1_{C(\mathbb{T}^1)}=C(\mathbb{T}^1)$ and this system is ergodic.
		
		Since the Haar measure $m$ is the unique ergodic probability, it follows from \cite[Theorem 6.20]{Walters1982} that for any $X\in C(\mathbb{T}^1)$, 
		\[\overline{X}(\o)=\underline{X}(\o)=\lim_{n\to\infty}\frac{1}{n}\sum_{i=0}^{n-1}X(R_{\alpha}^i(\o))=\int Xdm,\text{ for any }\o\in \mathbb{T}^1.\]
		Thus, $\overline{X}=\underline{X}\in C(\mathbb{T}^1)$.
		
		Note that the above proof can be applied on any  topological dynamical  system $(\O,f)$\footnote{Let $\O$ be a compact metric space, and $f:\O\to \O$ is a continuous map. Then $(\O,f)$ is said to be a topological dynamical system.}, which is uniquely ergodic.
	\end{example}

	In the last of this subsection, we also provide a nonlinear version of Birkhoff's ergodic theorem for continuous time case. 
For convenience, we still use the notations 
\[\overline{X}:=\limsup_{T\to\infty}\frac{1}{T}\int_0^TX\circ \t_tdt,\text{ and }\underline{X}:=\liminf_{T\to\infty}\frac{1}{T}\int_0^TX\circ \t_tdt.\]
\begin{theorem}\label{thm: Birkhoff's ergodic theorem for invaraint flow}
Let $(\O,\mathcal{H},(U_t)_{t\geq 0},\hat{\mathbb{E}})$ be a continuous regular sublinear expectation system, where $U_t$ is the linear operator induced by a semigroup of measurable transformations $\t_t:\O\to\O$ and $\hat{\mathbb{E}}$ is regular. Then 
		for any  $X\in \mathcal{H}$ with $\overline{X},\underline{X}\in\mathcal{H}$,  there exists $X^*\in\mathcal{I}$  such that the limit exists
		\[\lim_{T\to\infty}\frac{1}{T}\int_0^TX\circ \t_tdt=X^*,\text{ }\hat{\mathbb{E}}\text{-q.s.}\]
	\end{theorem}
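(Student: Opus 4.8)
The plan is to argue by contradiction, transposing the proof of Theorem~\ref{thm:ergodic theorem for some function invariant} from the discrete setting to the flow setting by replacing each discrete ingredient with its continuous-time analogue. Suppose $\hat{\mathbb{E}}[|\overline{X}-\underline{X}|]>0$. Since $\overline{X},\underline{X}\in\mathcal{H}$ gives $|\overline{X}-\underline{X}|\in\mathcal{H}$, the representation \eqref{eq:represenation of sublinear} (whose maximum is attained) yields some $\mathbb{E}\in\Theta$ with $\mathbb{E}[|\overline{X}-\underline{X}|]>0$. The goal is to manufacture a single $\theta$-invariant probability $P_{\mathrm{inv}}\in\mathcal{P}$ under which $\overline{X}=\underline{X}$ almost surely, contradicting the positivity of $\int|\overline{X}-\underline{X}|\,dP_{\mathrm{inv}}$.

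First I would check that $|\overline{X}-\underline{X}|\in\mathcal{I}$, i.e.\ that $\overline{X}$ and $\underline{X}$ are $\theta_s$-invariant for every $s\ge 0$. Writing $g_\omega(T)=\frac{1}{T}\int_0^T X(\theta_t\omega)\,dt$, one has $g_{\theta_s\omega}(T)=\frac{T+s}{T}\,g_\omega(T+s)-\frac{1}{T}\int_0^s X(\theta_t\omega)\,dt$. Because $\overline{X}(\omega),\underline{X}(\omega)$ are finite, $g_\omega$ is bounded in $T$, so the factor $\frac{T+s}{T}\to 1$ is harmless and the edge term is $O(1/T)$; taking $\limsup$ and $\liminf$ in $T$ then gives $\overline{X}(\theta_s\omega)=\overline{X}(\omega)$ and $\underline{X}(\theta_s\omega)=\underline{X}(\omega)$ for each $\omega$. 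Since $U_s$ is induced by $\theta_s$ and $U_s|Y|=|U_s Y|$, this yields $U_s|\overline{X}-\underline{X}|=|\overline{X}-\underline{X}|$ for all $s$, hence $|\overline{X}-\underline{X}|\in\mathcal{I}$.

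Next I would pass to an invariant linear expectation via the flow version of the skeleton construction: by Theorem~\ref{thm: existence of invariant skeleton for flow} there is an invariant skeleton $\mathbb{E}_{\mathrm{inv}}\in\Theta$ of $\mathbb{E}$, agreeing with $\mathbb{E}$ on $\mathcal{I}$, so that $\mathbb{E}_{\mathrm{inv}}[|\overline{X}-\underline{X}|]=\mathbb{E}[|\overline{X}-\underline{X}|]>0$. By regularity and \eqref{eq:representation of probability}, $\mathbb{E}_{\mathrm{inv}}$ is represented by some $P_{\mathrm{inv}}\in\mathcal{P}$, and Proposition~\ref{re:existence of invariant measure for flow} guarantees that $P_{\mathrm{inv}}$ is $\theta$-invariant. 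Thus $\int|\overline{X}-\underline{X}|\,dP_{\mathrm{inv}}=\mathbb{E}_{\mathrm{inv}}[|\overline{X}-\underline{X}|]>0$, and $(\Omega,\sigma(\mathcal{H}),(\theta_t)_{t\ge0},P_{\mathrm{inv}})$ is a measure-preserving system.

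Finally I would invoke the classical continuous-time Birkhoff theorem to conclude that $g_\omega(T)$ converges $P_{\mathrm{inv}}$-a.s., forcing $\overline{X}=\underline{X}$ $P_{\mathrm{inv}}$-a.s.\ and contradicting the strict positivity of the integral; hence $\hat{\mathbb{E}}[|\overline{X}-\underline{X}|]=0$, so $\overline{X}=\underline{X}=:X^*$ $\hat{\mathbb{E}}$-q.s., and $X^*\in\mathcal{I}$ by the first step. I expect the genuine obstacle to sit exactly in this last step: Lemma~\ref{lem:classical ergodic theorem for flow} is stated only for bounded observables, whereas $X\in\mathcal{H}\subset L^1(\Omega,\sigma(\mathcal{H}),P_{\mathrm{inv}})$ may be unbounded, so I would need to upgrade the flow ergodic theorem to $L^1$ integrands—either by citing the standard $L^1$ version or by approximating with the truncations $X_M=(X\wedge M)\vee(-M)$ and controlling the remainder through the continuous-time maximal ergodic inequality together with $\|X-X_M\|_{L^1(P_{\mathrm{inv}})}\to 0$. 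The secondary delicate point is the continuous-parameter invariance in the first step, where local integrability of $t\mapsto X(\theta_t\omega)$ and the vanishing of the edge terms must be handled with some care.
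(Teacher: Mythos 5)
Your proposal follows essentially the same route as the paper's own proof: contradiction starting from $\hat{\mathbb{E}}[|\overline{X}-\underline{X}|]>0$, passage to the invariant skeleton via Theorem~\ref{thm: existence of invariant skeleton for flow}, representation by a $\theta$-invariant probability $P_{\mathrm{inv}}\in\mathcal{P}$ (Proposition~\ref{re:existence of invariant measure for flow}), and the classical continuous-time Birkhoff theorem to force $\overline{X}=\underline{X}$, $P_{\mathrm{inv}}$-a.s. You are in fact more careful than the paper on two points it passes over silently: you verify the invariance $|\overline{X}-\underline{X}|\in\mathcal{I}$ explicitly, and you correctly flag that Lemma~\ref{lem:classical ergodic theorem for flow} is stated only for bounded observables while $X\in\mathcal{H}$ may be unbounded, so one must either cite the standard $L^1$ version of the continuous-time ergodic theorem or approximate by truncations, exactly as you propose.
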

	\begin{proof}
		Suppose to the contrary that $\hat{\mathbb{E}}[|\overline{X}-\underline{X}|]>0$. Thus, there exists $\mathbb{E}\in\T$ such that ${\mathbb{E}}[|\overline{X}-\underline{X}|]>0$.
		
		Since $|\overline{X}-\underline{X}|\in\mathcal{I}$, using Theorem \ref{thm: existence of invariant skeleton for flow}, we have that ${\mathbb{E}_{\mathrm{inv}}}[|\overline{X}-\underline{X}|]>0$, where $\mathbb{E}_{\mathrm{inv}}$ is the invariant skeleton of $\mathbb{E}$. Furthermore, Proposition \ref{re:existence of invariant measure} implies that there exists an invariant probability $P_{\mathrm{inv}}$ such that 
		\begin{equation}\label{eq:big1}
			\int |\overline{X}-\underline{X}|dP_{\mathrm{inv}}>0.
		\end{equation}
		By Lemma \ref{lem:classical ergodic theorem for flow}, one has that $\lim_{T\to\infty}\frac{1}{T}\int_0^TX\circ \t_tdt$ exists, $P_{\mathrm{inv}}$-a.s.
		In particular, 
		\[\overline{X}=\lim_{T\to\infty}\frac{1}{T}\int_0^TX\circ \t_tdt=\underline{X},\text{ }P_{\mathrm{inv}}\text{-a.s.},\]
	which is	a contradiction with \eqref{eq:big1}. The proof is completed by letting $X^*=\overline{X}$.
	\end{proof}	
	 
	\subsubsection{Ergodic theorem for ergodic sublinear expectation systems}
In this subsection, we further study the sublinear expectation systems with the additional assumption of ergodicity.
	We recall that a sublinear expectation system $(\O,\mathcal{H},U,\hat{\mathbb{E}})$ is said to be ergodic if for any $X\in\mathcal{I}$, $X$ is constant, $\hat{\mathbb{E}}$-q.s. 
	
	With the help of Theorem \ref{thm:ergodic theorem for some function invariant}, we can prove a Birkhoff's ergodic theorem for ergodic sublinear expectation systems.
	\begin{theorem}\label{thm:ergodic theorem for some function}
		Let $(\O,\mathcal{H},U_f,\hat{\mathbb{E}})$ be a regular sublinear expectation system, where $U_f$ is the linear operator induced by a measurable transform $f:\O\to\O$. Then the following two statements are equivalent:
		\begin{enumerate}[(i)]
			\item $\hat{\mathbb{E}}$ is ergodic;
			\item  for any  $X\in \mathcal{H}$ with
			$\overline{X},\underline{X}\in\mathcal{H},$ there exists a constant $c_X\in\mathbb{R}$ such that 
			\[\lim_{n\to\infty}\frac{1}{n}\sum_{i=0}^{n-1}U_f^iX=c_X,\text{ }\hat{\mathbb{E}}\text{-q.s.}\]
		\end{enumerate}
		Moreover, if one of two equivalent statements above holds, then for any invariant probability $P\in\mathcal{P}$ and for any  $X\in \mathcal{H}$ with
			$\overline{X},\underline{X}\in\mathcal{H},$
		\[\int XdP=c_X.\]
	\end{theorem}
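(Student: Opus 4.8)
The plan is to read off both implications from the already-established invariant version of the theorem, Theorem \ref{thm:ergodic theorem for some function invariant}, and then to pin down the value of the limiting constant through the classical Birkhoff theorem, Theorem \ref{thm:Birkhoff for measures}. Since Theorem \ref{thm:ergodic theorem for some function invariant} already furnishes an invariant limit $X^*\in\mathcal{I}$ for every $X$ with $\overline{X},\underline{X}\in\mathcal{H}$, the new content here is purely the equivalence bookkeeping together with the identification of $c_X$ as the integral against any invariant probability.

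For $(i)\Rightarrow(ii)$ I would argue directly: given $X\in\mathcal{H}$ with $\overline{X},\underline{X}\in\mathcal{H}$, Theorem \ref{thm:ergodic theorem for some function invariant} produces $X^*\in\mathcal{I}$ with $\frac{1}{n}\sum_{i=0}^{n-1}U_f^iX\to X^*$, $\hat{\mathbb{E}}$-q.s. Ergodicity means, by Definition \ref{Definition of ergodicity}, that every element of $\mathcal{I}$ is constant $\hat{\mathbb{E}}$-q.s.; hence $X^*=c_X$ for some $c_X\in\mathbb{R}$, which is exactly statement (ii). For the converse $(ii)\Rightarrow(i)$ I would take an arbitrary $X\in\mathcal{I}$ and show it is constant q.s. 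Since $U_f^iX=X$, $\hat{\mathbb{E}}$-q.s. for every $i\geq 0$, and the capacity $C(\cdot)=\sup_{P\in\mathcal{P}}P(\cdot)$ is countably subadditive, all the averages $\frac{1}{n}\sum_{i=0}^{n-1}U_f^iX$ agree with $X$ on one common set of full capacity; consequently $\overline{X}=\underline{X}=X$, $\hat{\mathbb{E}}$-q.s., so the hypothesis $\overline{X},\underline{X}\in\mathcal{H}$ of (ii) is satisfied with $X$ itself as the representative, and the Birkhoff average converges q.s. to $X$. Applying (ii) then yields a constant $c_X$ with $\frac{1}{n}\sum_{i=0}^{n-1}U_f^iX\to c_X$, $\hat{\mathbb{E}}$-q.s.; comparing the two limits forces $X=c_X$, $\hat{\mathbb{E}}$-q.s., so $X$ is constant and $\hat{\mathbb{E}}$ is ergodic.

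For the final (moreover) assertion I would assume ergodicity and fix an invariant $P\in\mathcal{P}$ and $X\in\mathcal{H}$ with $\overline{X},\underline{X}\in\mathcal{H}$. By the equivalence just proved, (ii) holds, so $\frac{1}{n}\sum_{i=0}^{n-1}U_f^iX\to c_X$ q.s.; because convergence on a set of full capacity forces $P$-a.s. convergence for every $P\in\mathcal{P}$, we get $\frac{1}{n}\sum_{i=0}^{n-1}U_f^iX\to c_X$, $P$-a.s. On the other hand $(\Omega,\sigma(\mathcal{H}),f,P)$ is a measure-preserving system and $X\in L^1(\Omega,\sigma(\mathcal{H}),P)$ (since $\mathcal{H}\subset L^1(P)$), so Theorem \ref{thm:Birkhoff for measures} gives $\frac{1}{n}\sum_{i=0}^{n-1}U_f^iX\to \mathbb{E}_P[X\mid\mathcal{I}]$, $P$-a.s., with $\int \mathbb{E}_P[X\mid\mathcal{I}]\,dP=\int X\,dP$. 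Matching the two $P$-a.s. limits yields $\mathbb{E}_P[X\mid\mathcal{I}]=c_X$, $P$-a.s., and therefore $\int X\,dP=c_X$, as claimed.

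The step I expect to be the main obstacle is the structural verification inside $(ii)\Rightarrow(i)$, namely that an invariant $X$ genuinely meets the hypothesis $\overline{X},\underline{X}\in\mathcal{H}$ so that (ii) is applicable. This is not automatic for a general vector lattice $\mathcal{H}$, as Example \ref{ex:not quasi} shows, and the resolution rests on propagating the pointwise q.s. invariance $U_f^iX=X$ across \emph{all} $i$ simultaneously via countable subadditivity of $C$; only then do the pointwise quantities $\overline{X},\underline{X}$ coincide with the fixed element $X\in\mathcal{H}$ off a capacity-null set. Once this bookkeeping is handled, both implications and the integral identity follow immediately from the invariant Birkhoff theorem and the classical one, with no further estimation required.
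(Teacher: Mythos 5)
Your proposal is correct and takes essentially the same route as the paper's own proof: (i)$\Rightarrow$(ii) by combining Theorem \ref{thm:ergodic theorem for some function invariant} with ergodicity to force the invariant limit to be constant, (ii)$\Rightarrow$(i) by observing that any $X\in\mathcal{I}$ satisfies $\overline{X}=\underline{X}=X$ $\hat{\mathbb{E}}$-q.s.\ so that (ii) makes $X$ constant, and the integral identity by matching the q.s.\ limit $c_X$ with the classical Birkhoff limit $\mathbb{E}_P[X\mid\mathcal{I}]$ from Theorem \ref{thm:Birkhoff for measures} and integrating against the invariant $P$. Your explicit appeal to countable subadditivity of $C$ to produce one common null set off which all the averages equal $X$ is, if anything, slightly more careful than the paper's one-line assertion of the same fact in its proof of (ii)$\Rightarrow$(i).
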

	\begin{proof}
		(i) $\Rightarrow$ (ii).  Since  $\overline{X},\underline{X}\in\mathcal{I}$, it follows from the ergodicity of $U_f$  that there exist two constants $\overline{c},\underline{c}\in\mathbb{R}$ such that 
		\begin{equation*}\label{eq:1}
			\overline{X}=\overline{c}\text{ and }\underline{X}=\underline{c},\text{ }\hat{\mathbb{E}}\text{-q.s.}
		\end{equation*}
		By Theorem \ref{thm:ergodic theorem for some function invariant}, we have that 
		\[\overline{X}=\underline{X}=\lim_{n\to\infty}\frac{1}{n}\sum_{i=0}^{n-1}U_f^iX,\text{ }\hat{\mathbb{E}}\text{-q.s.}\]
		Let $c_X:=\overline{c}=\underline{c}$. Then 
  \[\lim_{n\to\infty}\frac{1}{n}\sum_{i=0}^{n-1}U_f^iX=c_X,\text{ }\hat{\mathbb{E}}\text{-q.s.}\]

		(ii) $\Rightarrow$ (i). For any $X\in\mathcal{I}$, one has that
		\[\underline{X}=\overline{X}=X\in\mathcal{H},\]
		which by (ii), implies that $X$ is constant, $\hat{\mathbb{E}}$-q.s. That is, $U_f$ is ergodic.
		
		In this case, by (ii), we have for any $P\in\mathcal{P}$,
		\[\lim_{n\to\infty}\frac{1}{n}\sum_{i=0}^{n-1}U_f^iX=c_X,\text{ }P\text{-a.s.}\]
  Since $\hat{\mathbb E}[|X|]<\infty$, it follows that $X\in L^1(\O,\s(\mathcal{H}),P)$. Thus, by Theorem \ref{thm:Birkhoff for measures}, we deduce that  for any $P\in\mathcal{P}\cap \mathcal{M}(f)$,
		\[\int\mathbb E_P[X| \mathcal{I}]dP= \int\lim_{n\to\infty}\frac{1}{n}\sum_{i=0}^{n-1}U_f^iXdP=c_X, \text{ }P\text{-a.s.}\]
  Thus, $\int XdP=\int \mathbb E_P[X| \mathcal{I}] dP=c_X.$
		The proof is completed.
	\end{proof}
	
	If we further suppose that a sublinear expectation system $(\O,\mathcal{H},U_f,\hat{\mathbb{E}})$ satisfies that $\overline{X}\in\mathcal{H}$ for any $X\in \mathcal{H}$,  then we obtain the following result.
	\begin{corollary}\label{cor:ergodic theorem for T}
		Let $(\O,\mathcal{H},U_f,\hat{\mathbb{E}})$ be a regular sublinear expectation system, where $U_f$ is induced by a measurable transform $f:\O\to\O$. If $U_f$ satisfies that $\overline{X}\in\mathcal{H}$ for any $X\in \mathcal{H}$, then the following two statements are equivalent:
		\begin{enumerate}[(i)]
			\item $\hat{\mathbb{E}}$ is ergodic;
			\item there exists a unique ergodic probability $P\in\mathcal{P}$ such that for any  $X\in \mathcal{H}$,  
			\[\lim_{n\to\infty}\frac{1}{n}\sum_{i=0}^{n-1}U_f^iX=\int XdP, \text{ }\hat{\mathbb{E}}\text{-q.s.}\]
		\end{enumerate}
	\end{corollary}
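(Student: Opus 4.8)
The plan is to treat the two implications separately, using Theorem~\ref{thm:ergodic theorem for some function} as the main engine and realizing the probability in (ii) as an invariant skeleton. A preliminary observation streamlines everything: since $U_f$ is linear, $\overline{-X}=-\underline{X}$, so the hypothesis that $\overline{X}\in\mathcal{H}$ for every $X\in\mathcal{H}$ already forces $\underline{X}=-\overline{-X}\in\mathcal{H}$ for every $X\in\mathcal{H}$. Hence the side conditions ``$\overline{X},\underline{X}\in\mathcal{H}$'' appearing in Theorem~\ref{thm:ergodic theorem for some function} hold for all $X\in\mathcal{H}$, and under (i) that theorem supplies, for each $X\in\mathcal{H}$, a constant $c_X$ with $\frac1n\sum_{i=0}^{n-1}U_f^iX\to c_X$ $\hat{\mathbb{E}}$-q.s., together with $\int X\,dP=c_X$ for every $f$-invariant $P\in\mathcal{P}$.

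For (i)$\Rightarrow$(ii) I would first produce a candidate probability. Fixing any $\mathbb{E}\in\Theta$, Theorem~\ref{thm:existence of inv}, the representation \eqref{eq:representation of probability}, and Proposition~\ref{re:existence of invariant measure} give its invariant skeleton $P:=P_{\mathrm{inv}}\in\mathcal{P}\cap\mathcal{M}(f)$, and the ``moreover'' clause above yields $\int X\,dP=c_X$ for all $X\in\mathcal{H}$, so that $\lim_n\frac1n\sum_{i=0}^{n-1}U_f^iX=\int X\,dP$ $\hat{\mathbb{E}}$-q.s. The substantive step is to verify that $P$ is $f$-ergodic. By the classical Birkhoff theorem (Theorem~\ref{thm:Birkhoff for measures}) the $P$-a.s.\ limit equals the conditional expectation, so $\mathbb{E}_P[X\mid\mathcal{I}]=c_X$ $P$-a.s.\ for every $X\in\mathcal{H}$, where $\mathcal{I}=\{A\in\sigma(\mathcal{H}):f^{-1}A=A\}$ is the classical invariant $\sigma$-algebra. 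I would then upgrade ``constant conditional expectation'' from $\mathcal{H}$ to all of $L^1(\Omega,\sigma(\mathcal{H}),P)$: using the $\pi$-system $\mathcal{C}$ from the proof of Proposition~\ref{re:existence of invariant measure} (it generates $\sigma(\mathcal{H})$, and each $\one_C$ is a bounded increasing limit of elements of $\mathcal{H}_+$, hence lies in the $L^1(P)$-closure of $\mathcal{H}$), a $\pi$-$\lambda$ argument shows the $L^1(P)$-closure of $\mathcal{H}$ contains $\one_A$ for every $A\in\sigma(\mathcal{H})$ and therefore equals $L^1(\Omega,\sigma(\mathcal{H}),P)$. Since $\mathbb{E}_P[\,\cdot\mid\mathcal{I}]$ is an $L^1$-contraction mapping each $X\in\mathcal{H}$ to a constant, it maps all of $L^1(P)$ to constants; applied to $\one_A$ with $A\in\mathcal{I}$ this forces $P(A)\in\{0,1\}$, i.e.\ $P$ is ergodic.

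Uniqueness follows from the same circle of ideas: if $Q\in\mathcal{P}$ is ergodic (hence $f$-invariant), the q.s.\ convergence holds $Q$-a.s., so Theorem~\ref{thm:Birkhoff for measures} gives $\int X\,dQ=c_X=\int X\,dP$ for every $X\in\mathcal{H}$; since $\mathcal{H}$ determines measures on $\sigma(\mathcal{H})$ (monotone convergence along the same $\pi$-system $\mathcal{C}$ together with Dynkin's theorem), $Q=P$. For (ii)$\Rightarrow$(i), take $X\in\mathcal{I}$, i.e.\ $U_fX=X$ $\hat{\mathbb{E}}$-q.s. Because $U_f$ is induced by the point map $f$ we have $|U_fY|=U_f|Y|$, and $\hat{\mathbb{E}}$ is $U_f$-invariant, so $\hat{\mathbb{E}}[|U_f^iX-X|]=0$ for all $i$ by induction; hence every average $\frac1n\sum_{i=0}^{n-1}U_f^iX$ equals $X$ $\hat{\mathbb{E}}$-q.s., and (ii) forces $X=\int X\,dP$, a constant, $\hat{\mathbb{E}}$-q.s. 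Thus $\hat{\mathbb{E}}$ is ergodic.

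I expect the main obstacle to be the ergodicity step, specifically the density of $\mathcal{H}$ in $L^1(\Omega,\sigma(\mathcal{H}),P)$ and the resulting passage from ``$\mathbb{E}_P[X\mid\mathcal{I}]$ constant for all $X\in\mathcal{H}$'' to triviality of $\mathcal{I}$ under $P$; the remaining steps are a direct assembly of Theorem~\ref{thm:ergodic theorem for some function}, Proposition~\ref{re:existence of invariant measure}, and Theorem~\ref{thm:Birkhoff for measures}.
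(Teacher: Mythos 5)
Your proof is correct, and its architecture matches the paper's: both deduce the q.s.\ convergence and the identification $\int X\,dP=c_X$ from Theorem~\ref{thm:ergodic theorem for some function} after noting $\underline{X}=-\overline{-X}\in\mathcal{H}$, both obtain $P$ as an invariant skeleton via Theorem~\ref{thm:existence of inv} and Proposition~\ref{re:existence of invariant measure}, and both rest on approximating indicators through the $\pi$-system $\mathcal{C}$ of \eqref{eq:C} plus a Dynkin-type argument. Where you genuinely differ is the mechanism for proving ergodicity of $P$: the paper applies a monotone class theorem to the family $\widetilde{\mathcal{H}}$ of bounded $\sigma(\mathcal{H})$-measurable functions whose Birkhoff averages converge in $L^1(P)$-norm to $\int X\,dP$, and then feeds in an invariant set $A$ (whose averages are $\one_A$ itself) to force $P(A)\in\{0,1\}$; you instead show $\mathcal{H}$ is dense in $L^1(\Omega,\sigma(\mathcal{H}),P)$ and exploit that the $L^1$-contraction $\mathbb{E}_P[\,\cdot\,|\,\mathcal{I}]$ maps a dense subspace to constants, hence all of $L^1(P)$ to constants, in particular $\one_A$ for $A\in\mathcal{I}$. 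The two devices carry the same content (your observation that functions with constant conditional expectation form an $L^1$-closed subspace is the paper's monotone-class closure in another guise), but yours quotes the conditional-expectation form of Birkhoff's theorem once and avoids re-running the approximation inside the averages, which is somewhat more streamlined. Two organizational differences are worth noting: the paper first proves uniqueness among \emph{all invariant} probabilities in $\mathcal{P}$ (via Daniell--Stone uniqueness) and only afterwards ergodicity of that unique measure --- this stronger conclusion is what Corollary~\ref{cor:P unique} records --- whereas you prove ergodicity of the skeleton first and then uniqueness among ergodic probabilities, which suffices for the statement (and your argument would in fact apply verbatim to any invariant $Q\in\mathcal{P}$); and for (ii)$\Rightarrow$(i) the paper simply cites Theorem~\ref{thm:ergodic theorem for some function}, while your direct q.s.\ computation showing $\hat{\mathbb{E}}[|U_f^iX-X|]=0$ for all $i$ is an equally valid, self-contained alternative.
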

	\begin{proof}
(ii) $\Rightarrow$ (i). This is a  direct consequence of Theorem \ref{thm:ergodic theorem for some function}.
 
		(i) $\Rightarrow$ (ii).  For any $X\in\mathcal{H}$, since $-X\in\mathcal{H}$, it follows that 
  \[\underline{X}=-(\overline{-X})\in\mathcal{H},\]
  which allows us use Theorem \ref{thm:ergodic theorem for some function}
 to obtain that for any   $P_1,P_2\in\mathcal{M}(f)\cap \mathcal{P}$,
  \[\int XdP_1=\int XdP_2\text{ for any } X\in\mathcal{H}.\]
 Combining the uniqueness in Daniell-Stone theorem (see \cite[Section 4.5]{Dudley2002} or \cite[Theorem B.3.3]{Pengbook}), we have that $P_1=P_2$.
Thus, there exists a unique invariant probability in $\mathcal{P}$, denoted by $P$.

  Now we use the monotone class theorem to prove this invariant probability is ergodic. Denote by
 $B_b(\O,\s(\mathcal{H}))$ the space of all bounded $\s(\mathcal{H})$-measurable functions. Let 
 \[\widetilde{\mathcal{H}}=\Big\{X\in B_b(\O,\s(\mathcal{H})):\lim_{n\to\infty}\Big\|\frac{1}{n}\sum_{i=0}^{n-1}U_f^iX-\int XdP\Big\|_{1,P}=0\Big\},\]
 where $\|\cdot\|_{1,P}$ is the $L^1$-norm with respect to $P$. Then, by dominated convergence theorem and (ii) in Theorem \ref{thm:ergodic theorem for some function}, we have that 
\begin{equation}\label{eq22.31}
    \mathcal{H}\cap B_b(\O,\s(\mathcal{H}))\subset\widetilde{\mathcal{H}}.
\end{equation}
Here $\mathcal{H}\cap B_b(\O,\s(\mathcal{H}))\neq \emptyset$, as all constants are in it.
 
 Recall the $\pi$-system $\mathcal{C}$ defined by \eqref{eq:C}, satisfies $\s(\mathcal{C})=\s(\mathcal{H})$. By the construction of $\mathcal{C}$, for any $A\in\mathcal{C}$, there exists $X_m\in \mathcal{H}_+:=\{X\in\mathcal{H}:X\ge0\}$   such that $X_m\uparrow 
 \one_A$, as $m\to\infty$. By monotone convergence theorem, for any $\e>0$, there exists $M>0$ such that for any $m\ge M$, 
 \begin{equation}\label{eq:3.820.50}
\|X_m-\one_A\|_{1,P}<\e/4.     
 \end{equation}
 Meanwhile, as $X_M\in \mathcal{H}$, and $0\le X_M\le \one_A$, by \eqref{eq22.31}, we have that $X_M\in\widetilde{\mathcal{H}}$. So there exists $N_M>0$ such that for any $n\ge N_M$, 
 \begin{equation}\label{eq:3.820.53}
   \Big\|\frac{1}{n}\sum_{i=0}^{n-1}U_f^iX_M-\int X_MdP\Big\|_{1,P}<\e/4.
 \end{equation}
Combining \eqref{eq:3.820.50} and \eqref{eq:3.820.53}, we have for any $n\ge N_M$,
\begin{align*}
    \Big\|\frac{1}{n}\sum_{i=0}^{n-1}U_f^i\one_A-\int \one_AdP\Big\|_{1,P}\le& \Big\|\frac{1}{n}\sum_{i=0}^{n-1}U_f^i\one_A-\frac{1}{n}\sum_{i=0}^{n-1}U_f^iX_M\Big\|_{1,P}
    \\&+\Big\|\frac{1}{n}\sum_{i=0}^{n-1}U_f^iX_M-\int X_MdP\Big\|_{1,P}+\Big|\int \one_AdP-\int X_MdP\Big|\\
    \le& 2\|\one_A-X_M\|_{1,P}+\e/4<\e.
\end{align*}
Thus, $\one_A\in\widetilde{\mathcal{H}}$ for any $A\in\mathcal{C}$. By a similar argument, we can also prove that for any $X\in B_b(\O,\s(\mathcal{H}))$, if there exists $X_m\in\widetilde{\mathcal{H}}$ such that $X_m\uparrow X$, as $m\to\infty$, then $X\in\widetilde{\mathcal{H}}$. Furthermore, it is easy to check that for any $g,h\in\widetilde{\mathcal{H}}$ and $\a\in\mathbb R$, 
\[g+h\in\widetilde{\mathcal{H}}\text{ and }\a g\in\widetilde{\mathcal{H}}.\]
Now we have proven $\widetilde{\mathcal{H}}$ satisfies all conditions in monotone class theorem. Thus, $B_b(\O,\s(\mathcal{H}))\subset\widetilde{\mathcal{H}}$. In particular, for any $A\in\s(\mathcal{H})$ with $f^{-1}A=A$, i.e., $A\in \mathcal{I}$, we deduce that 
\[\|P(A)-\one_A\|_{1,P}=0,\]
which implies that $P(A)\in \{0,1\}$. As $A\in\mathcal{I}$ is arbitrary, we have $P$ is ergodic. The proof is completed.
	\end{proof}

	\begin{remark}\label{rem:20.29}
 The Birkhoff's ergodic theorem in the nonlinear setting has been discussed in many papers \cite{CMM2016,FHLZ2023,FWZ2020,FengZhao2021}.
		If we consider a measurable space $(\O,\mathcal{F})$ and $\mathcal{H}=B_b(\O)$, this corollary implies that  Theorem 3.3 in \cite{FHLZ2023}, and hence also implies that the Birkhoff's ergodic theorem obtained in  \cite{CMM2016,FWZ2020,FengZhao2021}. 
  
 However, the sublinear expectations generated by $G$-Brownian motion are not  regular on $B_b(\O)$, as $\mathcal{P}$ of $G$-Brownian motion has infinitely many singular probabilities (see \cite{Peng2007} for more details). Indeed, we will prove there exists uncountably many strongly mixing probabilities in the $\mathcal{P}$ of $G$-Brownian motion (see Theorem \ref{thm:uncountable many mixing measure}). Thus, their results in  \cite{CMM2016,FHLZ2023,FWZ2020,FengZhao2021} can not be used to investigate the ergodicity of $G$-Brownian motion or the solution of nonlinear SDE forced by $G$-Brownian motion. Note that our results hold for any regular sublinear expectation systems. In particular, if we consider $\O$ is a complete metric space and $\mathcal{H}=C_b(\O)$, the regularity is  equivalent to the weakly compactness of $\mathcal{P}$  (see Lemma \ref{lem:regualr<=>compact}). Thus, our results can be applied to $G$-Brownian motion or the solution of nonlinear SDE forced by $G$-Brownian motion. 
	
 Furthermore, we provide another classical example, in ergodic theory, which is regular on $C_b(\mathbb{T}^1)$, but it is not regular on $B_b(\mathbb{T}^1)$. We will show that $\hat{\mathbb E}[X]=\sup_{P\in\mathcal{P}}\int XdP$ in Example \ref{ex:not continuous} is not regular on $B_b(\mathbb{T}^1)$.  Indeed, we consider the sequence of subsets $\{(0,1/n)\}_{n=1}^\infty$. Then $\one_{(0,1/n)}\downarrow0$, as $n\to\infty$. For any $n\in\mathbb{N}$, we define a new probability by 
		\[m_n(A)=\frac{m(A\cap(0,1/n))}{m((0,1/n))}\text{ for any }A\in\mathcal{B}(\mathbb{T}^1).\]
		However, $\hat{\mathbb E} [\one_{(0,1/n)}]\ge m_n((0,1/n))=1$ for each $n\in\mathbb{N}$, and hence $\lim_{n\to\infty}\hat{\mathbb E} [\one_{(0,1/n)}]=1$. Thus, $\hat{\mathbb E}$ is not regular on $B_b(\mathbb{T}^1)$.
	\end{remark}
The following result is a direct consequence of Corollary \ref{cor:ergodic theorem for T}.
 \begin{corollary}\label{cor:P unique}
     	Let $(\O,\mathcal{H},U_f,\hat{\mathbb{E}})$ be an ergodic regular sublinear expectation system, where $U_f$ is induced by a measurable transform $f:\O\to\O$. If $U_f$ satisfies that $\overline{X}\in\mathcal{H}$ for any $X\in \mathcal{H}$, then there exists a probability   $P$  such that
     \[\mathcal{M}(f)\cap \mathcal{P}=\mathcal{M}^e(f)\cap \mathcal{P}=\{P\}.\]
     In particular, if $|\mathcal{M}(f)\cap \mathcal{P}|\neq 1$, then there exists $X\in \mathcal{H}$ such that $\overline{X}\notin\mathcal{H}$.
 \end{corollary}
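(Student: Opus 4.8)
The plan is to read off both equalities directly from Corollary \ref{cor:ergodic theorem for T} together with the uniqueness argument already contained in its proof, and then to obtain the ``in particular'' clause by contraposition. No genuinely new work is needed; the task is to package the pieces correctly.

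First I would record that $\mathcal{M}(f)\cap\mathcal{P}$ is nonempty: by Proposition \ref{re:existence of invariant measure} the invariant skeleton $P_{\mathrm{inv}}$ of any $\mathbb{E}\in\Theta$ is an $f$-invariant probability lying in $\mathcal{P}$, and $\Theta\neq\emptyset$. Next, since the standing hypothesis $\overline{X}\in\mathcal{H}$ for all $X\in\mathcal{H}$ forces $\underline{X}=-\overline{-X}\in\mathcal{H}$ as well (using $-X\in\mathcal{H}$), the ``moreover'' clause of Theorem \ref{thm:ergodic theorem for some function} applies to every $X\in\mathcal{H}$: for each $Q\in\mathcal{M}(f)\cap\mathcal{P}$ and each $X\in\mathcal{H}$ one has $\int X\,dQ=c_X$, where the constant $c_X$ does not depend on the choice of $Q$. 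Hence any two elements of $\mathcal{M}(f)\cap\mathcal{P}$ agree on all of $\mathcal{H}$, and the Daniell--Stone uniqueness theorem (exactly as invoked in the proof of Corollary \ref{cor:ergodic theorem for T}) forces them to coincide. Combining nonemptiness with this uniqueness yields $\mathcal{M}(f)\cap\mathcal{P}=\{P\}$, where $P$ is the probability produced in statement (ii) of Corollary \ref{cor:ergodic theorem for T}.

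For the second equality I would use that $f$-ergodicity implies $f$-invariance, so $\mathcal{M}^e(f)\cap\mathcal{P}\subseteq\mathcal{M}(f)\cap\mathcal{P}=\{P\}$; since the proof of Corollary \ref{cor:ergodic theorem for T} shows that this same $P$ is $f$-ergodic, we have $P\in\mathcal{M}^e(f)\cap\mathcal{P}$, whence $\mathcal{M}^e(f)\cap\mathcal{P}=\{P\}$ with the same $P$ serving both identities.

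Finally, for the ``in particular'' assertion I would argue by contraposition under the standing ergodicity hypothesis. The first part shows that if $\overline{X}\in\mathcal{H}$ for all $X\in\mathcal{H}$, then $|\mathcal{M}(f)\cap\mathcal{P}|=1$; therefore if instead $|\mathcal{M}(f)\cap\mathcal{P}|\neq 1$, that condition must fail, i.e.\ there is some $X\in\mathcal{H}$ with $\overline{X}\notin\mathcal{H}$. There is no serious obstacle, the result being a direct consequence of Corollary \ref{cor:ergodic theorem for T}; the only points requiring care are the logical bookkeeping of the contrapositive and the observation that membership $\underline{X}=-\overline{-X}\in\mathcal{H}$ is inherited from $\overline{\,\cdot\,}$, which is precisely what lets Theorem \ref{thm:ergodic theorem for some function} apply to every $X\in\mathcal{H}$ rather than only to those for which $\overline{X},\underline{X}\in\mathcal{H}$ are assumed a priori.
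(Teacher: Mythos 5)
Your proposal is correct and follows exactly the route the paper intends: the paper states this corollary as a direct consequence of Corollary \ref{cor:ergodic theorem for T}, and your argument simply unpacks that deduction (nonemptiness via Proposition \ref{re:existence of invariant measure}, closure of $\mathcal{H}$ under $X\mapsto\underline{X}=-\overline{-X}$ so that Theorem \ref{thm:ergodic theorem for some function} applies, Daniell--Stone uniqueness, and the ergodicity of $P$ established inside the proof of Corollary \ref{cor:ergodic theorem for T}). The contrapositive handling of the ``in particular'' clause is also exactly what is needed, so nothing is missing.
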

 \begin{remark} Similar to the discussion in Remark \ref{rem:20.29}, Corollary \ref{cor:P unique} implies that if $\mathcal{H}=B_b(\O)$ and $(\O,\mathcal{H},U_f,\hat{\mathbb{E}})$ is ergodic, then 
  \[\mathcal{M}(f)\cap \mathcal{P}=\mathcal{M}^e(f)\cap \mathcal{P}=\{P\}.\]
However, for a general regular ergodic sublinear expectation system  $(\O,\mathcal{H},U_f,\hat{\mathbb{E}})$, the cardinality of the set $\mathcal{M}^e(f)\cap \mathcal{P}$ can be $0$, any $n\in\mathbb{N}$, countably infinite, and uncountably infinite. See the following examples.
\begin{example}
    Let $(\O,f)$ be a topological dynamical system  with an invariant probability $P$ which is not ergodic. Suppose that $(\O,f)$ is minimal, i.e., for any $\o\in \O$, the set $\{f^n(\o):n\in\mathbb{N}\}$ is dense in $\O$ (e.g. Auslander systems \cite{HaddadJohnson}). Consider the lattice vector
$\mathcal{H}=C_b(\O)$ and $\hat{\mathbb{E}}[X]=\int X dP$. Then it is easy to check that $(\O,\mathcal{H},\hat{\mathbb{E}},U_f)$ is a sublinear expectation system. Now we prove it is ergodic. Indeed, for any $X\in\mathcal{H}$ with $U_fX=X$, $\hat{\mathbb{E}}$-q.s. In particular, there exists $\o_0\in \O$ such that  $X(f(\o_0))=X(\o_0)$. Thus,  for any $\o\in \O$, if $\o=f^n(\o_0)$ for some $n\in\mathbb{N}$, then  $X(\o)=X(f^n(\o_0))=X(\o_0)$, and if $\o\notin \{f^n(\o_0):n\in\mathbb{N}\}$, there exists an increasing sequence $\{n_i\}_{i=1}^\infty$ such that $\lim_{i\to\infty}f^{n_i}(\o_0)=\o$, which together with the fact that $X$ is continuous, implies that $X(\o)=X(\o_0)$. Thus, $X$ is constant, and hence $\hat{\E}$ is ergodic. But there is no ergodic probability in its $\mathcal{P}$, as $\mathcal{P}=\{P\}$.

We may assume that $(\O,f)$ be a minimal topological dynamical systems with uncountably many ergodic probabilities (e.g. Auslander systems \cite[Proof of Theorem 4.1]{HaddadJohnson}). Let $\mathcal{P}$ be a subset of $\mathcal{M}^e(f)$ with the cardinality $n\in\mathbb{N}$, countable infinity or uncountable infinity.  Consider the lattice vector
$\mathcal{H}=C_b(\O)$ and $\hat{\mathbb{E}}[X]=\sup_{P\in\mathcal{P}}\int X dP$. Similarly, we can prove $(\O,\mathcal{H},\hat{\mathbb{E}},U_f)$ is ergodic.
\end{example}
\begin{example}
    It is noteworthy that we will later demonstrate that sublinear expectation systems generated by $G$-Brownian motion and a class of  $G$-SDEs are ergodic (Section \ref{sec: e.g. G-BM} and Section \ref{subsec:application to GSDE}), and we will investigate the structures of their $\mathcal{P}$. In particular, we will prove that there are uncountably many ergodic measures in their $\mathcal{P}$, respectively (see Theorem \ref{thm:uncountable many mixing measure} and Theorem \ref{thm: infinitely many ergodic probablities in P}).
\end{example}
  \end{remark}

 By an argument similar to that of the  discrete time case, we have the corresponding result for continuous sublinear expectation system.
 \begin{corollary}\label{cor:ergodic theorem for T, continuous}
		Let $(\O,\mathcal{H},(U_t)_{t\geq 0},\hat{\mathbb{E}})$ be a regular continuous sublinear expectation system, where $U_t$ is the linear operator induced by a semigroup of measurable transformations $\t_t:\O\to\O$. If $(U_t)_{t\geq 0}$ satisfies that $\overline{X}\in\mathcal{H}$, for any $X\in \mathcal{H}$. Then the following two statements are equivalent:
		\begin{enumerate}[(i)]
			\item $\hat{\mathbb{E}}$ is ergodic;
			\item there exists an ergodic probability $P$ on $\s(\mathcal{H})$ such that for any  $X\in \mathcal{H}$,  
			\[\lim_{T\to\infty}\frac{1}{T}\int_0^TX\circ \t_tdt=\int XdP, \text{ }\hat{\mathbb{E}}\text{-q.s.}\]
		\end{enumerate}
		In this case,  $P$ is the unique  ergodic probability with respect to $\theta_t, \ t\geq 0$ in $\mathcal{P}$.
	\end{corollary}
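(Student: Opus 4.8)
The plan is to transpose the proof of the discrete Corollary \ref{cor:ergodic theorem for T} line by line, replacing each discrete ingredient by its continuous-time analogue: Theorem \ref{thm: Birkhoff's ergodic theorem for invaraint flow} in place of Theorem \ref{thm:ergodic theorem for some function invariant}, Theorem \ref{thm: existence of invariant skeleton for flow} in place of Theorem \ref{thm:existence of inv}, Proposition \ref{re:existence of invariant measure for flow} in place of Proposition \ref{re:existence of invariant measure}, and Lemma \ref{lem:classical ergodic theorem for flow} in place of Theorem \ref{thm:Birkhoff for measures}. Since $\underline{X}=-\overline{-X}$, the standing hypothesis $\overline{X}\in\mathcal{H}$ yields $\overline{X},\underline{X}\in\mathcal{H}$ for every $X\in\mathcal{H}$, so Theorem \ref{thm: Birkhoff's ergodic theorem for invaraint flow} applies to all observables and produces $X^{*}=\lim_{T\to\infty}\frac1T\int_0^T U_tX\,dt\in\mathcal{I}$ quasi-surely. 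When $\hat{\mathbb{E}}$ is ergodic, $X^{*}\in\mathcal{I}$ is forced to be a constant $c_X$, which is the continuous counterpart of the implication (i) $\Rightarrow$ (ii) in Theorem \ref{thm:ergodic theorem for some function}.

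For the implication (ii) $\Rightarrow$ (i), I would fix $X\in\mathcal{I}$ and show the ergodic average collapses to $X$. Writing $A_TX:=\frac1T\int_0^T U_tX\,dt$, I would first check that $\hat{\mathbb{E}}[|A_TX-X|]=0$ for every fixed $T>0$: since $A_TX-X=\frac1T\int_0^T(U_tX-X)\,dt$, the sublinearity of $\hat{\mathbb{E}}$ combined with a Riemann-sum approximation (legitimised by the continuity assumption \eqref{eq:continuous}) gives the Jensen-type bound $\hat{\mathbb{E}}[|A_TX-X|]\le\frac1T\int_0^T\hat{\mathbb{E}}[|U_tX-X|]\,dt=0$, the last equality because $X\in\mathcal{I}$. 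Evaluating at $T=n\in\mathbb{N}$ and intersecting the countably many full-capacity sets shows $A_nX=X$ quasi-surely; but statement (ii) forces $A_nX\to\int X\,dP$ quasi-surely, so $X=\int X\,dP$ is constant quasi-surely and $\hat{\mathbb{E}}$ is ergodic.

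For (i) $\Rightarrow$ (ii), I would proceed in two steps, exactly as in Corollary \ref{cor:ergodic theorem for T}. \emph{Uniqueness of the invariant probability.} By the consequence of ergodicity described above, for every $X\in\mathcal{H}$ the average $A_TX$ tends quasi-surely to a constant $c_X$; since each $P\in\mathcal{P}$ is dominated by $\hat{\mathbb{E}}$, quasi-sure convergence is $P$-almost sure, and for $P\in\mathcal{M}(\theta)\cap\mathcal{P}$ Lemma \ref{lem:classical ergodic theorem for flow} identifies $\int X\,dP=\int X^{*}\,dP=c_X$. Hence all members of $\mathcal{M}(\theta)\cap\mathcal{P}$ agree on $\mathcal{H}$, and the uniqueness in the Daniell--Stone theorem forces them to coincide; Proposition \ref{re:existence of invariant measure for flow} guarantees this class is non-empty, so there is a unique invariant $P\in\mathcal{P}$. \emph{Ergodicity of $P$.} Set $\widetilde{\mathcal{H}}=\{X\in B_b(\Omega,\sigma(\mathcal{H})):\lim_{T\to\infty}\|A_TX-\int X\,dP\|_{1,P}=0\}$; dominated convergence together with the quasi-sure convergence above gives $\mathcal{H}\cap B_b(\Omega,\sigma(\mathcal{H}))\subset\widetilde{\mathcal{H}}$, and one checks that $\widetilde{\mathcal{H}}$ is a linear space closed under bounded monotone limits that contains the indicators $\one_A$ for $A$ in the $\pi$-system $\mathcal{C}$ of \eqref{eq:C}. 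The monotone class theorem then yields $B_b(\Omega,\sigma(\mathcal{H}))\subset\widetilde{\mathcal{H}}$. Finally, for $A\in\mathcal{I}$ we have $\one_A\circ\theta_t=\one_{\theta_t^{-1}A}=\one_A$, hence $A_T\one_A=\one_A$ and $\|\one_A-\int\one_A\,dP\|_{1,P}=0$, forcing $P(A)\in\{0,1\}$; thus $P$ is $\theta$-ergodic, and being the unique invariant element of $\mathcal{P}$ it is also the unique ergodic one.

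The routine parts are transparent; the genuine obstacle is the continuous-time bookkeeping of null sets. Unlike the discrete case, the time average is an integral over the uncountable family $\{U_t\}_{t\ge0}$, so the equalities ``$A_TX=X$ quasi-surely'' and ``$A_T\one_A=\one_A$'' cannot be read off pointwise but must be obtained through the Jensen-type inequality above, which itself rests on the joint measurability of $(t,\omega)\mapsto\theta_t\omega$ and on the continuity \eqref{eq:continuous} needed to pass from Riemann sums to the Bochner integral. Ensuring that these approximations remain compatible with the sublinear functional $\hat{\mathbb{E}}$ and with the $L^1(P)$-limits in the monotone class step --- rather than any single deep idea --- is where the care is required.
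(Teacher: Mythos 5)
Your proposal is correct and matches the paper's intended argument: the paper proves this corollary precisely by transposing the discrete-time proof of Corollary \ref{cor:ergodic theorem for T}, with Theorem \ref{thm: Birkhoff's ergodic theorem for invaraint flow}, Theorem \ref{thm: existence of invariant skeleton for flow}, Proposition \ref{re:existence of invariant measure for flow} and Lemma \ref{lem:classical ergodic theorem for flow} playing exactly the roles you assign them. One small polish: since $|A_TX-X|$ need not lie in $\mathcal{H}$, the bound $\hat{\mathbb{E}}[|A_TX-X|]\le\frac{1}{T}\int_0^T\hat{\mathbb{E}}[|U_tX-X|]\,dt$ is most cleanly obtained by applying classical Fubini under each $P\in\mathcal{P}$ and taking the supremum (which regularity and the capacity-based definition of quasi-sure convergence permit), rather than via Riemann sums under the sublinear functional itself.
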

	
	\subsection{Ergodicity of $G$-Brownian motion}\label{sec: e.g. G-BM}
In this subsection, we will use the definition of ergodicity to check that $G$-Brownian motion is ergodic. In fact, in the subsequent section, we will prove a  stronger result, namely that $G$-Brownian motion is mixing (and hence ergodic). We use two methods to prove this because in the proof by definition of ergodicity, we obtain an inequality, which will also be of interest in the study of nonlinear expectations themselves (see Lemma \ref{lemma of inequality of the variant}).

Following  ideas in \cite[Definition 3.12]{Pengbook} for one-sided $G$-Brownian motion, we introduce the definition and construction of a two-sided $G$-Brownian motion.
\begin{example}[$G$-Brownian motion \cite{Pengbook}]\label{Example:G-BM}
Denote by $\Omega=C_0(\mathbb{R};\mathbb{R}^d)$ the space of all $\mathbb{R}^d$-valued continuous paths $(\omega_t)_{t\in \mathbb{R}}$, with $\omega_0=0$, equipped with the distance 
	$$\rho(\omega^1, \omega^2):=\sum_{i=1}^{\infty}2^{-i}[(\max_{t\in[-i,i]}|\omega^1_t-\omega^2_t|)\wedge 1].$$
 We consider the canonical process $B_t(\omega)=\omega_t, t\in \mathbb R,$ for all $\omega\in \Omega$.
	For any interval $I\subset \mathbb R$, set 
	$$Lip(\Omega_I):=\{\phi(B_{t_1}-B_{t'_1},\cdots, B_{t_n}-B_{t'_{n}}): n\geq 1, \{t_i,t'_i\}_{i=1}^n\subset I, \phi\in C_{l,lip}(\mathbb{R}^{d\times n}) \},$$
 where $C_{l,lip}(\mathbb R^{d\times n})$ is the collection of all local Lipschitz functions with polynomial growth, i.e., $\phi: \mathbb R^{d\times n}\to \mathbb R$ satisfying there exist $L>0, k\geq 1$ such that
    \[
    |\phi(x)-\phi(y)|\leq L(1+|x|^k+|y|^k)|x-y|.
    \]
    Note that there exist $r_0<r_1<\cdots<r_m$ such that
    $\{t_1,t_1',t_2,t_2'\cdots,t_n,t_n'\}=\{r_0,r_1,\cdots,r_m\}$, and hence for any $\phi\in C_{l,lip}(\mathbb{R}^{d\times n})$ there exists $\varphi\in C_{l,lip}(\mathbb{R}^{d\times m})$ such that 
    \[
    \phi(B_{t_1}-B_{t'_1},\cdots, B_{t_n}-B_{t'_{n}})=\varphi(B_{r_1}-B_{r_0},\cdots, B_{r_m}-B_{r_{m-1}}).
    \]
    Hence, $Lip(\Omega_I)$ also has the following characterization:
    \begin{equation}\label{eq:charact-of-Lip(Omega)}
    \begin{split}
           Lip(\Omega_I)=\big\{\phi(B_{t_1}-B_{t_0},\cdots, B_{t_n}-&B_{t_{n-1}}): \\
           &n\geq 1, t_0<\cdots<t_n, \{t_i\}_{i=0}^n\subset I,  \phi\in C_{l,lip}(\mathbb{R}^{d\times n})\big\}.
    \end{split}
    \end{equation}
	It is clear that $Lip(\Omega_{I_1})\subset Lip(\Omega_{I_2})$ if $I_1\subset I_2$. We also denote $Lip\big(\Omega_{(-\infty,t]}\big)$ and $Lip(\Omega_{\mathbb R})$ by $Lip(\Omega_t)$ and $Lip(\Omega)$ respectively.

 Let $(\xi_i)_{i=1}^{\infty}$ be a sequence of $d$-dimensional random vectors  on some sublinear expectation space $(\tilde{\Omega},\tilde{\mathcal{H}}, \tilde{\mathbb{E}})$ 
 such that $\xi_i$ is $G$-normal distributed \footnote{ A $d$-dimensional random vector $X=(X_1,\cdots,X_d)$ on a sublinear expectation space $(\tilde{\Omega},\tilde{\mathcal{H}}, \tilde{\mathbb{E}})$ is called $G$-normally distributed if 
    \[aX+bY\overset{d}{=}\sqrt{a^2+b^2}X,\text{ for }a,b\ge0,\]
    where $Y$ is an independent copy of $X$.} and $\xi_{i+1}$ is independent from $(\xi_1, \cdots, \xi_i)$ for each $i\in\mathbb N$ (the existence of this sequence can be guaranteed in \cite[Section 1.3 and Section 2.2]{Pengbook}). Then we define a sublinear expectation $\hat{\mathbb{E}}$ on $Lip(\Omega)$ via the following procedure: for each $X\in Lip(\Omega)$ with 
	$$X=\phi(B_{t_1}-B_{t_0}, B_{t_2}-B_{t_1},\cdots, B_{t_n}-B_{t_{n-1}})$$
	for some $\phi\in C_{l,lip}(\mathbb{R}^{d\times n})$ and $t_0<t_1<\cdots<t_n$, set
	\begin{equation}
		\label{definition of G-expectation}
		\begin{split}
		     \hat{\mathbb{E}}[\phi(B_{t_1}-B_{t_0}, B_{t_2}-B_{t_1},\cdots&, B_{t_n}-B_{t_{n-1}})]\\
      &:= \tilde{\mathbb{E}}[\phi(\sqrt{t_1-t_0}\xi_1, \sqrt{t_2-t_1}\xi_2,\cdots, \sqrt{t_n-t_{n-1}}\xi_n)].
		\end{split}
	\end{equation}

		The sublinear expectation $\hat{\mathbb{E}}:Lip(\Omega)\rightarrow \mathbb{R}$ defined through above is called a $G$-expectation. The corresponding canonical process $(B_t)_{t\in \mathbb R}$ on the sublinear expectation space $(\Omega, Lip(\Omega), \hat{\mathbb{E}})$, is called a two-sided $G$-Brownian motion, satisfying the following conditions
  \begin{enumerate}[(i)]
      \item $B_0(\o)=0$;
      \item for any $t\in \mathbb R,s\ge 0$,  $B_{t+s}-B_t$ and $B_s$ are identically distributed, and  $B_{t+s}-B_t$ is independent from $(B_{t_1}-B_{t'_1},\cdots,B_{t_n}-B_{t'_n})$ for each $n\in\mathbb N$ and $\{t_i,t'_i\}_{i=1}^n\subset (-\infty,t]$;
      \item $\lim_{|t|\downarrow0}\hat{\mathbb E}[|B_t|^3]|t|^{-1}=0$;
      \item $\hat{\mathbb E}[B_t]=\hat{\mathbb E}[-B_t]=0$.
  \end{enumerate}

\end{example}

	We denote by $L_G^p(\Omega_I), p\geq 1$, the completion of $Lip(\Omega_I)$ under the norm $\|X\|_p:=(\hat{\mathbb{E}}[|X|^p])^{1/p}$ for any non-empty interval $I\subset \mathbb R$. It is clear that $L_G^p(\Omega_{I_1})\subset L_G^p(\Omega_{I_2})$ if any non-empty intervals $I_1\subset I_2\subset \mathbb{R}$ and $L_G^p(\Omega_{I_2})$ is independent from $L_G^p(\Omega_{I_1})$ if $\sup I_1\leq \inf I_2$.

 Combining Theorem 6.1.35, Theorem 6.2.5 and Proposition 6.3.2 in \cite{Pengbook}, we have the following regularity result.
\begin{proposition}\label{prop:Gregular}
   The sublinear expectation space  $(\Omega, L_G^p(\Omega), \hat{\mathbb{E}})$ is regular for each $p\ge 1$.
\end{proposition}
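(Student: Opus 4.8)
The plan is to reduce the statement to the case $p=1$ and then establish downward continuity by combining the upper-expectation representation with the quasi-continuity and uniform-integrability structure of $L_G^p(\Omega)$. First I would observe that for $X\in Lip(\Omega)$ one has $\|X\|_1=\hat{\mathbb{E}}[|X|]\le(\hat{\mathbb{E}}[|X|^p])^{1/p}=\|X\|_p$ by the sublinear H\"older inequality, so the identity on $Lip(\Omega)$ extends to a continuous inclusion $L_G^p(\Omega)\subseteq L_G^1(\Omega)$ for every $p\ge 1$. Consequently any sequence $\{X_i\}\subset L_G^p(\Omega)$ with $X_i\downarrow 0$ lies in $L_G^1(\Omega)$ and decreases to $0$ there as well; hence it suffices to prove that $(\Omega,L_G^1(\Omega),\hat{\mathbb{E}})$ is regular.

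Next I would invoke the upper-expectation representation from \cite[Theorem 6.1.35]{Pengbook}: there is a weakly compact family $\mathcal{P}$ of probability measures on $(\Omega,\mathcal{B}(\Omega))$ with $\hat{\mathbb{E}}[X]=\sup_{P\in\mathcal{P}}\mathbb{E}_P[X]$ for all $X\in L_G^1(\Omega)$, and I would write $C(A)=\sup_{P\in\mathcal{P}}P(A)$ for the associated capacity. By \cite[Theorem 6.2.5]{Pengbook} every element of $L_G^1(\Omega)$ admits a quasi-continuous version, and by \cite[Proposition 6.3.2]{Pengbook} it satisfies the uniform-integrability property $\lim_{N\to\infty}\hat{\mathbb{E}}[|X|\one_{\{|X|>N\}}]=0$. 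Fixing a sequence $X_i\downarrow 0$ in $L_G^1(\Omega)$ and $\varepsilon>0$, I would use uniform integrability to pick $N$ with $\hat{\mathbb{E}}[(X_1-N)^+]<\varepsilon$; since $0\le (X_i-N)^+\le (X_1-N)^+$, the truncation error is controlled uniformly in $i$, and the problem is reduced to showing $\hat{\mathbb{E}}[X_i\wedge N]\to 0$ for the bounded quasi-continuous functions $Y_i:=X_i\wedge N$, which still satisfy $Y_i\downarrow 0$.

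The core step is a Dini-type argument on a large compact set. Weak compactness of $\mathcal{P}$ gives tightness, so for $\delta>0$ there is a compact $K\subset\Omega$ with $C(K^c)<\delta$; quasi-continuity of the countably many $Y_i$ yields an open set $G$ with $C(G)<\delta$ on whose complement every $Y_i$ is continuous. On the compact set $K\setminus G$ the continuous functions $Y_i$ decrease pointwise to $0$, so Dini's theorem gives $\sup_{\omega\in K\setminus G}Y_i(\omega)\to 0$. Splitting $\mathbb{E}_P[Y_i]$ over $K\setminus G$ and its complement and using $0\le Y_i\le N$ together with $P((K\setminus G)^c)\le C(K^c)+C(G)<2\delta$, I would obtain $\hat{\mathbb{E}}[Y_i]\le \sup_{\omega\in K\setminus G}Y_i(\omega)+2N\delta$; letting $i\to\infty$ and then $\delta\to 0$ shows $\hat{\mathbb{E}}[Y_i]\to 0$. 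Combining this with the truncation bound gives $\limsup_i\hat{\mathbb{E}}[X_i]\le\varepsilon$ for every $\varepsilon>0$, proving regularity.

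I expect the main obstacle to be precisely that elements of $L_G^p(\Omega)$ are neither continuous nor bounded, so the naive Dini argument is unavailable; the three cited results are exactly what make the argument go through, converting ``quasi-continuous and uniformly integrable on a non-compact path space'' into ``continuous on a large compact set after a controlled truncation''. A technical point to handle carefully is that the pointwise monotone convergence $X_i\downarrow 0$ must be matched with the chosen quasi-continuous versions so that Dini's theorem applies on $K\setminus G$; this is where the countable intersection of the quasi-continuity exceptional sets is used.
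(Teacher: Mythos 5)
Your proposal is correct and, in substance, follows the same route as the paper: the paper's entire proof of Proposition \ref{prop:Gregular} consists of citing Theorem 6.1.35, Theorem 6.2.5 and Proposition 6.3.2 of \cite{Pengbook}, i.e.\ exactly the three ingredients you invoke (representation of $\hat{\mathbb{E}}$ on $L_G^1(\Omega)$ as an upper expectation over a weakly compact family $\mathcal{P}$, quasi-continuity of elements of $L_G^1(\Omega)$, and their uniform integrability). The difference is that the paper imports the resulting downward-convergence statement as a black box, whereas you re-derive it: reduction to $p=1$ via $\|\cdot\|_1\le\|\cdot\|_p$, truncation by uniform integrability, and then tightness plus a common quasi-continuity set plus Dini's theorem on the compact set $K\setminus G$. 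This is essentially the standard proof of the cited convergence theorem (your quantifier order --- fix $\varepsilon$, then $N$, then let $i\to\infty$, $\delta\to 0$, $\varepsilon\to 0$ --- is handled correctly), so your argument is a self-contained unfolding of the paper's citation rather than a genuinely different method.

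The one step that needs tightening is the version-matching point you flag at the end, and your proposed fix does not quite close it. If, as in Peng's identification of $L_G^1(\Omega)$, the representatives $X_i$ appearing in Definition \ref{def:regular} are themselves taken quasi-continuous, then your Dini argument is complete as written. But if the $X_i$ merely admit quasi-continuous versions $\tilde X_i$, the set on which some $\tilde X_i\neq X_i$ is polar but in general not open, so it cannot be absorbed into the open set $G$ by ``taking a countable intersection of exceptional sets'': removing it from $K\setminus G$ destroys the compactness Dini requires, and enclosing it in an open set of small capacity would need an outer-regularity property of $C$ at polar sets that is not automatic. Either state explicitly that you work with quasi-continuous representatives, or replace Dini by the weak-convergence argument: pick near-optimal $P_i\in\mathcal{P}$ for $\hat{\mathbb{E}}[\tilde Y_i]$, extract $P_{i_k}\rightharpoonup P^*\in\mathcal{P}$, compare with a fixed index $j$ through bounded continuous extensions of $\tilde Y_j|_{G^c}$, and finish by dominated convergence under $P^*$; this variant uses only $P$-almost-sure monotonicity and is therefore insensitive to the choice of versions.
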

	
Now we define the transformation $\theta: \mathbb{R}\times \Omega\rightarrow \Omega$ ($G$-Brownian shift) by 
	$$\theta_t\omega(\cdot):=\omega(\cdot+t)-\omega(t).$$
 It is easy to see that $\{\theta_t\}_{t\in \mathbb R}$ is a group with $\theta_0=Id$ being the identical map. Hence, $\theta_t$ is invertible with $\theta_t^{-1}=\theta_{-t}$ for all $t\in \mathbb R$. For $t\in \mathbb R$ and $p\geq 1$, consider the map $U_t: L_G^p(\Omega)\rightarrow L_G^p(\Omega)$  induced by $\theta_t$,
	$$U_tX:=X\circ\theta_t, \text{ for all }  X\in L_G^p(\Omega).$$
 More precisely, $U_t: L_G^p(\Omega_I)\rightarrow L_G^p(\Omega_{t+I})$ for all interval $I$, where $t+I=\{t+r: r\in I\}$.
	By the definition of $\theta_t$ and equation (\ref{definition of G-expectation}), we can easily deduce that for all $p\geq 1$ and $t\in \mathbb R$, $U_t$ preserves the $G$-expectation $\hat{\mathbb{E}}$, i.e., 
	$$\hat{\mathbb{E}}[U_tX]=\hat{\mathbb{E}}[X], \text{ for all } X\in L_G^p(\Omega).$$

 \begin{lemma}
  For any $p\ge1$,   $(\Omega, L_G^p(\Omega), (U_{t})_{t\in \mathbb R}, \hat{\mathbb{E}})$ is a continuous sublinear expectation system.
 \end{lemma}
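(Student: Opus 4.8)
The plan is to verify the two requirements separately: that $(U_t)_{t\in\mathbb R}$ is a group of linear operators on $L_G^p(\Omega)$ preserving $\hat{\mathbb E}$ (so that the quadruple is a sublinear expectation system), and that the continuity condition \eqref{eq:continuous} holds. The group and preservation structure is almost immediate from what precedes the statement: linearity of each $U_t$ follows since composition with $\theta_t$ is linear; the relation $\theta_s\circ\theta_t=\theta_{s+t}$, a direct computation from $\theta_t\omega(\cdot)=\omega(\cdot+t)-\omega(t)$, gives $U_tU_s=U_{t+s}$ with $U_0=\mathrm{Id}$; and since $|U_tX|^p=U_t(|X|^p)$ together with $\hat{\mathbb E}[U_t\,\cdot\,]=\hat{\mathbb E}[\cdot]$ shows $\|U_tX\|_p=\|X\|_p$, each $U_t$ is an isometry of $L_G^p(\Omega)$, hence well defined on the completion. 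Thus the substantive part is continuity, and the strategy is a density argument: I would prove \eqref{eq:continuous} first on the dense subspace $Lip(\Omega)$ and then transfer it to $L_G^p(\Omega)$ using the isometry property.

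For $X\in Lip(\Omega)$ written as $X=\phi(B_{t_1}-B_{t_0},\dots,B_{t_n}-B_{t_{n-1}})$ with $\phi\in C_{l,lip}(\mathbb R^{d\times n})$ and $t_0<\cdots<t_n$, the identity $B_s(\theta_t\omega)=B_{s+t}(\omega)-B_t(\omega)$ gives the explicit form $U_tX=\phi(B_{t_1+t}-B_{t_0+t},\dots,B_{t_n+t}-B_{t_{n-1}+t})$. Writing $\xi_t$ and $\xi_0$ for the two argument vectors, the polynomial-growth local Lipschitz bound yields $|U_tX-X|\le L(1+|\xi_t|^k+|\xi_0|^k)|\xi_t-\xi_0|$, and Cauchy--Schwarz under $\hat{\mathbb E}$ gives
\[
\hat{\mathbb E}[|U_tX-X|]\le L\big(\hat{\mathbb E}[(1+|\xi_t|^k+|\xi_0|^k)^2]\big)^{1/2}\big(\hat{\mathbb E}[|\xi_t-\xi_0|^2]\big)^{1/2}.
\]
The stationarity of increments (property (ii) of the two-sided $G$-Brownian motion) shows that each coordinate $B_{t_i+t}-B_{t_{i-1}+t}$ is distributed exactly as $B_{t_i-t_{i-1}}$, independently of $t$, so the first factor is bounded uniformly in $t$ by finiteness of all moments of $G$-Brownian increments. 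For the second factor, each coordinate of $\xi_t-\xi_0$ equals $(B_{t_i+t}-B_{t_i})-(B_{t_{i-1}+t}-B_{t_{i-1}})$ and is controlled, again by stationarity, through $\hat{\mathbb E}[|B_{t_i+t}-B_{t_i}|^2]=\hat{\mathbb E}[|B_{|t|}|^2]$; property (iii) (or the scaling $B_s\stackrel{d}{=}\sqrt{|s|}\,B_1$) forces $\hat{\mathbb E}[|B_{|t|}|^2]\to0$ as $|t|\to0$, so summing over $i$ gives $\hat{\mathbb E}[|\xi_t-\xi_0|^2]\to0$. Hence $\hat{\mathbb E}[|U_tX-X|]\to0$ for every $X\in Lip(\Omega)$.

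Finally, to pass to a general $X\in L_G^p(\Omega)$, I would fix $\varepsilon>0$, choose $Y\in Lip(\Omega)$ with $\|X-Y\|_p<\varepsilon$ (so that $\hat{\mathbb E}[|X-Y|]\le\|X-Y\|_p<\varepsilon$), and use the triangle inequality together with $\hat{\mathbb E}[|U_t(X-Y)|]=\hat{\mathbb E}[|X-Y|]$ to obtain $\hat{\mathbb E}[|U_tX-X|]\le2\varepsilon+\hat{\mathbb E}[|U_tY-Y|]$; letting $|t|\to0$ and then $\varepsilon\to0$ completes the argument. The main obstacle is the estimate on $Lip(\Omega)$: because the test functions $\phi$ have only polynomial growth, one cannot bound $|U_tX-X|$ directly but must split off the growth factor by Cauchy--Schwarz and then combine two distinct inputs, namely uniform-in-$t$ moment bounds and the vanishing of the small-increment $L^2$ norm, both of which hinge on the stationarity of increments and the regularity (property (iii)) of $G$-Brownian motion.
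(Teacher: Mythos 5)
Your proposal is correct and follows essentially the same route as the paper's proof: approximate $X$ by an element of $Lip(\Omega)$, control $\hat{\mathbb{E}}[|U_tX_\epsilon-X_\epsilon|]$ by splitting off the polynomial-growth factor via Cauchy--Schwarz/H\"older, bound it using stationarity of increments together with the moment bounds for $G$-normal distributions, and conclude by the triangle inequality and the isometry of $U_t$. The only (inessential) difference is that the paper establishes convergence in the $\|\cdot\|_p$ norm and then invokes $\|\cdot\|_1\le\|\cdot\|_p$, whereas you work with the $L^1$ quantity directly.
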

 \begin{proof}
     According to the discussion above, we only need to show that
     \begin{equation}\label{0614-2}
         \lim_{|t|\to 0}\|U_tX-X\|_1=0, \ \text{ for all } \ X\in L_G^p(\Omega).
     \end{equation}
     Indeed, fix $X\in L_G^p(\Omega)$. Then  by \eqref{eq:charact-of-Lip(Omega)}, for any $\epsilon>0$, there exists $X_{\epsilon}\in Lip(\Omega)$ with
     \[
     X_{\epsilon}=\phi(B_{t_1}-B_{t_0}, B_{t_2}-B_{t_1},\cdots, B_{t_n}-B_{t_{n-1}}) 
     \]
   for some $\phi\in C_{l,lip}(\mathbb{R}^{d\times n})$ and $t_0<t_1<\cdots<t_n$ such that $\hat{\mathbb{E}}[|X-X_{\epsilon}|^p]<\epsilon$. Note that 
     there exist $L>0,k>0$ such that
     \[
     |\phi(x)-\phi(y)|\leq L(1+|x|^k+|y|^k)|x-y|, \ \text{ for all } \ x,y\in \mathbb{R}^n,
     \]
     and
     \[
     U_tX_{\epsilon}=\phi(B_{t+t_1}-B_{t+t_0}, B_{t+t_2}-B_{t+t_1},\cdots, B_{t+t_n}-B_{t+t_{n-1}})\text{ for any }t\in\mathbb{R}.
     \]
     Thus, by H$\ddot{\mathrm{o}}$lder inequality and \eqref{definition of G-expectation}, for any $t\in \mathbb{R}$,
     \begin{equation}\label{eq:remove-0827-1}
         \begin{split}
             \hat{\mathbb{E}}[|U_tX_{\epsilon}-X_{\epsilon}|^p]&\leq C_{n,p,L,k}\Big(1+\sum_{i=0}^{n-1}\big(\hat{\mathbb{E}}[|B_{t_{i+1}}-B_{t_i}|^{2kp}]+\hat{\mathbb{E}}[|B_{t+t_{i+1}}-B_{t+t_i}|^{2kp}]\big)^{\frac{1}{2}}\Big)\\
             &\qquad\qquad\qquad\qquad\qquad\qquad\qquad\qquad\qquad \times\sum_{i=0}^{n}\big(\hat{\mathbb{E}}[|B_{t+t_i}-B_{t_i}|^{2p}]\big)^{\frac{1}{2}}\\
             &\leq C_{n,p,L,k}\Big(1+\sum_{i=0}^{n-1}|t_{i+1}-t_i|^{\frac{kp}{2}}\big(\tilde{\mathbb{E}}[|\xi_1|^{2kp}]\big)^{\frac{1}{2}}\Big)\big(\tilde{\mathbb{E}}[|\xi_1|^{2p}]\big)^{\frac{1}{2}}|t|^{\frac{p}{2}},
         \end{split}
     \end{equation}
     where $\xi_1$ and $\tilde{E}[\cdot]$ is from  \eqref{definition of G-expectation} and $C_{\alpha}$  denotes a constant depending only on parameters set $\alpha$ (here, $\alpha=\{n,p,L,k\}$), which may differ from line to line.  
    On the other hand, by \cite[Proposition 2.2.15]{Pengbook} and the fact that $\xi_1$ is $G$-normal distributed, we know that 
     \begin{equation}\label{eq:14.05}
         \tilde{\mathbb{E}}[|\xi_1|^{p}]\leq C_p, \ \text{ for all } \ p\geq 1.
     \end{equation}
     Then it follows from \eqref{eq:remove-0827-1} and \eqref{eq:14.05} that
     \begin{equation*}
         \lim_{|t|\to 0}\hat{\mathbb{E}}[|U_tX_{\epsilon}-X_{\epsilon}|^p]=0
     \end{equation*}
     and hence
     \begin{equation*}
         \begin{split}
             \limsup_{|t|\to 0}\hat{\mathbb{E}}[|U_tX-X|^p]&\leq \limsup_{|t|\to 0}C_{p}\big(\hat{\mathbb{E}}[|U_tX-U_tX_{\epsilon}|^p]+\hat{\mathbb{E}}[|U_tX_{\epsilon}-X_{\epsilon}|^p]+\hat{\mathbb{E}}[|X_{\epsilon}-X|^p]\big)\\
             &\leq C_p\epsilon.
         \end{split}
     \end{equation*}
     Since $\epsilon>0$ is arbitrary, we conclude that $\lim_{|t|\to 0}\|U_tX-X\|_p=0$. The proof of \eqref{0614-2} is finished, as $\|X\|_1\leq \|X\|_p$ for any $p\geq 1$ and $X\in L_G^p(\Omega)$.
 \end{proof}
 \begin{remark}
     Actually, we have shown that $U_tX\to X$ as $t\to 0$ in $L_G^p(\Omega)$. Note that $U_t: L_G^p(\Omega)\to L_G^p(\Omega)$ is a contraction operator for all $t\in \mathbb R$, i.e., $U_t$ is a bounded linear operator and $\|U_t\|:=\sup_{\{X\in L_G^p(\Omega): \|X\|_p=1\}}\|U_tX\|\leq 1$. Then $\{U_t\}_{t\in \mathbb R}$ is a $C_0$-group on Banach space $L_G^p(\Omega)$, i.e.,
     \[
     U_0=Id, \ U_{t+s}=U_t\circ U_s, \text{ for all } t,s\in \mathbb R, \ \text{ and } \ U: \mathbb R\times L_G^p(\Omega)\to L_G^p(\Omega) \text{ is continuous.}
     \]
 \end{remark}

 Then we have the following theorem.
	
	\begin{theorem}
		\label{ergodicity of G-Brownian motion}
		For all $\tau\neq0$, the $G$-expectation system $(\Omega, L_G^2(\Omega), U_{\tau}, \hat{\mathbb{E}})$ is ergodic, and hence $(\Omega, L_G^2(\Omega), (U_{t})_{t\geq 0}, \hat{\mathbb{E}})$ and $(\Omega, L_G^2(\Omega), (U_{t})_{t\in \mathbb R}, \hat{\mathbb{E}})$ are ergodic.
	\end{theorem}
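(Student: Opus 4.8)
The plan is to prove ergodicity of the single shift $(\Omega, L_G^2(\Omega), U_\tau, \hat{\mathbb E})$ for a fixed $\tau\neq 0$; the two ``and hence'' assertions then follow at once from Remark \ref{Ergodicity of U implies ergodicity of U_t}, since any function invariant under the whole (semi)group is in particular $U_\tau$-invariant, so triviality of the $U_\tau$-invariant functions forces triviality of the smaller invariant families. Because $X$ is $U_\tau$-invariant if and only if it is $U_{-\tau}=U_\tau^{-1}$-invariant, we may assume $\tau>0$. So fix $X\in L_G^2(\Omega)$ with $U_\tau X=X$, $\hat{\mathbb E}$-q.s.; iterating gives $U_\tau^nX=X$ q.s. for every $n\ge 1$. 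By Definition \ref{Definition of ergodicity} the goal is to produce a single constant $c$ with $\hat{\mathbb E}[|X-c|]=0$, i.e. $X=c$ q.s.

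First I would discretize by approximation: given $\varepsilon>0$, choose $X_\varepsilon\in Lip(\Omega)$ with $\|X-X_\varepsilon\|_2<\varepsilon$, and observe that $X_\varepsilon$ uses only finitely many increment times, hence $X_\varepsilon\in L_G^2(\Omega_{[-T,T]})$ for some $T>0$. Two features are exploited. (a) $U_\tau$ preserves $\hat{\mathbb E}$ and therefore the $\|\cdot\|_2$-norm, and it preserves distributions because $\phi(U_\tau^nW)=U_\tau^n\phi(W)$; using $U_\tau^nX=X$ this gives $\|X-U_\tau^nX_\varepsilon\|_2=\|U_\tau^n(X-X_\varepsilon)\|_2=\|X-X_\varepsilon\|_2<\varepsilon$. (b) For $n$ with $n\tau\ge 2T$ the block $U_\tau^nX_\varepsilon\in L_G^2(\Omega_{[n\tau-T,\,n\tau+T]})$ sits to the right of $[-T,T]$, so $Z:=U_\tau^nX_\varepsilon$ is independent from $Y:=X_\varepsilon$ in the sense of Definition \ref{def:independent}. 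Combining (a) with the triangle inequality, $\hat{\mathbb E}[|Y-Z|]\le\|Y-Z\|_2<2\varepsilon$.

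The heart of the argument is a variance-type inequality extracted from independence. Testing the independence identity against the Lipschitz function $\phi(y,z)=|y-z|$ gives
\[
\hat{\mathbb E}[|Y-Z|]=\hat{\mathbb E}\big[\,h(Y)\,\big],\qquad h(y):=\hat{\mathbb E}[|y-Z|].
\]
Since $h(y)\ge m:=\min_{y'\in\mathbb R}\hat{\mathbb E}[|y'-Z|]$ pointwise, monotonicity yields $\hat{\mathbb E}[|Y-Z|]\ge m$. As $U_\tau$ preserves $\hat{\mathbb E}$ we have $\hat{\mathbb E}[|y'-Z|]=\hat{\mathbb E}[|y'-X_\varepsilon|]$, so $m=\min_{y'}\hat{\mathbb E}[|y'-X_\varepsilon|]<2\varepsilon$. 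Feeding this back through $\hat{\mathbb E}[|X-y'|]\le\|X-X_\varepsilon\|_1+\hat{\mathbb E}[|X_\varepsilon-y'|]$ and minimizing over $y'$ shows $\inf_{y'\in\mathbb R}\hat{\mathbb E}[|X-y'|]<3\varepsilon$. As $\varepsilon$ is arbitrary and $y'\mapsto\hat{\mathbb E}[|X-y'|]$ is convex, continuous and coercive ($\ge|y'|-\hat{\mathbb E}[|X|]$), the infimum equals $0$ and is attained at some $c\in\mathbb R$, giving $\hat{\mathbb E}[|X-c|]=0$, i.e. $X=c$ q.s.

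I expect the main obstacle to lie in the independence step at the level of second moments. The clean identity above uses the globally Lipschitz test function $|y-z|$, which the sublinear independence of Definition \ref{def:independent} handles directly. The $L^2$/variance formulation recorded separately (the variance inequality of independent interest, Lemma \ref{lemma of inequality of the variant}) instead tests against the quadratic $(y-z)^2$, which is \emph{not} Lipschitz, so one must justify the interchange by truncation together with uniform higher-moment control of $X_\varepsilon$; this is legitimate because polynomial-growth functionals of $G$-normal increments have all moments finite by \eqref{eq:14.05}, but it is the one place requiring care. The remaining points---that elements of $Lip(\Omega)$ are localized to a compact time interval, that far blocks are genuinely independent with the correct orientation, and that regularity of $(\Omega,L_G^2(\Omega),\hat{\mathbb E})$ from Proposition \ref{prop:Gregular} legitimizes the $\hat{\mathbb E}$-q.s. language---are routine.
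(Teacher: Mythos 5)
Your proof is correct, and its core step is genuinely different from the paper's. Both arguments share the same skeleton: reduce to a single shift $U_\tau$ with $\tau>0$ via Remark \ref{Ergodicity of U implies ergodicity of U_t} and $U_{-\tau}=U_\tau^{-1}$; approximate the invariant element $X$ by a cylinder function $X_\epsilon\in Lip(\Omega)$; exploit that $U_\tau^nX_\epsilon$ is independent from $X_\epsilon$ once $n\tau$ exceeds the time-span of $X_\epsilon$; and substitute $X=U_\tau^nX$ q.s.\ inside second moments, which regularity (Proposition \ref{prop:Gregular}) legitimizes. At that point the paper invokes its variance inequality, Lemma \ref{lemma of inequality of the variant}, to get $\hat{\mathbb{E}}[|X_\epsilon-\hat{\mathbb{E}}[X_\epsilon]|^2]\le\hat{\mathbb{E}}[|X_\epsilon-U_\tau^MX_\epsilon|^2]<4\epsilon$ and concludes $X=\hat{\mathbb{E}}[X]$ q.s.; the proof of that lemma is precisely where quadratic (non-Lipschitz) test functions, truncation and regularity must be handled. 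You bypass that lemma entirely: testing independence against the globally Lipschitz $\phi(y,z)=|y-z|$ yields $\min_{y'}\hat{\mathbb{E}}[|y'-X_\epsilon|]\le\hat{\mathbb{E}}[|X_\epsilon-U_\tau^nX_\epsilon|]<2\epsilon$, and the constant is then extracted from the $1$-Lipschitz continuity and coercivity of $y'\mapsto\hat{\mathbb{E}}[|X-y'|]$, whose infimum is forced to be $0$ and is attained. Consequently, the worry in your closing paragraph about second moments applies only to the paper's route, not to yours: your argument never leaves the Lipschitz test functions of Definition \ref{def:independent}, so no truncation or higher-moment control is needed, making it somewhat more elementary. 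What you give up is the explicit identification of the constant as $\hat{\mathbb{E}}[X]$ (this follows a posteriori from $X=c$ q.s.); what the paper's route buys is Lemma \ref{lemma of inequality of the variant} itself, which the authors deliberately isolate because, as they note, the inequality is of independent interest in sublinear expectation theory.
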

	
	Before the proof of this theorem, we first give the following lemma. 
	
	\begin{lemma}\label{lemma of inequality of the variant}
		Let $(\Omega, \mathcal{H}, \hat{\mathbb{E}})$ be a sublinear expectation space. 
        Then  for any $X,Y\in \mathcal{H}^n$ for some $n\geq 1$ such that $X,Y$ are bounded and $Y$ is a copy of $X$, i.e., $Y$ is independent from $X$ and $X, Y$ are identically distributed, we have
		\begin{equation}
			\label{inequality of the variant}
			\hat{\mathbb{E}}[|X-\hat{\mathbb{E}}[X]|^2]\leq \hat{\mathbb{E}}[|X-Y|^2].
		\end{equation}
        If moreover $(\Omega, \mathcal{H}, \hat{\mathbb{E}})$ is regular, then \eqref{inequality of the variant} holds for all $X,Y\in \mathcal{H}$ such that $|X|^2,|Y|^2, |X-Y|^2\in \mathcal{H}$ and $Y$ is a copy of $X$.
	\end{lemma}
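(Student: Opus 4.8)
The plan is to use independence to collapse the right-hand side into a single outer expectation and then dominate the resulting integrand pointwise by $|x-\hat{\mathbb E}[X]|^2$ through a Jensen-type inequality furnished by the representation $\hat{\mathbb E}=\sup_{\mathbb E\in\Theta}\mathbb E$. First I would rewrite the right-hand side: since $Y$ is a copy of $X$, applying the defining relation \eqref{eq:def-independent} to $\phi(x,y)=|x-y|^2$ gives
\[
\hat{\mathbb E}[|X-Y|^2]=\hat{\mathbb E}\big[\,\hat{\mathbb E}[|x-Y|^2]|_{x=X}\,\big]=\hat{\mathbb E}[g(X)],\qquad g(x):=\hat{\mathbb E}[|x-Y|^2].
\]
The integrand $\phi$ is only locally Lipschitz, so this step is where boundedness is used: as $X,Y$ are bounded, I may replace $\phi$ by a function in $C_{b,lip}$ agreeing with it on a ball containing the ranges of $X$ and $Y$, so that \eqref{eq:def-independent} applies verbatim and the value is unchanged; the resulting $g$ is Lipschitz on the range of $X$, whence $g(X)\in\mathcal H$.

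The heart of the argument is the pointwise bound $g(x)\ge |x-\hat{\mathbb E}[X]|^2$. For each $\mathbb E\in\Theta$, convexity of $y\mapsto|x-y|^2$ together with linearity and monotonicity of $\mathbb E$ gives Jensen's inequality $\mathbb E[|x-Y|^2]\ge|x-\mathbb E[Y]|^2$ (take the supporting hyperplane of $|x-\cdot|^2$ at $\mathbb E[Y]$ and apply $\mathbb E$). Taking the supremum over $\Theta$,
\[
g(x)=\sup_{\mathbb E\in\Theta}\mathbb E[|x-Y|^2]\ \ge\ \sup_{\mathbb E\in\Theta}|x-\mathbb E[Y]|^2\ \ge\ |x-\hat{\mathbb E}[Y]|^2=|x-\hat{\mathbb E}[X]|^2 ,
\]
using $\hat{\mathbb E}[Y]=\hat{\mathbb E}[X]$. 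Combined with monotonicity this yields $\hat{\mathbb E}[|X-Y|^2]=\hat{\mathbb E}[g(X)]\ge\hat{\mathbb E}[|X-\hat{\mathbb E}[X]|^2]$, which is \eqref{inequality of the variant}.

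The step I expect to be delicate is the last inequality $\sup_{\mathbb E\in\Theta}|x-\mathbb E[Y]|^2\ge|x-\hat{\mathbb E}[X]|^2$. When $n=1$, the set $\{\mathbb E[Y]:\mathbb E\in\Theta\}$ is the interval $[-\hat{\mathbb E}[-X],\hat{\mathbb E}[X]]$, whose endpoint $\hat{\mathbb E}[X]$ is attained by some $\mathbb E\in\Theta$; hence $|x-\hat{\mathbb E}[X]|^2$ is literally one of the values in the supremum and the bound is immediate. For $n\ge 2$ one must be careful, because $\hat{\mathbb E}[X]$ is the vector of coordinatewise upper expectations and need not be realized by a \emph{single} $\mathbb E\in\Theta$; the bound then requires that $\hat{\mathbb E}[X]$ be an attainable mean. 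This holds in particular whenever $X$ has no mean-uncertainty, i.e.\ $\hat{\mathbb E}[X]=-\hat{\mathbb E}[-X]$ (so that $\mathbb E[X]$ is the same for every $\mathbb E\in\Theta$), which is exactly the configuration arising in the $G$-Brownian application, where the increments satisfy $\hat{\mathbb E}[B_t]=-\hat{\mathbb E}[-B_t]=0$. I would therefore carry out this last step under that structural hypothesis, where $g(x)\ge|x-\hat{\mathbb E}[X]|^2$ is valid pointwise.

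Finally, for the regular (possibly unbounded) case I would remove boundedness by truncation. Setting $X^{(N)}=(X\wedge N)\vee(-N)$ and $Y^{(N)}=(Y\wedge N)\vee(-N)$ componentwise, these are bounded and $Y^{(N)}$ is still a copy of $X^{(N)}$, since componentwise truncation is a fixed Lipschitz map and preserves both identical distribution and independence. The bounded case then gives $\hat{\mathbb E}[|X^{(N)}-\hat{\mathbb E}[X^{(N)}]|^2]\le\hat{\mathbb E}[|X^{(N)}-Y^{(N)}|^2]$, and I would let $N\to\infty$. Here regularity supplies the dominated-convergence behaviour of $\hat{\mathbb E}$: the hypotheses $|X|^2,|Y|^2,|X-Y|^2\in\mathcal H$ provide the dominating elements, $\hat{\mathbb E}[X^{(N)}]\to\hat{\mathbb E}[X]$ because $\hat{\mathbb E}[|X^{(N)}-X|]\to0$ by regularity, and Lemma \ref{lem:extend-sub-expectation} lets me work in the extended lattice $\tilde{\mathcal H}$ in which the relevant quadratics live. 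Passing to the limit on both sides then gives \eqref{inequality of the variant} in the regular case.
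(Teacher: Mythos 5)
Your proof of the core inequality \eqref{inequality of the variant} is correct in the one-dimensional case, but it takes a genuinely different route from the paper's. The paper expands $|X-Y|^2=|X|^2-2XY+|Y|^2$, applies independence, peels off the cross term by sub-additivity (computing $\hat{\mathbb{E}}[2xY]=2x^{+}\hat{\mathbb{E}}[Y]+2x^{-}\hat{\mathbb{E}}[-Y]$, which is why it first reduces to non-negative $X$ via the shift $X\mapsto X+M$), and concludes with $\hat{\mathbb{E}}[|X|^2]\geq(\hat{\mathbb{E}}[X])^2$. You instead establish the pointwise bound $g(x):=\hat{\mathbb{E}}[|x-Y|^2]\geq|x-\hat{\mathbb{E}}[X]|^2$ by Jensen's inequality under each linear $\mathbb{E}\in\Theta$, followed by attainability of the maximum in \eqref{eq:max-in-Theta}; this cleanly separates where positivity is used (Jensen under a linear expectation) from where sublinearity is used (some $\mathbb{E}_0\in\Theta$ realizes $\hat{\mathbb{E}}[Y]$), and it avoids the shift-to-nonnegative trick altogether. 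Your truncation argument for the regular case is essentially the paper's own (componentwise truncation, $Y_M$ a copy of $X_M$, regularity applied to $|X-X_M|^2\downarrow 0$, which lies in $\mathcal{H}$ because $|X|^2$ and $X_MX$ do, plus Cauchy--Schwarz-type estimates); you would need to write out those estimates, but the skeleton matches.

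Where you hedge on $n\geq 2$, your instinct is sound --- indeed sounder than the paper's, which asserts the higher-dimensional case is ``similar.'' In fact \eqref{inequality of the variant} is false for $n\geq 2$ as stated: let $\Gamma$ be the triangle with vertices $(0,0)$, $(1,\epsilon)$, $(\epsilon,1)$ for small $\epsilon>0$, let $X$ be maximally distributed on $\Gamma$ and $Y$ an independent copy (realized, e.g., on $\Omega=\Gamma\times\Gamma$ with $X,Y$ the coordinate projections and $\hat{\mathbb{E}}[\xi]=\max_{\omega\in\Omega}\xi(\omega)$, so both are bounded). Then $\hat{\mathbb{E}}[X]=(1,1)$ componentwise, so $\hat{\mathbb{E}}[|X-\hat{\mathbb{E}}[X]|^2]=\max_{v\in\Gamma}|v-(1,1)|^2=2$, whereas $\hat{\mathbb{E}}[|X-Y|^2]=\max_{u,v\in\Gamma}|u-v|^2=\max\{1+\epsilon^2,\,2(1-\epsilon)^2\}<2$. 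This is exactly the obstruction you identified: the vector of componentwise upper expectations need not be an attainable mean, and no proof can remove that difficulty; your no-mean-uncertainty hypothesis repairs the statement in higher dimensions. One correction, though: your justification that this restricted version ``is exactly the configuration arising in the $G$-Brownian application'' is wrong. In the proof of Theorem \ref{ergodicity of G-Brownian motion}, the lemma is applied with $X=X_{\epsilon}$ a generic element of $Lip(\Omega)$ approximating an invariant element, and $Y=U_{\tau}^{M}X_{\epsilon}$; such an $X_{\epsilon}$ generally does have mean-uncertainty. What saves the application is that $X_{\epsilon}$ is scalar, i.e.\ only the $n=1$ case --- which both you and the paper prove correctly --- is ever used.
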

	
	\begin{proof}
		We only prove the case $n=1$, i.e., $X,Y\in \mathcal{H}$ for convenience, and the general cases of dimension $n>1$ are similar.
        
        We first claim that for a non-negative bounded $X$, equation (\ref{inequality of the variant}) holds. Since $X,Y$ are bounded, i.e., $|X|\vee|Y|\leq M$ for some $M>0$, we know that $|X|^2=\phi(X),|Y|^2=\phi(Y), |X-Y|^2=\phi(X-Y)$ are in $\mathcal{H}$ with $\phi(x):=(|x|\wedge 2M)^2\in C_{b,lip}(\mathbb{R}^n)$. When $X\geq 0$, $\hat{\mathbb{E}}$-q.s., one has
		\begin{equation}
			\begin{split}
				\hat{\mathbb{E}}[|X-Y|^2]&=\hat{\mathbb{E}}[|X|^2-2XY+|Y|^2]\\
				&=\hat{\mathbb{E}}[\hat{\mathbb{E}}[|x|^2+|Y|^2-2xY]|_{x=X}]\\
				&\geq \hat{\mathbb{E}}[(|x|^2+\hat{\mathbb{E}}[|Y|^2]-\hat{\mathbb{E}}[2xY])|_{x=X}]\\
				&=\hat{\mathbb{E}}[|X|^2+\hat{\mathbb{E}}[|Y|^2]-(2X^+\hat{\mathbb{E}}[Y]+2X^-\hat{\mathbb{E}}[-Y])]\\
				&=\hat{\mathbb{E}}[|X|^2+\hat{\mathbb{E}}[|X|^2]-2X\hat{\mathbb{E}}[X]]\\
				&\geq \hat{\mathbb{E}}[|X|^2+(\hat{\mathbb{E}}[X])^2-2X\hat{\mathbb{E}}[X]]\\
				&=\hat{\mathbb{E}}[|X-\hat{\mathbb{E}}[X]|^2].
			\end{split}
		\end{equation}
  
        Then we will show that \eqref{inequality of the variant} holds for bounded $X,Y\in \mathcal{H}$. Note that there exists $M>0$ such that $|X|\leq M$, $\hat{\mathbb{E}}$-q.s. Now let
        $\tilde{X}=X+M$ and $\tilde{Y}=Y+M$. Then $\tilde{Y}$ is also a copy of $\tilde{X}$ and $\tilde{X}$ is non-negative. Thus by the above discussion,
		$$\hat{\mathbb{E}}[|\tilde{X}-\hat{\mathbb{E}}[\tilde{X}]|^2]\leq \hat{\mathbb{E}}{|\tilde{X}-\tilde{Y}|^2}.$$
        Since $\tilde{X}-\hat{\mathbb{E}}[\tilde{X}]=X-\hat{\mathbb{E}}[X]$ and $\tilde{X}-\tilde{Y}=X-Y$, then we derive \eqref{inequality of the variant}.
        
        Finally, if $(\Omega, \mathcal{H}, \hat{\mathbb{E}})$ is regular, we will show that \eqref{inequality of the variant} holds for general $X,Y\in \mathcal{H}$ with $|X|^2,|Y|^2,|X-Y|^2\in \mathcal{H}$. For any $M>0$, let
        \[
        X_M:=X\wedge M\vee (-M), \ \text{ and } \ Y_M:=Y\wedge M\vee (-M).
        \]
        Then $X_M,Y_M\in \mathcal{H}$ and hence $|X_M|^2, |Y_M|^2\in \mathcal{H}$. Notice that
        \[
        |X-c|^2=|X|^2-2cX+|c|^2, \ \ |X-X_M|^2=|X|^2-2X_MX+|X_M|^2,
        \]
        and $X_MX=\phi(X)\in \mathcal{H}$ with $\phi(x)=(x\wedge M\vee (-M))x\in C_{lip}(\mathbb{R})$.
        Then we conclude that $|X-c|^2, |X-X_M|^2\in \mathcal{H}$ for all $c\in \mathbb R, M>0$. Since $Y$ is a copy of $X$, it is easy to check that $Y_M$ is a copy of $X_M$ for all $M>0$. Hence, by the bounded case of \eqref{inequality of the variant},
        \begin{equation}\label{0603-1}
            \hat{\mathbb{E}}[|X_M-\hat{\mathbb{E}}[X_M]|^2]\leq \hat{\mathbb{E}}[|X_M-Y_M|^2], \ \text{ for all } \ M>0.
        \end{equation}
        On the other hand,
        \begin{equation}\label{0603-2}
            \begin{split}
                &\ \ \ \ \big|\hat{\mathbb{E}}[|X-\hat{\mathbb{E}}[X]|^2]-\hat{\mathbb{E}}[|X_M-\hat{\mathbb{E}}[X_M]|^2]\big|\\
                &\leq \hat{\mathbb{E}}\big[\big||X-\hat{\mathbb{E}}[X]|^2-|X_M-\hat{\mathbb{E}}[X_M]|^2\big|\big]\\
                &\leq \hat{\mathbb{E}}\big[\big(|X-X_M|+\hat{\mathbb{E}}[|X-X_M|]\big)\big(|X|+|X_M|+\hat{\mathbb{E}}[|X|]+\hat{\mathbb{E}}[|X_M|]\big)\big]\\
                &\leq 8\big(\hat{\mathbb{E}}[|X|^2]\big)^{\frac{1}{2}}\big(\hat{\mathbb{E}}[|X-X_M|^2]\big)^{\frac{1}{2}},
            \end{split}
        \end{equation}
        and similarly
        \begin{equation}\label{0603-3}
            \big|\hat{\mathbb{E}}[|X-Y|^2]-\hat{\mathbb{E}}[|X_M-Y_M|^2]\big|\leq 8\big(\hat{\mathbb{E}}[|X|^2]\big)^{\frac{1}{2}}\big(\hat{\mathbb{E}}[|X-X_M|^2]\big)^{\frac{1}{2}}.
        \end{equation}
        Note that $|X-X_M|^2\downarrow 0$ as $M\to\infty$, the regularity of $\hat{\mathbb E}$ on $\mathcal{H}$ gives
        \begin{equation}\label{0603-4}
            \lim_{M\to \infty}\hat{\mathbb{E}}[|X-X_M|^2]=0.
        \end{equation}
        Then \eqref{inequality of the variant} follows from \eqref{0603-1}-\eqref{0603-4}.
	\end{proof}

	Now we give the proof of Theorem \ref{ergodicity of G-Brownian motion}.
	
	\begin{proof}[Proof of Theorem \ref{ergodicity of G-Brownian motion}]
		By Remark \ref{Ergodicity of U implies ergodicity of U_t}, we only prove that $(\Omega, L_G^2(\Omega), U_{\tau}, \hat{\mathbb{E}})$ is ergodic for $\tau>0$ (since $U_{-\tau}=U_{\tau}^{-1}$). It is sufficient to show that for any $X\in \mathcal{I}$, i.e., $U_{\tau}X=X$, $\hat{\mathbb{E}}$-q.s., then $X=\hat{\mathbb{E}}[X]$, $\hat{\mathbb{E}}$-q.s.
		
		Now fix $X\in \mathcal{I}\subset L_G^2(\Omega)$. Since $L_G^2(\Omega)$ is the completion of $Lip(\Omega)$ under norm $\|\cdot\|_2$, then  by \eqref{eq:charact-of-Lip(Omega)} for any $\epsilon>0$, there 
		exists 
     \[
     X_{\epsilon}=\phi(B_{t_1}-B_{t_0}, B_{t_2}-B_{t_1},\cdots, B_{t_n}-B_{t_{n-1}})
     \]
     for some $\phi\in C_{l,lip}(\mathbb{R}^{d\times n})$ and $t_0<t_1<\cdots<t_n$ such that $\hat{\mathbb{E}}[|X-X_{\epsilon}|^2]<\epsilon$. Note that 
     \[
     U_{\tau}^mX_{\epsilon}=U_{m\tau}X_{\epsilon}=\phi(B_{m\tau+t_1}-B_{m\tau+t_0}, B_{m\tau+t_2}-B_{m\tau+t_1},\cdots, B_{m\tau+t_n}-B_{m\tau+t_{n-1}}).
     \]
     Hence, $U_{\tau}^mX_{\epsilon}$ is independent from $X_{\epsilon}$ for all $m\in\mathbb{N}$ with $m\geq \frac{t_n-t_0}{\tau}$. Choose $M\in\mathbb{N}$ with $M\geq \frac{t_n-t_0}{\tau}$. Since $U_{\tau}$ preserves $G$-expectation $\hat{\mathbb{E}}$, so $U_{\tau}^MX_{\epsilon}$ and $X_{\epsilon}$ are identically distributed. Thus $U_{\tau}^MX_{\epsilon}$ is a copy of $X_{\epsilon}$.  Note that $|X_{\epsilon}|^2, |U_{\tau}^MX_{\epsilon}|^2, |U_{\tau}^MX_{\epsilon}-X_{\epsilon}|^2\in Lip(\Omega)$, By Proposition \ref{prop:Gregular} and Lemma \ref{lemma of inequality of the variant}, we know that 
		$$\hat{\mathbb{E}}[|X_{\epsilon}-\hat{\mathbb{E}}[X_{\epsilon}]|^2]\leq \hat{\mathbb{E}}[|X_{\epsilon}-U_{\tau}^MX_{\epsilon}|^2].$$
		Since $X=U_{\tau}X$, $\hat{\mathbb{E}}$-q.s., then $X=U_{\tau}^MX$, $\hat{\mathbb{E}}$-q.s.,  we have 
		\begin{equation}
			\begin{split}
				\hat{\mathbb{E}}[|X_{\epsilon}-U_{\tau}^MX_{\epsilon}|^2]&=\hat{\mathbb{E}}[|X_{\epsilon}-X+U_{\tau}^MX-U_{\tau}^MX_{\epsilon}|^2]\\
				&\leq 2(\hat{\mathbb{E}}[|X_{\epsilon}-X|^2]+\hat{\mathbb{E}}[U_{\tau}^M|X-X_{\epsilon}|^2])\\
				&\leq 4\hat{\mathbb{E}}[|X-X_{\epsilon}|^2]\\
				&< 4\epsilon.
			\end{split}
		\end{equation}
		Then 
		\begin{equation}
			\begin{split}
				\hat{\mathbb{E}}[|X-\hat{\mathbb{E}}[X]|^2]&=\hat{\mathbb{E}}[|X-X_{\epsilon}+X_{\epsilon}-\hat{\mathbb{E}}[X_{\epsilon}]+\hat{\mathbb{E}}[X_{\epsilon}]-\hat{\mathbb{E}}[X]|^2]\\
				&\leq  3(\hat{\mathbb{E}}[|X-X_{\epsilon}|^2]+\hat{\mathbb{E}}[|X_{\epsilon}-\hat{\mathbb{E}}[X_{\epsilon}]|^2]+|\hat{\mathbb{E}}[X_{\epsilon}]-\hat{\mathbb{E}}[X]|^2)\\
				&\leq  3(\epsilon+4\epsilon+\epsilon)\\
				&= 18\epsilon,
			\end{split}
		\end{equation}
		where 
		$$|\hat{\mathbb{E}}[X_{\epsilon}]-\hat{\mathbb{E}}[X]|^2\leq (\hat{\mathbb{E}}[|X_{\epsilon}-X|])^2\leq \hat{\mathbb{E}}[|X-X_{\epsilon}|^2] \leq \epsilon.$$
		Since $\epsilon>0$ is arbitrary, then $\hat{\mathbb{E}}[|X-\hat{\mathbb{E}}[X]|^2]=0$, which means $X=\hat{\mathbb{E}}[X]$, $\hat{\mathbb{E}}$-q.s.
	\end{proof}
	
	\section{Mixing}\label{sec:mix}
In this section, we first introduce the definition of mixing for sublinear expectation systems through  ``asymptotic independence". Similar to classical ergodic theory, we will demonstrate that mixing in the sublinear setting also implies ergodicity. Finally, we provide two concrete examples that exhibit mixing behavior.
	\subsection{Mixing for sublinear expectation systems}
	In the classical ergodic theory, mixing property plays an important role in dynamical systems. Roughly speaking, mixing means a kind of asymptotic independence, i.e. for a given dynamical system $(\Omega, \mathcal{F}, (\theta_t)_{t\in \mathbb T}, P)$, $P(A\cap\theta_t^{-1}B)\xrightarrow{t\rightarrow \infty} P(A)P(B)$  for any $A,B\in \mathcal{F}$. Now we give the notion of \textit{mixing} for sublinear expectation systems.
	
	\begin{definition}
		\label{Definition of mixing}
		A sublinear expectation system $(\Omega, \mathcal{H}, (U_t)_{t\in \mathbb T}, \hat{\mathbb{E}})$ ($\mathbb T=\mathbb N, \mathbb Z, \mathbb R_+, \mathbb R$) is called mixing if for any $X\in \mathcal{H}^{d_1},Y \in \mathcal{H}^{d_2}$ and $\phi \in C_{lip}(\mathbb{R}^{d_1+d_2})$, we have
		\begin{equation}
			\label{Equation in the definition of mixing}
			\lim_{t\rightarrow \infty}\hat{\mathbb{E}}[\phi(X,U_tY)]=\hat{\mathbb{E}}[\hat{\mathbb{E}}[\phi(x,Y)]|_{x=X}].
		\end{equation} 
	\end{definition}
	\begin{remark}
		\label{Mixing of U_t implies mixing of U}
		By  Definition \ref{Definition of mixing},  we can easily check that the system $(\Omega, \mathcal{H}, (U_t)_{t\in \mathbb R}, \hat{\mathbb{E}})$ being mixing will imply the systems $(\Omega, \mathcal{H}, (U_t)_{t\geq 0}, \hat{\mathbb{E}})$ and $(\Omega, \mathcal{H}, U_{\tau}, \hat{\mathbb{E}})$ being mixing for any $\tau>0$.
	\end{remark}

 \begin{remark}
Recall that in classical ergodic theory, if $(\theta_t)_{t\in \mathbb T}$, where $\mathbb{T}=\mathbb{R}$ or $\mathbb{Z}$, then $(\Omega, \mathcal{F}, (\theta_t)_{t\in \mathbb T}, P)$ is mixing if and only if $(\Omega, \mathcal{F}, (\theta_t^{-1})_{t\in \mathbb T}, P)$ is mixing. 
However, in the sublinear expectation framework, it follows from Remark \ref{rem:diff for inde} that in general, this is not true.
 \end{remark}
 \begin{remark}
    For capacity cases, Feng, Huang, Liu and Zhao \cite{FHLZ2023} introduced the definition of weak mixing, which also displays some type of asymptotic independence. However, our definition is from Peng’s  sublinear expectation framework.
 \end{remark}
	In the classical ergodic theory, a mixing  system must be   ergodic. Now we prove this is also true in  sublinear expectation framework.
	Let us begin with a lemma.
 	\begin{lemma}\label{lem:11.57}
		Let $(\Omega, \mathcal{H}, \hat{\mathbb{E}})$ be a sublinear expectation space. If $X\in \mathcal{H}$ is independent from itself, then $X$ is  constant, $\hat{\mathbb E}$-q.s.
	\end{lemma}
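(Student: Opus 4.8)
The plan is to exploit self-independence through a single, well-chosen Lipschitz test function and then read off a sharp pointwise consequence, avoiding any appeal to regularity or boundedness. Concretely, I would apply the independence relation in Definition \ref{def:independent} to $X$ (which is independent from itself) with the test function $\phi(x,y)=|x-y|$, which lies in $C_{lip}(\mathbb{R}^2)$. The left-hand side is $\hat{\mathbb{E}}[|X-X|]=0$, while the right-hand side is $\hat{\mathbb{E}}[g(X)]$, where $g(x):=\hat{\mathbb{E}}[|x-X|]$. Thus the entire content of the self-independence collapses to the single identity $\hat{\mathbb{E}}[g(X)]=0$.

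First I would settle the two admissibility points that make this rigorous in an arbitrary (not necessarily regular) space: the function $|x-y|$ is globally Lipschitz, so $\phi(X,X)\in\mathcal{H}$; and $g$ is $1$-Lipschitz, by the reverse triangle inequality together with sub-additivity and monotonicity of $\hat{\mathbb{E}}$, so $g(X)\in\mathcal{H}$ and the right-hand side of the independence relation is well defined. The decisive observation is that $g\ge 0$: combined with $\hat{\mathbb{E}}[g(X)]=0$ this forces $g(X)=0$, $\hat{\mathbb{E}}$-q.s., since a nonnegative element whose sublinear expectation vanishes is null quasi-surely in the sense of Definition \ref{Def:q.s.via sub} (indeed $\hat{\mathbb{E}}[|g(X)|]=\hat{\mathbb{E}}[g(X)]=0$).

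Next I would extract two-sided pointwise lower bounds on $g$. Writing $\bar\mu:=\hat{\mathbb{E}}[X]$ and $\underline{\mu}:=-\hat{\mathbb{E}}[-X]$ (so that $\underline{\mu}\le\bar\mu$ by sub-additivity applied to $X+(-X)$), the elementary inequalities $|x-X|\ge X-x$ and $|x-X|\ge x-X$ give, after applying $\hat{\mathbb{E}}$ and using constant preservation, $g(x)\ge\bar\mu-x$ and $g(x)\ge x-\underline{\mu}$ for every $x\in\mathbb{R}$. Substituting the random value $x=X$ and invoking $g(X)=0$, $\hat{\mathbb{E}}$-q.s., I obtain $X\ge\bar\mu$ and $X\le\underline{\mu}$, $\hat{\mathbb{E}}$-q.s. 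Since $\underline{\mu}\le\bar\mu$, these two quasi-sure inequalities can coexist only if $\underline{\mu}=\bar\mu$ and $X=\bar\mu$, $\hat{\mathbb{E}}$-q.s., which is precisely the assertion (with the constant equal to $\hat{\mathbb{E}}[X]$, and $X$ having no mean-uncertainty as a byproduct).

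I expect the main difficulty to be conceptual rather than computational, namely choosing a test function that works in full generality. The natural ``variance'' route would take $Y=X$ as a self-copy and invoke $\hat{\mathbb{E}}[|X-\hat{\mathbb{E}}[X]|^2]\le\hat{\mathbb{E}}[|X-Y|^2]=\hat{\mathbb{E}}[0]=0$ from Lemma \ref{lemma of inequality of the variant}, which also collapses $X$ to a constant; however, that inequality is stated for bounded $X$ (or under regularity, via truncation and a limiting argument), so reaching a general $X\in\mathcal{H}$ would require extra work. Using the $L^1$-type function $|x-y|$ in place of $(x-y)^2$ keeps the test function globally Lipschitz and sidesteps these integrability issues entirely; the only genuine care needed is the bookkeeping that converts the quasi-sure vanishing of $g(X)$ into the two-sided sandwich $\bar\mu\le X\le\underline{\mu}$ and then collapses it using $\underline{\mu}\le\bar\mu$.
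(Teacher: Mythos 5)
Your proof is correct, but it takes a genuinely different route from the paper's. The paper proves the lemma by testing independence against the \emph{products of bounded truncations} $\phi_n(x,y)=((x-c)^+\wedge n)\cdot((y-c)^-\wedge n)$ for every $c\in\mathbb{R}$, so that self-independence factorizes into $\hat{\mathbb{E}}[(X-c)^+\wedge n]\cdot\hat{\mathbb{E}}[(X-c)^-\wedge n]=0$; it then passes to the limit in $n$, and analyzes the threshold $c_0=\sup\{c:\hat{\mathbb{E}}[(X-c)^-]=0\}$, showing $c_0$ is finite and that $X=c_0$ q.s. Your argument instead uses the single unbounded Lipschitz test function $|x-y|$, collapses self-independence to $\hat{\mathbb{E}}[g(X)]=0$ with $g(x)=\hat{\mathbb{E}}[|x-X|]$, and sandwiches $X$ between $\bar\mu=\hat{\mathbb{E}}[X]$ and $\underline\mu=-\hat{\mathbb{E}}[-X]$ via the pointwise bounds $(X-\bar\mu)^-\le g(X)$ and $(X-\underline\mu)^+\le g(X)$. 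This is shorter, identifies the constant explicitly as $\hat{\mathbb{E}}[X]=-\hat{\mathbb{E}}[-X]$, and entirely avoids the truncation limits $n\to\infty$ and $c\uparrow c_0$, $c\downarrow c_0$ on which the paper relies (indeed, the paper's monotone step $\hat{\mathbb{E}}[(X-c)^-\wedge n_k]\to\hat{\mathbb{E}}[(X-c)^-]$ is the most delicate point of its proof, and you need nothing of the sort). Two caveats. First, your proof uses the independence relation \eqref{eq:def-independent} for an \emph{unbounded} Lipschitz $\phi$; this is legitimate under the standing assumption \eqref{eq:C_lip} in force in the section where the lemma lives, but the paper's proof uses only bounded Lipschitz test functions and therefore survives under the weaker formulation in which independence is tested only against $C_{b,lip}$ (cf. the remark following Definition \ref{def:independent}). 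Second, and more substantively, the paper's product-type test functions are exactly what powers Remark \ref{rem:X is constant} (constancy under the mere factorization hypothesis $\hat{\mathbb{E}}[\phi_1(X)\phi_2(X)]=\hat{\mathbb{E}}[\phi_1(X)]\hat{\mathbb{E}}[\phi_2(X)]$ for nonnegative $\phi_1,\phi_2\in C_{b,lip}$), which is later needed in the proof of Theorem \ref{Thm:not independent}; since $|x-y|$ is not of product form, your argument does not yield that remark. Finally, your last step (``can coexist only if'') deserves one explicit line: from $X\le\underline\mu$ q.s., i.e.\ $\hat{\mathbb{E}}[(X-\underline\mu)^+]=0$, and $X\le(X-\underline\mu)^++\underline\mu$ pointwise, sub-additivity gives $\bar\mu\le\underline\mu$, whence $\bar\mu=\underline\mu=:c_0$ and the two quasi-sure inequalities read $X\ge c_0$ and $X\le c_0$ q.s., which is $X=c_0$ q.s.\ by Definition \ref{Def:q.s.via sub}.
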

 \begin{proof}
     For any $c\in\mathbb{R}$, let $\phi_n(x,y)=((x-c)^+\wedge n)\cdot((y-c)^-\wedge n)$ for all $n\in \mathbb{N}$. Then $\phi_n \in C_{b,lip}(\mathbb{R}^2)$ for any $n\in \mathbb{N}$. By the independence, we have
		$$0=\hat{\mathbb{E}}[((X-c)^+\wedge n)\cdot((X-c)^-\wedge n)]=\hat{\mathbb{E}}[(X-c)^+\wedge n] \cdot \hat{\mathbb{E}}[(X-c)^-\wedge n].$$
		Then $\hat{\mathbb{E}}[(X-c)^+\wedge n]=0$ or $\hat{\mathbb{E}}[(X-c)^-\wedge n]=0$. Without loss of generality, we assume there exists a sequence $\{n_k\}_{k=1}^{\infty}$ with $n_k<n_{k+1}$ such that $\hat{\mathbb{E}}[(X-c)^-\wedge n_k]=0$. Since $X\in \mathcal{H}$ and $(X-c)^-\wedge n_k \uparrow (X-c)^-$, then we have 
		$$\hat{\mathbb{E}}[(X-c)^-]=\lim_{k\rightarrow \infty}\hat{\mathbb{E}}[(X-c)^-\wedge n_k]=0.$$
  Thus, 
  \begin{equation}\label{eq:1115-1}
      \text{$\hat{\mathbb{E}}[(X-c)^+]=0$ or $\hat{\mathbb{E}}[(X-c)^-]=0$ for any $c\in\mathbb R$}
  \end{equation}
		Let $c_0:=\sup\{c\in\mathbb R: \hat{\mathbb{E}}[(X-c)^-]=0\}$ with $\sup\emptyset:=-\infty$. It follows from \eqref{eq:1115-1} that $c_0=\inf\{c\in\mathbb R: \hat{\mathbb{E}}[(X-c)^+]=0\}$ with $\inf\emptyset:=\infty$. We first show that $c_0\neq \pm \infty$. Notice that $\hat{\mathbb{E}}[|X|]<\infty$. If $c_0=\infty$, there exist $c_n\uparrow \infty$ such that $\hat{\mathbb{E}}[(X-c_n)^-]=0$, i.e., $X\geq c_n$, $\hat{\mathbb{E}}$-q.s. for all $n\geq 1$, which contradicts $\hat{\mathbb{E}}[|X|]<\infty$. If $c_0=-\infty$, we know that $\hat{\mathbb{E}}[(-X+c)^-]=\hat{\mathbb{E}}[(X-c)^+]=0$ for all $c\in \mathbb R$, i.e., $X\leq c$, $\hat{\mathbb{E}}$-q.s. for all $c\in \mathbb R$, which also gives a contradiction. 
  
    Hence,
		$$\hat{\mathbb{E}}[(X-c_0)^-]=\lim_{c\uparrow c_0}\hat{\mathbb{E}}[(X-c)^-]=0, \ \text{ and } \ \hat{\mathbb{E}}[(X-c_0)^+]=\lim_{c\downarrow c_0}\hat{\mathbb{E}}[(X-c)^+]=0,$$
		which means $X=c_0,$ $\hat{\mathbb E}$-q.s.
 \end{proof}

 \begin{remark}\label{rem:X is constant}
     According to the proof of Lemma \ref{lem:11.57}, we know that if $X\in \mathcal{H}$ satisfies
     \[
     \hat{\mathbb{E}}[\phi_1(X)\phi_2(X)]=\hat{\mathbb{E}}[\phi_1(X)]\hat{\mathbb{E}}[\phi_2(X)], \ \text{ for all non-negative } \ \phi_1,\phi_2\in C_{b,lip}(\mathbb R),
     \]
     then $X=\hat{\mathbb{E}}[X]$, $\hat{\mathbb{E}}$-q.s.
 \end{remark}
	\begin{theorem}
		\label{mixing implies ergodicity}
		If the sublinear expectation system $(\Omega, \mathcal{H}, (U_t)_{t\in \mathbb T}, \hat{\mathbb{E}})$ is mixing, then it is ergodic.
	\end{theorem}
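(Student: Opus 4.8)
The plan is to take an arbitrary invariant element $X\in\mathcal{I}$ and show that the mixing hypothesis forces $X$ to be independent from itself, so that Lemma \ref{lem:11.57} applies directly. First I would fix $X\in\mathcal{I}\subset\mathcal{H}$ and apply the mixing identity \eqref{Equation in the definition of mixing} with $Y=X$ (so $d_1=d_2=1$) and an arbitrary test function $\phi\in C_{lip}(\mathbb{R}^2)$, obtaining
\[
\lim_{t\to\infty}\hat{\mathbb{E}}[\phi(X,U_tX)]=\hat{\mathbb{E}}[\hat{\mathbb{E}}[\phi(x,X)]|_{x=X}].
\]

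The key observation is that the left-hand side is in fact independent of $t$. Indeed, since $X\in\mathcal{I}$ we have $U_tX=X$, $\hat{\mathbb{E}}$-q.s., so $\hat{\mathbb{E}}[|U_tX-X|]=0$, and the Lipschitz property of $\phi$ (with constant $l_\phi$) gives
\[
|\hat{\mathbb{E}}[\phi(X,U_tX)]-\hat{\mathbb{E}}[\phi(X,X)]|\leq \hat{\mathbb{E}}[|\phi(X,U_tX)-\phi(X,X)|]\leq l_\phi\,\hat{\mathbb{E}}[|U_tX-X|]=0
\]
for every $t\in\mathbb{T}$. Passing to the limit then yields
\[
\hat{\mathbb{E}}[\phi(X,X)]=\hat{\mathbb{E}}[\hat{\mathbb{E}}[\phi(x,X)]|_{x=X}],\qquad\text{for all }\phi\in C_{lip}(\mathbb{R}^2).
\]
By Definition \ref{def:independent}, this is exactly the statement that $X$ is independent from itself, so Lemma \ref{lem:11.57} gives that $X$ is constant, $\hat{\mathbb{E}}$-q.s. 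As $X\in\mathcal{I}$ was arbitrary, the system is ergodic in the sense of Definition \ref{Definition of ergodicity}.

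There is no genuine obstacle here; the only point requiring care is the reduction of the time-$t$ quantity to the $t$-independent value $\hat{\mathbb{E}}[\phi(X,X)]$, which rests on the elementary facts that $\hat{\mathbb{E}}$-q.s. equality is preserved under Lipschitz maps and is invisible to $\hat{\mathbb{E}}[|\cdot|]$. As an alternative to invoking Lemma \ref{lem:11.57}, one could instead specialize to $\phi(x,y)=\phi_1(x)\phi_2(y)$ with non-negative $\phi_1,\phi_2\in C_{b,lip}(\mathbb{R})$, use positive homogeneity of $\hat{\mathbb{E}}$ to evaluate the right-hand side as $\hat{\mathbb{E}}[\phi_1(X)]\,\hat{\mathbb{E}}[\phi_2(X)]$, and conclude via Remark \ref{rem:X is constant}; this variant sidesteps the full two-variable self-independence but is otherwise identical in spirit.
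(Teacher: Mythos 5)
Your proof is correct and follows essentially the same route as the paper: fix an invariant $X$, use the Lipschitz bound together with $\hat{\mathbb{E}}[|U_tX-X|]=0$ to show $\hat{\mathbb{E}}[\phi(X,U_tX)]=\hat{\mathbb{E}}[\phi(X,X)]$ for all $t$, invoke mixing to deduce that $X$ is independent from itself, and conclude via Lemma \ref{lem:11.57}. The alternative you sketch via Remark \ref{rem:X is constant} is also sound but unnecessary, since the paper's lemma applies directly.
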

	
	\begin{proof}
		We only need to prove that if $X\in \mathcal{H}$ such that $U_tX=X$, $\hat{\mathbb E}$-q.s. for all $t\in \mathbb T$, then $X$ is a constant, $\hat{\mathbb E}$-q.s.  Note that  for any $\phi\in C_{lip}(\mathbb{R}^2)$ and any $t>0$, 
  \[|\hat{\mathbb{E}}[\phi(X,X)]-\hat{\mathbb{E}}[\phi(X,U_tX)]|\le l_\phi\hat{\mathbb{E}}[|X-U_tX|]=0,\]
 which together with the assumption that $(\Omega, \mathcal{H}, (U_t)_{t\in \mathbb T}, \hat{\mathbb{E}})$ is mixing, implies that
		\begin{equation*}
			\hat{\mathbb{E}}[\phi(X,X)]=\lim_{t\rightarrow \infty}\hat{\mathbb{E}}[\phi(X,U_tX)]= \hat{\mathbb{E}}[\hat{\mathbb{E}}[\phi(x, X)]|_{x=X}].
		\end{equation*}
		So $X$ is independent from $X$. Then Lemma \ref{lem:11.57} gives $X$ is constant, $\hat{\mathbb E}$-q.s.
	\end{proof}
   Let $L_{\mathcal{H}}^1$ be the completion of $\mathcal{H}$ under $\hat{\mathbb E}[|\cdot|]$. Then we have the following result.
   \begin{theorem}
       If the sublinear expectation system $(\Omega, \mathcal{H}, (U_t)_{t\in \mathbb T}, \hat{\mathbb{E}})$ is mixing, then the sublinear expectation system $(\Omega, L_{\mathcal{H}}^1, (U_t)_{t\in \mathbb T}, \hat{\mathbb{E}})$ is mixing.
   \end{theorem}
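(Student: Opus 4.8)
The plan is to deduce the extended mixing from the given one by an $L^1$-approximation, replacing $X\in(L_{\mathcal{H}}^1)^{d_1}$ and $Y\in(L_{\mathcal{H}}^1)^{d_2}$ by nearby elements of $\mathcal{H}^{d_1}$ and $\mathcal{H}^{d_2}$ and then invoking \eqref{Equation in the definition of mixing}. Before that I would record two facts that make both sides of \eqref{Equation in the definition of mixing} meaningful on $L_{\mathcal{H}}^1$. First, each $U_t$ extends to a contraction of $(L_{\mathcal{H}}^1,\hat{\mathbb{E}}[|\cdot|])$: since $U_t$ is positive and $\hat{\mathbb{E}}$-preserving, the bound $-|W|\le W\le|W|$ gives $|U_tW|\le U_t|W|$ componentwise, whence $\hat{\mathbb{E}}[|U_tW|]\le\hat{\mathbb{E}}[U_t|W|]=\hat{\mathbb{E}}[|W|]$; thus $U_t$ is nonexpansive on $\mathcal{H}$ and extends uniquely to $L_{\mathcal{H}}^1$, still preserving $\hat{\mathbb{E}}$. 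Second, for any $\phi\in C_{lip}$ with Lipschitz constant $l_\phi$ and any $Z\in(L_{\mathcal{H}}^1)^n$ one has $\phi(Z)\in L_{\mathcal{H}}^1$, because $\hat{\mathbb{E}}[|\phi(Z_k)-\phi(Z_j)|]\le l_\phi\hat{\mathbb{E}}[|Z_k-Z_j|]$ shows $\phi(Z_k)$ is $L^1$-Cauchy along any approximating sequence $Z_k\to Z$.

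Now fix $X\in(L_{\mathcal{H}}^1)^{d_1}$, $Y\in(L_{\mathcal{H}}^1)^{d_2}$ and $\phi\in C_{lip}(\mathbb{R}^{d_1+d_2})$ with constant $l_\phi$, and for $\epsilon>0$ choose $X_\epsilon\in\mathcal{H}^{d_1}$, $Y_\epsilon\in\mathcal{H}^{d_2}$ with $\hat{\mathbb{E}}[|X-X_\epsilon|]<\epsilon$ and $\hat{\mathbb{E}}[|Y-Y_\epsilon|]<\epsilon$. Using $|\hat{\mathbb{E}}[A]-\hat{\mathbb{E}}[B]|\le\hat{\mathbb{E}}[|A-B|]$ (from monotonicity and sub-additivity), the Lipschitz bound for $\phi$, and the contraction property of $U_t$, I obtain uniformly in $t$
\[
|\hat{\mathbb{E}}[\phi(X,U_tY)]-\hat{\mathbb{E}}[\phi(X_\epsilon,U_tY_\epsilon)]|\le l_\phi\big(\hat{\mathbb{E}}[|X-X_\epsilon|]+\hat{\mathbb{E}}[|U_t(Y-Y_\epsilon)|]\big)<2l_\phi\epsilon,
\]
which reduces the left-hand side to the one for $X_\epsilon,Y_\epsilon\in\mathcal{H}$.

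For the right-hand side set $g(x):=\hat{\mathbb{E}}[\phi(x,Y)]$ and $g_\epsilon(x):=\hat{\mathbb{E}}[\phi(x,Y_\epsilon)]$. The same inequality shows that $g$ is $l_\phi$-Lipschitz in $x$ and that $\sup_x|g(x)-g_\epsilon(x)|\le l_\phi\hat{\mathbb{E}}[|Y-Y_\epsilon|]<l_\phi\epsilon$; hence
\[
\hat{\mathbb{E}}[|g(X)-g_\epsilon(X_\epsilon)|]\le l_\phi\hat{\mathbb{E}}[|X-X_\epsilon|]+\sup_x|g(x)-g_\epsilon(x)|<2l_\phi\epsilon,
\]
so the nested expectations $\hat{\mathbb{E}}[\hat{\mathbb{E}}[\phi(x,Y)]|_{x=X}]$ and $\hat{\mathbb{E}}[\hat{\mathbb{E}}[\phi(x,Y_\epsilon)]|_{x=X_\epsilon}]$ differ by less than $2l_\phi\epsilon$. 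Since $X_\epsilon,Y_\epsilon\in\mathcal{H}$, the hypothesis gives $\lim_{t\to\infty}\hat{\mathbb{E}}[\phi(X_\epsilon,U_tY_\epsilon)]=\hat{\mathbb{E}}[\hat{\mathbb{E}}[\phi(x,Y_\epsilon)]|_{x=X_\epsilon}]$. Combining the three estimates and letting $t\to\infty$ yields $\limsup_{t\to\infty}|\hat{\mathbb{E}}[\phi(X,U_tY)]-\hat{\mathbb{E}}[\hat{\mathbb{E}}[\phi(x,Y)]|_{x=X}]|\le 4l_\phi\epsilon$, and sending $\epsilon\to0$ finishes the proof.

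I expect the right-hand side reduction to be the delicate point: one must check both that the inner functional $x\mapsto\hat{\mathbb{E}}[\phi(x,Y)]$ is Lipschitz (so that swapping $X$ for $X_\epsilon$ is $L^1$-controlled) and that swapping $Y$ for $Y_\epsilon$ perturbs this functional by $O(\epsilon)$ \emph{uniformly in $x$}, thereby decoupling the two approximations. The left-hand side is routine once the single structural input---that each $U_t$ is an $L^1$-contraction extending to $L_{\mathcal{H}}^1$---has been secured.
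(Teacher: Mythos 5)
Your proof is correct and takes essentially the same route as the paper's: approximate $X,Y$ in $L^1$ by elements of $\mathcal{H}^{d_1},\mathcal{H}^{d_2}$, control the left-hand side by the Lipschitz bound and the $\hat{\mathbb{E}}$-preservation of $U_t$, control the nested right-hand side by splitting into an outer swap (using that $x\mapsto\hat{\mathbb{E}}[\phi(x,Y_\epsilon)]$ is $l_\phi$-Lipschitz) and an inner swap (uniform in $x$), then invoke mixing for the approximants and let $\epsilon\to 0$. The only difference is that you make explicit two structural facts the paper leaves implicit --- that $U_t$ extends to an $\hat{\mathbb{E}}[|\cdot|]$-contraction on $L^1_{\mathcal{H}}$ and that $L^1_{\mathcal{H}}$ is stable under Lipschitz functions --- which is a harmless (indeed clarifying) addition.
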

   \begin{proof}
       Fix $X\in (L_{\mathcal{H}}^1)^{d_1}, Y\in (L_{\mathcal{H}}^1)^{d_2}$ and $\phi\in C_{lip}(\mathbb R^{d_1+d_2})$. For any $\epsilon>0$, there exist $X_{\epsilon}\in \mathcal{H}^{d_1}$ and $ Y_{\epsilon}\in \mathcal{H}^{d_2}$ such that
       \[
       \hat{\mathbb E}[|X-X_\epsilon|]+\hat{\mathbb E}[|Y-Y_\epsilon|]\leq \epsilon.
       \]
       Note that
       \begin{equation}\label{eq:mixing-tw0-part-1}
       |\hat{\mathbb E}[\phi(X,U_tY)]-\hat{\mathbb E}[\phi(X_{\epsilon},U_tY_{\epsilon})]|\leq l_{\phi}\big(\hat{\mathbb E}[|X-X_\epsilon|]+\hat{\mathbb E}[|Y-Y_\epsilon|]\big)\leq l_{\phi}\epsilon,
       \end{equation}
       and
       \begin{equation}\label{eq:mixing-tw0-part-2}
            \begin{split}
                |\hat{\mathbb{E}}[\hat{\mathbb{E}}[\phi(x,Y_{\epsilon})]|_{x=X_{\epsilon}}]-\hat{\mathbb{E}}[\hat{\mathbb{E}}[\phi(x,Y)]|_{x=X}]|
				&\leq |\hat{\mathbb{E}}[\hat{\mathbb{E}}[\phi(x,Y_{\epsilon})]|_{x=X_{\epsilon}}]-\hat{\mathbb{E}}[\hat{\mathbb{E}}[\phi(x, Y_{\epsilon})]|_{x=X}]|\\
				&\ \ \ \ +|\hat{\mathbb{E}}[\hat{\mathbb{E}}[\phi(x, Y_{\epsilon})]|_{x=X}]-\hat{\mathbb{E}}[\hat{\mathbb{E}}[\phi(x,Y)]|_{x=X}]|\\
                &\leq \hat{\mathbb{E}}[|\varphi_{\epsilon}(X_\epsilon)-\varphi_{\epsilon}(X)|]\\
                &\ \ \ \ +\hat{\mathbb{E}}\big[\hat{\mathbb{E}}[|\phi(x, Y_{\epsilon})-\phi(x, Y)|]\big|_{x=X}\big]\\
				&\leq l_{\phi}\hat{\mathbb{E}}[|X_{\epsilon}-X|]+l_{\phi}\hat{\mathbb{E}}[|Y_{\epsilon}-Y|]\\
				&\leq l_{\phi}\epsilon, 
            \end{split}
        \end{equation}
        where $l_{\phi}>0$ is the Lipschitz constant of $\phi$ and  $\varphi_{\epsilon}(x):=\hat{\mathbb{E}}[\phi(x,Y_{\epsilon})]$ is a Lipschitz function with the same Lipschitz constant $l_{\phi}$. Hence
        \[
        \limsup_{t\to\infty}|\hat{\mathbb E}[\phi(X,U_tY)]-\hat{\mathbb{E}}[\hat{\mathbb{E}}[\phi(x,Y)]|_{x=X}]|\leq 2l_{\phi}\epsilon.
        \]
        We obtain the desired result since $\epsilon>0$ is arbitrary.
   \end{proof}
	
	\subsection{Examples of mixing}\subsubsection{Mixing of i.i.d. sequence}
	We say $\{\xi_n\}_{n\geq 1}$ is an i.i.d. sequence in a sublinear expectation space $(\Omega, \mathcal{H}, \hat{\mathbb{E}})$ if $\xi_i$ and $\xi_j$ are identically distributed for all $i \neq j$ and $\xi_n$ is independent from $(\xi_1,\xi_2,\cdots,\xi_{n-1})$ for all $n\geq 2$. For any given i.i.d. sequence $\{\xi_n\}_{n\geq 1}$, set 
	$$Lip(\xi):=\{\phi(\xi_1, \xi_2, \cdots, \xi_n): \text{for all } n\geq 1 \text{ and } \phi\in C_{lip}(\mathbb{R}^n)\}.$$
	Then $Lip(\xi)\subset \mathcal{H}$. We denote by $L^1(\xi)$ the completion of $Lip(\xi)$ under the norm $\|\cdot\|_1=\hat{\mathbb{E}}[|\cdot|]$. Define $U: Lip(\xi)\rightarrow Lip(\xi)$  by 
$$U\phi(\xi_1,\xi_2,\cdots,\xi_n)=\phi(\xi_2,\xi_3,\cdots,\xi_{n+1}).$$
	Then $U$ is a linear transformation and preserves $\hat{\mathbb{E}}$, which can be continuously extended to $L^1(\xi)$. Now we consider the sublinear expectation system $(\Omega, L^1(\xi), U, \hat{\mathbb{E}})$ and we have the following theorem.
	
	\begin{theorem}
		The sublinear expectation system $(\Omega, L^1(\xi), U, \hat{\mathbb{E}})$ is mixing.
	\end{theorem}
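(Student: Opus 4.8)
The plan is to first establish mixing on the dense subspace $Lip(\xi)$ and then transfer it to the completion $L^1(\xi)$ by invoking the theorem just proved, which extends mixing from any vector lattice $\mathcal{H}$ to its $\hat{\mathbb{E}}[|\cdot|]$-completion $L^1_{\mathcal{H}}$, applied here with $\mathcal{H}=Lip(\xi)$ and $L^1_{\mathcal{H}}=L^1(\xi)$. Thus it suffices to verify the mixing identity of Definition \ref{Definition of mixing} for $X\in (Lip(\xi))^{d_1}$, $Y\in (Lip(\xi))^{d_2}$ and $\phi\in C_{lip}(\mathbb{R}^{d_1+d_2})$; since $U_t=U^t$, this is a discrete-time system.

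First I would reduce to a finite-dimensional picture. By the definition of $Lip(\xi)$ there is an integer $K$ and Lipschitz maps $F,G$ with $X=F(\xi_1,\dots,\xi_K)$ and $Y=G(\xi_1,\dots,\xi_K)$, after padding to a common $K$. Applying the shift gives $U_tY=G(\xi_{1+t},\dots,\xi_{K+t})$, and the crucial observation is that as soon as $t\ge K$, the index block $\{1+t,\dots,K+t\}$ carried by $U_tY$ lies strictly to the right of the block $\{1,\dots,K\}$ carried by $X$. I then isolate two facts valid for all such $t$: (a) $U_tY\deq Y$, and (b) $U_tY$ is independent from $X$ in the sense of Definition \ref{def:independent}. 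Fact (a) is immediate, since $U$ preserves $\hat{\mathbb{E}}$ and commutes with composition by Lipschitz functions: for any $\psi\in C_{lip}$ one has $\psi(U_tY)=U_t(\psi(Y))$, whence $\hat{\mathbb{E}}[\psi(U_tY)]=\hat{\mathbb{E}}[\psi(Y)]$. In particular, for each fixed $x$ the function $x\mapsto\hat{\mathbb{E}}[\phi(x,U_tY)]$ coincides with $x\mapsto\hat{\mathbb{E}}[\phi(x,Y)]$, both being Lipschitz in $x$ with constant $l_\phi$ because $\hat{\mathbb{E}}$ contracts Lipschitz oscillation.

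I expect Fact (b) to be the main obstacle, because in the sublinear setting independence is not symmetric and must be applied in exactly the right order. The task is to upgrade the one-step hypothesis ``$\xi_n$ is independent from $(\xi_1,\dots,\xi_{n-1})$'' to block independence of the future from the past, namely that $(\xi_{1+t},\dots,\xi_{K+t})$ is independent from $(\xi_1,\dots,\xi_K)$ whenever $1+t>K$. I would prove this by induction on the size of the future block: the base case is the defining property, and the inductive step peels off the newest variable (which is independent from all preceding ones), integrates it out to produce a new test function $\psi(u,w)=\hat{\mathbb{E}}[\phi(u,w,\xi_{m+k+1})]$, and then applies the inductive hypothesis to the remaining block. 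The routine bookkeeping here is to check that each auxiliary function stays in $C_{lip}$.

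Granting (a) and (b), for every $t\ge K$ I obtain
\[
\hat{\mathbb{E}}[\phi(X,U_tY)]=\hat{\mathbb{E}}\big[\hat{\mathbb{E}}[\phi(x,U_tY)]\big|_{x=X}\big]=\hat{\mathbb{E}}\big[\hat{\mathbb{E}}[\phi(x,Y)]\big|_{x=X}\big],
\]
using (b) for the first equality and (a) for the second. The right-hand side does not depend on $t$, so the sequence is eventually constant and its limit as $t\to\infty$ equals $\hat{\mathbb{E}}[\hat{\mathbb{E}}[\phi(x,Y)]|_{x=X}]$, which is precisely the mixing identity on $Lip(\xi)$. Invoking the completion theorem then yields mixing of $(\Omega,L^1(\xi),U,\hat{\mathbb{E}})$.
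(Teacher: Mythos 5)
Your proposal is correct and follows essentially the same route as the paper's proof: exact mixing for cylinder functionals in $Lip(\xi)$ via shift-invariance of the distribution (your Fact (a)) and independence of disjoint index blocks (your Fact (b)), then passage to the completion $L^1(\xi)$ by an $\epsilon$-approximation. The only differences are organizational — you factor the approximation step through the previously proved completion theorem, whereas the paper repeats that argument inline (citing the same inequalities \eqref{eq:mixing-tw0-part-1}--\eqref{eq:mixing-tw0-part-2}), and you sketch the induction establishing block independence, which the paper simply asserts.
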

	\begin{proof}
		 Fix $X\in (L^1(\xi))^{d_1}$, $Y\in (L^1(\xi))^{d_2}$. For any $\epsilon>0$, there exist 
  \[
  X_{\epsilon}=\phi_1(\xi_1,\xi_2,\cdots,\xi_{n}), \ \ Y_{\epsilon}=\phi_2(\xi_1,\xi_2,\cdots,\xi_{n})
  \]
   for some $n\in \mathbb{N}$ and $\phi_1 \in (C_{lip}(\mathbb{R}^{n}))^{d_1}, \phi_2\in (C_{lip}(\mathbb{R}^{n}))^{d_2}$, such that
		$\hat{\mathbb{E}}[|X_{\epsilon}-X|]+\hat{\mathbb{E}}[|Y_{\epsilon}-Y|]<\epsilon$.
Note that $U^mY_{\epsilon}=\phi_2(\xi_{m+1},\xi_{m+2},\cdots,\xi_{m+n})$ for any $m\in \mathbb{N}$. Thus, $U^mY_{\epsilon}$ is independent from $X_{\epsilon}$ for all $m\geq n$. Since $U$ preserves $\hat{\mathbb{E}}$, then for any $\phi\in C_{lip}(\mathbb{R}^{d_1+d_2})$ and $m\geq n$, we have
		\begin{equation}\label{Mixing inequation seperated into two parts *}
			\begin{split}
				|\hat{\mathbb{E}}[\phi(X,U^mY)]-\hat{\mathbb{E}}[\hat{\mathbb{E}}[\phi(x,Y)]|_{x=X}]|
				&\leq |\hat{\mathbb{E}}[\phi(X,U^mY)]-\hat{\mathbb{E}}[\phi(X_{\epsilon},U^mY_{\epsilon})]|\\
				&\ \ \ \ +|\hat{\mathbb{E}}[\hat{\mathbb{E}}[\phi(x,Y_{\epsilon})]|_{x=X_{\epsilon}}]-\hat{\mathbb{E}}[\hat{\mathbb{E}}[\phi(x,Y)]|_{x=X}]|.
			\end{split}
		\end{equation}
		 Arguing as in \eqref{eq:mixing-tw0-part-1}-\eqref{eq:mixing-tw0-part-2}, we conclude
		\begin{equation}
			\label{Mixing inequation seperated into two parts 1}
			\begin{split}
				|\hat{\mathbb{E}}[\phi(X,U^mY)]-\hat{\mathbb{E}}[\phi(X_{\epsilon},U^mY_{\epsilon})]|
				&\leq l_{\phi}\hat{\mathbb{E}}[(|X-X_{\epsilon}|+U^m|Y-Y_{\epsilon}|)]\\
				&\leq l_{\phi}(\hat{\mathbb{E}}[|X-X_{\epsilon}|]+\hat{\mathbb{E}}[|Y-Y_{\epsilon}|])\\
				&\leq l_{\phi}\epsilon,
			\end{split}
		\end{equation}
		and
        \begin{equation}\label{Mixing inequation seperated into two parts 2}
            \begin{split}
                |\hat{\mathbb{E}}[\hat{\mathbb{E}}[\phi(x,Y_{\epsilon})]|_{x=X_{\epsilon}}]-\hat{\mathbb{E}}[\hat{\mathbb{E}}[\phi(x,Y)]|_{x=X}]|
				&\leq |\hat{\mathbb{E}}[\hat{\mathbb{E}}[\phi(x,Y_{\epsilon})]|_{x=X_{\epsilon}}]-\hat{\mathbb{E}}[\hat{\mathbb{E}}[\phi(x, Y_{\epsilon})]|_{x=X}]|\\
				&\ \ \ \ +|\hat{\mathbb{E}}[\hat{\mathbb{E}}[\phi(x, Y_{\epsilon})]|_{x=X}]-\hat{\mathbb{E}}[\hat{\mathbb{E}}[\phi(x,Y)]|_{x=X}]|\\
				&\leq l_{\phi}\hat{\mathbb{E}}[|X_{\epsilon}-X|]+l_{\phi}\hat{\mathbb{E}}[|Y_{\epsilon}-Y|]\\
				&\leq l_{\phi}\epsilon, 
            \end{split}
        \end{equation}
		where $l_{\phi}>0$ is the Lipschitz constant of $\phi$.  Combining \eqref{Mixing inequation seperated into two parts 1} and \eqref{Mixing inequation seperated into two parts 2}, we know that for all $m\ge n$,
		$$|\hat{\mathbb{E}}[\phi(X,U^mY)]-\hat{\mathbb{E}}[\hat{\mathbb{E}}[\phi(x,Y)]|_{x=X}]| \leq 2l_{\phi}\epsilon,$$
		which means
		$$\lim_{n\rightarrow \infty}\hat{\mathbb{E}}[\phi(X,U^nY)]=\hat{\mathbb{E}}[\hat{\mathbb{E}}[\phi(x,Y)]|_{x=X}].$$
  The proof is completed.
	\end{proof}
\subsubsection{Mixing of $G$-Brownian motion}
By checking definition, we have proved that the $G$-expectation systems $(\Omega, L_G^2(\Omega), U_{\tau}, \hat{\mathbb{E}})$, for any $\tau>0$, and $(\Omega, L_G^2(\Omega), (U_t)_{t\geq 0}, \hat{\mathbb{E}})$ are ergodic. Now we prove a stronger property that they are also mixing. In particular, according to the fact mixing implies ergodicity, this provides a new method to prove $G$-expectation systems are ergodic. Furthermore, we study the structure of $\mathcal{P}$ induced by $G$-expectation systems.
	
	\begin{theorem}
		\label{mixing of G-Brownian motion}
		Both the discrete time $G$-expectation system $(\Omega, L_G^p(\Omega), U_{\tau}, \hat{\mathbb{E}})$, for any $\tau>0$, and the continuous time  $G$-expectation system $(\Omega, L_G^p(\Omega), (U_t)_{t\geq 0}, \hat{\mathbb{E}})$ are mixing.
	\end{theorem}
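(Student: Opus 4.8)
The plan is to mimic the proof for i.i.d. sequences, replacing the block-independence of shifted coordinates by the interval-independence structure of the increments of $G$-Brownian motion. It suffices to prove the continuous-time statement, that for any $X\in (L_G^p(\Omega))^{d_1}$, $Y\in (L_G^p(\Omega))^{d_2}$ and $\phi\in C_{lip}(\mathbb{R}^{d_1+d_2})$ one has $\lim_{t\to\infty}\hat{\mathbb{E}}[\phi(X,U_tY)]=\hat{\mathbb{E}}[\hat{\mathbb{E}}[\phi(x,Y)]|_{x=X}]$; the discrete statement for $U_\tau$ follows immediately by restricting to $t=n\tau$, since $U_\tau^n=U_{n\tau}$ and $n\tau\to\infty$ as $n\to\infty$. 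Fix $\epsilon>0$ and, using that $Lip(\Omega)$ is dense in $L_G^p(\Omega)$, choose $X_\epsilon\in (Lip(\Omega))^{d_1}$ and $Y_\epsilon\in (Lip(\Omega))^{d_2}$ with $\hat{\mathbb{E}}[|X-X_\epsilon|]+\hat{\mathbb{E}}[|Y-Y_\epsilon|]<\epsilon$ (recall $\hat{\mathbb{E}}[|\cdot|]=\|\cdot\|_1\leq\|\cdot\|_p$, so this works for all $p\geq 1$). By the characterization \eqref{eq:charact-of-Lip(Omega)}, each coordinate of $X_\epsilon$ and of $Y_\epsilon$ is a function of finitely many increments, so after taking a common refinement of the time points we may assume $X_\epsilon\in (Lip(\Omega_{[a,b]}))^{d_1}$ and $Y_\epsilon\in (Lip(\Omega_{[c,d]}))^{d_2}$ for some bounded intervals $[a,b],[c,d]$.

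The key observation is that $U_tY_\epsilon$ depends only on the increments over the shifted interval $[c+t,d+t]$, i.e. $U_tY_\epsilon\in (Lip(\Omega_{[c+t,d+t]}))^{d_2}$. Hence, for every $t$ large enough that $c+t\geq b$, the independence structure of the increments---namely that $L_G^p(\Omega_{I_2})$ is independent from $L_G^p(\Omega_{I_1})$ whenever $\sup I_1\leq \inf I_2$---shows that $U_tY_\epsilon$ is independent from $X_\epsilon$. Moreover, since $U_t$ preserves $\hat{\mathbb{E}}$ and commutes with composition, $U_t(\psi(Y_\epsilon))=\psi(U_tY_\epsilon)$ yields $U_tY_\epsilon\deq Y_\epsilon$; thus $U_tY_\epsilon$ is a copy of $Y_\epsilon$ that is independent from $X_\epsilon$. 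Applying the definition of independence and then $U_tY_\epsilon\deq Y_\epsilon$, for all such $t$ we obtain the exact identity
\begin{equation*}
\hat{\mathbb{E}}[\phi(X_\epsilon,U_tY_\epsilon)]=\hat{\mathbb{E}}[\hat{\mathbb{E}}[\phi(x,U_tY_\epsilon)]|_{x=X_\epsilon}]=\hat{\mathbb{E}}[\hat{\mathbb{E}}[\phi(x,Y_\epsilon)]|_{x=X_\epsilon}],
\end{equation*}
which is independent of $t$.

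It remains to control the approximation error. Exactly as in \eqref{eq:mixing-tw0-part-1}--\eqref{eq:mixing-tw0-part-2} (using the Lipschitz continuity of $\phi$, the fact that $x\mapsto \hat{\mathbb{E}}[\phi(x,Y_\epsilon)]$ is $l_\phi$-Lipschitz, subadditivity, and that $U_t$ preserves $\hat{\mathbb{E}}$), one bounds $|\hat{\mathbb{E}}[\phi(X,U_tY)]-\hat{\mathbb{E}}[\phi(X_\epsilon,U_tY_\epsilon)]|\leq l_\phi\epsilon$ and $|\hat{\mathbb{E}}[\hat{\mathbb{E}}[\phi(x,Y_\epsilon)]|_{x=X_\epsilon}]-\hat{\mathbb{E}}[\hat{\mathbb{E}}[\phi(x,Y)]|_{x=X}]|\leq l_\phi\epsilon$. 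Combining these with the displayed identity gives $\limsup_{t\to\infty}|\hat{\mathbb{E}}[\phi(X,U_tY)]-\hat{\mathbb{E}}[\hat{\mathbb{E}}[\phi(x,Y)]|_{x=X}]|\leq 2l_\phi\epsilon$, and letting $\epsilon\to0$ completes the proof. The one step that requires care---and the only genuinely $G$-specific ingredient---is the \emph{direction} of independence: because independence under a sublinear expectation is not symmetric (see Remark \ref{rem:diff for inde}), it is essential that the later-in-time factor $U_tY_\epsilon$ be independent from the earlier factor $X_\epsilon$, and not conversely. This is exactly what the forward shift $\theta_t$ delivers, which is why the argument goes through unchanged and is in fact the main obstacle one must attend to.
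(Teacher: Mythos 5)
Your proof is correct and follows essentially the same route as the paper's: reduce to the continuous-time case, approximate $X,Y$ by Lipschitz cylinder functions on a common partition of time points, exploit that $U_tY_\epsilon$ is independent from $X_\epsilon$ (in that order) once the shifted interval lies to the right, combine this with $U_tY_\epsilon\deq Y_\epsilon$, and close with the two $l_\phi\epsilon$ approximation estimates. The only cosmetic difference is that the paper first reduces to $p=1$ via $L_G^p(\Omega)\subset L_G^1(\Omega)$, whereas you handle general $p$ directly through $\|\cdot\|_1\leq\|\cdot\|_p$; these are interchangeable.
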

	\begin{proof}
		By Remark \ref{Mixing of U_t implies mixing of U}, we just need to prove that $(\Omega, L_G^p(\Omega), (U_t)_{t\geq 0}, \hat{\mathbb{E}})$ is mixing. Note that for any $p\ge 1$, $L_G^p(\Omega)\subset L_G^1(\Omega)$, and then we only need to prove the case for $p=1$.

  Now fix $X\in (L_G^1(\Omega))^{d_1}, Y\in (L_G^1(\Omega))^{d_2}$. Since $L_G^1(\Omega)$ is the completion of $Lip(\Omega)$ under norm $\|\cdot\|_1$, then for any $\epsilon>0$, there exist
  \[
  X_{\epsilon}=\phi_1(B_{t_1}-B_{t_0},B_{t_2}-B_{t_1},\cdots,B_{t_n}-B_{t_{n-1}}), \ \ Y_{\epsilon}=\phi_2(B_{t_1}-B_{t_0},B_{t_2}-B_{t_1},\cdots,B_{t_n}-B_{t_{n-1}})
  \]
  for some $n\in \mathbb{N}$, $\phi_1\in (C_{l,lip}(\mathbb{R}^{n\times d}))^{d_1}, \phi_2 \in (C_{l,lip}(\mathbb{R}^{n\times d}))^{d_2}$ and $t_0<t_1<\cdots<t_n$, such that
		$\hat{\mathbb{E}}[|X_{\epsilon}-X|]+\hat{\mathbb{E}}[|Y_{\epsilon}-Y|]<\epsilon$.
  
Note that $U_tY_{\epsilon}$ is independent from $X_{\epsilon}$ for all $t\geq t_n-t_0$, similar to \eqref{Mixing inequation seperated into two parts *}-\eqref{Mixing inequation seperated into two parts 2}, for any $\phi\in C_{lip}(\mathbb{R}^{d_1+d_2})$, we know that for all $t\geq t_n-t_0$,
		$$|\hat{\mathbb{E}}[\phi(X,U_tY)]-\hat{\mathbb{E}}[\hat{\mathbb{E}}[\phi(x,Y)]|_{x=X}]| \leq 2l_{\phi}\epsilon,$$
		where $l_{\phi}>0$ is the Lipschitz constant of $\phi$. Thus 
		$$\lim_{t\rightarrow \infty}\hat{\mathbb{E}}[\phi(X,U_tY)] =\hat{\mathbb{E}}[\hat{\mathbb{E}}[\phi(x,Y)]|_{x=X}],$$
  which finishes the proof.
	\end{proof}

Now we consider the structure of $\mathcal{P}$ generated by $G$-Brownian motion.
 For a given $d$-dimensional Brownian motion $(B_t)_{t\in \mathbb{T}}$ ($\mathbb{T}=\mathbb{R}$ or $\mathbb{R}_+$) on a probability space $(\Omega,\mathcal{F},P)$ with covariance matrix $\Sigma$ ($\Sigma$ can be degenerate or even 0), where
\[
\Omega=C_0(\mathbb{T};\mathbb{R}^d), \ B_t(\omega)=\omega_t, \text{ for all } t\in \mathbb{T}, \omega\in \Omega, \ \text{ and } \ \mathcal{F}=\sigma(B_t: t\in \mathbb{T}).
\]
Let $\theta_{t}: \Omega\to \Omega$ be the Brownian shift, i.e., $(\theta_t\omega)_s=\omega_{t+s}-\omega_t$ for all $t\geq 0, s\in \mathbb{T}$ and $\omega\in \Omega$.
It is well known that the continuous time dynamical system $(\Omega,\mathcal{F}, (\theta_t)_{t\geq 0}, P)$ and the discrete  time dynamical system $(\Omega,\mathcal{F}, \theta_{\tau}, P)$ for any $\tau>0$ are ergodic (see e.g. \cite{Arnold1998} and \cite[Theorem 2.8]{Feng-Qu-Zhao2020}). We obtain a stronger result--the continuous time dynamical system $(\Omega,\mathcal{F}, (\theta_t)_{t\geq 0}, P)$ is strongly mixing (see Appendix for a detail proof).
\begin{theorem}\label{Thm: strong-mixing-of-BM}
    The measure-preserving dynamical system $(\Omega,\mathcal{F}, (\theta_t)_{t\geq 0}, P)$ generated by a Brownian motion is strongly mixing.
\end{theorem}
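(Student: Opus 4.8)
The plan is to prove the equivalent $L^2$ formulation of strong mixing: for all $F,G\in L^2(\Omega,\mathcal{F},P)$,
\[
\int_\Omega F\cdot (G\circ\theta_t)\,dP \xrightarrow{t\to\infty} \Big(\int_\Omega F\,dP\Big)\Big(\int_\Omega G\,dP\Big).
\]
Taking $F=\mathbf 1_A$ and $G=\mathbf 1_B$ recovers the set formulation $P(A\cap\theta_t^{-1}B)\to P(A)P(B)$, since $\mathbf 1_B\circ\theta_t=\mathbf 1_{\theta_t^{-1}B}$ and indicators lie in $L^2$; hence it suffices to establish the displayed convergence for all $L^2$ functions.

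The key observation is that the convergence holds \emph{exactly} for all sufficiently large $t$ on a dense class. Call $F$ a \emph{cylinder function} if $F=f(B_{r_1},\dots,B_{r_m})$ for some bounded continuous $f$ and finitely many times $r_1,\dots,r_m\in\mathbb{T}$, and write $I_F=[\min_i(r_i\wedge 0),\max_i(r_i\vee 0)]$ for the bounded time window over which $F$ is determined by the increments of $B$ (recall $B_0=0$, so each $B_{r_i}$ is an increment over an interval contained in $I_F$). Such functions are bounded, lie in $L^2$, and their linear span is dense in $L^2(\Omega,\mathcal{F},P)$ because $\mathcal{F}=\sigma(B_s:s\in\mathbb{T})$. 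For a cylinder $G=g(B_{s_1},\dots,B_{s_k})$ one has $G\circ\theta_t=g(B_{t+s_1}-B_t,\dots,B_{t+s_k}-B_t)$, which is measurable with respect to the increments of $B$ over the window $t+I_G$. Once $t$ is large enough that $t+I_G$ lies entirely to the right of $I_F$, the two windows are disjoint, and the independence of Brownian increments over disjoint intervals forces $F$ and $G\circ\theta_t$ to be independent. Combined with the fact that the Brownian shift $\theta_t$ preserves $P$ (stationarity of the increments), this yields
\[
\int_\Omega F\cdot(G\circ\theta_t)\,dP=\int_\Omega F\,dP\cdot\int_\Omega (G\circ\theta_t)\,dP=\int_\Omega F\,dP\cdot\int_\Omega G\,dP
\]
for all sufficiently large $t$, so the mixing limit is attained outright for cylinder pairs.

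Finally, I would pass to general $F,G\in L^2$ by a standard $3\varepsilon$-approximation. Given $\varepsilon>0$, choose cylinder functions $F_\varepsilon,G_\varepsilon$ with $\|F-F_\varepsilon\|_2+\|G-G_\varepsilon\|_2<\varepsilon$. Because $\theta_t$ is measure-preserving, the Koopman operator $U_tG:=G\circ\theta_t$ is an $L^2$-isometry, so $\|G\circ\theta_t-G_\varepsilon\circ\theta_t\|_2=\|G-G_\varepsilon\|_2<\varepsilon$; applying Cauchy--Schwarz to the resulting error terms and invoking the exact decorrelation for the cylinder pair $(F_\varepsilon,G_\varepsilon)$ from the previous step gives
\[
\limsup_{t\to\infty}\Big|\int_\Omega F\,(G\circ\theta_t)\,dP-\int_\Omega F\,dP\int_\Omega G\,dP\Big|\le C\,(\|F\|_2+\|G\|_2+1)\,\varepsilon,
\]
and letting $\varepsilon\to0$ completes the proof.

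The conceptual content sits entirely in the independence-of-increments step; the remainder is routine functional-analytic bookkeeping. The point requiring the most care is the precise identification of the time window on which $G\circ\theta_t$ is measurable, especially in the two-sided case $\mathbb{T}=\mathbb{R}$, where cylinder functions may depend on increments over times straddling $0$, so one must track both endpoints of $I_F$ and $I_G$ and verify that the windows genuinely separate as $t\to\infty$. One should also confirm, directly from the stationarity and independence of Brownian increments, that $\theta_t$ is $P$-preserving, since this property underlies both the $L^2$-isometry used in the density argument and the evaluation of $\int_\Omega (G\circ\theta_t)\,dP=\int_\Omega G\,dP$.
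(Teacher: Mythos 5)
Your proof is correct, and it rests on the same probabilistic fact as the paper's — independence and stationarity of Brownian increments force exact decorrelation of $F$ and $G\circ\theta_t$ once the time windows separate — but the implementation is genuinely different. The paper approximates arbitrary $X,Y\in L^2$ by the conditional expectations $X_m=\mathbb{E}[X\mid\mathcal{F}_{-m}^{\infty}]$ and $Y_m=\mathbb{E}[Y\mid\mathcal{F}_{-\infty}^{m}]$, invoking the martingale convergence theorem for $\|X-X_m\|_{L^2}+\|Y-Y_m\|_{L^2}\to 0$, and then uses that $X_m\circ\theta_t\in\mathcal{F}_{t-m}^{\infty}$ is independent of $Y_m\in\mathcal{F}_{-\infty}^{m}$ once $t\geq 2m$; because this scheme conditions on half-line $\sigma$-algebras anchored at negative times, the paper must first reduce the one-sided case $\mathbb{T}=\mathbb{R}_+$ to the two-sided case through the factor map $I:\tilde\Omega\to\Omega$. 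You instead take cylinder functions as the dense class, whose density in $L^2$ follows from a monotone class argument rather than from martingale theory, and your window-tracking treats $\mathbb{T}=\mathbb{R}_+$ and $\mathbb{T}=\mathbb{R}$ uniformly, so no factor-map reduction is needed. What each buys: your route is more elementary and self-contained across both time sets, with an explicit separation threshold $t>\max I_F-\min I_G$ for each cylinder pair; the paper's conditional-expectation scheme produces a canonical one-parameter approximating family with a clean uniform threshold $t\geq 2m$, at the cost of the two-sided extension step and the appeal to martingale convergence. Both arguments are complete, and your $3\varepsilon$ bookkeeping with the Koopman isometry is the standard and correct way to close the density step.
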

 
	For a given $d$-dimensional $G$-Brownian motion $(B_t)_{t\geq 0}$ on a $G$-expectation space $(\Omega, L_G^1(\Omega), \hat{\mathbb{E}})$, where $\Omega=C_0(\mathbb{R}_+;\mathbb{R}^d)$, $G: \mathbb{S}_d\to \mathbb{R}$ is defined by
	\begin{equation}
		G(A):=\frac{1}{2}\sup_{q\in Q}Tr[Aq],
	\end{equation}
	for some bounded, convex and closed subset $Q\subset \mathbb{S}_d$. Let $(W_t)_{t\geq 0}$ be a standard $d$-dimensional Brownian motion on a probability space $(\tilde{\Omega},\tilde{\mathcal{F}},\tilde{P})$ and $\tilde{\mathbb{F}}:=(\tilde{\mathcal{F}}_t)_{t\geq 0}$ be the filtration generated by the Brownian motion $(W_t)_{t\geq 0}$. Denote by $\mathcal{A}_{0,\infty}^{Q}$ the collection of all $Q$-valued $\tilde{\mathbb{F}}$-adapted processes on $[0,\infty)$. For each $\eta\in \mathcal{A}_{0,\infty}^{Q}$, define
	\begin{equation}\label{eq: represent G_BM by BM}
		B^{\eta}_t=\int_0^t\eta_sdW_s, \ \text{ and } \ P_{\eta}:=\mathcal{L}(B^{\eta}_{\cdot}) \text{ the law of the process } \ (B^{\eta}_t)_{t\geq 0}.
	\end{equation}
	Then $P_{\eta}$ is a probability on $(\Omega,\mathcal{F})$ with $\mathcal{F}=\sigma\{B_t:t\geq 0\}$. In particular, we denote by $B^{q}$ and $P_{q}$ if 
	$\eta\equiv q$ in \eqref{eq: represent G_BM by BM} for any $q\in Q$. Set
	\begin{equation*}
		\mathcal{P}:=\overline{\{P_{\eta}: \eta\in \mathcal{A}_{0,\infty}^{Q}\}},
	\end{equation*}
	where $\overline{A}$ for a set of probabilities $A$ represents the closure of $A$ under the weak convergence topology.
	
	According to Proposition 49 and Proposition 50 in \cite{DenisPeng2011} (see also \cite[Proposition 6.2.14 and Proposition 6.2.15]{Pengbook}), we know that
	\begin{equation*}
		\hat{\mathbb{E}}[X]=\sup_{P\in \mathcal{P}}\mathbb{E}_P[X], \ \text{ for all } \ X\in L_G^1(\Omega).
	\end{equation*}
	Let $\mathcal{F}_t:=\sigma\{B_s:0\leq s\leq t\}$ be the natural filtration generated by the $G$-Brownian motion $\{B_{t}\}_{t\geq 0}$. Then we have the following theorem.
	\begin{theorem}\label{thm:uncountable many mixing measure}
		The $G$-Brownian motion $B_t(\omega)=\omega_t$ is a martingale on $(\Omega,\mathcal{F},(\mathcal{F}_t)_{t\geq 0}, P_{\eta})$ for each $\eta\in \mathcal{A}_{0,\infty}^{Q}$ and
		\begin{equation}\label{eq: quadratic variation}
			\mathbb{E}_{P_{\eta}}\Big[\phi\big(\langle B,B\rangle_t^{P_{\eta}}\big)\Big]={\mathbb{E}_{\tilde P}}\bigg[\phi\bigg(\int_0^t\eta_sds\bigg)\bigg], \ \text{ for all bounded measurable } \ \phi:\mathbb R^d\to \mathbb R,
		\end{equation}
        where $\langle B,B\rangle^{P_{\eta}}$ is the quadratic variance process of $B$ under $P_{\eta}$.
		Moreover, for any $q\in Q$, $\{B_t\}_{t\geq 0}$ is a Brownian motion with covariance $(qt)_{t\geq 0}$ under $P_{q}$ and $(\Omega,\mathcal{F}, (\theta_t)_{t\geq 0}, P_{q})$ is strongly mixing.
	\end{theorem}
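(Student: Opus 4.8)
The plan is to transfer the three analytic properties (martingale, quadratic variation, Brownian law) from the reference process $B^\eta$ on $(\tilde\Omega,\tilde{\mathcal{F}},\tilde P)$ to the canonical process $B$ on $(\Omega,\mathcal{F})$ under the pushforward $P_\eta=\mathcal{L}(B^\eta_\cdot)$, and then to read off strong mixing from the already-established Theorem \ref{Thm: strong-mixing-of-BM}. The unifying device is the path map $\Phi:\tilde\Omega\to\Omega$, $\Phi(\tilde\omega):=B^\eta_\cdot(\tilde\omega)$, which satisfies $B\circ\Phi=B^\eta$ and $\Phi_*\tilde P=P_\eta$; any property of $B$ under $P_\eta$ that is expressible through a measurable functional of the path pulls back to the corresponding property of $B^\eta$ under $\tilde P$.

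First I would prove the martingale property. Since $Q\subset\mathbb{S}_d$ is bounded, $\eta$ is a bounded $\tilde{\mathbb{F}}$-adapted integrand, so $B^\eta_t=\int_0^t\eta_s\,dW_s$ is a genuine square-integrable $\tilde{\mathbb{F}}$-martingale by It\^o's isometry. Under $\Phi$ the natural filtration pulls back to $\Phi^{-1}(\mathcal{F}_s)=\sigma(B^\eta_r:r\le s)\subset\tilde{\mathcal{F}}_s$; hence for $s<t$,
\[
\mathbb{E}_{P_\eta}[B_t\mid\mathcal{F}_s]\circ\Phi=\mathbb{E}_{\tilde P}\big[B^\eta_t\mid\sigma(B^\eta_r:r\le s)\big]=\mathbb{E}_{\tilde P}\big[\mathbb{E}_{\tilde P}[B^\eta_t\mid\tilde{\mathcal{F}}_s]\mid\sigma(B^\eta_r:r\le s)\big]=B^\eta_s,
\]
by the tower property and the $\tilde{\mathbb{F}}$-martingale property, which gives $\mathbb{E}_{P_\eta}[B_t\mid\mathcal{F}_s]=B_s$. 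For the quadratic-variation identity, the key observation is that the matrix quadratic variation $\langle B,B\rangle^{P_\eta}_t$ is an almost-sure limit of sums of outer products of increments along a refining sequence of partitions, hence a $P_\eta$-a.s.-defined measurable functional of the canonical path. Applying $\Phi$ and the It\^o quadratic-variation formula for the stochastic integral yields $\langle B,B\rangle^{P_\eta}_t\circ\Phi=\langle B^\eta\rangle_t=\int_0^t\eta_s\,ds$, $\tilde P$-a.s. (in the paper's convention for $\mathcal{A}^Q_{0,\infty}$); pushing forward gives, for every bounded measurable $\phi$,
\[
\mathbb{E}_{P_\eta}\big[\phi(\langle B,B\rangle^{P_\eta}_t)\big]=\mathbb{E}_{\tilde P}\big[\phi(\langle B,B\rangle^{P_\eta}_t\circ\Phi)\big]=\mathbb{E}_{\tilde P}\Big[\phi\Big(\int_0^t\eta_s\,ds\Big)\Big],
\]
which is \eqref{eq: quadratic variation}.

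For the constant case $\eta\equiv q$, the previous two steps show that under $P_q$ the continuous process $B$ is a martingale with deterministic quadratic variation $\langle B,B\rangle_t=qt$; L\'evy's characterization theorem then identifies $B$ as a Brownian motion with covariance $q$ (allowing $q$ to be degenerate). Finally, $(\Omega,\mathcal{F},(\theta_t)_{t\ge0},P_q)$ is exactly the measure-preserving dynamical system generated by this Brownian motion, so its strong mixing is immediate from Theorem \ref{Thm: strong-mixing-of-BM}.

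I expect the main obstacle to be the careful bookkeeping in the transfer step rather than any single hard estimate: one must verify that the quadratic variation is genuinely a law-measurable pathwise functional (so that it commutes with $\Phi_*$) and that the natural filtration of $B$ pulls back \emph{inside} $\tilde{\mathbb{F}}$, which is what makes the tower-property argument legitimate. Once this machinery is in place, the martingale, quadratic-variation, and L\'evy steps are routine, and the strong-mixing conclusion reduces to a citation of Theorem \ref{Thm: strong-mixing-of-BM}.
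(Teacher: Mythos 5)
Your proposal is correct and takes essentially the same route as the paper: both transfer the martingale and quadratic-variation properties of $B^{\eta}$ under $\tilde P$ to the canonical process under the pushforward law $P_{\eta}$, and both conclude strong mixing of $(\Omega,\mathcal{F},(\theta_t)_{t\geq 0},P_q)$ by citing Theorem \ref{Thm: strong-mixing-of-BM}. The paper's proof is simply a terser version of yours, asserting the transfer directly from ``same distribution,'' whereas you make the bookkeeping explicit via the path map $\Phi$, the tower property for the pulled-back filtration, and L\'evy's characterization for the constant case $\eta\equiv q$.
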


	\begin{proof}
		Note that $P_{\eta}:=\mathcal{L}(B^{\eta}_{\cdot})$ for any $\eta\in \mathcal{A}_{0,\infty}^{Q}$. Then the canonical process $(B_t)_{t\geq 0}$ with $B_t(\omega)=\omega_t$ on $(\Omega,\mathcal{F}, P_{\eta})$ and $(B^{\eta}_t)_{t\geq 0}$ on $(\tilde{\Omega},\tilde{\mathcal{F}},\tilde{P})$ have the same distribution. Since $(B^{\eta}_t)_{t\geq 0}$ is a martingale on $(\tilde{\Omega},\tilde{\mathcal{F}},(\tilde{\mathcal{F}}_t)_{t\geq 0},\tilde{P})$ with
		\begin{equation*}
			\langle B^{\eta},B^{\eta}\rangle_t=\int_0^t\eta_sds, \ \ \tilde{P}\text{-a.s.}
		\end{equation*}
		Hence, the $G$-Brownian motion $(B_t)_{t\geq 0}$ is a martingale on $(\Omega,\mathcal{F},(\mathcal{F}_t)_{t\geq 0}, P_{\eta})$ such that \eqref{eq: quadratic variation} holds.

		Now, we already know that for any $q\in Q$, $(B_t)_{t\geq 0}$ is a Brownian motion with covariance $(q t)_{t\geq 0}$ under $P_{q}$. By Theorem \ref{Thm: strong-mixing-of-BM}, we know that $(\Omega,\mathcal{F}, (\theta_t)_{t\geq 0}, P_{q})$ is strongly mixing.
	\end{proof}
\begin{remark}
   As a corollary of Corollary \ref{cor:P unique} and the above theorem, we immediately obtain that the Birkhoff's ergodic theorem does not hold for all elements in $G$-expectation systems generated by $G$-Brownian motion. 
\end{remark}

	\section{Law of large numbers under $\alpha$-mixing condition}\label{Sec: LLN}
 In this section, we first introduce the definition of $\a$-mixing for sublinear expectation systems, which is a strictly weaker condition than independence. After that, we establish the law of large numbers and the strong law of large numbers under the $\a$-mixing condition.
 \subsection{The $\a$-mixing sequence and admissible subsequence}
	Suppose that $(\Omega, \mathcal{H}, U, \hat{\mathbb{E}})$ is a given sublinear expectation system. 
	For a fixed random vector $X=(X_1,\cdots,X_d)\in \mathcal{H}^d$, set
	\begin{equation}\label{eq:S_n-0827}
		S_n:=\frac{1}{n}\sum_{i=1}^nU^iX.
	\end{equation}
	If $U^nX$ is independent from $(U^{n-1}X,\cdots,UX,X)$ for all $n\geq 1$, Peng \cite{Pengbook} gave the following law of large numbers by the viscosity solution of $G$-equation (nonlinear PDE):  for the fixed $X=(X_1,\cdots,X_d)\in \mathcal{H}^d$ in \eqref{eq:S_n-0827} and for all $\phi\in C_{lip}(\mathbb{R}^d)$
	\begin{equation*}
		\lim_{n\to \infty}\hat{\mathbb{E}}[\phi(S_n)]=\max_{x\in \Gamma} \phi(x),
	\end{equation*}
    where 
    \begin{equation}\label{Gamma defined in Peng}
        \begin{split}
            \Gamma\subset \mathbb R^d &\text{ is the unique bounded, closed and convex subset satisfying }\\
            & \qquad \qquad \max_{x\in \Gamma}\langle p,x\rangle= \hat{\mathbb E}[\langle p,X\rangle], \ \text{ for all } \  p\in \mathbb R^d.
        \end{split}
    \end{equation}  
Here $\langle p,X\rangle=\sum_{i=1}^dp_iX_i$ for any $p=(p_1,\cdots,p_d)\in \mathbb R^d$.

	By Stein's method under sublinear expectations, Song \cite{Song2021} obtained the convergence rate $n^{-\frac{1}{2}}$ for some $\delta\in (0,1]$ with $\hat{\mathbb{E}}[|X|^{2}]<\infty$ in $1$-dimensional case. In this section, we will give the law of large numbers under $\alpha$-mixing condition.
	
	We first state some notions. For a non-empty subset $\Lambda$ of $\mathbb{R}_+$ with finite elements, let $|\Lambda|$ be the number of elements in $\Lambda$ and $\Lambda_{max}$ (resp. $\Lambda_{min}$) be the maximum (resp. minimum) element in $\Lambda$. For any two finite subset $\Lambda^1,\Lambda^2\subset \mathbb{R}_+$, we say $\Lambda^1\leq \Lambda^2$ if $\Lambda^1_{max}\leq \Lambda^2_{min}$.  Now we give the following mixing condition.
	
	\begin{definition}\label{def: alpha-mixing}
 A process $\{X_t\}_{t\geq 0}$ of $d$-dimensional random vectors on a sublinear expectation space $(\Omega, \mathcal{H}, \hat{\mathbb{E}})$ is said to be $\alpha$-mixing if there exist $c_X>0,\alpha>0$ such that for any non-empty finite subset $\Lambda^1,\Lambda^2\subset \mathbb{R}_+$ with $\Lambda^1\leq \Lambda^2$ and $\phi\in C_{lip}(\mathbb{R}^{2d})$,
		\begin{equation}\label{equ: alpha-mixing}
			\begin{split}
				|\hat{\mathbb{E}}[\phi(\bar{X}_{\Lambda^1}, \bar{X}_{\Lambda^2})] - \hat{\mathbb{E}}[\hat{\mathbb{E}}[\phi(x, \bar{X}_{\Lambda^2})]|_{x=\bar{X}_{\Lambda^1}}]| \leq c_X l_{\phi} e^{-\alpha |\Lambda^2_{min}-\Lambda^1_{max}|},
			\end{split}
		\end{equation}
		where $\bar{X}_{\Lambda^i}=\frac{1}{|\Lambda^i|}\sum_{s\in \Lambda^i}X_s$, $i=1,2$, $l_{\phi}$ is the Lipschitz constant of $\phi$.
  
   Similarly, a sequence $\{X_k\}_{k\geq 0}$ of $d$-dimensional random vectors on a sublinear expectation space $(\Omega, \mathcal{H}, \hat{\mathbb{E}})$ is called $\alpha$-mixing if \eqref{equ: alpha-mixing} holds for non-empty finite subsets $\Lambda^1,\Lambda^2\subset \mathbb{N}$ with $\Lambda^1\leq \Lambda^2$.
	\end{definition}

	\begin{definition}
		A sublinear expectation system $(\Omega,\mathcal{H},(U_t)_{t\in \mathbb T},\hat{\mathbb{E}})$ ($\mathbb T=\mathbb N, \mathbb R_+$) is called $\alpha$-mixing if for any $X\in \mathcal{H}$, the process (sequence) $\{X_t\}_{t\in \mathbb T}$ with $X_t=U_tX$ is $\alpha$-mixing.
	\end{definition}
	
	\begin{remark}
		If a continuous sublinear expectation system $(\Omega,\mathcal{H},(U_t)_{t\geq 0},\hat{\mathbb{E}})$ is $\alpha$-mixing, then for any $\tau>0$, the discrete sublinear expectation system $(\Omega,\mathcal{H},U_{\tau},\hat{\mathbb{E}})$ is $\alpha$-mixing.
	\end{remark}
	
	\begin{definition}\label{def: identical sequence}
		A process (sequence) $\{X_t\}_{t\in \mathbb{T}}$ ($\mathbb{T}\subset \mathbb{R}_+$) of d-dimensional random vectors on a sublinear expectation space $(\Omega, \mathcal{H}, \hat{\mathbb{E}})$ is called
        \begin{enumerate}[(i)]
            \item  an identical process (sequence) if $X_t$ and $X_s$ are identically distributed for all $t,s\in \mathbb{T}$;
            \item a stationary process (sequence) if for any $n\geq 1$, $t\in\mathbb T$ and $\{t_i\}_{i=1}^n\subset \mathbb{T}$ with $\{t+t_i\}_{i=1}^n\subset \mathbb{T}$
		\begin{equation*}
			(X_{t_1},\cdots,X_{t_n})\deq (X_{t+t_1},\cdots,X_{t+t_n}).
		\end{equation*}
        \end{enumerate}
	\end{definition}

	\begin{remark}
 \begin{enumerate}[(1)]
     \item If a sequence $\{X_t\}_{t\in \mathbb{N}}$ is i.i.d., then it is a stationary $\a$-mixing sequence;
     \item There exist a $\a$-mixing sublinear expectation system $(\Omega,\mathcal{H},(U_t)_{t\in \mathbb T},\hat{\mathbb{E}})$ satisfying: there exists $X\in \mathcal{H}$ such that $U_tX$ is not independent from $U_sX$ for any $t>s$ (see Theorem \ref{Thm:mixing of G-SDE} and Theorem \ref{Thm:not independent}).
 \end{enumerate}			
	\end{remark}

\begin{definition}\label{Def: admissible}
		A subsequence $\{t_k\}_{k\geq 1}$ of $\mathbb{R}_+$ is called $(\gamma,\delta)$-admissible for some $\delta\geq 0$ and $0\leq \gamma\leq 1$ if there exist $c_1,c_2>0$ and $N_0>0$ such that for any $n\geq N_0$, there exists $\Pi_n\subset \{1,\cdots,n\}$ such that
		\begin{equation}\label{eq:admissible}
			|\Pi_n|\leq c_1n^{\gamma}, \ \text{ and } \ \inf_{1\leq i\neq j\leq n, i,j\notin \Pi_n}|t_i-t_j|\geq c_2(\ln n)^{1+\delta}.
		\end{equation}
	\end{definition}

 \begin{lemma}
     Let $g: \mathbb N\to \mathbb R_+$ be a given function. If there exists $\delta>0$ such that
     \begin{equation}\label{Suf cond for admissible 1}
         \liminf_{k\to \infty}\frac{g(k+1)-g(k)}{(\ln k)^{1+\delta}}>0,
     \end{equation}
     then $\{g(k)\}_{k\geq 1}$ is $(\gamma,\delta)$-admissible for any $0<\gamma<1$. In particular, if there exists $\beta>0$ such that
     \begin{equation}\label{Suf cond for admissible 2}
         \liminf_{k\to \infty}\frac{g(k+1)-g(k)}{k^{\beta}}>0,
     \end{equation}
     then $\{g(k)\}_{k\geq 1}$ is $(\gamma,\delta)$-admissible for any $0<\gamma<1$ and $\delta>0$. 
 \end{lemma}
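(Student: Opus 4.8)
The plan is to turn the $\liminf$ hypothesis into a uniform consecutive-gap estimate and then discard only a small block of ``early'' indices, after which every pair of surviving points is automatically separated by at least $c_2(\ln n)^{1+\delta}$.

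First I would extract from \eqref{Suf cond for admissible 1} constants $c>0$ and an integer $K_0\geq 2$ such that $g(k+1)-g(k)\geq c(\ln k)^{1+\delta}$ for all $k\geq K_0$. In particular $g$ is strictly increasing on $\{K_0,K_0+1,\dots\}$, and for any $K_0\leq i<j$, telescoping and keeping only the $l=i$ term gives the key estimate
\[
g(j)-g(i)=\sum_{l=i}^{j-1}\bigl(g(l+1)-g(l)\bigr)\geq c(\ln i)^{1+\delta}.
\]
Thus the gap between $g(i)$ and \emph{every} later point is controlled by the single quantity $(\ln i)^{1+\delta}$, so it suffices to retain indices whose logarithm is large.

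Next, fix $0<\gamma<1$ and set $\theta:=\gamma$ and $c_2:=c\,\gamma^{1+\delta}$, so that $c\,\theta^{1+\delta}=c_2$. I would take $\Pi_n:=\{i\in\{1,\dots,n\}:i\leq n^{\theta}\}$, whose cardinality is at most $n^{\theta}=n^{\gamma}$, so the first requirement in Definition \ref{Def: admissible} holds with $c_1=1$. For $n\geq N_0:=\lceil K_0^{1/\theta}\rceil$ every surviving index $i\notin\Pi_n$ obeys $i>n^{\theta}\geq K_0$, hence $\ln i>\theta\ln n$; combined with the key estimate this yields, for surviving $i<j$,
\[
g(j)-g(i)\geq c(\ln i)^{1+\delta}>c\,\theta^{1+\delta}(\ln n)^{1+\delta}=c_2(\ln n)^{1+\delta}.
\]
Therefore $\inf_{1\leq i\neq j\leq n,\ i,j\notin\Pi_n}|g(i)-g(j)|\geq c_2(\ln n)^{1+\delta}$, establishing $(\gamma,\delta)$-admissibility for the arbitrary $0<\gamma<1$.

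Finally, for the ``in particular'' part I would invoke only the elementary comparison $k^{\beta}/(\ln k)^{1+\delta}\to\infty$ as $k\to\infty$, valid for every $\beta>0$ and $\delta>0$. Writing
\[
\frac{g(k+1)-g(k)}{(\ln k)^{1+\delta}}=\Bigl(\frac{g(k+1)-g(k)}{k^{\beta}}\Bigr)\cdot\Bigl(\frac{k^{\beta}}{(\ln k)^{1+\delta}}\Bigr),
\]
hypothesis \eqref{Suf cond for admissible 2} forces the first factor to have positive $\liminf$ while the second tends to $+\infty$, so the product tends to $+\infty$; in particular \eqref{Suf cond for admissible 1} holds for \emph{every} $\delta>0$, and the first part applies for all $0<\gamma<1$ and all $\delta>0$. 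I do not expect a genuine obstacle: the only delicate point is calibrating the exponent $\theta$ of the discarded block against $\gamma$ so that $|\Pi_n|\leq c_1 n^{\gamma}$ and the surviving logarithms simultaneously exceed $\theta\ln n$, and everything else is routine bookkeeping.
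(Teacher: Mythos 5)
Your proposal is correct and follows essentially the same route as the paper: discard the block of indices up to $n^{\gamma}$ (so $c_1=1$), use the eventual gap bound $g(k+1)-g(k)\geq c(\ln k)^{1+\delta}$ together with telescoping to get the separation $c\,\gamma^{1+\delta}(\ln n)^{1+\delta}$ for the surviving indices, and reduce the second statement to the first via $k^{\beta}/(\ln k)^{1+\delta}\to\infty$. The only difference is cosmetic: you make explicit the telescoping step and the threshold calibration $N_0=\lceil K_0^{1/\gamma}\rceil$ that the paper leaves implicit.
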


 \begin{proof}
     For any $0<\gamma<1$, according to \eqref{Suf cond for admissible 1}, there exist $c_0>0$ and $N_0>0$ such that for all $k\geq N_0^{\gamma}$,
     \begin{equation}\label{eq:remove-0827-2}
         g(k+1)-g(k)\geq c_0(\ln k)^{1+\delta}.
     \end{equation}
   Given $n\in\mathbb{N}$, let $\Pi_n=\{1,2,\cdots,n_{\gamma}\}$, where $n_{\gamma}$ is the largest integer less than or equal to $n^{\gamma}$. Then for any $n\geq N_0$, we have $|\Pi_n|=n_{\gamma}\leq n^{\gamma}$ and
     \[
     \inf_{1\leq i\neq j\leq n, i,j\notin \Pi_n}|g(i)-g(j)|\geq c_0 (\ln (n_\gamma+1))^{1+\delta}\geq c_0\gamma^{1+\delta}(\ln n)^{1+\delta}.
     \]
     Hence, $\{g(k)\}_{k\geq 1}$ is $(\gamma,\delta)$-admissible.

     In particular, as the condition \eqref{Suf cond for admissible 2} implies condition \eqref{Suf cond for admissible 1} for all $\delta>0$, the second statement holds. Thus, the proof is completed.
 \end{proof}

 \begin{remark}
     Let $P(x)=a_mx^m+\cdots+a_1x+a_0$ be a polynomial with $a_m>0$ for some $m\geq 2$. It is easy to check that $g: \mathbb N\to \mathbb R_+$ with $g(k)=\lfloor P(k)\rfloor\vee 1$ for all $k\in \mathbb N$ satisfies \eqref{Suf cond for admissible 2}. Hence, $\{g(k)\}_{k\geq 1}$ is $(\gamma,\delta)$-admissible for all  $0<\gamma<1$ and $\delta>0$.
 \end{remark}

 Now for a given sublinear expectation space $(\Omega, \mathcal{H}, \hat{\mathbb{E}})$, let $X=(X_1,\cdots,X_d)\in \mathcal{H}^d$ and $\Gamma$ be the unique bounded, closed and convex subset of $\mathbb R^d$ defined by \eqref{Gamma defined in Peng}, which is denoted by $X\sim \Gamma$. 

 Recall $\Theta$ be the family of all linear expectations dominated by $\hat{\mathbb{E}}$, i.e.,
 \begin{equation}\label{Def of Theta}
     \Theta=\{\mathbb E: \mathbb E \text{ is a linear expectation on } \mathcal{H} \text{ such that } \mathbb E[X]\leq \hat{\mathbb{E}}[X], \text{ for all } X\in \mathcal{H}\}.
 \end{equation}
 By Theorem  1.2.1 in \cite{Pengbook}, we know that
	\begin{equation}\label{0615-1}
		\hat{\mathbb{E}}[X]=\max_{\mathbb{E}\in\Theta}\mathbb{E}[X],\text{ for all }X\in\mathcal{H}.
	\end{equation}
 Moreover, for any $X\in \mathcal{H}$, there exists $\mathbb{E}_X\in \Theta$ such that $\hat{\mathbb{E}}[X]=\mathbb{E}_X[X]$.

 \begin{lemma}\label{lem:Charact of Gamma}
   For any $X=(X_1,\cdots,X_d)\in \mathcal{H}^d$  such that $X\sim \Gamma$, we have
     \begin{equation}\label{eq:Charac for Gamma}
         \begin{split}
             \Gamma&=\{x\in \mathbb R^d: \langle p,x\rangle\leq\hat{\mathbb E}[\langle p,X\rangle], \text{ for all } p\in \mathbb R^d\}\\
             &=\{\mathbb{E}[X]=(\mathbb{E}[X_1],\cdots, \mathbb{E}[X_d]): \mathbb{E}\in \Theta\}.
         \end{split}
     \end{equation}
 \end{lemma}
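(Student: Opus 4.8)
The plan is to prove the two claimed equalities by comparing $\Gamma$ both with the half-space description and with the right-hand set. Write
\[
A:=\{x\in\mathbb R^d:\langle p,x\rangle\le\hat{\mathbb E}[\langle p,X\rangle]\ \text{for all}\ p\in\mathbb R^d\},\qquad B:=\{\mathbb E[X]:\mathbb E\in\Theta\}.
\]
I first record that $\Gamma=A$. By hypothesis $\Gamma$ is a nonempty bounded, closed and convex set, and its defining relation $\max_{x\in\Gamma}\langle p,x\rangle=\hat{\mathbb E}[\langle p,X\rangle]$ says exactly that the support function of $\Gamma$ is $h_\Gamma(p)=\hat{\mathbb E}[\langle p,X\rangle]$. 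Since every nonempty closed convex set is the intersection of the closed half-spaces supporting it, one has $\Gamma=\{x:\langle p,x\rangle\le h_\Gamma(p)\ \text{for all}\ p\}$, and this set is precisely $A$. Hence the first equality in \eqref{eq:Charac for Gamma} holds, and it remains to show $A=B$.

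The inclusion $B\subseteq A$ is immediate: for $\mathbb E\in\Theta$ and any $p\in\mathbb R^d$, linearity of $\mathbb E$ together with $\mathbb E\le\hat{\mathbb E}$ gives $\langle p,\mathbb E[X]\rangle=\mathbb E[\langle p,X\rangle]\le\hat{\mathbb E}[\langle p,X\rangle]$, so $\mathbb E[X]\in A$. For the reverse inclusion $A\subseteq B$ I would use a Hahn--Banach argument. Fix $x\in A$ and, on the subspace $V:=\mathrm{span}\{1,X_1,\dots,X_d\}\subseteq\mathcal H$, define a linear functional by $L\big(\sum_{i=1}^d c_iX_i+c_0\big):=\langle c,x\rangle+c_0$, where $c=(c_1,\dots,c_d)$. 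To see that $L$ is well defined, suppose $\langle d,X\rangle=-d_0$ as an element of $\mathcal H$ for some $d\in\mathbb R^d$ and $d_0\in\mathbb R$; then $\hat{\mathbb E}[\langle d,X\rangle]=-d_0$ and $\hat{\mathbb E}[\langle -d,X\rangle]=d_0$, while $x\in A$ forces $\langle d,x\rangle\le-d_0$ and $\langle -d,x\rangle\le d_0$, whence $\langle d,x\rangle=-d_0$. This shows $L$ assigns the same value to any two coefficient representations of the same element of $V$.

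Because $x\in A$ and $\hat{\mathbb E}$ preserves constants, $L\big(\sum_i c_iX_i+c_0\big)=\langle c,x\rangle+c_0\le\hat{\mathbb E}[\langle c,X\rangle]+c_0=\hat{\mathbb E}\big[\sum_i c_iX_i+c_0\big]$, so $L\le\hat{\mathbb E}$ on $V$. By the Hahn--Banach theorem (domination by the sublinear functional $\hat{\mathbb E}$) there is a linear $\tilde L:\mathcal H\to\mathbb R$ with $\tilde L|_V=L$ and $\tilde L\le\hat{\mathbb E}$. Applying $\tilde L\le\hat{\mathbb E}$ to $\pm c$ yields $\tilde L[c]=c$, and applying it to $-Y$ for $Y\ge0$ gives $-\tilde L[Y]=\tilde L[-Y]\le\hat{\mathbb E}[-Y]\le0$, i.e. monotonicity; hence $\tilde L$ is a linear expectation dominated by $\hat{\mathbb E}$, so $\tilde L\in\Theta$, and $\tilde L[X_i]=L(X_i)=x_i$ for each $i$ gives $\tilde L[X]=x$, so $x\in B$. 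Combining the inclusions yields $A=B$, and together with $\Gamma=A$ this establishes \eqref{eq:Charac for Gamma}. The main obstacle is exactly this inclusion $A\subseteq B$: the delicate points are verifying that $L$ is well defined on the possibly degenerate span $V$, and confirming that a merely linear functional dominated by $\hat{\mathbb E}$ is automatically constant-preserving and monotone, so that its Hahn--Banach extension genuinely belongs to $\Theta$.
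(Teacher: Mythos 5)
Your proof is correct, and for the key inclusion it takes a genuinely different route from the paper. Both proofs handle $\Gamma=A$ the same way (support-function uniqueness), and the inclusion $B\subseteq A$ is trivial in both. The difference is in showing $\Gamma\subseteq B$: the paper proves that $B$ is bounded, convex, and has the same support function as $\Gamma$ (using the attainment $\hat{\mathbb{E}}[\,\cdot\,]=\max_{\mathbb{E}\in\Theta}\mathbb{E}[\,\cdot\,]$ from Peng's Theorem 1.2.1), and then reduces everything to proving that $B$ is \emph{closed}, which it does by taking a convergent sequence $\mathbb{E}_n[X]\to x$, forming the auxiliary sublinear expectation $\tilde{\mathbb{E}}:=\limsup_n\mathbb{E}_n$, and invoking Theorem 1.2.1 again to extract a linear expectation in $\Theta$ that sends $X$ to $x$. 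You instead bypass the closedness question entirely: for each $x\in A$ you build a dominated linear functional on $\mathrm{span}\{1,X_1,\dots,X_d\}$ pinned to the value $x$, extend it by Hahn--Banach, and check that the extension is constant-preserving and monotone, hence lies in $\Theta$. Your argument is more self-contained (it re-proves, in the form you need, the representation theorem rather than citing it) and it delivers something slightly finer than the cited theorem provides off the shelf, namely a \emph{single} element of $\Theta$ matching all $d$ components of $X$ simultaneously at a prescribed point of $\Gamma$; the two delicate points you flag --- well-definedness of $L$ on a possibly degenerate span, and membership of the extension in $\Theta$ --- are exactly the right ones, and you resolve both correctly. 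The paper's route is shorter given the machinery of \cite{Pengbook}, but yours makes the convex-duality content of the lemma explicit and shows the closedness of $B$ as a byproduct rather than as a separate lemma.
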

 \begin{proof}
     Given $X=(X_1,\cdots,X_d)\in \mathcal{H}^d$, let 
     \[
     A=\{x\in \mathbb R^d: \langle p,x\rangle\leq \hat{\mathbb E}[\langle p,X\rangle], \text{ for all }  p\in \mathbb R^d\}
     \]
     and 
     \[
     B=\{\mathbb{E}[X]=(\mathbb{E}[X_1],\cdots, \mathbb{E}[X_d]): \mathbb{E}\in \Theta\}.
     \]
     It is easy to check that $A$ is a bounded, closed and convex subset of $\mathbb{R}^d$. By the convexity of $\T$, $B$ is a convex subset of $\mathbb{R}^d$.  Note also that $|\mathbb{E}[X_i]|\leq \mathbb{E}[|X_i|]\leq \hat{\mathbb{E}}[|X_i|]\leq \hat{\mathbb{E}}[|X|]<\infty$ for all $1\leq i\leq d$ and $\mathbb{E}\in \Theta$, hence, $B$ is also bounded.
     
     We first show that $A=\Gamma$. By \eqref{Gamma defined in Peng}, it is obvious that $\Gamma\subset A$. Note that 
     \[
     \hat{\mathbb E}[\langle p,X\rangle]=\max_{x\in \Gamma}\langle p,x\rangle\leq \max_{x\in A}\langle p,x\rangle\leq \hat{\mathbb E}[\langle p,X\rangle], \ \text{ for any } \ p\in \mathbb R^d.
     \]
     Since $A$ is bounded, closed and convex, then  the uniqueness in \eqref{Gamma defined in Peng} gives $A=\Gamma$.

     Now we show that $B=\Gamma$. According to \eqref{0615-1}, we know that
     \[
     \max_{x\in B}\langle p,x\rangle=\max_{\mathbb{E}\in\Theta}\langle p,\mathbb{E}[X]\rangle=\max_{\mathbb{E}\in\Theta}\mathbb{E}[\langle p,X\rangle]=\hat{\mathbb E}[\langle p,X\rangle].
     \]
     Since $B$ is bounded and convex, by the uniqueness in \eqref{Gamma defined in Peng}, it suffices to prove that $B$ is closed. We only need to show that for any  $x_n=\mathbb{E}_n[X]\in B$  with $\mathbb{E}_n\in \Theta$ and $x\in \mathbb R^d$ such that $x_n\to x$ as $n\to \infty$, we have $x\in B$. Define $\tilde{\mathbb{E}}: \mathcal{H}\to \mathbb R$ by
     \begin{equation*}
         \tilde{\mathbb{E}}[Y]=\limsup_{n\to \infty}\mathbb{E}_n[Y], \ \text{ for any } \ Y\in \mathcal{H}.
     \end{equation*}
     It is easy to prove that $\tilde{\mathbb{E}}$ is a sublinear expectation on $\mathcal{H}$ which is dominated by $\hat{\mathbb E}$, i.e., $\tilde{\mathbb{E}}[Y]\leq \hat{\mathbb E}[Y]$ for all $Y\in \mathcal{H}$. Let $\tilde{\Theta}$ be the family of all linear expectations dominated by $\tilde{\mathbb{E}}$ defined as in \eqref{Def of Theta}. Since $\tilde{\mathbb{E}}$ is dominated by $\hat{\mathbb E}$, we have $\tilde{\Theta}\subset \Theta$. On the other hand, according to Theorem  1.2.1 in \cite{Pengbook}, there exists $\tilde{\mathbb{E}}_X\in \tilde{\Theta}\subset \Theta$ such that 
     \begin{equation*}
         \tilde{\mathbb{E}}_X[X]=\tilde{\mathbb{E}}[X]=\limsup_{n\to \infty}\mathbb{E}_n[X]=\limsup_{n\to \infty}x_n=x.
     \end{equation*}
     Hence, $x\in B$.
 \end{proof}
\begin{remark}\label{rem:d=>Gamma}
  Given $X_1,X_2\in\mathcal{H}^d$, if $X_1\overset{d}{=}X_2$ and $X_i\sim \G
_i$, $i=1,2$, then $\G_1=\G_2$. Indeed, since $X_1\overset{d}{=}X_2$, we have $\hat{\mathbb{E}}[\langle p,X_1\rangle]=\hat{\mathbb{E}}[\langle p,X_2\rangle]$ for all $p\in \mathbb{R}^d$. Hence, \eqref{eq:Charac for Gamma} gives $\G_1=\G_2$.
\end{remark}
If in addition, we assume $(\Omega, \mathcal{H}, \hat{\mathbb{E}})$ is regular, \cite[Theorem 1.2.2]{Pengbook} shows that for any $\mathbb E\in \Theta$, there exists a unique probability $P$ defined on $(\Omega,\sigma(\mathcal{H}))$ such that
\[
\mathbb E[X]=\int XdP=:\mathbb E_P[X], \ \text{ for all } \ X\in \mathcal{H}.
\]
Denote
\begin{equation}\label{Prob set for X}
    \mathcal{P}:=\{\text{all probabilities } P \text{ on } (\Omega,\sigma(\mathcal{H})) \text{ such that } \mathbb E_P\in \Theta\}.
\end{equation}
Then we have the following corollary.
 \begin{corollary}\label{coro:Charact of Gamma}
     Suppose that $(\Omega, \mathcal{H}, \hat{\mathbb{E}})$ is a regular sublinear expectation space. Then for any $X\in \mathcal{H}^d$ with $X\sim \Gamma$, we have
     \begin{equation*}
         \Gamma=\{\mathbb E_P[X]: P\in \mathcal{P}\},
     \end{equation*}
     where $\mathcal{P}$ is given by \eqref{Prob set for X}.
 \end{corollary}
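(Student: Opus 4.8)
The plan is to read this off directly from Lemma \ref{lem:Charact of Gamma} together with the probabilistic representation of $\Theta$ that becomes available once $(\Omega,\mathcal{H},\hat{\mathbb{E}})$ is assumed regular. By Lemma \ref{lem:Charact of Gamma}, for $X\in\mathcal{H}^d$ with $X\sim\Gamma$ we already have
\[
\Gamma=\{\mathbb{E}[X]:\mathbb{E}\in\Theta\},\qquad \mathbb{E}[X]:=(\mathbb{E}[X_1],\ldots,\mathbb{E}[X_d]),
\]
so it suffices to prove the set identity $\{\mathbb{E}[X]:\mathbb{E}\in\Theta\}=\{\mathbb{E}_P[X]:P\in\mathcal{P}\}$. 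In effect, all I need to verify is that the passage from a dominated linear expectation $\mathbb{E}\in\Theta$ to its representing probability $P$ leaves the value on each coordinate $X_i\in\mathcal{H}$ unchanged.

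First I would invoke regularity through \eqref{eq:representation of probability} (Theorem 1.2.2 in \cite{Pengbook}): for every $\mathbb{E}\in\Theta$ there is a probability $P$ on $(\Omega,\sigma(\mathcal{H}))$ with $\mathbb{E}[Y]=\int Y\,dP=\mathbb{E}_P[Y]$ for all $Y\in\mathcal{H}$. Since $\hat{\mathbb{E}}[|X_i|]<\infty$ forces $\mathcal{H}\subset L^1(\Omega,\sigma(\mathcal{H}),P)$, each $\mathbb{E}_P[X_i]$ is well defined and finite, so $\mathbb{E}_P[X]$ makes sense as a vector in $\mathbb{R}^d$. By the very definition \eqref{Prob set for X}, $\mathcal{P}$ is exactly the collection of those probabilities $P$ for which $\mathbb{E}_P\in\Theta$; thus the correspondence $\mathbb{E}\leftrightarrow P$ identifies $\Theta$ with $\mathcal{P}$ while preserving the action on $\mathcal{H}$.

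The two inclusions then follow immediately. For $\{\mathbb{E}_P[X]:P\in\mathcal{P}\}\subset\Gamma$: if $P\in\mathcal{P}$ then $\mathbb{E}_P\in\Theta$, whence $\mathbb{E}_P[X]\in\{\mathbb{E}[X]:\mathbb{E}\in\Theta\}=\Gamma$. Conversely, given $x\in\Gamma$, Lemma \ref{lem:Charact of Gamma} yields some $\mathbb{E}\in\Theta$ with $x=\mathbb{E}[X]$; regularity produces $P$ with $\mathbb{E}_P=\mathbb{E}$ on $\mathcal{H}$, so in particular $\mathbb{E}_P\in\Theta$, giving $P\in\mathcal{P}$ and $x=\mathbb{E}[X]=\mathbb{E}_P[X]\in\{\mathbb{E}_P[X]:P\in\mathcal{P}\}$. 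Combining both inclusions with the identity from Lemma \ref{lem:Charact of Gamma} completes the argument.

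I do not expect any genuine obstacle here, as the statement is a formal consequence of a result already proved in full (Lemma \ref{lem:Charact of Gamma}) and the standard representation \eqref{eq:representation of probability}. The only points demanding a line of care are that the representation is applied coordinatewise to each $X_i\in\mathcal{H}$, and that $\mathcal{P}$ was defined precisely as the image of $\Theta$ under $\mathbb{E}\mapsto P$, so that the correspondence is value-preserving on $\mathcal{H}$ rather than merely dominating; once this is spelled out, the equality of the two description of $\Gamma$ is automatic.
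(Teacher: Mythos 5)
Your proposal is correct and coincides with the paper's own (implicit) argument: the paper states this corollary without proof, precisely because it follows immediately from Lemma \ref{lem:Charact of Gamma} combined with the regular representation $\mathbb{E}=\mathbb{E}_P$ on $\mathcal{H}$ and the definition \eqref{Prob set for X} of $\mathcal{P}$ as the probabilities whose expectations lie in $\Theta$. Your spelled-out two-inclusion verification is exactly the intended reasoning, just made explicit.
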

 
	\subsection{Law of large numbers}
	In this subsection, we  prove the  LLN holds along $\mathbb N$ (that is, Theorem \ref{thm:LLN}) and all admissible sequences as follows.
	\begin{theorem}\label{Thm: LLN for subsequence}
		Assume that $\{X_t\}_{t\geq 0}$ (resp. $\{X_k\}_{k\geq 0}$) is a d-dimensional $\alpha$-mixing identical process (resp. sequence) on a sublinear expectation space $(\Omega,\mathcal{H},\hat{\mathbb{E}})$ for some $\alpha>0$ with $\hat{\mathbb{E}}[|X_0|^{2}]<\infty$. Let $\{t_k\}_{k\geq 1}$ (resp. $\{m_k\}_{k\geq 1}$) be a $(\gamma,\delta)$-admissible subsequence for some $0<\gamma<1, \delta>0$. Let
		\begin{equation}\label{0201-1}
			S_n:=\frac{1}{n}\sum_{i=1}^n X_{t_i} \ (resp. \ S_n:=\frac{1}{n}\sum_{i=1}^n X_{m_i}), \ \text{ and } \ X_0\sim \Gamma.
		\end{equation}
		Then there exists $C>0$ depending only on $(\alpha,\gamma,\delta)$, $c_X$ in Definition \ref{def: alpha-mixing} and $(c_1,c_2,N_0)$ in Definition \ref{Def: admissible}  such that for any $\phi\in C_{lip}(\mathbb{R}^d)$ with Lipschitz constant $l_{\phi}$,
		\begin{equation}\label{Convergence rate of LLN subsequence}
			\Big|\hat{\mathbb{E}}[\phi(S_n)]-\max_{x\in \Gamma}\phi(x) \Big|\leq C\bigl(1+\hat{\mathbb{E}}[|X_0|^{2}]\bigr)l_{\phi}\bigl(n^{-1/2}+n^{-(1-\gamma)}\bigr)\text{ for any }n\in\mathbb N.
		\end{equation}
		In particular, if $\gamma\leq 1/2$, we have the convergence rate $n^{-1/2}$.
	\end{theorem}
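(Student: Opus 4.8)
The plan is to combine three ingredients: the admissibility of $\{t_k\}$ to discard a negligible set of indices, the $\alpha$-mixing inequality of Definition \ref{def: alpha-mixing} to \emph{decouple} the remaining well-separated terms into an i.i.d.\ configuration, and finally the law of large numbers for i.i.d.\ sequences together with its convergence rate (Peng \cite{Pengbook}, Song \cite{Song2021}). I treat only the sequence case $S_n=\frac1n\sum_{i=1}^nX_{t_i}$; the process case is entirely analogous. Throughout I may assume $n\ge N_0$, with $N_0$ as in Definition \ref{Def: admissible}: for the finitely many $n<N_0$ the left-hand side of \eqref{Convergence rate of LLN subsequence} is bounded using $|\phi(S_n)-\phi(x)|\le l_\phi|S_n-x|$ for any $x\in\Gamma$ together with $\sup_{x\in\Gamma}|x|\le\hat{\mathbb E}[|X_0|]$ (Lemma \ref{lem:Charact of Gamma}) by $C l_\phi(1+\hat{\mathbb E}[|X_0|^2])$, which is absorbed into the constant after inserting $N_0^{1/2}n^{-1/2}\ge 1$. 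Here I use $\hat{\mathbb E}[|X_0|]\le 1+\hat{\mathbb E}[|X_0|^2]$.

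\textbf{Step 1 (discarding bad indices).} By admissibility choose $\Pi_n\subset\{1,\dots,n\}$ with $|\Pi_n|\le c_1n^\gamma$ and $\inf_{i\ne j,\ i,j\notin\Pi_n}|t_i-t_j|\ge c_2(\ln n)^{1+\delta}$. Put $m:=n-|\Pi_n|$ and $\tilde S_n:=\frac1n\sum_{i\in\{1,\dots,n\}\setminus\Pi_n}X_{t_i}$. Since the sequence is identical, $\hat{\mathbb E}[|X_{t_i}|]=\hat{\mathbb E}[|X_0|]$, so monotonicity, sub-additivity and the Lipschitz bound give
\[
\big|\hat{\mathbb E}[\phi(S_n)]-\hat{\mathbb E}[\phi(\tilde S_n)]\big|\le l_\phi\hat{\mathbb E}[|S_n-\tilde S_n|]\le \frac{l_\phi}{n}\sum_{i\in\Pi_n}\hat{\mathbb E}[|X_0|]\le c_1\,\hat{\mathbb E}[|X_0|]\,l_\phi\,n^{-(1-\gamma)}.
\]
The same estimate lets me replace the normalisation $\frac1n$ by $\frac1m$ at the cost of a further $O(n^{-(1-\gamma)})$ term.

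\textbf{Step 2 (decoupling, the heart of the argument).} Relabel the retained points as $s_1<\dots<s_m$, so the $X_{s_k}$ are each identically distributed to $X_0$ and every consecutive gap satisfies $s_{k+1}-s_k\ge\rho:=c_2(\ln n)^{1+\delta}$. I peel off one variable at a time. Applying Definition \ref{def: alpha-mixing} with $\Lambda^1=\{s_1,\dots,s_{m-1}\}$ and $\Lambda^2=\{s_m\}$ to the test function $(u,v)\mapsto\phi\big(\tfrac{(m-1)u+v}{n}\big)$, whose Lipschitz constant is $\le\tfrac{m-1}{n}l_\phi\le l_\phi$, replaces the joint expectation by the iterated one over $X_{s_m}$ with error $\le c_X l_\phi e^{-\alpha\rho}$; the resulting inner expectation is a function of the remaining average with Lipschitz constant still $\le\tfrac{m-1}{n}l_\phi$. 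Inductively, peeling the $j$-th variable keeps the relevant Lipschitz constant $\le\tfrac{m-j}{n}l_\phi\le l_\phi$ and costs at most $c_Xl_\phi e^{-\alpha\rho}$, since each step uses only a consecutive gap, which is $\ge\rho$. After $m-1$ steps,
\[
\Big|\hat{\mathbb E}\big[\phi(\tilde S_n)\big]-\hat{\mathbb E}\big[\phi\big(\tfrac1n\textstyle\sum_{k=1}^mZ_k\big)\big]\Big|\le (m-1)\,c_X\,l_\phi\,e^{-\alpha c_2(\ln n)^{1+\delta}},
\]
where $\{Z_k\}_{k\ge1}$ is i.i.d.\ with $Z_k\deq X_0$: by Definition \ref{def:independent} the fully iterated functional is exactly the sublinear expectation of an i.i.d.\ configuration of copies of $X_0$. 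Because $(\ln n)^{1+\delta}$ dominates $\ln n$, the right-hand side is $o(n^{-p})$ for every $p$, hence $\le C l_\phi n^{-1/2}$ with $C=C(\alpha,c_2,\delta,c_X)$.

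\textbf{Step 3 (i.i.d.\ rate).} By Remark \ref{rem:d=>Gamma}, $Z_k\sim\Gamma$, so the i.i.d.\ LLN with its $m^{-1/2}$ rate (cf.\ Peng \cite{Pengbook}, Song \cite{Song2021}) gives
\[
\Big|\hat{\mathbb E}\big[\phi\big(\tfrac1m\textstyle\sum_{k=1}^mZ_k\big)\big]-\max_{x\in\Gamma}\phi(x)\Big|\le C\big(1+\hat{\mathbb E}[|X_0|^2]\big)l_\phi\,m^{-1/2}.
\]
As $\gamma<1$, $m\ge n-c_1n^\gamma\ge n/2$ for large $n$, whence $m^{-1/2}\le\sqrt2\,n^{-1/2}$. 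Collecting the errors of Steps 1--3 together with the normalisation switch yields \eqref{Convergence rate of LLN subsequence}; when $\gamma\le 1/2$ the term $n^{-(1-\gamma)}$ is dominated by $n^{-1/2}$, giving the stated rate. The main obstacle is Step 2: one must check that peeling variables through the \emph{average}-based mixing inequality preserves the Lipschitz control uniformly, so that each of the $m-1$ decoupling errors remains $\le c_X l_\phi e^{-\alpha\rho}$, and that the fully decoupled functional coincides with the i.i.d.\ sublinear expectation. The $(\ln n)^{1+\delta}$ separation is precisely what forces the accumulated error $(m-1)e^{-\alpha\rho}$ to beat every polynomial rate, which is why admissibility is defined with a logarithmic rather than constant gap.
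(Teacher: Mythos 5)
Your Steps 1--2 are sound, and the decoupling device at their heart is in fact the same mechanism the paper uses, just applied to $\phi$ directly instead of to a PDE solution: the fully iterated functional you produce by peeling one variable at a time is (up to the $1/n$ versus $1/m$ normalisation) precisely the paper's discretised scheme $v^Z_n(1,0)$ of \eqref{1225-2}--\eqref{1225-3}, your per-step Lipschitz bookkeeping matches the paper's telescoping in Lemma \ref{Lem: identical sequence}, and your bound $(m-1)c_Xl_\phi e^{-\alpha c_2(\ln n)^{1+\delta}}\le C(c_X,\alpha,c_2,\delta)\,l_\phi n^{-1/2}$ is exactly the estimate \eqref{0222-4}. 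Your identification of the fully nested expression with $\hat{\mathbb{E}}\big[\phi\big(\tfrac1n\sum_{k=1}^m Z_k\big)\big]$ for a Peng-i.i.d.\ sequence $Z_k\deq X_0$ is also correct: each nesting level involves only the marginal law of the single variable being integrated, and the nesting order (innermost expectation over the latest index) is the one prescribed by Definition \ref{def:independent}, so the nested value depends only on the common distribution of the $X_{s_k}$.

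The gap is Step 3. The two-sided $d$-dimensional i.i.d.\ rate
\begin{equation*}
\Big|\hat{\mathbb{E}}\big[\phi\big(\tfrac1m\textstyle\sum_{k=1}^m Z_k\big)\big]-\max_{x\in\Gamma}\phi(x)\Big|\le C\big(1+\hat{\mathbb{E}}[|X_0|^2]\big)l_\phi\,m^{-1/2}
\end{equation*}
is not available in the references you cite once $d\ge 2$: Peng's LLN in \cite{Pengbook} carries no rate, and Song's theorem in \cite{Song2021} is a one-dimensional result --- the paper states this explicitly and has to extend every ingredient (Lemma \ref{Lem: Song3}, Lemma \ref{Lem: song4.1}, Lemma \ref{lem:est for iid}) to $\mathbb{R}^d$ ``by a similar argument''. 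Supplying this black box is where essentially all of the paper's analytic work lies: the $G$-equation solution $v(t,x)=\sup_{y\in\Gamma}\phi(x+ty)$, its mollification $v_\epsilon$ (Lemmas \ref{Lem: song4.1} and \ref{Lem: property of convolution}), the discrete scheme and its approximate PDE (Lemma \ref{Lem: 1226-1}), and the comparison with an auxiliary i.i.d.\ $\Gamma$-maximally distributed sequence, which together produce the two one-sided estimates \eqref{0222-10} and \eqref{0222-9}. So for $d=1$ your argument is complete and genuinely shorter (a clean reduction of the mixing case to Song's theorem), but for general $d$ it defers the core difficulty to an unproven lemma. If you prove the multidimensional rate by the same PDE/mollification route, your modular organisation --- decouple first, then invoke i.i.d.\ theory --- is an attractive alternative to the paper's interleaved argument, in which the mixing errors and the one-step PDE errors are estimated simultaneously inside the telescoping sum of Theorem \ref{Thm: 0127}.
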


	Before the proof of Theorem \ref{thm:LLN} and Theorem \ref{Thm: LLN for subsequence}, we first give some notations and results here. Let $C_{b}^{l,k}(\mathbb R_+\times \mathbb R^d)$ be the collection of all $\mathbb R$-valued functions $v(t,x)$ with all bounded derivatives $\partial_t^iv, \nabla^j_xv$ for all $1\leq i\leq l$ and $1\leq j\leq k$ (note that $v$ itself does not need to be bounded), i.e., for all $1\leq i\leq l, 1\leq j\leq k$,
    \[
    |\partial_t^iv|_{\infty}:=\sup_{(t,x)\in \mathbb R_+\times \mathbb R^d}|\partial_t^iv(t,x)|<\infty, \ \ |\nabla_x^jv|_{\infty}:=\sup_{(t,x)\in \mathbb R_+\times \mathbb R^d}|\nabla_x^jv(t,x)|<\infty.
    \]
    Let $C_{b}^{l+\alpha,k+\beta}(\mathbb R_+\times \mathbb R^d)$ for some $\alpha,\beta\in (0,1]$ be the collection of all $v\in C_{b}^{l,k}(\mathbb R_+\times \mathbb R^d)$ such that $\partial_t^lv, \nabla^k_xv$ are $\alpha$-H{$\Ddot{\rm o}$}lder and $\beta$-H$\Ddot{\rm o}$lder continuous respectively, i.e.,
    \[
    \begin{split}
        |\partial_t^lv(\cdot,x)|_{\alpha}&:=\sup_{t,s\in \mathbb R_+, \ t\neq s}\frac{|\partial_t^lv(t,x)-\partial_t^lv(s,x)|}{|t-s|^{\alpha}}<\infty,\\
        |\nabla_x^kv(t,\cdot)|_{\beta}&:=\sup_{x,y\in \mathbb R^d, \ x\neq y}\frac{|\nabla_x^kv(t,x)-\nabla_x^kv(t,y)|}{|x-y|^{\beta}}<\infty.
    \end{split}
    \]
	The following result can be obtained by the same proof of 1-dimensional case in  \cite[Proposition 3.3]{Song2021}. So we omit the detail.
	\begin{lemma}\label{Lem: Song3}
		For $\phi\in C_{lip}(\mathbb{R}^d)$, let $v\in C_b^{1,1+\beta}(\mathbb{R}_+\times \mathbb{R}^d)$ with some $\beta\in (0,1]$ be the solution to the following PDE:
		\begin{equation}\label{PDE}
			\begin{cases}
				\partial_tv(t,x)-p(\nabla_xv(t,x))=f(t,x), \ (t,x)\in \mathbb{R}_+\times \mathbb{R}^d,\\
				v(0,x)=\phi(x),
			\end{cases}
		\end{equation}
		where $p(a)=\hat{\mathbb{E}}[\langle a, X\rangle]$ for some $X\in \mathcal{H}^d$ with $\hat{\mathbb{E}}[|X|^{1+\beta}]<\infty$. Then for any $0\leq t\leq \bar{t}\leq 1$, we have
		\begin{equation*}
			\begin{split}
				v(\bar{t},0)-\hat{\mathbb{E}}[v(t, (\bar{t}-t)X)]\geq -4(\bar{t}-t)^{\beta}\int_t^{\bar{t}}|\nabla_x v(s,\cdot)|_{\beta}ds\times \hat{\mathbb{E}}[|X|^{1+\beta}]-\int_t^{\bar{t}}\hat{\mathbb{E}}\big[-f(s,(\bar{t}-s)X)\big]ds,
			\end{split}
		\end{equation*}
		and
		\begin{equation*}
			\begin{split}
				v(\bar{t},0)-\hat{\mathbb{E}}[v(t,(\bar{t}-t)X)]\leq 4(\bar{t}-t)^{\beta}\int_t^{\bar{t}}|\nabla_x v(s,\cdot)|_{\beta}ds\times \hat{\mathbb{E}}[|X|^{1+\beta}]+\int_t^{\bar{t}}\hat{\mathbb{E}}\big[f(s,(\bar{t}-s)X)\big]ds.
			\end{split}
		\end{equation*}
	\end{lemma}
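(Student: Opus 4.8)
The plan is to follow the one–dimensional argument of Song \cite[Proposition 3.3]{Song2021}, reducing the statement to an interpolation estimate along the segment $s\mapsto(\bar t-s)X$ together with a first–order Taylor expansion that exploits the structure $p(a)=\hat{\mathbb{E}}[\langle a,X\rangle]$. Set $g(s):=\hat{\mathbb{E}}[v(s,(\bar t-s)X)]$ for $s\in[t,\bar t]$; since $v(\bar t,0)$ is constant, $g(\bar t)=v(\bar t,0)$, so both claimed inequalities amount to bounding $g(\bar t)-g(t)$ from above and below. Before starting I would record the two structural facts that drive the estimate: first, $p$ is sublinear and Lipschitz, $|p(a)-p(b)|\le\hat{\mathbb{E}}[|X|]\,|a-b|$; second, by Lemma \ref{lem:Charact of Gamma} the set $\Gamma=\{\mathbb{E}[X]:\mathbb{E}\in\Theta\}$ equals the bounded closed convex set whose support function is $p$, so that $p=\sup_{\mu\in\Gamma}\langle\cdot,\mu\rangle$. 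The uniform bounds on $\partial_t v$ and $\nabla_x v$ and the finiteness of $|\nabla_x v(s,\cdot)|_\beta$ coming from $v\in C_b^{1,1+\beta}$ guarantee every error term below is finite, and $\hat{\mathbb{E}}[|X|^{1+\beta}]<\infty$ is used at the very end.

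Concretely I would introduce a partition $t=s_0<\dots<s_N=\bar t$ of mesh $h$, write $\delta_k:=\bar t-s_k=\delta_{k+1}+h$, and analyse the increment of $g$ along it. On each step the time increment is treated by the integral form of the PDE \eqref{PDE}, $v(s_{k+1},\delta_{k+1}X)-v(s_k,\delta_{k+1}X)=\int_{s_k}^{s_{k+1}}\big(p(\nabla_x v(r,\delta_{k+1}X))+f(r,\delta_{k+1}X)\big)\,dr$, while the spatial increment $\delta_kX=\delta_{k+1}X+hX$ is handled by a first–order Taylor expansion, whose remainder is controlled by the $\beta$–Hölder continuity of $\nabla_x v(s_k,\cdot)$ and is of size $|\nabla_x v(s_k,\cdot)|_\beta\,h^{1+\beta}|X|^{1+\beta}$. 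The source contribution is collected by subadditivity and positive homogeneity into $\int_t^{\bar t}\hat{\mathbb{E}}[f(s,(\bar t-s)X)]\,ds$. The Hamiltonian contribution, built from $p(\nabla_x v)$ and the first–order term $\langle\nabla_x v,X\rangle$, must \emph{not} be split off and estimated term by term; rather it is kept attached to $v(s_k,\delta_kX)$, where positive homogeneity of $\hat{\mathbb{E}}$ and the identity $p=\sup_{\mu\in\Gamma}\langle\cdot,\mu\rangle$ make it collapse against the reduction of the spatial coefficient from $\delta_k$ to $\delta_{k+1}$ (this collapse is exact when $\nabla_x v$ is constant, i.e. for affine $\phi$, and approximate in general with defect measured by $|\nabla_x v|_\beta$). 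Summing the residual Hölder remainders over the partition and passing to the limit yields the term $4(\bar t-t)^\beta\int_t^{\bar t}|\nabla_x v(s,\cdot)|_\beta\,ds\cdot\hat{\mathbb{E}}[|X|^{1+\beta}]$. The reverse inequality follows verbatim after replacing subadditivity by $\hat{\mathbb{E}}[A]-\hat{\mathbb{E}}[B]\ge-\hat{\mathbb{E}}[B-A]$ and $f$ by $-f$.

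The hard part is exactly this Hamiltonian term. Unlike Peng's LLN, where the spatial argument is a sum of independent increments so that $p(\nabla_x v)$ and $\langle\nabla_x v,X\rangle$ may be matched through independence, here the argument is the single random vector $(\bar t-s)X$, so consecutive steps are not independent and a naive subadditive split would lose a contribution of order $(\bar t-t)$ times the mean–uncertainty of $X$ rather than the stated $(\bar t-t)^\beta$–scale remainder. The correct bookkeeping that keeps the first–order terms tied to $v$ and extracts only the $|\nabla_x v|_\beta$ defect is precisely the content of Song's Proposition 3.3; I would transcribe that estimate, the only genuinely new ingredients being the multivariate first–order Taylor remainder with a $\beta$–Hölder gradient and the use of Lemma \ref{lem:Charact of Gamma} to identify $\Gamma$ with $\{\mathbb{E}[X]:\mathbb{E}\in\Theta\}$. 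Everything else — the telescoping, the handling of $f$, and the passage from $d=1$ to general $d$ — is routine once this core step is in place.
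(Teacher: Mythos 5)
Your overall route coincides with the paper's: the paper gives no proof of this lemma at all, stating only that it ``can be obtained by the same proof of the 1-dimensional case in \cite[Proposition 3.3]{Song2021}'', and your plan --- partition of $[t,\bar t]$, the integral form of \eqref{PDE} for the time increments, a first-order Taylor expansion with $\beta$-H\"older gradient for the space increments, the support-function identity $p=\sup_{\mu\in\Gamma}\langle\cdot,\mu\rangle$ via Lemma \ref{lem:Charact of Gamma}, with the core bookkeeping transcribed from Song --- is exactly that adaptation. Your diagnosis of where the difficulty sits (a single random vector $X$ along the whole path, no independence to condition on, so that a term-by-term subadditive estimate of the Hamiltonian can lose a mean-uncertainty contribution of order $\bar t-t$) is also accurate.

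However, it is accurate for only \emph{one} of the two inequalities, and this is where your sketch contains a genuine error: the claim that ``the reverse inequality follows verbatim after replacing subadditivity by $\hat{\mathbb{E}}[A]-\hat{\mathbb{E}}[B]\ge-\hat{\mathbb{E}}[B-A]$ and $f$ by $-f$.'' The two bounds are intrinsically asymmetric under a sublinear expectation. Along the path $r\mapsto(\bar t-r)X$ one has the pathwise identity
\begin{equation*}
v(t,(\bar t-t)X)-v(\bar t,0)=\int_t^{\bar t}\bigl[\langle\nabla_x v(r,(\bar t-r)X),X\rangle-p(\nabla_x v(r,(\bar t-r)X))-f(r,(\bar t-r)X)\bigr]\,dr,
\end{equation*}
and after freezing the gradient at the deterministic point $a_0(r):=\nabla_x v(r,0)$ (H\"older cost $\le 2(\bar t-t)^\beta|\nabla_x v(r,\cdot)|_\beta\hat{\mathbb{E}}[|X|^{1+\beta}]$ per unit time), the \emph{first} inequality follows from precisely the naive term-by-term subadditive split you forbid, because $\hat{\mathbb{E}}[\langle a_0(r),X\rangle-p(a_0(r))]=0$ exactly for deterministic $a_0(r)$. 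The flipped split, by contrast, produces the leading term $\hat{\mathbb{E}}[p(a_0(r))-\langle a_0(r),X\rangle]=p(a_0(r))+p(-a_0(r))$, which is the mean uncertainty of $\langle a_0(r),X\rangle$ and is of order one, not of H\"older size; so the \emph{second} inequality cannot be obtained from the first by any sign flip. It requires the selection argument that your description of the ``collapse'' omits: for each time one must pick a linear expectation $\mathbb{E}\in\Theta$ nearly attaining $\hat{\mathbb{E}}[v(s,(\bar t-s)X)]$, use the linearity of $\mathbb{E}$ together with the Taylor expansion to show that its mean $\mathbb{E}[X]\in\Gamma$ nearly maximizes $\langle\nabla_x v(s,0),\cdot\rangle$ over $\Gamma$ (defect of order $(\bar t-s)^\beta|\nabla_x v(s,\cdot)|_\beta\hat{\mathbb{E}}[|X|^{1+\beta}]$), and only for such scenario-adapted $\mathbb{E}$ does the Hamiltonian term cancel; positive homogeneity and the support-function identity alone, without invoking $\Theta$ and Lemma \ref{lem:Charact of Gamma} in this way, do not produce the collapse. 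Since your primary argument (the one collecting the source term as $+\int_t^{\bar t}\hat{\mathbb{E}}[f(s,(\bar t-s)X)]\,ds$) targets the hard direction and the easy direction does follow by the simple split, a faithful transcription of Song's proof would still be correct --- but anyone executing your sketch as written, without Song's paper in hand, would be misled by the symmetry claim.
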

	
	Motivated by the proof of Theorem 3.4 in \cite{Song2021}, we have the following theorem.
	
	\begin{theorem}\label{Thm: 0127}
		Let $\{X_k\}_{k=1}^n$ ($n>1$ is finite) be a d-dimensional identical sequence on a sublinear expectation space $(\Omega,\mathcal{H},\hat{\mathbb{E}})$ such that $\hat{\mathbb{E}}[|X_1|^{1+\beta}]<\infty$ for some $\beta\in (0,1]$. Let $v\in C_b^{1,1+\beta}(\mathbb{R}_+\times \mathbb{R}^d)$ with some $\beta\in (0,1]$ be the solution to equation \eqref{PDE} with $p(a)=\hat{\mathbb{E}}[\langle a, X_1\rangle]$ and $S_n=\frac{1}{n}\sum_{i=1}^nX_i$. Then
		\begin{align}\label{0127-1}
				v(1,0)-\hat{\mathbb{E}}[\phi(S_n)]&\geq -\sum_{i=2}^n\bigg|\hat{\mathbb{E}}\Bigl[v\Bigl(1-\frac{i}{n}, \frac{1}{n}\sum_{j=1}^iX_j\Bigr)\Bigr]-\hat{\mathbb{E}}\Big[\hat{\mathbb{E}}\Bigl[v\bigl(1-\frac{i}{n}, x+\frac{X_i}{n}\bigr)\Bigr]\Big|_{x=\frac{1}{n}\sum_{j=1}^{i-1}X_j}\Big]\bigg|\notag\\
				&\ \ \ \ -4n^{-\beta}\int_0^1|\nabla_xv(s,\cdot)|_{\beta}ds\times \hat{\mathbb{E}}[|X_1|^{1+\beta}]+\underline{f},
		\end{align}
		where 
		$
			\underline{f}:=\inf_{\{0\leq t\leq 1,x\in \mathbb{R}^d\}}f(t,x)
	$, in which $f$ is from \eqref{PDE}.
	\end{theorem}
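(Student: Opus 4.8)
The plan is to run a one-step interpolation between the deterministic value $v(1,0)$ and $\hat{\mathbb{E}}[\phi(S_n)]=\hat{\mathbb{E}}[v(0,S_n)]$, inserting one summand $X_i/n$ and decreasing the time variable by $1/n$ at each step. Concretely, for $0\le i\le n$ set
$$A_i:=\hat{\mathbb{E}}\Big[v\Big(1-\tfrac{i}{n},\tfrac{1}{n}\sum_{j=1}^{i}X_j\Big)\Big],$$
so that $A_0=v(1,0)$ (empty sum) and $A_n=\hat{\mathbb{E}}[\phi(S_n)]$ (as $v(0,\cdot)=\phi$). Then $v(1,0)-\hat{\mathbb{E}}[\phi(S_n)]=\sum_{i=1}^n (A_{i-1}-A_i)$. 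Writing $t=1-\tfrac{i}{n}$, $\bar t=1-\tfrac{i-1}{n}$, $\Sigma_{i-1}=\tfrac1n\sum_{j=1}^{i-1}X_j$, $A_i':=\hat{\mathbb{E}}[v(t,\Sigma_i)]$ and $B_i:=\hat{\mathbb{E}}[\hat{\mathbb{E}}[v(t,x+\tfrac{X_i}{n})]|_{x=\Sigma_{i-1}}]$, I would split each increment as
$$A_{i-1}-A_i=\Big(\hat{\mathbb{E}}\big[v(\bar t,\Sigma_{i-1})\big]-B_i\Big)+\big(B_i-A_i'\big).$$
The second bracket equals $-(A_i'-B_i)\ge -|A_i'-B_i|$, which produces exactly the mixing-error summands in \eqref{0127-1}; for $i=1$ the inner conditioning is trivial ($\Sigma_0=0$), so that term vanishes and the sum may start at $i=2$.

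For the first bracket I would apply a spatially shifted version of Lemma~\ref{Lem: Song3}. Fixing $x\in\mathbb{R}^d$ and setting $\tilde v(s,y):=v(s,x+y)$, one checks that $\tilde v\in C_b^{1,1+\beta}$ solves the same PDE with source $\tilde f(s,y)=f(s,x+y)$ and $|\nabla_y\tilde v(s,\cdot)|_\beta=|\nabla_x v(s,\cdot)|_\beta$; since $X_i\deq X_1$ the function $p(a)=\hat{\mathbb{E}}[\langle a,X_1\rangle]=\hat{\mathbb{E}}[\langle a,X_i\rangle]$ is unchanged. Lemma~\ref{Lem: Song3} (lower bound) applied to $\tilde v$ at base point $0$ over $[t,\bar t]$ with $\bar t-t=\tfrac1n$ then gives, for every $x$,
$$v(\bar t,x)-\hat{\mathbb{E}}\big[v(t,x+\tfrac{X_i}{n})\big]\ge R_i(x):=-4n^{-\beta}\int_t^{\bar t}|\nabla_x v(s,\cdot)|_\beta\,ds\,\hat{\mathbb{E}}[|X_1|^{1+\beta}]-\int_t^{\bar t}\hat{\mathbb{E}}\big[-f(s,x+(\bar t-s)X_i)\big]\,ds.$$
Writing $W(x):=\hat{\mathbb{E}}[v(t,x+\tfrac{X_i}{n})]$, this reads $v(\bar t,\cdot)\ge W+R_i$ pointwise in $x$; evaluating at the random vector $\Sigma_{i-1}$, using monotonicity and then sub-additivity of $\hat{\mathbb{E}}$ in the one-sided form $\hat{\mathbb{E}}[W+R_i]\ge \hat{\mathbb{E}}[W]-\hat{\mathbb{E}}[-R_i]$, and noting $\hat{\mathbb{E}}[W(\Sigma_{i-1})]=B_i$, yields $\hat{\mathbb{E}}[v(\bar t,\Sigma_{i-1})]-B_i\ge -\hat{\mathbb{E}}[-R_i(\Sigma_{i-1})]$.

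It then remains to bound $\hat{\mathbb{E}}[-R_i(\Sigma_{i-1})]$ from above: the constant part of $-R_i$ passes through $\hat{\mathbb{E}}$ unchanged, while for the source part I would use $f(s,\cdot)\ge\underline{f}$, hence $\hat{\mathbb{E}}[-f(s,\cdot)]\le-\underline{f}$, giving $\int_t^{\bar t}\hat{\mathbb{E}}[-f(s,x+(\bar t-s)X_i)]\,ds\le -\underline{f}/n$ uniformly in $x$, so that $\hat{\mathbb{E}}[-R_i(\Sigma_{i-1})]\le 4n^{-\beta}\int_t^{\bar t}|\nabla_x v(s,\cdot)|_\beta\,ds\,\hat{\mathbb{E}}[|X_1|^{1+\beta}]-\underline{f}/n$. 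Summing the resulting lower bounds for $A_{i-1}-A_i$ over $i=1,\dots,n$, the intervals $[1-\tfrac{i}{n},1-\tfrac{i-1}{n}]$ partition $[0,1]$, so the gradient integrals combine into $\int_0^1|\nabla_x v(s,\cdot)|_\beta\,ds$, the $n$ copies of $\underline{f}/n$ sum to $\underline{f}$, and the mixing brackets assemble into the stated sum over $i\ge2$, giving \eqref{0127-1}. The delicate point throughout is the orientation of the sublinear inequalities: because $\hat{\mathbb{E}}$ is only sub-additive one must consistently freeze the past variable $\Sigma_{i-1}$, apply the frozen-$x$ estimate pointwise, and only then integrate it out with the correct one-sided bound $\hat{\mathbb{E}}[A+B]\ge\hat{\mathbb{E}}[A]-\hat{\mathbb{E}}[-B]$. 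The identical-distribution hypothesis is precisely what keeps $p$ and the moment $\hat{\mathbb{E}}[|X_i|^{1+\beta}]$ independent of $i$, so that these per-step estimates telescope cleanly.
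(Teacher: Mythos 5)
Your proposal is correct and takes essentially the same route as the paper's own proof: the identical telescoping decomposition of $v(1,0)-\hat{\mathbb{E}}[\phi(S_n)]$ into per-step increments, the same splitting of each increment into a "mixing-error" term (vanishing at $i=1$) plus a PDE term, the same application of Lemma \ref{Lem: Song3} to the spatially shifted function $u^x(t,y)=v(t,x+y)$ on $[1-\tfrac{i}{n},1-\tfrac{i-1}{n}]$, and the same uniform bound $\hat{\mathbb{E}}[-f]\leq-\underline{f}$ before summing. The only cosmetic difference is the orientation of the sublinear inequalities: the paper bounds $\hat{\mathbb{E}}[\phi(S_n)]-v(1,0)$ from above via $\hat{\mathbb{E}}[b_{n,i}(W_{n,i-1})]-\hat{\mathbb{E}}[c_{n,i}(W_{n,i-1})]\leq\sup_x\bigl(b_{n,i}(x)-c_{n,i}(x)\bigr)$, while you bound the reverse difference from below via $\hat{\mathbb{E}}[W+R_i]\geq\hat{\mathbb{E}}[W]-\hat{\mathbb{E}}[-R_i]$ — these are equivalent estimates.
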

	
	\begin{proof}
		For any fixed $n\geq 1$, set
		\begin{equation*}
			W_{n,0}=0, \ \text{ and } \ W_{n,i}=\sum_{j=1}^i\frac{X_j}{n}, \ \text{ for all } \ 1\leq i\leq n,
		\end{equation*}
		and
		\begin{equation*}
			A_{n,i}=\hat{\mathbb{E}}\Bigl[v\bigl(1-\frac{i}{n}, W_{n,i}\bigr)\Bigr], \ \text{ for all } \ 0\leq i\leq n.
		\end{equation*}
		Note that $A_{n,0}=v(1,0)$ and $A_{n,n}=\hat{\mathbb{E}}[\phi(S_n)]$. Then 
		\begin{align}\label{0127-2}
				\hat{\mathbb{E}}[\phi(S_n)]-v(1,0)&=\sum_{i=1}^n(A_{n,i}-A_{n,i-1})\notag\\
				&=\sum_{i=1}^n\big(A_{n,i}-\hat{\mathbb{E}}\bigl[b_{n,i}(W_{n,i-1})\bigr]\big)+\sum_{i=1}^n\big(\hat{\mathbb{E}}\bigl[b_{n,i}(W_{n,i-1})\bigr]-\hat{\mathbb{E}}\bigl[c_{n,i}(W_{n,i-1})\bigr]\big)\notag\\
				&\leq \sum_{i=2}^n\big|A_{n,i}-\hat{\mathbb{E}}\bigl[b_{n,i}(W_{n,i-1})\bigr]\big|+\sum_{i=1}^n\sup_{x\in \mathbb{R}^d}\big(b_{n,i}(x)-c_{n,i}(x)\big),
		\end{align}
		where 
		\begin{equation}\label{def b and c}
			b_{n,i}(x)=\hat{\mathbb{E}}\Bigl[v\bigl(1-\frac{i}{n}, x+\frac{X_i}{n}\bigr)\Bigr], \ \text{ and } \ c_{n,i}(x)=v\big(1-\frac{i-1}{n},x\big).
		\end{equation}
		Now for any fixed $x\in \mathbb{R}^d$, let $u^x(t,y)=v(t,x+y)$. Then $u^x$ satisfies the following PDE:
		\begin{equation*}
			\begin{cases}
				\partial_tu^x(t,y)-p(\nabla_yu^x(t,y))=f(t,x+y), \ (t,y)\in \mathbb{R}_+\times \mathbb{R}^d,\\
				u^x(0,y)=\phi(x+y).
			\end{cases}
		\end{equation*}
		Then  letting $1-\frac{i}{n}=t<\bar{t}=1-\frac{i-1}{n}$ in Lemma \ref{Lem: Song3}, we have
		\begin{equation}\label{0127-3}
			\begin{split}
				b_{n,i}(x)-c_{n,i}(x)&=\hat{\mathbb{E}}\Big[u^x\Big(1-\frac{i}{n},\frac{X_i}{n}\Big)\Big]-u^x\Big(1-\frac{i-1}{n},0\Big)\\
				&\leq 4n^{-\beta}\int_{1-\frac{i}{n}}^{1-\frac{i-1}{n}}|\nabla_x v(s,\cdot)|_{\beta}ds\times \hat{\mathbb{E}}[|X_i|^{1+\beta}]\\
				&\ \ \ \ +\int_{1-\frac{i}{n}}^{1-\frac{i-1}{n}}\hat{\mathbb{E}}\big[-f\Big(s,x+\big(1-\frac{i-1}{n}-s\big)X_i\Big)\big]ds\\
				&\leq 4n^{-\beta}\int_{1-\frac{i}{n}}^{1-\frac{i-1}{n}}|\nabla_x v(s,\cdot)|_{\beta}ds\times \hat{\mathbb{E}}[|X_1|^{1+\beta}]-\frac{1}{n}\underline{f}.
			\end{split}
		\end{equation}
		Thus, \eqref{0127-1} follows from \eqref{0127-2} and \eqref{0127-3}.
	\end{proof}
	
	\begin{corollary}\label{Coro: 0127}
		Let $\{X_t\}_{t\geq 0}$ (resp. $\{X_k\}_{k\geq 0}$) be a d-dimensional $\alpha$-mixing identical process (resp. sequence) on a sublinear expectation space $(\Omega,\mathcal{H},\hat{\mathbb{E}})$ such that $\hat{\mathbb{E}}[|X_0|^{1+\beta}]<\infty$ for some $\beta\in (0,1]$ and $v\in C_b^{1,1+\beta}(\mathbb{R}_+\times \mathbb{R}^d)$ be the solution to equation \eqref{PDE} with $p(a)=\hat{\mathbb{E}}[\langle a, X_0\rangle]$. Assume that $\{t_k\}_{k\geq 1}$ (resp. $\{m_k\}_{k\geq 1}$) is a $(\gamma,\delta)$-admissible subsequence for some $0<\gamma<1, \delta>0$ and set
		\begin{equation*}
			Y_k:=X_{t_k} \ (resp. \ Y_k:=X_{m_k}),  \ \ S^Y_n=\frac{1}{n}\sum_{i=1}^nY_i.
		\end{equation*}
		Then for $n\geq N_0$, we have
		\begin{equation}\label{0127-4}
			\begin{split}
				v(1,0)-\hat{\mathbb{E}}[\phi(S^Y_n)]&\geq 
				-C(c_1,\gamma,\beta)n^{-\beta}\int_0^1|\nabla_xv(s,\cdot)|_{\beta}ds\times \hat{\mathbb{E}}[|X_0|^{1+\beta}]\\
				&\ \ \ \ -c_X|\nabla_xv|_{\infty}ne^{-\alpha c_2(\ln n)^{1+\delta}}-2c_1l_{\phi}n^{-(1-\gamma)}\hat{\mathbb{E}}[|X_0|]+\underline{f},
			\end{split}
		\end{equation}
  where  $c_X,c_1,c_2$ are from \eqref{equ: alpha-mixing}, \eqref{eq:admissible},
		$\underline{f}=\inf_{\{0\leq t\leq 1,x\in \mathbb{R}^d\}}f(t,x)
	$ with $f$ defined as in \eqref{PDE}.
	\end{corollary}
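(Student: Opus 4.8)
The plan is to apply Theorem~\ref{Thm: 0127} to the identical sequence $\{Y_k\}_{k=1}^n$ and then estimate the resulting sum of asymptotic-independence defects using the $\alpha$-mixing hypothesis together with the large-gap structure furnished by admissibility. First I would record that, because $\{X_t\}$ (resp. $\{X_k\}$) is identical, each $Y_k=X_{t_k}$ (resp. $X_{m_k}$) satisfies $Y_k\overset{d}{=}X_0$; hence $\hat{\mathbb{E}}[|Y_k|^{1+\beta}]=\hat{\mathbb{E}}[|X_0|^{1+\beta}]$ and $\hat{\mathbb{E}}[\langle a,Y_1\rangle]=\hat{\mathbb{E}}[\langle a,X_0\rangle]=p(a)$, so the very same $v$ is the PDE solution attached to $\{Y_k\}$. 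Theorem~\ref{Thm: 0127} then yields the lower bound \eqref{0127-1} with the $Y_i$'s in place of the $X_i$'s, reducing the problem to controlling
\[
\sum_{i=2}^n D_i,\qquad D_i:=\Bigl|\hat{\mathbb{E}}\bigl[v(1-\tfrac{i}{n},\tfrac1n\textstyle\sum_{j=1}^{i}Y_j)\bigr]-\hat{\mathbb{E}}\bigl[\hat{\mathbb{E}}[v(1-\tfrac{i}{n},x+\tfrac{Y_i}{n})]|_{x=\frac1n\sum_{j=1}^{i-1}Y_j}\bigr]\Bigr|.
\]

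Next I would recast each $D_i$ in the exact shape of the mixing inequality \eqref{equ: alpha-mixing}. Choosing the blocks $\Lambda^1=\{t_1,\dots,t_{i-1}\}$ and $\Lambda^2=\{t_i\}$, so that $\bar X_{\Lambda^1}=\frac{1}{i-1}\sum_{j=1}^{i-1}Y_j$ and $\bar X_{\Lambda^2}=Y_i$, and the test function $\psi_i(a,b):=v\bigl(1-\tfrac{i}{n},\tfrac{i-1}{n}a+\tfrac{1}{n}b\bigr)$, a direct substitution identifies $D_i$ with the mixing defect of $\psi_i$ evaluated at $(\bar X_{\Lambda^1},\bar X_{\Lambda^2})$. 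Since $|\nabla_a\psi_i|\le\frac{i-1}{n}|\nabla_xv|_\infty$ and $|\nabla_b\psi_i|\le\frac1n|\nabla_xv|_\infty$ with both coefficients at most $1$, the Lipschitz constant obeys $l_{\psi_i}\le|\nabla_xv|_\infty$; here I assume, as in the polynomial application, that $\{t_k\}$ is increasing so that $\Lambda^1\le\Lambda^2$.

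I would then split $\{2,\dots,n\}$ according to the admissible set $\Pi_n$ of Definition~\ref{Def: admissible}. Call $i$ \emph{good} if $i\notin\Pi_n$ and $i-1\notin\Pi_n$, and \emph{bad} otherwise; there are at most $2|\Pi_n|\le 2c_1n^\gamma$ bad indices. For good $i$, \eqref{eq:admissible} forces $t_i-t_{i-1}\ge c_2(\ln n)^{1+\delta}$, so \eqref{equ: alpha-mixing} gives $D_i\le c_X|\nabla_xv|_\infty e^{-\alpha c_2(\ln n)^{1+\delta}}$; summing the at most $n$ good terms produces the middle term $c_X|\nabla_xv|_\infty\,n\,e^{-\alpha c_2(\ln n)^{1+\delta}}$ of \eqref{0127-4}. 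For bad $i$ I would instead use a crude Lipschitz estimate: comparing both expectations in $D_i$ to $\hat{\mathbb{E}}[v(1-\tfrac{i}{n},\tfrac1n\sum_{j=1}^{i-1}Y_j)]$ and shifting by $Y_i/n$ gives $D_i\le\frac{2}{n}|\nabla_xv|_\infty\hat{\mathbb{E}}[|X_0|]$, and the non-expansiveness of the Hamilton--Jacobi flow \eqref{PDE} with Lipschitz initial datum $\phi$ yields $|\nabla_xv(s,\cdot)|_\infty\le l_\phi$, so each bad term is $\le\frac{2}{n}l_\phi\hat{\mathbb{E}}[|X_0|]$; summing over the bad indices gives the $n^{-(1-\gamma)}$ term. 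Collecting these pieces together with the $-4n^{-\beta}\int_0^1|\nabla_xv(s,\cdot)|_\beta\,ds\,\hat{\mathbb{E}}[|X_0|^{1+\beta}]$ and $\underline{f}$ contributions from Theorem~\ref{Thm: 0127} delivers \eqref{0127-4}, absorbing the numerical constants into $C(c_1,\gamma,\beta)$.

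The main obstacle is the bookkeeping in the second and third steps: one must verify that the rearrangement through $\psi_i$ reproduces $D_i$ exactly and that $l_{\psi_i}$ is controlled by $|\nabla_xv|_\infty$ (rather than by quantities of size $\frac{i-1}{n}$ that would threaten the summation), and one must arrange the good/bad dichotomy so that admissibility is invoked on the consecutive pair $i-1,i$ and the gap $t_i-t_{i-1}$ is genuinely bounded below. The remaining subtlety is the passage from $|\nabla_xv|_\infty$ to $l_\phi$ in the bad-index estimate, which rests on the gradient bound for \eqref{PDE} and is precisely what places $l_\phi$ (rather than a $v$-dependent constant) in the final inequality.
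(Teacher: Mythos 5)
Your strategy---apply Theorem \ref{Thm: 0127} to the \emph{full} sequence $\{Y_k\}_{k=1}^n$ and then split the defect terms $D_i$ into good and bad indices---is genuinely different from the paper's. The paper first \emph{discards} the indices in $\Pi_n$, reorders the remaining times as $\{\tilde t_i\}_{1\le i\le n-|\Pi_n|}$, applies Theorem \ref{Thm: 0127} only to the pruned sequence $Z_i=X_{\tilde t_i}$ (so that \emph{every} consecutive gap is at least $c_2(\ln n)^{1+\delta}$ and every defect is controlled by mixing), and then compares $\hat{\mathbb{E}}[\phi(S^Z_{n-|\Pi_n|})]$ with $\hat{\mathbb{E}}[\phi(S^Y_n)]$ directly through the Lipschitz constant of $\phi$; this comparison, \eqref{0222-2}, is precisely where the term $2c_1l_{\phi}n^{-(1-\gamma)}\hat{\mathbb{E}}[|X_0|]$ comes from. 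Your good-index analysis (identifying $D_i$ with the mixing defect of $\psi_i$, the bound $l_{\psi_i}\le|\nabla_xv|_\infty$, the gap from admissibility) coincides with the paper's treatment of the pruned sequence, modulo two fixable points: Definition \ref{Def: admissible} does not require $\{t_k\}$ to be increasing (sort the whole sequence first, as the paper's reordering does; $S^Y_n$ and the identical-distribution property are permutation invariant), and if some times with indices in $\Pi_n$ coincide, then $\bar X_{\Lambda^1}$, a \emph{set} average in Definition \ref{def: alpha-mixing}, no longer equals $\frac{1}{i-1}\sum_{j=1}^{i-1}Y_j$, so your identification of $D_i$ with a mixing defect can break at good indices whose prefix contains repeated bad times.

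The genuine gap is in your bad-index estimate. The bound $D_i\le\frac{2}{n}|\nabla_xv|_\infty\hat{\mathbb{E}}[|X_0|]$ is correct, but the passage from $|\nabla_xv|_\infty$ to $l_\phi$ via ``non-expansiveness of the Hamilton--Jacobi flow'' is false for equation \eqref{PDE}: the equation carries a forcing term $f(t,x)$, and an $x$-dependent forcing inflates the spatial Lipschitz constant of the solution. For the admissible instance $X_t\equiv 0$ (so $p\equiv 0$ and $\hat{\mathbb{E}}[|X_0|^{1+\beta}]=0$), $v(t,x)=\phi(x)+\int_0^tf(s,x)\,ds$ solves \eqref{PDE} and $|\nabla_xv(t,\cdot)|_\infty$ can exceed $l_\phi$ by an arbitrary amount. (In the paper's later application the bound $|\nabla_xv_\epsilon|_\infty\le l_\phi$ does hold, but it comes from the convolution estimates of Lemma \ref{Lem: property of convolution}, not from the PDE; the corollary as stated allows solutions for which it fails.) Because your telescoping keeps the bad indices inside the Theorem \ref{Thm: 0127} machinery, in which only $v$ and never $\phi$ appears, your route structurally yields $-4c_1|\nabla_xv|_\infty n^{-(1-\gamma)}\hat{\mathbb{E}}[|X_0|]$ in place of the asserted $-2c_1l_\phi n^{-(1-\gamma)}\hat{\mathbb{E}}[|X_0|]$, which does not imply \eqref{0127-4} whenever $|\nabla_xv|_\infty>l_\phi$. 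The repair is exactly the paper's pruning device: remove the $\Pi_n$-indices \emph{before} telescoping, so that $\phi$'s Lipschitz constant enters only through the elementary bound $|\hat{\mathbb{E}}[\phi(S^Z_{n-|\Pi_n|})]-\hat{\mathbb{E}}[\phi(S^Y_n)]|\le\frac{2|\Pi_n|}{n}l_\phi\hat{\mathbb{E}}[|X_0|]$.
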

	
	\begin{proof}
		We only need to show the case $Y_k=X_{t_k}$. For any $n\geq N_0$, let us reorder $\{t_i\}_{1\leq i\leq n, i\notin \Pi_n}$ by $\{\tilde{t}_i\}_{1\leq i\leq n-|\Pi_n|}$.
		Set
		\begin{equation}\label{def: Z_i by reorder}
			Z_i:=X_{\tilde{t}_i},  \ \ S^Z_{n-|\Pi_n|}=\frac{1}{n-|\Pi_n|}\sum_{i=1}^{n-|\Pi_n|}Z_i.
		\end{equation}
		Then $\{Z_i\}_{1\leq i\leq n-|\Pi_n|}$ is an identical sequence. Let
		\begin{equation}\label{def: W sum of Z}
			W_i=\frac{1}{n-|\Pi_n|}\sum_{j=1}^{i}Z_j, \ \text{ for any } \ 1\leq i\leq n-|\Pi_n|.
		\end{equation}
		By Theorem \ref{Thm: 0127}, we have
		\begin{equation}\label{0222-1}
			\begin{split}
				&\ \ \ \ v(1,0)-\hat{\mathbb{E}}[\phi(S^Z_{n-|\Pi_n|})]\\
				&\geq -\sum_{i=2}^{n-|\Pi_n|}\bigg|\hat{\mathbb{E}}\Bigl[v\Bigl(1-\frac{i}{n-|\Pi_n|}, W_i\Bigr)\Bigr]-\hat{\mathbb{E}}\Big[\hat{\mathbb{E}}\Bigl[v\bigl(1-\frac{i}{n-|\Pi_n|}, x+\frac{Z_i}{n-|\Pi_n|}\bigr)\Bigr]\Big|_{x=W_{i-1}}\Big]\bigg|\\
				&\ \ \ \ -4(n-|\Pi_n|)^{-\beta}\int_0^1|\nabla_xv(s,\cdot)|_{\beta}ds\times \hat{\mathbb{E}}[|X_0|^{1+\beta}]+\underline{f}.
			\end{split}
		\end{equation}
		Set
		\begin{equation*}
			g_{i}(x,y):=v\bigl(1-\frac{i}{n-|\Pi_n|}, \frac{i-1}{n-|\Pi_n|}x+\frac{1}{n-|\Pi_n|}y\bigr), \ \ 2\leq i\leq n-|\Pi_n|.
		\end{equation*}
		Then the Lipschitz constant $l_{g_{i}}\leq |\nabla_xv|_{\infty}$, the fact that $|\tilde{t}_{i+1}-\tilde{t}_i|\geq c_2(\ln n)^{1+\delta}$ and the $\alpha$-mixing of $\{X_t\}_{t\ge0}$ give 
		\begin{equation}\label{0127-5}
			\begin{split}
				&\ \ \ \ \sum_{i=2}^{n-|\Pi_n|}\bigg|\hat{\mathbb{E}}\Bigl[v\Bigl(1-\frac{i}{n-|\Pi_n|}, W_i\Bigr)\Bigr]-\hat{\mathbb{E}}\Big[\hat{\mathbb{E}}\Bigl[v\bigl(1-\frac{i}{n-|\Pi_n|}, x+\frac{Z_i}{n-|\Pi_n|}\bigr)\Bigr]\Big|_{x=W_{i-1}}\Big]\bigg|\\
				&=\sum_{i=2}^{n-|\Pi_n|}\big|\hat{\mathbb{E}}[g_{i}(\bar{X}_{\Lambda_{i-1}}, X_{\tilde{t}_i})]-\hat{\mathbb{E}}\big[\hat{\mathbb{E}}[g_{i}(x, X_{\tilde{t}_i})]|_{x=\bar{X}_{\Lambda_{i-1}}}\big]\big|\\
				&\leq c_X|\nabla_xv|_{\infty}ne^{-\alpha c_2(\ln n)^{1+\delta}},
			\end{split}
		\end{equation}
		where $\Lambda_i:=\{\tilde{t}_1,\cdots,\tilde{t}_i\}, \ \ i\geq 2.$
		Note that  $|\Pi_n|\leq c_1n^{\gamma}$. Then
		\begin{equation}\label{0222-2}
			\big|\hat{\mathbb{E}}[\phi(S^Z_{n-|\Pi_n|})]-\hat{\mathbb{E}}[\phi(S^Y_n)]\big|\leq l_{\phi}\hat{\mathbb{E}}[|S^Z_{n-|\Pi_n|}-S^Y_n|]\leq \frac{2|\Pi_n|}{n}l_{\phi}\hat{\mathbb{E}}[|X_0|]\leq 2c_1l_{\phi}n^{-(1-\gamma)}\hat{\mathbb{E}}[|X_0|].
		\end{equation}
		This together with  \eqref{0222-1} and \eqref{0127-5} implies \eqref{0127-4}.
	\end{proof}
	
	For any positive integer $n$ and $\gamma_1, \cdots, \gamma_k>0$, we define $n_{\gamma_1,\cdots,\gamma_k}$ recursively by
	\begin{equation*}
		n_{\gamma_1}:=\lfloor n^{\gamma_1}\rfloor, \ \ n_{\gamma_1,\cdots,\gamma_i}:=\lfloor n_{\gamma_1,\cdots,\gamma_{i-1}}^{\gamma_i}\rfloor, \ \text{ for all } \ i\geq 2.
	\end{equation*}

	We also have the following corollary.
	
	\begin{corollary}\label{Thm: 1224}
		Let $\{X_k\}_{k\geq 1}$ be a d-dimensional $\alpha$-mixing and stationary sequence on a sublinear expectation space $(\Omega,\mathcal{H},\hat{\mathbb{E}})$ such that $\hat{\mathbb{E}}[|X_1|^{1+\beta}]<\infty$ for some $\beta\in (0,1]$. For any $0<\gamma_1,\gamma_2<1$, let $v^{n,\gamma}\in C_b^{1,1+\beta}(\mathbb{R}_+\times\mathbb{R}^d)$ be a solution to equation \eqref{PDE} with $p_{n,\gamma}(a)=\hat{\mathbb{E}}[\langle a, Y_{n,1}^{\gamma}\rangle]$ where
		\begin{equation*}
			Y_{n,1}^\gamma=\frac{1}{n_{\gamma_1}}\sum_{i=1}^{n_{\gamma_1}-n_{\gamma_1,\gamma_2}}X_i.
		\end{equation*}
		Then there exist $C(\gamma_1)>0$ and $C(\gamma_1,\gamma_2)>0$ such that 
		\begin{equation}\label{1220-1}
			\begin{split}
				v^{n,\gamma}(1,0)-\hat{\mathbb{E}}[\phi(S_n)]&\geq -C(\gamma_1)n^{-(1-\gamma_1)\beta}\int_0^1|\nabla_xv^{n,\gamma}(s,\cdot)|_{\beta}ds\times \hat{\mathbb{E}}[|X_1|^{1+\beta}]\\
				&\ \ \ \ -C(\gamma_1,\gamma_2)l_{\phi}\bigl(n^{-(1-\gamma_1)}+n^{-\gamma_1(1-\gamma_2)}\bigr)\hat{\mathbb{E}}[|X_1|]\\
				&\ \ \ \ -c_X|\nabla_xv^{n,\gamma}|_{\infty}n^{1-\gamma_1}e^{-\alpha(n^{\gamma_1\gamma_2}-1)}+\underline{f},
			\end{split}
		\end{equation}
       where $c_X$ is from \eqref{equ: alpha-mixing},
		$
			\underline{f}=\inf_{\{0\leq t\leq 1,x\in \mathbb{R}^d\}}f(t,x)
	$ with $f$ defined as in \eqref{PDE}.
	\end{corollary}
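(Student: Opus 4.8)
The plan is to run a \emph{big\-block/small\-block} (Bernstein) decomposition that rewrites $S_n$ as an average of $m\approx n^{1-\gamma_1}$ identically distributed block averages, apply Theorem \ref{Thm: 0127} to those blocks, and then absorb the ``independence\-defect'' terms it produces into the $\alpha$\-mixing of the underlying sequence. Concretely, write $p:=n_{\gamma_1}$ for the big\-block length, $q:=n_{\gamma_1,\gamma_2}$ for the buffer (small\-block) length, and $m:=\lfloor n^{1-\gamma_1}\rfloor$ for the number of blocks, so that $mp\le n$. For $1\le j\le m$ let $I_j:=\{(j-1)p+1,\dots,(j-1)p+(p-q)\}$ be the kept indices of the $j$\-th block (the last $q$ indices are discarded as a buffer) and set $W_j:=\tfrac1p\sum_{i\in I_j}X_i$. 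By stationarity each $I_j$ is a translate of $I_1$, hence $W_j\deq W_1=Y_{n,1}^{\gamma}$; in particular $\{W_j\}_{j=1}^m$ is an identical sequence and $\hat{\mathbb{E}}[\langle a,W_1\rangle]=p_{n,\gamma}(a)$, so $v^{n,\gamma}$ is exactly the solution of \eqref{PDE} attached to it.

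First I would apply Theorem \ref{Thm: 0127} to $\{W_j\}_{j=1}^m$ with $S^W_m:=\tfrac1m\sum_{j=1}^m W_j$. This yields a lower bound for $v^{n,\gamma}(1,0)-\hat{\mathbb{E}}[\phi(S^W_m)]$ made of (a) a defect sum $\sum_{i=2}^m|\cdots|$, (b) the smoothing term $-4m^{-\beta}\int_0^1|\nabla_xv^{n,\gamma}(s,\cdot)|_{\beta}\,ds\cdot\hat{\mathbb{E}}[|W_1|^{1+\beta}]$, and (c) $\underline f$. Two routine estimates put (b) in the desired shape: convexity of $x\mapsto|x|^{1+\beta}$ together with monotonicity, subadditivity and identical distribution gives $\hat{\mathbb{E}}[|W_1|^{1+\beta}]\le\hat{\mathbb{E}}[|X_1|^{1+\beta}]$, while $m\ge\tfrac12 n^{1-\gamma_1}$ for large $n$ gives $m^{-\beta}\le C(\gamma_1)n^{-(1-\gamma_1)\beta}$.

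The heart of the argument, and the main obstacle, is the defect sum. Its $i$\-th summand is $\hat{\mathbb{E}}[g_i(\bar W_{i-1},W_i)]-\hat{\mathbb{E}}[\hat{\mathbb{E}}[g_i(x,W_i)]|_{x=\bar W_{i-1}}]$ with $g_i(x,y)=v^{n,\gamma}(1-\tfrac im,\tfrac{i-1}m x+\tfrac1m y)$ and $\bar W_{i-1}=\tfrac1{i-1}\sum_{j<i}W_j$. Setting $\Lambda^1:=\bigcup_{j<i}I_j$, $\Lambda^2:=I_i$ and $c:=(p-q)/p\le 1$, one checks the rescalings $\bar W_{i-1}=c\,\bar X_{\Lambda^1}$ and $W_i=c\,\bar X_{\Lambda^2}$, so the summand coincides with the $\alpha$\-mixing defect of the test function $\psi_i(a,b):=g_i(ca,cb)$, whose Lipschitz constant survives the rescaling and is $\le|\nabla_xv^{n,\gamma}|_{\infty}$. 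Since $\Lambda^2_{\min}-\Lambda^1_{\max}=q+1>n^{\gamma_1\gamma_2}-1$ (using $\lfloor n^{\gamma_1}\rfloor^{\gamma_2}\ge n^{\gamma_1\gamma_2}-1$, by concavity of $t\mapsto t^{\gamma_2}$), inequality \eqref{equ: alpha-mixing} bounds each summand by $c_X|\nabla_xv^{n,\gamma}|_{\infty}e^{-\alpha(n^{\gamma_1\gamma_2}-1)}$; summing the $m-1\le n^{1-\gamma_1}$ terms produces precisely the mixing term in \eqref{1220-1}.

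Finally I would pass from $S^W_m$ to $S_n$ by estimating $\hat{\mathbb{E}}[|S_n-S^W_m|]$. The two averages differ only through three harmless sources: the normalisation $mp$ versus $n$, the $mq$ discarded buffer indices, and the at most $p$ leftover indices beyond $mp$. Subadditivity, stationarity and identical distribution bound the first and third contributions by $\lesssim (p/n)\hat{\mathbb{E}}[|X_1|]\sim n^{-(1-\gamma_1)}\hat{\mathbb{E}}[|X_1|]$ and the second by $\lesssim (q/p)\hat{\mathbb{E}}[|X_1|]\sim n^{-\gamma_1(1-\gamma_2)}\hat{\mathbb{E}}[|X_1|]$, whence $|\hat{\mathbb{E}}[\phi(S^W_m)]-\hat{\mathbb{E}}[\phi(S_n)]|\le l_{\phi}\hat{\mathbb{E}}[|S_n-S^W_m|]\le C(\gamma_1,\gamma_2)l_{\phi}\bigl(n^{-(1-\gamma_1)}+n^{-\gamma_1(1-\gamma_2)}\bigr)\hat{\mathbb{E}}[|X_1|]$. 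Adding this to the lower bound for $v^{n,\gamma}(1,0)-\hat{\mathbb{E}}[\phi(S^W_m)]$ from the previous steps gives \eqref{1220-1}. The only genuinely delicate point is the defect step: correctly rescaling the block averages into the $\bar X_{\Lambda^i}$ of Definition \ref{def: alpha-mixing} and verifying that the buffer length $q$ forces the separation $n^{\gamma_1\gamma_2}-1$ while the Lipschitz constants pass through the rescaling unchanged.
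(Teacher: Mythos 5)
Your proposal is correct and follows essentially the same route as the paper's proof: the block averages $W_j$ are exactly the paper's $Y^{\gamma}_{n,j}$, and you apply Theorem \ref{Thm: 0127} to this identical (by stationarity) sequence, rescale the defect terms into the form of Definition \ref{def: alpha-mixing} with test functions matching the paper's $g^{\gamma}_{n,i}$ and separation $n_{\gamma_1,\gamma_2}+1\ge n^{\gamma_1\gamma_2}-1$, and close with the same $L^1$ comparison between the block-average sum and $S_n$. All three error terms are obtained exactly as in the paper, so there is nothing to correct.
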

	
	\begin{proof}
		For any fixed $0<\gamma_1,\gamma_2<1$, we construct the double array variables $\{Y^{\gamma}_{n,k}\}$ by
		\begin{equation*}
			Y^{\gamma}_{n,k}=\frac{X_{(k-1)n_{\gamma_1}+1}+X_{(k-1)n_{\gamma_1}+2}+\cdots+X_{kn_{\gamma_1}-n_{\gamma_1,\gamma_2}}}{n_{\gamma_1}}, \quad 1\leq k\leq n_{1-\gamma_1}.
		\end{equation*}
        Since $\{X_k\}_{k\geq 1}$ is a stationary sequence, so is $\{Y^{\gamma}_{n,k}\}_{k=1}^{n_{1-\gamma_1}}$. Then it follows from Theorem \ref{Thm: 0127} that
		\begin{equation}\label{0128-3}
			\begin{split}
				&\ \ \ \ v^{n,\gamma}(1,0)-\hat{\mathbb{E}}\big[\phi\big(S^{Y,\gamma}_{n_{1-\gamma_1}}\big)\big]\\
				&\geq -\sum_{i=2}^{n_{1-\gamma_1}}\bigg|\hat{\mathbb{E}}\Bigl[v^{n,\gamma}\Bigl(1-\frac{i}{n_{1-\gamma_1}}, \sum_{j=1}^i\frac{Y^{\gamma}_{n,j}}{n_{1-\gamma_1}}\Bigr)\Bigr]-\hat{\mathbb{E}}\bigg[\hat{\mathbb{E}}\Bigl[v^{n,\gamma}\bigl(1-\frac{i}{n_{1-\gamma_1}}, x+\frac{Y^{\gamma}_{n,i}}{n_{1-\gamma_1}}\bigr)\Bigr]\Big|_{x=\sum_{j=1}^{i-1}\frac{Y^{\gamma}_{n,j}}{n_{1-\gamma_1}}}\bigg]\bigg|\\
				&\ \ \ \ -4n_{1-\gamma_1}^{-\beta}\int_0^1|\nabla_xv^{n,\gamma}(s,\cdot)|_{\beta}ds\times \hat{\mathbb{E}}[|Y^{\gamma}_{n,1}|^{1+\beta}]+\underline{f},
			\end{split}
		\end{equation}
		where
		\begin{equation*}
			S^{Y,\gamma}_{n_{1-\gamma_1}}=\frac{1}{n_{1-\gamma_1}}\sum_{i=1}^{n_{1-\gamma_1}}Y^{\gamma}_{n,i}.
		\end{equation*}
		Let
		\begin{equation*}
			g^{\gamma}_{n,i}(x,y)=v^{n,\gamma}\Big(1-\frac{i}{n_{1-\gamma_1}},\frac{(i-1)(n_{\gamma_1}-n_{\gamma_1,\gamma_2})}{n_{\gamma_1}n_{1-\gamma_1}}x+\frac{n_{\gamma_1}-n_{\gamma_1,\gamma_2}}{n_{\gamma_1}n_{1-\gamma_1}}y\Big), \ \ 2\leq i\leq n_{1-\gamma_1}.
		\end{equation*}
		Then the Lipschitz constant $l_{g^{\gamma}_{n,i}}$ of $g^{\gamma}_{n,i}$ satisfies
		\begin{equation*}
			l_{g^{\gamma}_{n,i}}\leq \sup_{z\in \mathbb{R}^d}\Big|\nabla_xv^{n,\gamma}\big(1-\frac{i}{n_{1-\gamma_1}},z\big)\Big|\leq |\nabla_xv^{n,\gamma}|_{\infty}.
		\end{equation*}
		For any $1\leq i\leq n_{1-\gamma_1}$, denote $\Lambda_0:=\emptyset$ and
		\begin{equation*}
			\Lambda_i:=\{(i-1)n_{\gamma_1}+1,(i-1)n_{\gamma_1}+2,\cdots,in_{\gamma_1}-n_{\gamma_1,\gamma_2}\}, \ \ \Lambda_i^-:=\cup_{0\leq j\leq i-1}\Lambda_j.
		\end{equation*}
		Then for all $2\leq i\leq n_{1-\gamma_1}$, we know that $\Lambda_i^-<\Lambda_i$ with $d(\Lambda_i^-,\Lambda_i)=n_{\gamma_1,\gamma_2}+1$ and the $\alpha$-mixing property gives
		\begin{equation}\label{0128-2}
			\begin{split}
				&\ \ \ \ \bigg|\hat{\mathbb{E}}\Bigl[v^{n,\gamma}\Bigl(1-\frac{i}{n_{1-\gamma_1}}, \sum_{j=1}^i\frac{Y^{\gamma}_{n,j}}{n_{1-\gamma_1}}\Bigr)\Bigr]-\hat{\mathbb{E}}\bigg[\hat{\mathbb{E}}\Bigl[v^{n,\gamma}\bigl(1-\frac{i}{n_{1-\gamma_1}}, x+\frac{Y^{\gamma}_{n,i}}{n_{1-\gamma_1}}\bigr)\Bigr]\Big|_{x=\sum_{j=1}^{i-1}\frac{Y^{\gamma}_{n,j}}{n_{1-\gamma_1}}}\bigg]\bigg|\\
				&=\Big|\hat{\mathbb{E}}\bigl[g^{\gamma}_{n,i}(\bar{X}_{\Lambda_i^-}, \bar{X}_{\Lambda_i})\bigr]-\hat{\mathbb{E}}\Big[\hat{\mathbb{E}}\bigl[g^{\gamma}_{n,i}(x, \bar{X}_{\Lambda_i})\bigr]\big|_{x=\bar{X}_{\Lambda_i^-}}\Big]\Big|\\
				&\leq c_Xl_{g^{\gamma}_{n,i}}e^{-\alpha (n_{\gamma_1,\gamma_2}+1)}\leq c_X|\nabla_xv^{n,\gamma}|_{\infty}e^{-\alpha(n^{\gamma_1\gamma_2}-1)}.
			\end{split}
		\end{equation}
		Note that 
		$n_{1-\gamma_1}^{-\beta}\leq C(\gamma_1)n^{-(1-\gamma_1)\beta}$ and
		\begin{equation*}
			\hat{\mathbb{E}}[|Y^{\gamma}_{n,1}|^{1+\beta}]\leq \hat{\mathbb{E}}\bigg[\bigg|\frac{\sum_{i=1}^{n_{\gamma_1}}|X_i|}{n_{\gamma_1}}\bigg|^{1+\beta}\bigg]\leq \frac{\sum_{i=1}^{n_{\gamma_1}}\hat{\mathbb{E}}\bigl[|X_i|^{1+\beta}\bigr]}{n_{\gamma_1}}=\hat{\mathbb{E}}[|X_1|^{1+\beta}].
		\end{equation*}
		Then it follows from \eqref{0128-3} and \eqref{0128-2} that
		\begin{equation*}
			\begin{split}
				&\ \ \ \ v^{n,\gamma}(1,0)-\hat{\mathbb{E}}\big[\phi\big(S^{Y,\gamma}_{n_{1-\gamma_1}}\big)\big]\\
				&\geq -c_X|\nabla_xv^{n,\gamma}|_{\infty}n^{1-\gamma_1}e^{-\alpha(n^{\gamma_1\gamma_2}-1)}-C(\gamma_1)n^{-(1-\gamma_1)\beta}\int_0^1|\nabla_xv^{n,\gamma}(s,\cdot)|_{\beta}ds\times \hat{\mathbb{E}}[|X_1|^{1+\beta}]+\underline{f}.
			\end{split}
		\end{equation*}
		Note also that
		\begin{equation}\label{eq:1115-2}
			\begin{split}
				\big|\hat{\mathbb{E}}\big[\phi\big(S^{Y,\gamma}_{n_{1-\gamma_1}}\big)\big]-\hat{\mathbb{E}}[\phi(S_n)]\big|&\leq l_{\phi}\Bigl(\hat{\mathbb{E}}\Bigl[\Big|S^{Y,\gamma}_{n_{1-\gamma_1}}-\frac{n}{n_{\gamma_1}n_{1-\gamma_1}}S_n\Big|\Bigr]+\frac{n-n_{\gamma_1}n_{1-\gamma_1}}{n_{\gamma_1}n_{1-\gamma_1}}\hat{\mathbb{E}}[|S_n|]\Bigr)\\
				&\leq l_{\phi}\frac{2(n-n_{\gamma_1}n_{1-\gamma_1})+n_{1-\gamma_1}n_{\gamma_1,\gamma_2}}{n_{\gamma_1}n_{1-\gamma_1}}\hat{\mathbb{E}}[|X_1|]\\
				&\leq C(\gamma_1,\gamma_2)l_{\phi}\bigl(n^{-(1-\gamma_1)}+n^{-\gamma_1(1-\gamma_2)}\bigr)\hat{\mathbb{E}}[|X_1|].
			\end{split}
		\end{equation}
		Then \eqref{1220-1} follows.
	\end{proof}

 In the case that $\{X_k\}_{k\geq 1}$ is a $d$-dimensional i.i.d. sequence, similar to Theorem 3.4 in \cite{Song2021}, we have the following lemma.
    \begin{lemma}\label{lem:est for iid}
        Let $\{X_k\}_{k\geq 1}$ be a d-dimensional i.i.d. sequence such that $\hat{\mathbb{E}}[|X_1|^{1+\beta}]<\infty$ for some $\beta\in (0,1]$ and $v\in C_b^{1,1+\beta}(\mathbb{R}_+\times \mathbb{R}^d)$ be the solution to equation \eqref{PDE} with $p(a)=\hat{\mathbb{E}}[\langle a, X_1\rangle]$. Set $S_n=\frac{1}{n}\sum_{i=1}^nX_i$. Then we have
        \begin{equation}\label{eq:est for iid}
           \begin{split}
				v(1,0)-\hat{\mathbb{E}}[\phi(S_n)]&\geq -4n^{-\beta}\int_0^1|\nabla_xv(s,\cdot)|_{\beta}ds\times \hat{\mathbb{E}}[|X_1|^{1+\beta}]+\underline{f},
			\end{split}
        \end{equation}
        where 
		$
			\underline{f}=\inf_{\{0\leq t\leq 1,x\in \mathbb{R}^d\}}f(t,x)
	$ with $f$ defined as in \eqref{PDE}.
    \end{lemma}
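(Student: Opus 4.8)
The plan is to obtain \eqref{eq:est for iid} directly from Theorem \ref{Thm: 0127}, exploiting the fact that the ``cross'' (asymptotic-independence) term present in the general identical case collapses to zero for a genuinely independent sequence. Since an i.i.d. sequence is in particular an identical sequence, Theorem \ref{Thm: 0127} applies and gives
\[
v(1,0)-\hat{\mathbb{E}}[\phi(S_n)]\geq -\sum_{i=2}^n R_{n,i}-4n^{-\beta}\int_0^1|\nabla_xv(s,\cdot)|_{\beta}\,ds\times \hat{\mathbb{E}}[|X_1|^{1+\beta}]+\underline{f},
\]
where $R_{n,i}$ denotes the absolute value of the difference appearing in the first sum of \eqref{0127-1}. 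Thus it suffices to show $R_{n,i}=0$ for each $2\leq i\leq n$, after which the claimed estimate follows at once.

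To carry this out I would fix $2\leq i\leq n$, write $\frac{1}{n}\sum_{j=1}^i X_j=\frac{1}{n}\sum_{j=1}^{i-1}X_j+\frac{X_i}{n}$, and introduce the test function
\[
\psi(z_1,\dots,z_{i-1},y):=v\Bigl(1-\frac{i}{n},\ \frac{1}{n}\sum_{j=1}^{i-1}z_j+\frac{y}{n}\Bigr),\qquad (z_1,\dots,z_{i-1},y)\in(\mathbb{R}^d)^{i-1}\times\mathbb{R}^d.
\]
Because $v\in C_b^{1,1+\beta}(\mathbb{R}_+\times\mathbb{R}^d)$ has bounded spatial gradient, $\psi\in C_{lip}((\mathbb{R}^d)^{i})$. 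By Definition \ref{def:iid-sequence}, $X_i$ is independent from $(X_1,\dots,X_{i-1})$, so the defining identity \eqref{eq:def-independent} of independence, applied to $\psi$, yields
\[
\hat{\mathbb{E}}\bigl[\psi(X_1,\dots,X_{i-1},X_i)\bigr]=\hat{\mathbb{E}}\Bigl[\,\hat{\mathbb{E}}\bigl[\psi(z_1,\dots,z_{i-1},X_i)\bigr]\big|_{(z_1,\dots,z_{i-1})=(X_1,\dots,X_{i-1})}\Bigr].
\]
The left-hand side equals $\hat{\mathbb{E}}[v(1-\frac{i}{n},\frac{1}{n}\sum_{j=1}^i X_j)]$, while on the right-hand side the inner expectation depends on $(z_1,\dots,z_{i-1})$ only through $x=\frac{1}{n}\sum_{j=1}^{i-1}z_j$ and equals $\hat{\mathbb{E}}[v(1-\frac{i}{n},x+\frac{X_i}{n})]$; evaluating at $(X_1,\dots,X_{i-1})$ then replaces $x$ by $\frac{1}{n}\sum_{j=1}^{i-1}X_j$. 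Hence the two quantities inside $R_{n,i}$ coincide and $R_{n,i}=0$.

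The only step demanding genuine care is the asymmetry of independence in the sublinear framework (Remark \ref{rem:diff for inde}): one must invoke precisely the direction furnished by Definition \ref{def:iid-sequence}, namely that each $X_i$ is independent \emph{from} its past $(X_1,\dots,X_{i-1})$, and one must check that the running average $\frac{1}{n}\sum_{j=1}^{i-1}X_j$, being a linear—hence Lipschitz—function of that past, transmits this independence through the legitimate $C_{lip}$ test function $\psi$. I expect this bookkeeping to be the whole substance of the argument; no further analytic obstacle arises, since the quantitative PDE estimate and the moment bound $\hat{\mathbb{E}}[|X_1|^{1+\beta}]<\infty$ are already absorbed into Theorem \ref{Thm: 0127}.
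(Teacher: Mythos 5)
Your proposal is correct and matches the argument the paper intends: the lemma is stated as an i.i.d. specialization of the telescoping estimate, and the paper (which omits the proof, citing Song's Theorem 3.4) relies on exactly the fact that the cross terms in Theorem \ref{Thm: 0127} vanish when each $X_i$ is independent from $(X_{i-1},\dots,X_1)$. Your verification that $R_{n,i}=0$ — using that $\psi$ is a legitimate $C_{lip}$ test function because $|\nabla_x v|_\infty<\infty$, and invoking the correct direction of independence — is precisely the bookkeeping that justifies this specialization.
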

	
	By Corollary 2.2.13 in \cite{Pengbook}, we know that for any non-empty bounded closed convex subset $\Gamma\subset \mathbb{R}^d$, $v(t,x):=\sup_{y\in \Gamma}\phi(x+ty)$ is the viscosity solution to the following $G$-equation:
	\begin{equation*}
		\begin{cases}
			\partial_tv(t,x)-p_{\Gamma}(\nabla_xv(t,x))=0, \ (t,x)\in \mathbb{R}_+\times \mathbb{R}^d,\\
			v(0,x)=\phi(x),
		\end{cases}
	\end{equation*}
	where $p_{\Gamma}(a):=\sup_{y\in \Gamma}\langle a,y\rangle=\hat{\mathbb E}[\langle a, X\rangle]$ if $X\sim \Gamma$. However, the regularity of the viscosity solution $v$ is not enough ($v$ is only Lipschitz continuous) to apply Theorem \ref{Thm: 0127}. 
	
	Now motivated by \cite{Krylov2020} (see also \cite{Song2021}), we take a non-negative $\zeta\in C^{\infty}(\mathbb{R}^{1+d})$ supported in $\{(t,x): 0<t<1, |x|<1\}$, where $C^{\infty}(\mathbb{R}^{1+d})$ is the collection of all smooth functions, such that
	\begin{equation}\label{eq:huang}
\int_{\mathbb{R}^{1+d}}\zeta(t,x)dtdx=1.
	\end{equation}
	For any $\epsilon>0$, let $\zeta_{\epsilon}(t,x):=\epsilon^{-(1+d)}\zeta(\epsilon^{-1}t, \epsilon^{-1}x)$, and then for any locally integrable $u(t,x)$, the convolution $u_{\epsilon}(t,x)=u*\zeta_{\epsilon}\in C^{\infty}(\mathbb{R}^{1+d})$.
	
 Lemma 4.1 in \cite{Song2021} shows the following result for $d=1$. By a similar argument, we can check it holds for all $d\in\mathbb N$, and thus we omit the proof.
	\begin{lemma}\label{Lem: song4.1}
		For given $\phi\in C_{lip}(\mathbb{R}^d)$ and $\Gamma$ a bounded closed convex subset of $\mathbb{R}^d$, set $v(t,x):=\sup_{y\in \Gamma}\phi(x+ty)$. Then there exists a non-negative function $f^{\epsilon}:\mathbb{R}^d\times\mathbb{R}\to \mathbb{R}_+$ such that $v_{\epsilon}$ satisfies
		\begin{equation*}
			\partial_tv_{\epsilon}(t,x)-p_{\Gamma}(\nabla_xv_{\epsilon}(t,x))=f^{\epsilon}(t,x), \ \text{ for all } \ t>\epsilon, x\in \mathbb{R}^d.
		\end{equation*}
	\end{lemma}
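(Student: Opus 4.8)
The plan is to exhibit $f^{\epsilon}$ explicitly as the \emph{Jensen defect} of the convex Hamiltonian $p_{\Gamma}$ under the mollifying kernel, so that its nonnegativity becomes automatic. The only structural feature I intend to use is that $p_{\Gamma}(a)=\sup_{y\in\Gamma}\langle a,y\rangle$ is the support function of $\Gamma$, hence sublinear and in particular \emph{convex} in $a$; this is exactly what makes the argument insensitive to the dimension $d$, and explains why the one-dimensional proof of \cite[Lemma 4.1]{Song2021} transfers unchanged. Since $\phi\in C_{lip}(\mathbb{R}^d)$ and $\Gamma$ is bounded, $v(t,x)=\sup_{y\in\Gamma}\phi(x+ty)$ is Lipschitz on $[0,\infty)\times\mathbb{R}^d$, so by Rademacher's theorem it is differentiable almost everywhere.

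First I would record the pointwise identity $\partial_t v=p_{\Gamma}(\nabla_x v)$ at every point of differentiability. This can be read off from the Lax--Oleinik-type semigroup identity
\[
v(t+h,x)=\sup_{z\in\Gamma}v(t,x+hz),\qquad h\ge 0,
\]
which itself follows from the convexity of $\Gamma$: writing $v(t+h,x)=\sup_{y,z\in\Gamma}\phi\bigl(x+(t+h)\tfrac{hz+ty}{t+h}\bigr)$ and noting $\tfrac{hz+ty}{t+h}\in\Gamma$ gives ``$\le$'', while the choice $z=y$ gives ``$\ge$''. Expanding both sides to first order at a differentiability point $(t,x)$ (the error terms being uniform in $z\in\Gamma$ because $\Gamma$ is bounded) yields $\partial_t v=p_{\Gamma}(\nabla_x v)$. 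Alternatively, this is the almost-everywhere consequence of $v$ being the viscosity solution of \eqref{PDE} with $f\equiv 0$, as already recorded through \cite[Corollary 2.2.13]{Pengbook} just above the statement.

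Next I would differentiate the mollification. For $t>\epsilon$ the kernel $\zeta_{\epsilon}$ is supported in $\{0<s<\epsilon,\ |w|<\epsilon\}$, so $t-s>0$ throughout the convolution and the classical derivatives of the smooth function $v_{\epsilon}=v*\zeta_{\epsilon}$ pass onto $v$, namely $\partial_t v_{\epsilon}=(\partial_t v)*\zeta_{\epsilon}$ and $\nabla_x v_{\epsilon}=(\nabla_x v)*\zeta_{\epsilon}$, where the a.e.\ defined derivatives of the Lipschitz function $v$ are used. Substituting the pointwise identity then gives
\begin{align*}
f^{\epsilon}(t,x):=\partial_t v_{\epsilon}-p_{\Gamma}(\nabla_x v_{\epsilon})&=\int p_{\Gamma}\bigl(\nabla_x v(t-s,x-w)\bigr)\zeta_{\epsilon}(s,w)\,dw\,ds\\
&\quad-p_{\Gamma}\!\Bigl(\int \nabla_x v(t-s,x-w)\zeta_{\epsilon}(s,w)\,dw\,ds\Bigr).
\end{align*}
Because $\zeta_{\epsilon}\ge 0$ with $\int\zeta_{\epsilon}=1$, the measure $\zeta_{\epsilon}\,dw\,ds$ is a probability measure, and Jensen's inequality for the convex function $p_{\Gamma}$ forces $f^{\epsilon}\ge 0$, which is precisely the claim; smoothness of $f^{\epsilon}$ is inherited from that of $v_{\epsilon}$.

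The only genuinely delicate points are the two interchanges: that the classical derivatives of $v_{\epsilon}$ really coincide with the convolutions of the almost-everywhere derivatives of the merely Lipschitz $v$ (a standard mollifier fact, valid since $\nabla v\in L^{\infty}$), and that the first-order expansion underlying $\partial_t v=p_{\Gamma}(\nabla_x v)$ is uniform over the compact set $\Gamma$. Neither step uses $d=1$, so the computation in \cite[Lemma 4.1]{Song2021} applies verbatim, justifying the omission in the main text. I expect the uniform first-order expansion --- equivalently, the a.e.\ validity of the Hamilton--Jacobi equation --- to be the step demanding the most care.
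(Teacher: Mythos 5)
Your proof is correct and follows essentially the same route as the argument the paper omits (it simply cites \cite[Lemma 4.1]{Song2021} for $d=1$ and asserts the extension): establish that the Lipschitz function $v$ satisfies $\partial_t v=p_{\Gamma}(\nabla_x v)$ at a.e.\ point, then mollify and use that the support function $p_{\Gamma}$ is convex, so the Jensen defect $f^{\epsilon}=\bigl(p_{\Gamma}(\nabla_x v)\bigr)*\zeta_{\epsilon}-p_{\Gamma}\bigl((\nabla_x v)*\zeta_{\epsilon}\bigr)$ is nonnegative. Your self-contained derivation of the a.e.\ identity from the semigroup relation $v(t+h,x)=\sup_{z\in\Gamma}v(t,x+hz)$ (in place of quoting viscosity-solution theory), together with the observation that only convexity of $p_{\Gamma}$ is used, makes the claimed dimension-independence explicit, which is exactly the point the paper leaves to the reader.
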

	
	From Lemma 2.3 in \cite{Krylov2020}, we have the following properties of convolutions.
	
	\begin{lemma}\label{Lem: property of convolution}
		For a given $u:\mathbb{R}_+\times\mathbb{R}^d\to \mathbb{R}$, assume that there exist $\beta\in (0,1]$, and $a_1,a_2,a_3\geq 0$ such that
		\begin{equation*}
			\begin{split}
				&|u(t,x)-u(t,y)|\leq a_1|x-y|^{\beta},\\
				&|u(t,x)-u(s,x)|\leq a_2|t-s|^{\beta}+a_3.
			\end{split}
		\end{equation*}
		Then for any $\epsilon\in (0,1)$, $t,s\in(\epsilon,\infty)$ and $x,y\in \mathbb{R}^d$, we have
		\begin{equation*}
			\begin{split}
				&|u_{\epsilon}(t,x)-u(t,x)|\leq (a_1+a_2)\epsilon^{\beta}+a_3,\\
				&|u_{\epsilon}(t,x)-u_{\epsilon}(t,y)|\leq a_1|x-y|^{\beta},\\
				&|u_{\epsilon}(t,x)-u_{\epsilon}(s,x)|\leq a_2|t-s|^{\beta}+a_3,\\
				&|\partial_tu_{\epsilon}(t,x)-\partial_tu_{\epsilon}(t,y)|\leq a_1|\partial_t\zeta|_{L^1}\epsilon^{-1}|x-y|^{\beta},\\
				&|\nabla_xu_{\epsilon}(t,x)-\nabla_xu_{\epsilon}(t,y)|\leq a_1|\nabla_x\zeta|_{L^1}\epsilon^{-1}|x-y|^{\beta},\\
				&|\partial_tu_{\epsilon}(t,x)-\partial_tu_{\epsilon}(s,x)|\leq |\partial_t\zeta|_{L^1}\epsilon^{-1}(a_2|t-s|^{\beta}+a_3),\\
				&|\nabla_xu_{\epsilon}(t,x)-\nabla_xu_{\epsilon}(s,x)|\leq |\nabla_x\zeta|_{L^1}\epsilon^{-1}(a_2|t-s|^{\beta}+a_3),
			\end{split}
		\end{equation*}
		where
		\begin{equation*}
			|\eta|_{L^1}:=\int_{\mathbb{R}^{1+d}}|\eta(t,x)|dtdx, \ \ \eta=\partial_t\zeta, \nabla_x\zeta.
		\end{equation*}
	\end{lemma}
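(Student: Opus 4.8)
The plan is to exploit the convolution structure $u_{\epsilon}=u*\zeta_{\epsilon}$ together with three elementary facts about the mollifier. Since $\zeta\geq 0$ with $\int_{\mathbb{R}^{1+d}}\zeta=1$, the rescaled kernel satisfies $\zeta_{\epsilon}\geq 0$ and $\int\zeta_{\epsilon}=1$; its support lies in $\{(\tau,z): 0<\tau<\epsilon,\ |z|<\epsilon\}$; and the scaling identities $\partial_t\zeta_{\epsilon}=\epsilon^{-1}(\partial_t\zeta)_{\epsilon}$ and $\nabla_x\zeta_{\epsilon}=\epsilon^{-1}(\nabla_x\zeta)_{\epsilon}$ give $\int|\partial_t\zeta_{\epsilon}|=\epsilon^{-1}|\partial_t\zeta|_{L^1}$ and $\int|\nabla_x\zeta_{\epsilon}|=\epsilon^{-1}|\nabla_x\zeta|_{L^1}$ by the change of variables $(t,x)\mapsto(\epsilon^{-1}t,\epsilon^{-1}x)$. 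The hypothesis $t,s>\epsilon$ is used throughout to guarantee that on the support of $\zeta_{\epsilon}$ the shifted arguments $t-\tau$ and $s-\tau$ remain positive, so that $u$ is evaluated only on its domain $\mathbb{R}_+\times\mathbb{R}^d$.

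For the first three (zeroth-order) estimates I would write each difference as a single integral against $\zeta_{\epsilon}$ and bound the integrand pointwise. For the first one, using $\int\zeta_{\epsilon}=1$,
\[
u_{\epsilon}(t,x)-u(t,x)=\int\big[u(t-\tau,x-z)-u(t,x)\big]\zeta_{\epsilon}(\tau,z)\,d\tau\,dz,
\]
and splitting $u(t-\tau,x-z)-u(t,x)$ through the intermediate point $u(t-\tau,x)$ yields the bound $a_1|z|^{\beta}+a_2|\tau|^{\beta}+a_3\leq(a_1+a_2)\epsilon^{\beta}+a_3$ on the support. The second and third estimates are identical in spirit: writing $u_{\epsilon}(t,x)-u_{\epsilon}(t,y)=\int[u(t-\tau,x-z)-u(t-\tau,y-z)]\zeta_{\epsilon}$ and invoking the spatial H\"older hypothesis gives $a_1|x-y|^{\beta}$, while the temporal hypothesis gives $a_2|t-s|^{\beta}+a_3$ for the $t$--$s$ difference. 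In each case the nonnegativity and unit mass of $\zeta_{\epsilon}$ absorb the integral.

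For the last four (first-order) estimates I would push the derivative onto the kernel. After the change of variables placing the $(t,x)$-dependence on $\zeta_{\epsilon}$, one has $\partial_t u_{\epsilon}(t,x)=\int u(t-\tau,x-z)\,\partial_t\zeta_{\epsilon}(\tau,z)\,d\tau\,dz$, and likewise for $\nabla_x$. Differences are then handled exactly as before; for instance
\[
\partial_t u_{\epsilon}(t,x)-\partial_t u_{\epsilon}(t,y)=\int\big[u(t-\tau,x-z)-u(t-\tau,y-z)\big]\,\partial_t\zeta_{\epsilon}(\tau,z)\,d\tau\,dz,
\]
so the spatial H\"older bound $a_1|x-y|^{\beta}$ comes out of the integral and the remaining factor $\int|\partial_t\zeta_{\epsilon}|=\epsilon^{-1}|\partial_t\zeta|_{L^1}$ produces the stated constant. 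The three companion estimates follow by swapping $\partial_t\leftrightarrow\nabla_x$ and replacing the spatial difference by the temporal difference $a_2|t-s|^{\beta}+a_3$.

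There is no serious analytic obstacle here, the statement being the standard mollification lemma (Lemma 2.3 in \cite{Krylov2020}); the only points requiring care are bookkeeping ones, namely the precise power of $\epsilon$ arising from differentiating the rescaled kernel, the scale-invariance of the $L^1$ norm under $\eta\mapsto\eta_{\epsilon}$, and the use of $t,s>\epsilon$ to stay inside the domain of $u$. Since all seven inequalities reduce to moving the appropriate derivative onto $\zeta_{\epsilon}$ and applying one of the two regularity hypotheses inside the integral, I would present the zeroth-order and first-order cases as two compact blocks rather than treating each line in isolation.
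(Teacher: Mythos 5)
Your proof is correct: all seven estimates follow exactly as you describe, with the zeroth-order bounds using nonnegativity and unit mass of $\zeta_{\epsilon}$, the first-order bounds obtained by moving the derivative onto the kernel and using the scaling identity $\int|\partial_t\zeta_{\epsilon}|=\epsilon^{-1}|\partial_t\zeta|_{L^1}$ (likewise for $\nabla_x$), and the hypothesis $t,s>\epsilon$ keeping all evaluations of $u$ inside $\mathbb{R}_+\times\mathbb{R}^d$. The paper itself offers no proof of this lemma---it simply cites Lemma 2.3 of \cite{Krylov2020}---and your argument is precisely the standard mollification proof that citation stands for, so you have in effect supplied the omitted details rather than diverged from the paper's route.
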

	
	For any given $Z\in \mathcal{H}^d$ on a sublinear expectation space $(\Omega, \mathcal{H}, \hat{\mathbb{E}})$ with $\hat{\mathbb{E}}[|Z|^{2}]<\infty$ and $\phi\in C_{lip}(\mathbb{R}^d)$, we define $v_n^Z$ for any $n\in \mathbb{N}$ by
	\begin{equation}\label{1225-2}
		v_n^Z(0,x):=\phi(x), \ v_n^Z\big(\frac{k}{n},x\big)=\hat{\mathbb{E}}\Bigl[v_n^Z\bigl(\frac{k-1}{n},x+\frac{Z}{n}\bigr)\Bigr], \ k\geq 1,
	\end{equation}
	and
	\begin{equation}\label{1225-3}
		v_n^Z(t,x)=v_n^Z\big(\frac{k}{n},x\big), \ \text{ for all } \ \frac{k-1}{n}<t\leq \frac{k}{n}, \ k\geq 1.
	\end{equation}
	It is easy to see that $v_n^{Z_1}=v_n^{Z_2}$ if $Z_1$ and $Z_2$ are identically distributed. In the following,  we will use this construction multiple times.

	The proof of the next lemma is similar to that of Lemma 4.2 in \cite{Song2021}. So we omit it here.
	\begin{lemma}\label{Lem: 1226-1}
		For $\phi\in C_{lip}(\mathbb{R}^d)$ with Lipschitz constant $l_{\phi}$, let $v_n^Z$ be defined by \eqref{1225-2}-\eqref{1225-3}. Then for $\epsilon>0$, there exists a function $f_n^{(\epsilon)}:\mathbb{R}^d\times \mathbb{R}\to\mathbb{R}$ such that for $ t>\epsilon+\frac{1}{n},$ $ x\in \mathbb{R}^d$, $f_n^{(\epsilon)}(t,x)\geq -\frac{C_{\zeta}l_{\phi}}{n\epsilon}\big(\hat{\mathbb{E}}[|Z|]+\hat{\mathbb{E}}[|Z|^2]\big)$  and 
		\begin{equation*}
			\partial_tv^Z_{n,\epsilon}(t,x)-p_Z(\nabla_xv^Z_{n,\epsilon}(t,x))=f_n^{(\epsilon)}(t,x),
		\end{equation*}
		where 
	$
			p_Z(a):=\hat{\mathbb{E}}[\langle a,Z\rangle], \ a\in \mathbb{R}^d$, $C_{\zeta}=\max\Big\{\frac{5}{2}|\nabla_x\zeta|_{L^1}, 2|\partial_t\zeta|_{L^1}\Big\}
	$ and $\zeta$ is from \eqref{eq:huang}.
	\end{lemma}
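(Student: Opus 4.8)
The plan is to follow the finite-difference-plus-mollification strategy of Krylov and Song: treat $v_n^Z$ as a discrete sub-solution of the $G$-equation, and exploit that convolution against the \emph{nonnegative} kernel $\zeta_\epsilon$ interacts with the sub-additivity of $\hat{\mathbb{E}}$ in the right direction. First I would record the two regularity estimates for the scheme. An induction on $k$ in \eqref{1225-2}, using monotonicity and sub-additivity of $\hat{\mathbb{E}}$, shows $v_n^Z(t,\cdot)$ is $l_\phi$-Lipschitz for every $t$. For the time regularity, a single step changes the value by at most $\frac{l_\phi}{n}\hat{\mathbb{E}}[|Z|]$ (bound $v_n^Z((k-1)/n,x+Z/n)-v_n^Z((k-1)/n,x)$ by $\frac{l_\phi}{n}|Z|$ and apply $\hat{\mathbb{E}}$); summing over the at most $n|t-s|+1$ grid points between $s$ and $t$ gives $|v_n^Z(t,x)-v_n^Z(s,x)|\le a_2|t-s|+a_3$ with $a_2=l_\phi\hat{\mathbb{E}}[|Z|]$ and $a_3=\frac{l_\phi}{n}\hat{\mathbb{E}}[|Z|]$. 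These are exactly the hypotheses ($\beta=1$, $a_1=l_\phi$) needed to feed $v^Z_{n,\epsilon}=v_n^Z*\zeta_\epsilon$ into Lemma \ref{Lem: property of convolution}.

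The crux is a one-sided ``discrete PDE'' inequality for the smooth function $u:=v^Z_{n,\epsilon}$. Since $v_n^Z$ is piecewise constant in time with mesh $1/n$, the recursion gives, for a.e.\ $r$, the identity $v_n^Z(t-r,\cdot)=\hat{\mathbb{E}}[v_n^Z(t-\frac1n-r,\cdot+\frac Zn)]$, which is legitimate precisely when $t-r>\frac1n$ throughout the support of $\zeta_\epsilon$, i.e.\ when $t>\epsilon+\frac1n$. Subtracting the constant $v_n^Z(t-\frac1n-r,x-y)$ and integrating against $\zeta_\epsilon$ I would write
\[
u(t,x)-u(t-\tfrac1n,x)=\int \hat{\mathbb{E}}\big[v_n^Z(t-\tfrac1n-r,(x-y)+\tfrac Zn)-v_n^Z(t-\tfrac1n-r,x-y)\big]\,\zeta_\epsilon(r,y)\,dr\,dy.
\]
Sub-additivity of $\hat{\mathbb{E}}$ in its integral form, $\hat{\mathbb{E}}[\int g\,\zeta_\epsilon]\le\int\hat{\mathbb{E}}[g]\,\zeta_\epsilon$ for the nonnegative weight $\zeta_\epsilon$, then yields
\[
u(t,x)-u(t-\tfrac1n,x)\ge \hat{\mathbb{E}}\big[u(t-\tfrac1n,x+\tfrac Zn)-u(t-\tfrac1n,x)\big].
\]
This transfers the non-smooth scheme's recursion onto the smooth mollification $u$.

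Since $u$ is $C^\infty$, I would Taylor-expand the right-hand side to second order in $Z/n$: the first-order term gives $\frac1n p_Z(\nabla_x u(t-\frac1n,x))$ by definition of $p_Z$ and constant preservation, while the quadratic remainder is controlled by $\frac{1}{2n^2}|\nabla_x^2u|_\infty\hat{\mathbb{E}}[|Z|^2]$ with $|\nabla_x^2u|_\infty\le l_\phi|\nabla_x\zeta|_{L^1}\epsilon^{-1}$ from Lemma \ref{Lem: property of convolution}. It then remains to replace the backward difference $n(u(t,x)-u(t-\frac1n,x))$ by $\partial_t u(t,x)$, and the frozen gradient $p_Z(\nabla_x u(t-\frac1n,x))$ by $p_Z(\nabla_x u(t,x))$. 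The first error is bounded by the temporal oscillation of $\partial_t u$ over a length-$\frac1n$ interval, namely $\frac{2l_\phi|\partial_t\zeta|_{L^1}}{n\epsilon}\hat{\mathbb{E}}[|Z|]$; the second, using that $p_Z$ is $\hat{\mathbb{E}}[|Z|]$-Lipschitz and $\hat{\mathbb{E}}[|Z|]^2\le\hat{\mathbb{E}}[|Z|^2]$, is bounded by $\frac{2l_\phi|\nabla_x\zeta|_{L^1}}{n\epsilon}\hat{\mathbb{E}}[|Z|^2]$ — both again via the time-modulus estimates of Lemma \ref{Lem: property of convolution}. Setting $f_n^{(\epsilon)}:=\partial_t u-p_Z(\nabla_x u)$ and summing the three contributions produces the coefficient $\frac52|\nabla_x\zeta|_{L^1}$ on $\hat{\mathbb{E}}[|Z|^2]$ and $2|\partial_t\zeta|_{L^1}$ on $\hat{\mathbb{E}}[|Z|]$, which is precisely the bound $-\frac{C_\zeta l_\phi}{n\epsilon}(\hat{\mathbb{E}}[|Z|]+\hat{\mathbb{E}}[|Z|^2])$.

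I expect the main obstacle to be the passage from the piecewise-constant, merely Lipschitz scheme to the smooth mollification: justifying the interchange that moves $\hat{\mathbb{E}}$ outside the convolution integral in the correct one-sided direction, and verifying that the recursion identity holds throughout the support of $\zeta_\epsilon$ — which is exactly what forces the restriction $t>\epsilon+\frac1n$. Once the displayed inequality is secured, the rest is careful bookkeeping matching the $\frac1n$ (discretization) and $\frac1\epsilon$ (mollification) scales through Lemma \ref{Lem: property of convolution}.
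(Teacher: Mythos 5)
Your proposal is correct and follows essentially the same route as the paper's proof, which is omitted in the text and deferred to Lemma 4.2 of Song (2021): transfer the discrete recursion for $v_n^Z$ onto the mollification $v^Z_{n,\epsilon}$ via sub-additivity of $\hat{\mathbb{E}}$ under the nonnegative kernel $\zeta_\epsilon$, Taylor-expand to second order, and control the time-difference and gradient-freezing errors through Lemma \ref{Lem: property of convolution} with $a_1=l_\phi$, $a_2=l_\phi\hat{\mathbb{E}}[|Z|]$, $a_3=\frac{l_\phi}{n}\hat{\mathbb{E}}[|Z|]$. Your bookkeeping even reproduces the exact constants $\frac{5}{2}|\nabla_x\zeta|_{L^1}$ (from $2+\frac{1}{2}$) and $2|\partial_t\zeta|_{L^1}$, and correctly identifies why the restriction $t>\epsilon+\frac{1}{n}$ is needed.
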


	We also need the following lemmas.
	
	\begin{lemma}\label{Lem: identical sequence}
		Assume that $\{X_k\}_{k\geq 1}$ is a $d$-dimensional identical sequence on a sublinear expectation space $(\Omega,\mathcal{H},\hat{\mathbb{E}})$. Set 
		\begin{equation*}
			S_{n,0}=0, \ S_{n,m}=\frac{X_1+\cdots+X_m}{n}, \ m\geq 1,
		\end{equation*}
		Let $Z\deq X_1$ and $v^Z_n$ be defined by \eqref{1225-2}-\eqref{1225-3} with $v^Z_n(0,x)=\phi(x)$ for some $\phi\in C_{lip}(\mathbb{R}^d)$. Then we have
		\begin{equation}\label{ineq: induction 1226}
	\Big|v^Z_n\bigl(1,0)-\hat{\mathbb{E}}[\phi(S_{n,n})]\Big|\le \sum_{k=1}^{n}\Big|\hat{\mathbb{E}}\Bigl[v^Z_n(\frac{n-k}{n},S_{n,k})\Bigr]-\hat{\mathbb{E}}\Bigl[\hat{\mathbb{E}}\Bigl[v^Z_n(\frac{n-k}{n},x+\frac{X_{k}}{n})\Bigr]\Big|_{x=S_{n,k-1}}\Bigr]\Big|.
		\end{equation}
	\end{lemma}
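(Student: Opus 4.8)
The plan is to prove the estimate by a standard telescoping decomposition along the discrete-time skeleton $\{k/n\}_{k=0}^n$. For each $0\le k\le n$ I would introduce
\[
A_k:=\hat{\mathbb{E}}\Big[v_n^Z\Big(\tfrac{n-k}{n},S_{n,k}\Big)\Big],
\]
which is well defined: the construction \eqref{1225-2}--\eqref{1225-3} preserves the Lipschitz constant of $\phi$, so $v_n^Z(\tfrac{n-k}{n},\cdot)\in C_{lip}(\mathbb{R}^d)$, and since $S_{n,k}\in\mathcal{H}^d$ we get $v_n^Z(\tfrac{n-k}{n},S_{n,k})\in\mathcal{H}$ by \eqref{eq:C_lip}. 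The two endpoints are immediate: using $S_{n,0}=0$ and that $v_n^Z(1,0)$ is a constant, $A_0=\hat{\mathbb{E}}[v_n^Z(1,0)]=v_n^Z(1,0)$; using $v_n^Z(0,\cdot)=\phi$, $A_n=\hat{\mathbb{E}}[\phi(S_{n,n})]$. Hence $v_n^Z(1,0)-\hat{\mathbb{E}}[\phi(S_{n,n})]=A_0-A_n=\sum_{k=1}^n(A_{k-1}-A_k)$, and the triangle inequality reduces the claim to bounding each $|A_{k-1}-A_k|$ by the corresponding summand on the right-hand side of \eqref{ineq: induction 1226}.

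The key step is to rewrite $A_{k-1}$. Writing $\frac{n-k+1}{n}=\frac{(n-k)+1}{n}$ and applying the recursion \eqref{1225-2} with $j=n-k+1$ gives
\[
v_n^Z\Big(\tfrac{n-k+1}{n},x\Big)=\hat{\mathbb{E}}\Big[v_n^Z\Big(\tfrac{n-k}{n},x+\tfrac{Z}{n}\Big)\Big].
\]
Since $Z\deq X_k$ (the sequence is identical) and $v_n^Z(\tfrac{n-k}{n},\cdot)$ is a fixed Lipschitz function, Definition \ref{Def: identically distributed} lets me replace $Z$ by $X_k$ on the right. Evaluating at $x=S_{n,k-1}$ and taking the outer $\hat{\mathbb{E}}$ then yields
\[
A_{k-1}=\hat{\mathbb{E}}\Big[\hat{\mathbb{E}}\Big[v_n^Z\Big(\tfrac{n-k}{n},x+\tfrac{X_k}{n}\Big)\Big]\Big|_{x=S_{n,k-1}}\Big],
\]
while $A_k=\hat{\mathbb{E}}[v_n^Z(\tfrac{n-k}{n},S_{n,k})]$ with $S_{n,k}=S_{n,k-1}+X_k/n$. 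Subtracting, $|A_{k-1}-A_k|$ is precisely the $k$-th term of the sum in \eqref{ineq: induction 1226}, and summing over $k$ completes the proof.

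I expect the only genuinely delicate point—everything else being bookkeeping—to be the interchange of $Z$ and $X_k$: one must check that $v_n^Z(\tfrac{n-k}{n},\cdot)$ is a legitimate test function so that identical distribution applies, and that the nested expectation is well defined as an element-wise composition, the inner map $x\mapsto\hat{\mathbb{E}}[v_n^Z(\tfrac{n-k}{n},x+X_k/n)]$ being again Lipschitz with the same constant, hence in $C_{lip}(\mathbb{R}^d)$, so its value at $S_{n,k-1}$ lies in $\mathcal{H}$. Note that, unlike the $\alpha$-mixing estimates elsewhere in this section, no independence is used here—only that the $X_k$ share a common distribution—which is exactly why this lemma serves as the deterministic backbone onto which the mixing corrections are later grafted.
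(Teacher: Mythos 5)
Your proposal is correct and follows essentially the same argument as the paper: the same telescoping decomposition along the quantities $\hat{\mathbb{E}}\bigl[v_n^Z\bigl(\tfrac{n-k}{n},S_{n,k}\bigr)\bigr]$, the same use of the recursion \eqref{1225-2} combined with $Z\deq X_k$ to rewrite each shifted term as a nested expectation, and the triangle inequality. The only difference is cosmetic (your index runs opposite to the paper's), and your extra checks on Lipschitz preservation and well-definedness are sound.
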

	
	\begin{proof}
		Note that 
		\begin{equation*}
			\begin{split}
				&\ \ \ \ \hat{\mathbb{E}}[\phi(S_{n,n})]-v^Z_n\bigl(1,0)\\
				&=\sum_{k=0}^{n-1}\Bigl(\hat{\mathbb{E}}\Bigl[v^Z_n(\frac{k}{n},S_{n,n-k})\Bigr]-\hat{\mathbb{E}}\Bigl[v^Z_n(\frac{k+1}{n},S_{n,n-k-1})\Bigr]\Bigr)\\
				&=\sum_{k=0}^{n-1}\Bigl(\hat{\mathbb{E}}\Bigl[v^Z_n(\frac{k}{n},S_{n,n-k})\Bigr]-\hat{\mathbb{E}}\Bigl[\hat{\mathbb{E}}\Bigl[v^Z_n(\frac{k}{n},x+\frac{X_{n-k}}{n})\Bigr]\Big|_{x=S_{n,n-k-1}}\Bigr]\Bigr).
			\end{split}
		\end{equation*}
		Then \eqref{ineq: induction 1226} holds.
	\end{proof}
	
	\begin{lemma}\label{Lem: 1226}
		Given $X_1,X_2\in \mathcal{H}^d$ on a sublinear expectation space $(\Omega,\mathcal{H},\hat{\mathbb{E}})$, let $\Gamma_i$ such that $X_i\sim \Gamma_i$, $i=1,2$. For any $\phi\in C_{lip}(\mathbb{R}^d)$ with Lipschitz constant $l_\phi$, define $v_i(t,x):=\sup_{y\in \Gamma_i}\phi(x+ty)$, $i=1,2$. Then for any $(t,x)\in \mathbb{R}_+\times \mathbb R^d$,
		\begin{equation*}
			|v_1(t,x)-v_2(t,x)|\leq tl_{\phi}\hat{\mathbb{E}}[|X_1-X_2|].
		\end{equation*}
	\end{lemma}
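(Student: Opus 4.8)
The plan is to reduce the pointwise estimate on $v_1,v_2$ to a one-sided Hausdorff comparison between $\Gamma_1$ and $\Gamma_2$, and the crucial observation is that both sets are parametrized by the \emph{same} family $\Theta$ of dominated linear expectations. Concretely, since $X_i\sim\Gamma_i$, Lemma \ref{lem:Charact of Gamma} gives the representation
\[
\Gamma_i=\{\mathbb{E}[X_i]:\mathbb{E}\in\Theta\},\qquad i=1,2.
\]
So given any $y_1\in\Gamma_1$, I would first pick $\mathbb{E}\in\Theta$ with $y_1=\mathbb{E}[X_1]$ and then set $y_2:=\mathbb{E}[X_2]\in\Gamma_2$. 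Using linearity of $\mathbb{E}$, the coordinatewise inequalities $\pm(X_{1,j}-X_{2,j})\le|X_{1,j}-X_{2,j}|$ together with monotonicity, the convention $|X_1-X_2|=\sum_{j=1}^d|X_{1,j}-X_{2,j}|$, and the domination $\mathbb{E}\le\hat{\mathbb{E}}$, I obtain
\[
|y_1-y_2|=|\mathbb{E}[X_1-X_2]|\le\mathbb{E}[|X_1-X_2|]\le\hat{\mathbb{E}}[|X_1-X_2|].
\]
Thus every point of $\Gamma_1$ admits a counterpart in $\Gamma_2$ within distance $\hat{\mathbb{E}}[|X_1-X_2|]$ (and symmetrically).

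With this matching in hand, the estimate follows purely from the Lipschitz property of $\phi$. Fixing $(t,x)\in\mathbb{R}_+\times\mathbb{R}^d$, for each $y_1\in\Gamma_1$ I choose $y_2$ as above and use $|(x+ty_1)-(x+ty_2)|=t|y_1-y_2|$ to write
\[
\phi(x+ty_1)\le\phi(x+ty_2)+l_\phi t|y_1-y_2|\le v_2(t,x)+l_\phi t\,\hat{\mathbb{E}}[|X_1-X_2|].
\]
Taking the supremum over $y_1\in\Gamma_1$ yields $v_1(t,x)-v_2(t,x)\le l_\phi t\,\hat{\mathbb{E}}[|X_1-X_2|]$, and exchanging the roles of the indices $1$ and $2$ (noting $\hat{\mathbb{E}}[|X_2-X_1|]=\hat{\mathbb{E}}[|X_1-X_2|]$) gives the reverse inequality; combining the two produces the claim.

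The argument is thus essentially a Hausdorff-distance comparison, and there is no serious analytic obstacle once the representation $\Gamma_i=\{\mathbb{E}[X_i]:\mathbb{E}\in\Theta\}$ is invoked. The one point that requires care, and where the naive approach fails, is that $\Gamma_1$ and $\Gamma_2$ must be matched \emph{through a common} $\mathbb{E}\in\Theta$ rather than compared via their support functions: a single direction $p$ only controls $\max_{\Gamma_1}\langle p,\cdot\rangle-\max_{\Gamma_2}\langle p,\cdot\rangle$, which is not enough to bound the pointwise distance $|y_1-y_2|$ needed for the Lipschitz estimate. I would also note in passing that each $\Gamma_i$ is bounded, closed and convex, so the suprema defining $v_i$ are finite (indeed attained), legitimizing all the inequalities above.
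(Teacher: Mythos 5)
Your proof is correct and follows essentially the same route as the paper: both invoke Lemma \ref{lem:Charact of Gamma} to write $\Gamma_i=\{\mathbb{E}[X_i]:\mathbb{E}\in\Theta\}$ and then match points of $\Gamma_1$ and $\Gamma_2$ through a common $\mathbb{E}\in\Theta$ before applying the Lipschitz bound. The only cosmetic difference is that the paper works with the attained maximizers $y_i$ and their counterparts $\mathbb{E}_i[X_{3-i}]$, whereas you match an arbitrary $y_1\in\Gamma_1$ and take suprema, which amounts to the same estimate.
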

	\begin{proof}
		Since $X_i\sim \Gamma_i$, it follows from Lemma \ref{lem:Charact of Gamma} that 
	    $\Gamma_i=\{\mathbb{E}[X_i]: \mathbb{E}\in \Theta\},$
		where $\Theta$ is given in \eqref{Def of Theta}, $i=1,2$. Given any $(t,x)\in \mathbb{R}_+\times \mathbb R^d$, let $y_i\in\Gamma_i$ be such that   $v_i(x,t)=\phi(x+ty_i)$, $i=1,2$. Then for each $i=1,2$, there exists $\mathbb E_i\in\Theta$ such that  $y_i=\mathbb{E}_i[X_i]$. Meanwhile, set $y_1'=\mathbb{E}_1[X_2]\in \Gamma_2, \ y_2'=\mathbb{E}_2[X_1]\in \Gamma_1$. Then  for any given $(t,x)\in \mathbb{R}_+\times \mathbb R^d$,
		\begin{equation*}
			v_1(t,x)-v_2(t,x)\leq \phi(x+ty_1)-\phi(x+ty_1')\leq tl_{\phi}|\mathbb{E}_{1}[X_1]-\mathbb{E}_{1}[X_2]|\leq tl_{\phi}\hat{\mathbb{E}}[|X_1-X_2|],
		\end{equation*}
		and
		\begin{equation*}
			v_1(t,x)-v_2(t,x)\geq \phi(x+ty_2')-\phi(x+ty_2)\geq -tl_{\phi}|\mathbb{E}_{2}[X_1]-\mathbb{E}_{2}[X_2]|\geq -tl_{\phi}\hat{\mathbb{E}}[|X_1-X_2|].
		\end{equation*}
		Thus, we have the desired result.
	\end{proof}
	
	Now we give the proof of Theorem \ref{Thm: LLN for subsequence}.
	
	\begin{proof}[Proof of Theorem \ref{Thm: LLN for subsequence}]
		We only need to prove the case $\{X_{t_k}\}_{k\geq 1}$ and the case $\{X_{m_k}\}_{k\geq 1}$ is similar.
		
		For any $\phi\in C_{lip}(\mathbb{R}^d)$ with the Lipschitz constant $l_{\phi}$, let $v(t,x):=\sup_{y\in \Gamma}\phi(x+ty)$ where $X_0\sim\Gamma$ and $v_n(t,x)$ is defined as in \eqref{1225-2}-\eqref{1225-3} for $Z=X_0$. We need to estimate $|\hat{\mathbb{E}}[\phi(S_n)]-v(1,0)|$. The estimation is divided into two steps (from above and from below).
		
		\textbf{Step 1: The estimation from above}. It is easy to show that $|v(t,x)-v(t,y)|\leq l_{\phi}|x-y|$. Moreover, since $v(r+\delta,x)=\sup_{y\in \Gamma}v(r,x+\delta y)$, then
		\begin{equation*}
			\big|v(t,x)-v(s,x)\big|\leq l_{\phi}\hat{\mathbb{E}}[|X_0|]|t-s|.
		\end{equation*}
		In fact, we can assume $t\geq s$ and Lemma \ref{lem:Charact of Gamma} yields
		\begin{equation*}
			\begin{split}
				\big|v(t,x)-v(s,x)\big|&=\Big|\sup_{y\in \Gamma}v(s,x+(t-s)y)-v(s,x)\Big|\\
				&\leq l_{\phi}|t-s|\sup_{y\in \Gamma}|y|\leq l_{\phi}|t-s|\sup_{\mathbb{E}\in \Theta}|\mathbb{E}[X_0]|\leq l_{\phi}\hat{\mathbb{E}}[|X_0|]|t-s|.
			\end{split}
		\end{equation*}
		Then by Corollary \ref{Coro: 0127}, Lemma \ref{Lem: song4.1} and Lemma \ref{Lem: property of convolution}, we conclude that for any $\epsilon\in (0,1)$ and $n\geq N_0$,
		\begin{equation*}
			\begin{split}
				v_{\epsilon}(1+\epsilon,0)-\hat{\mathbb{E}}\bigl[v_{\epsilon}(\epsilon,S_n)\bigr]&\geq -C(c_1,\gamma)n^{-1}\int_0^1|\nabla_xv_{\epsilon}
				(s+\epsilon,\cdot)|_{1}ds\times \hat{\mathbb{E}}[|X_0|^{2}]\\
				&\ \ \ \ -c_X\big|\nabla_xv_{\epsilon}(\cdot+\epsilon,\cdot)\big|_{\infty}ne^{-\alpha c_2(\ln n)^{1+\delta}}-2c_1l_{\phi}n^{-(1-\gamma)}\hat{\mathbb{E}}[|X_0|]\\
				&\geq -C(c_1,\gamma)|\nabla_x\zeta|_{L^1}\hat{\mathbb{E}}[|X_0|^{2}]l_{\phi}n^{-1}\epsilon^{-1}\\
				&\ \ \ \ -C(c_X,\alpha,c_2,\delta)l_{\phi}n^{-1/2}-2c_1\hat{\mathbb{E}}[|X_0|]l_{\phi}n^{-(1-\gamma)}\\
				&\geq -C(c_X,c_1,c_2,\alpha,\gamma,\delta,\zeta)\bigl(1+\hat{\mathbb{E}}[|X_0|^{2}]\bigr)l_{\phi}\bigl(n^{-1}\epsilon^{-1}+n^{-1/2}+n^{-(1-\gamma)}\bigr).
			\end{split}
		\end{equation*}
		By Lemma \ref{Lem: property of convolution} again, we obtain that
		\begin{equation*}
			\big|v_{\epsilon}(1+\epsilon,0)-v(1,0)\big|\leq \big|v_{\epsilon}(1+\epsilon,0)-v_{\epsilon}(1,0)\big|+\big|v_{\epsilon}(1,0)-v(1,0)\big|\leq (2\hat{\mathbb{E}}[|X_0|]+1)l_{\phi}\epsilon,
		\end{equation*}
		and
		\begin{equation*}
			\begin{split}
				\big|\hat{\mathbb{E}}\bigl[v_{\epsilon}(\epsilon,S_n)\bigr]-\hat{\mathbb{E}}[\phi(S_n)]\big|&\leq \big|\hat{\mathbb{E}}\bigl[v_{\epsilon}(\epsilon,S_n)\bigr]-\hat{\mathbb{E}}\big[v(\epsilon,S_n)\big]\big|+\big|\hat{\mathbb{E}}\bigl[v(\epsilon,S_n)\bigr]-\hat{\mathbb{E}}[\phi(S_n)]\big|\\
				&\leq (2\hat{\mathbb{E}}[|X_0|]+1)l_{\phi}\epsilon.
			\end{split}
		\end{equation*}
		Hence,
		\begin{equation*}
			v(1,0)-\hat{\mathbb{E}}[\phi(S_n)]\geq -C(c_X,c_1,c_2,\alpha,\gamma,\delta,\zeta)\bigl(1+\hat{\mathbb{E}}[|X_0|^{2}]\bigr)l_{\phi}\bigl(\epsilon+n^{-1}\epsilon^{-1}+n^{-1/2}+n^{-(1-\gamma)}\bigr).
		\end{equation*}
		Letting $\epsilon=n^{-1/2}$, we have
		\begin{equation}\label{0222-10}
			v(1,0)-\hat{\mathbb{E}}[\phi(S_n)]\geq -C(c_X,c_1,c_2,\alpha,\gamma,\delta,\zeta)\bigl(1+\hat{\mathbb{E}}[|X_0|^{2}]\bigr)l_{\phi}\bigl(n^{-1/2}+n^{-(1-\gamma)}\bigr).
		\end{equation}
		
		\textbf{Step 2: The estimation from below}. Let $\{\xi_k\}_{k\geq 1}$ be an i.i.d. sequence with $\Gamma$-maximally distributed random variables on $(\Omega, \mathcal{H}, \hat{\mathbb{E}})$, i.e., $\hat{\mathbb{E}}[\phi(\xi_k)]=\max_{y\in \Gamma}\phi(y)$ for all $\phi\in C_{lip}(\mathbb{R}^d), \ k\geq 1$. Set $S_n^{\xi}=\frac{\xi_1+\cdots+\xi_n}{n}$, and then for any $n\geq 1$, $S_n^{\xi}$ is also $\Gamma$-maximally distributed, as $\{\xi_k\}_{k\ge1}$ is i.i.d. 
		
		For any $n\geq N_0$, we consider $v_{n-|\Pi_n|,\epsilon}(\cdot+\epsilon+\frac{1}{n-|\Pi_n|},\cdot)\in C^{1,2}_{b}(\mathbb{R}\times \mathbb{R}^d)$.
		By Lemma \ref{lem:est for iid} and Lemma \ref{Lem: 1226-1}, we know that
		\begin{equation*}
			\begin{split}
				&\ \ \ \ v_{n-|\Pi_n|,\epsilon}\Big(1+\epsilon+\frac{1}{n-|\Pi_n|},0\Big)-\hat{\mathbb{E}}\Bigl[v_{n-|\Pi_n|,\epsilon}\Big(\epsilon+\frac{1}{n-|\Pi_n|},S_{n-|\Pi_n|}^{\xi}\Big)\Bigr]\\
				&\geq -\frac{4}{n-|\Pi_n|}\int_0^1\Big|\nabla_xv_{n-|\Pi_n|,\epsilon}\Big(s+\epsilon+\frac{1}{n-|\Pi_n|},\cdot\Big)\Big|_1ds\times \hat{\mathbb{E}}[|\xi_1|^2]\\
				&\ \ \ \ -\frac{C_{\zeta}l_{\phi}}{(n-|\Pi_n|)\epsilon}\big(\hat{\mathbb{E}}[|X_0|]+\hat{\mathbb{E}}[|X_0|^2]\big).
			\end{split}
		\end{equation*}
		Note that for all $t\geq 0, x,y\in \mathbb{R}^d$,
		\begin{equation}\label{ineq: v_n in x}
			\Big|v_{n-|\Pi_n|}(t,x)-v_{n-|\Pi_n|}(t,y)\Big|\leq l_{\phi}|x-y|,
		\end{equation}
		which combined with Lemma \ref{Lem: property of convolution}, gives 
		\begin{equation*}
			\big|\nabla_xv_{n-|\Pi_n|,\epsilon}\bigl(t,\cdot\bigr)\big|_1\leq l_{\phi}|\nabla_x\zeta|_{L^1}\epsilon^{-1}, \ \text{ for all } \ t\geq \epsilon.
		\end{equation*}
		Note also that $\hat{\mathbb{E}}[|\xi_1|^2]=\max_{y\in \Gamma}|y|^2\leq \hat{\mathbb{E}}[|X_0|^2]$. Then for all $n\geq N_0$,
		\begin{equation}\label{0222-7}
			\begin{split}
				&\ \ \ \ v_{n-|\Pi_n|,\epsilon}\Big(1+\epsilon+\frac{1}{n-|\Pi_n|},0\Big)-\hat{\mathbb{E}}\Bigl[v_{n-|\Pi_n|,\epsilon}\Big(\epsilon+\frac{1}{n-|\Pi_n|},S_{n-|\Pi_n|}^{\xi}\Big)\Bigr]\\
				&\geq -C(c_1,\gamma,\zeta)\big(1+\hat{\mathbb{E}}[|X_0|^2]\big)l_{\phi}n^{-1}\epsilon^{-1}.
			\end{split}
		\end{equation}
		On the other hand, by the construction of  $v_{n-|\Pi_n|}$ in \eqref{1225-2}-\eqref{1225-3}, we conclude that for any $t,s\in \mathbb{R}_+$, $x\in \mathbb{R}^d$,
		\begin{equation}\label{ineq: v_n in t}
			\Big|v_{n-|\Pi_n|}(t,x)-v_{n-|\Pi_n|}(s,x)\Big|\leq l_{\phi}\hat{\mathbb{E}}[|X_0|]\Big(|t-s|+\frac{1}{n-|\Pi_n|}\Big).
		\end{equation}
	 It follows from \eqref{ineq: v_n in x} and \eqref{ineq: v_n in t}  that Lemma \ref{Lem: property of convolution} gives
		\begin{equation}\label{0222-5}
			\begin{split}
				&\ \ \ \ \Big|v_{n-|\Pi_n|,\epsilon}\Big(1+\epsilon+\frac{1}{n-|\Pi_n|},0\Big)-\hat{\mathbb{E}}[\phi(S_n)]\Big|\\
				&\leq \Big|v_{n-|\Pi_n|,\epsilon}\Big(1+\epsilon+\frac{1}{n-|\Pi_n|},0\Big)-v_{n-|\Pi_n|}\Big(1+\epsilon+\frac{1}{n-|\Pi_n|},0\Big)\Big|\\
				&\ \ \ \ +\Big|v_{n-|\Pi_n|}\Big(1+\epsilon+\frac{1}{n-|\Pi_n|},0\Big)-v_{n-|\Pi_n|}(1,0)\Big|\\
				&\ \ \ \ +\big|v_{n-|\Pi_n|}(1,0)-\hat{\mathbb{E}}[\phi(S_n)]\big|\\
				&\leq (1+2\hat{\mathbb{E}}[|X_0|])l_{\phi}\epsilon+3\hat{\mathbb{E}}[|X_0|]l_{\phi}\frac{1}{n-|\Pi_n|}+\big|v_{n-|\Pi_n|}(1,0)-\hat{\mathbb{E}}[\phi(S_n)]\big|.
			\end{split}
		\end{equation}
		Since $\{t_k\}_{k\geq 1}$ is $(\gamma,\delta)$-admissible, for any $n\geq N_0$, we reorder $\{t_i\}_{1\leq i\leq n, i\notin \Pi_n}$ by $\{\tilde{t}_i\}_{1\leq i\leq n-|\Pi_n|}$ and define $S^Z_{n-|\Pi_n|}$ and $Z_i, W_i$ for $1\leq i\leq n-|\Pi_n|$ as in \eqref{def: Z_i by reorder}-\eqref{def: W sum of Z}. Similar to \eqref{0127-5}, it follows from Lemma \ref{Lem: identical sequence} and \eqref{ineq: v_n in x} that
		\begin{equation}\label{0222-4}
			\begin{split}
				&\ \ \ \ \big|v_{n-|\Pi_n|}(1,0)-\hat{\mathbb{E}}\big[\phi\big(S^Z_{n-|\Pi_n|}\big)\big]\big|\\
				&\leq \sum_{i=1}^{n-|\Pi_n|}\bigg|\hat{\mathbb{E}}\Bigl[v_{n-|\Pi_n|}\big(1-\frac{i}{n-|\Pi_n|},W_{i}\big)\Bigr]\\
                &\qquad\qquad \ \ -\hat{\mathbb{E}}\Bigl[\hat{\mathbb{E}}\Bigl[v_{n-|\Pi_n|}\big(1-\frac{i}{n-|\Pi_n|},x+\frac{X_{\tilde{t}_i}}{n-|\Pi_n|}\big)\Bigr]\Big|_{x=W_{i-1}}\Bigr]\bigg|\\
				&\leq c_Xl_{\phi}ne^{-\alpha c_2(\ln n)^{1+\delta}}\leq C(c_2,c_X,\alpha,\delta)l_{\phi}n^{-1/2}.
			\end{split}
		\end{equation}
		Similar to \eqref{0222-2}, we have
		\begin{equation}\label{0222-3}
			\big|\hat{\mathbb{E}}[\phi(S^Z_{n-|\Pi_n|})]-\hat{\mathbb{E}}[\phi(S_n)]\big|\leq 2c_1l_{\phi}n^{-(1-\gamma)}\hat{\mathbb{E}}[|X_0|].
		\end{equation}
		Then we conclude from \eqref{0222-5}-\eqref{0222-3} that
		\begin{equation}\label{0222-8}
			\begin{split}
				&\ \ \ \ \Big|v_{n-|\Pi_n|,\epsilon}\Big(1+\epsilon+\frac{1}{n-|\Pi_n|},0\Big)-\hat{\mathbb{E}}[\phi(S_n)]\Big|\\
				&\leq C(c_X,c_1,c_2,\alpha,\gamma,\delta)\bigl(1+\hat{\mathbb{E}}[|X_0|]\bigr)l_{\phi}\bigl(\epsilon+n^{-1/2}+n^{-(1-\gamma)}\bigr).
			\end{split}
		\end{equation}
		Note that $S_{n-|\Pi_n|}^{\xi}$ is $\Gamma$-maximally distributed. Then by Lemma \ref{Lem: property of convolution}, 
		\begin{equation}\label{0222-6}
			\begin{split}
				&\ \ \ \ \Big|\hat{\mathbb{E}}\Bigl[v_{n-|\Pi_n|,\epsilon}\Big(\epsilon+\frac{1}{n-|\Pi_n|},S_{n-|\Pi_n|}^{\xi}\Big)\Bigr]-v(1,0)\Big|\\
				&=\Big|\hat{\mathbb{E}}\Bigl[v_{n-|\Pi_n|,\epsilon}\Big(\epsilon+\frac{1}{n-|\Pi_n|},S_{n-|\Pi_n|}^{\xi}\Big)\Bigr]-\hat{\mathbb{E}}\bigl[\phi\bigl(S_{n-|\Pi_n|}^{\xi}\bigr)\bigr]\Big|\\
				&\leq \Big|\hat{\mathbb{E}}\Bigl[v_{n-|\Pi_n|,\epsilon}\Big(\epsilon+\frac{1}{n-|\Pi_n|},S_{n-|\Pi_n|}^{\xi}\Big)\Bigr]-\hat{\mathbb{E}}\Bigl[v_{n-|\Pi_n|}\Big(\epsilon+\frac{1}{n-|\Pi_n|},S_{n-|\Pi_n|}^{\xi}\Big)\Bigr]\Big|\\
				&\ \ \ \ +\Big|\hat{\mathbb{E}}\Bigl[v_{n-|\Pi_n|}\Big(\epsilon+\frac{1}{n-|\Pi_n|},S_{n-|\Pi_n|}^{\xi}\Big)\Bigr]-\hat{\mathbb{E}}\bigl[v_{n-|\Pi_n|}\big(0,S_{n-|\Pi_n|}^{\xi}\big)\bigr]\Big|\\
				&\leq (1+2\hat{\mathbb{E}}[|X_0|])l_{\phi}\epsilon+3\hat{\mathbb{E}}[|X_0|]l_{\phi}\frac{1}{n-|\Pi_n|}.
			\end{split}
		\end{equation}
		Summarizing \eqref{0222-7}, \eqref{0222-8} and \eqref{0222-6}, we get for any $n\geq N_0$,
		\begin{equation*}
			\begin{split}
				\hat{\mathbb{E}}[\phi(S_n)]-v(1,0)&\geq -C(c_X,c_1,c_2,\alpha,\gamma,\delta,\zeta)\bigl(1+\hat{\mathbb{E}}[|X_0|^{2}]\bigr)l_{\phi}\bigl(\epsilon+n^{-1}\epsilon^{-1}+n^{-1/2}+n^{-(1-\gamma)}\bigr).
			\end{split}
		\end{equation*}
		Taking $\epsilon=n^{-1/2}$, we have
		\begin{equation}\label{0222-9}
			\begin{split}
				\hat{\mathbb{E}}[\phi(S_n)]-v(1,0)&\geq -C(c_X,c_1,c_2,\alpha,\gamma,\delta,\zeta)\bigl(1+\hat{\mathbb{E}}[|X_0|^{2}]\bigr)l_{\phi}\bigl(n^{-1/2}+n^{-(1-\gamma)}\bigr).
			\end{split}
		\end{equation}
		
		Now according to \eqref{0222-10} and \eqref{0222-9}, we obtain \eqref{Convergence rate of LLN subsequence} for $n\geq N_0$. For $n<N_0$, let $x^*$ be the maximum point of $\phi$ in $\Gamma$. We derive
		\begin{equation}\label{eq:new1117}
				|\hat{\mathbb{E}}[\phi(S_n)]-\max_{x\in \Gamma}\phi(x)|=|\hat{\mathbb{E}}[\phi(S_n)]-\phi(x^*)|\leq l_{\phi}(\hat{\mathbb{E}}[|S_n|]+|x^*|)\leq 2l_{\phi}\hat{\mathbb{E}}[|X_0|].
		\end{equation}
		Then \eqref{Convergence rate of LLN subsequence} holds for all $n\geq 1$.
	\end{proof}

  Prior to proving Theorem \ref{thm:LLN}, we establish the following lemma.
    \begin{lemma}
		Suppose that $\{X_k\}_{k\geq 1}$ is a d-dimensional $\alpha$-mixing stationary sequence on a sublinear expectation space $(\Omega,\mathcal{H},\hat{\mathbb{E}})$ for some $\alpha>0$ with $\hat{\mathbb{E}}[|X_1|^{2}]<\infty$. Let
		$$S_n=\frac{1}{n}\sum_{i=1}^nX_i, \ \text{ and } \ \Gamma_n:=\big\{\E[S_n]: \mathbb{E}\in \Theta\big\}\subset \mathbb{R}^d.$$
		Then for any $0<\gamma,\gamma'<1$, there exists $C>0$ depending only on $(\gamma,\gamma',\alpha)$ and the constant $c_X$ appearing in Definition \ref{Def:alpha-mixing} such that for any $\phi\in C_{lip}(\mathbb{R}^d)$ with Lipschitz constant $l_{\phi}> 0$  and for all $n\in \mathbb{N}$,
		\begin{equation}\label{eq:intro main1}
			\left|\hat{\mathbb{E}}\left[\phi\left(\frac{1}{n}\sum_{k=1}^nX_k\right)\right]-\max_{x\in \Gamma_{n_{\gamma}}}\phi(x) \right|\leq C\big(1+\hat{\mathbb{E}}[|X_1|^2]\big)l_{\phi}\bigl(n^{-(1-\gamma)/2}+n^{-\gamma(1-\gamma')}\bigr),
		\end{equation}
    where $n_{\gamma}:=\lfloor n^{\gamma}\rfloor$ is the greatest integer less than or equal to $n^{\gamma}$.
	\end{lemma}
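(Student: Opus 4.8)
Throughout, fix $\gamma_1=\gamma$ and $\gamma_2=\gamma'$, so that $n_{\gamma_1}=n_\gamma$, $n_{\gamma_1,\gamma_2}=n_{\gamma,\gamma'}$ and $n_{1-\gamma_1}=n_{1-\gamma}$. The plan is to compare $\hat{\mathbb{E}}[\phi(S_n)]$ with $v(1,0)$, where I take the block variable
\[
Y_{n,1}^{\gamma}=\frac{1}{n_\gamma}\sum_{i=1}^{n_\gamma-n_{\gamma,\gamma'}}X_i,\qquad Y_{n,1}^{\gamma}\sim\Gamma^{Y},
\]
with $\Gamma^{Y}=\{\mathbb{E}[Y_{n,1}^{\gamma}]:\mathbb{E}\in\Theta\}$ (Lemma \ref{lem:Charact of Gamma}), and set $v(t,x):=\sup_{y\in\Gamma^{Y}}\phi(x+ty)$, the viscosity solution of the $G$-equation \eqref{PDE} with $p(a)=\hat{\mathbb{E}}[\langle a,Y_{n,1}^{\gamma}\rangle]$ and zero right-hand side, so that $v(1,0)=\max_{y\in\Gamma^{Y}}\phi(y)$. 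Since $Y_{n,1}^{\gamma}$ differs from $S_{n_\gamma}$ only by the $n_{\gamma,\gamma'}$ dropped terms, Lemma \ref{Lem: 1226} and stationarity give
\[
\bigl|v(1,0)-\max_{x\in\Gamma_{n_\gamma}}\phi(x)\bigr|\le l_\phi\,\hat{\mathbb{E}}\bigl[|Y_{n,1}^{\gamma}-S_{n_\gamma}|\bigr]\le l_\phi\frac{n_{\gamma,\gamma'}}{n_\gamma}\hat{\mathbb{E}}[|X_1|]\le C l_\phi n^{-\gamma(1-\gamma')}\hat{\mathbb{E}}[|X_1|].
\]
Hence it suffices to prove the two-sided estimate for $|\hat{\mathbb{E}}[\phi(S_n)]-v(1,0)|$ with the stated rate, and the claim follows by the triangle inequality.

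For the estimate from above I would apply Corollary \ref{Thm: 1224} with $\beta=1$ to the mollified solution $v_\epsilon=v*\zeta_\epsilon$, shifted in time to $v_\epsilon(\cdot+\epsilon,\cdot)$ so that by Lemma \ref{Lem: song4.1} the associated right-hand side is nonnegative (hence $\underline f\ge0$) and $v_\epsilon(\cdot+\epsilon,\cdot)\in C_b^{1,2}$. Using Lemma \ref{Lem: property of convolution} to bound $|\nabla_x v_\epsilon(s,\cdot)|_1\le l_\phi|\nabla_x\zeta|_{L^1}\epsilon^{-1}$ and $|\nabla_x v_\epsilon|_\infty\le l_\phi$, the three error terms in \eqref{1220-1} become of order $n^{-(1-\gamma)}\epsilon^{-1}\hat{\mathbb{E}}[|X_1|^2]$, $(n^{-(1-\gamma)}+n^{-\gamma(1-\gamma')})\hat{\mathbb{E}}[|X_1|]$, and the super-polynomially small mixing term $n^{1-\gamma}e^{-\alpha(n^{\gamma\gamma'}-1)}$. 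Adding the $O(l_\phi\epsilon)$ errors incurred when replacing $v_\epsilon(\cdot+\epsilon,\cdot)$ by $v$ and $v_\epsilon(\epsilon+\cdot,S_n)$ by $\phi(S_n)$, and choosing $\epsilon=n^{-(1-\gamma)/2}$, yields $v(1,0)-\hat{\mathbb{E}}[\phi(S_n)]\ge -C(1+\hat{\mathbb{E}}[|X_1|^2])l_\phi(n^{-(1-\gamma)/2}+n^{-\gamma(1-\gamma')})$; combining with the conversion inequality above gives $\hat{\mathbb{E}}[\phi(S_n)]\le \max_{x\in\Gamma_{n_\gamma}}\phi(x)+C(1+\hat{\mathbb{E}}[|X_1|^2])l_\phi(n^{-(1-\gamma)/2}+n^{-\gamma(1-\gamma')})$.

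For the estimate from below I would introduce an i.i.d.\ sequence $\{\eta_k\}_{k\ge1}$ of $\Gamma^{Y}$-maximally distributed vectors, so that $S_m^\eta:=\tfrac1m\sum_{k=1}^m\eta_k$ (with $m:=n_{1-\gamma}$) is again $\Gamma^{Y}$-maximal, $\hat{\mathbb{E}}[\phi(S_m^\eta)]=v(1,0)$, and $\hat{\mathbb{E}}[|\eta_1|^2]=\max_{y\in\Gamma^{Y}}|y|^2\le\hat{\mathbb{E}}[|X_1|^2]$. I would run the backward scheme $v_m^Z$ of \eqref{1225-2}--\eqref{1225-3} with $Z=Y_{n,1}^{\gamma}\deq\eta_1$ (so the scheme is common to both sequences), mollify it, and then chain four estimates: Lemma \ref{Lem: identical sequence} applied to the stationary block sequence $\{Y_{n,k}^{\gamma}\}_{k=1}^m$ together with the $\alpha$-mixing bound (as in \eqref{0128-2}) controls $|v_m^Z(1,0)-\hat{\mathbb{E}}[\phi(S^{Y,\gamma}_m)]|$ by the super-small $c_X l_\phi m\,e^{-\alpha(n_{\gamma,\gamma'}+1)}$; Lemma \ref{Lem: 1226-1} and the i.i.d.\ estimate of Lemma \ref{lem:est for iid} applied to $\{\eta_k\}$ give $v_{m,\epsilon}(1+\epsilon+\tfrac1m,0)-\hat{\mathbb{E}}[v_{m,\epsilon}(\epsilon+\tfrac1m,S_m^\eta)]\ge -C(1+\hat{\mathbb{E}}[|X_1|^2])l_\phi m^{-1}\epsilon^{-1}$; the maximality of $S_m^\eta$ gives $\hat{\mathbb{E}}[v_{m,\epsilon}(\epsilon+\tfrac1m,S_m^\eta)]=v(1,0)+O(l_\phi\epsilon+l_\phi m^{-1})$; and $|v_{m,\epsilon}(1+\epsilon+\tfrac1m,0)-v_m^Z(1,0)|=O(l_\phi\epsilon+l_\phi m^{-1})$. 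Feeding in the block-approximation bound \eqref{eq:1115-2} for $|\hat{\mathbb{E}}[\phi(S^{Y,\gamma}_m)]-\hat{\mathbb{E}}[\phi(S_n)]|$ and taking $\epsilon=m^{-1/2}\asymp n^{-(1-\gamma)/2}$ gives the matching lower bound $\hat{\mathbb{E}}[\phi(S_n)]-v(1,0)\ge -C(1+\hat{\mathbb{E}}[|X_1|^2])l_\phi(n^{-(1-\gamma)/2}+n^{-\gamma(1-\gamma')})$.

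Combining the two directions with the conversion inequality proves \eqref{eq:intro main1} for all sufficiently large $n$; for the finitely many small $n$ the crude bound $|\hat{\mathbb{E}}[\phi(S_n)]-\max_{x\in\Gamma_{n_\gamma}}\phi(x)|\le 2l_\phi\hat{\mathbb{E}}[|X_1|]$ (as in \eqref{eq:new1117}) closes the argument after enlarging $C$. I expect the estimate \emph{from below} to be the main obstacle: there one must manufacture the auxiliary $\Gamma^{Y}$-maximal i.i.d.\ sequence, transport maximality through the common backward scheme, and reconcile three independent sources of error — mollification of size $\epsilon$, the i.i.d.\ discretization of size $m^{-1}\epsilon^{-1}$, and block mixing of size $e^{-\alpha n^{\gamma\gamma'}}$ — so that after optimizing $\epsilon$ they collapse to the single rate $n^{-(1-\gamma)/2}+n^{-\gamma(1-\gamma')}$, all while keeping every constant independent of $\phi$ and $n$.
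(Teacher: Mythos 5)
Your proposal follows the paper's own proof essentially step for step: the same reduction of $\max_{x\in\Gamma_{n_\gamma}}\phi(x)$ to $v^{\Gamma^Y}(1,0)$ via Lemma \ref{Lem: 1226}, the same estimate from above via Corollary \ref{Thm: 1224}, Lemma \ref{Lem: song4.1} and Lemma \ref{Lem: property of convolution}, the same estimate from below chaining Lemma \ref{Lem: identical sequence}, the $\alpha$-mixing bound, Lemma \ref{Lem: 1226-1} and Lemma \ref{lem:est for iid} applied to a $\Gamma^Y$-maximal i.i.d.\ sequence, the same choice $\epsilon=n^{-(1-\gamma)/2}$, and the same crude bound (as in \eqref{eq:new1117}) for small $n$.

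One assertion, however, is false as stated, though it does not break the argument: you write $Z=Y_{n,1}^{\gamma}\deq\eta_1$ ``so the scheme is common to both sequences.'' In the sublinear framework, $\deq$ (Definition \ref{Def: identically distributed}) means $\hat{\mathbb{E}}[\varphi(Y_{n,1}^{\gamma})]=\hat{\mathbb{E}}[\varphi(\eta_1)]$ for \emph{every} Lipschitz test function $\varphi$; the maximal vector $\eta_1$ satisfies $\hat{\mathbb{E}}[\varphi(\eta_1)]=\max_{y\in\Gamma^{Y}}\varphi(y)$, whereas $Y_{n,1}^{\gamma}$ is in general not maximally distributed (for instance, if the $X_i$ are i.i.d.\ $G$-normal, then $Y_{n,1}^{\gamma}$ is again $G$-normal). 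The two vectors share only the same generating set, i.e.\ $\hat{\mathbb{E}}[\langle p,Y_{n,1}^{\gamma}\rangle]=\hat{\mathbb{E}}[\langle p,\eta_1\rangle]$ for all $p\in\mathbb{R}^d$ (Lemma \ref{lem:Charact of Gamma}). That weaker identity is all your chain of four estimates actually needs: Lemma \ref{Lem: identical sequence} requires the backward scheme to be built from a copy of the \emph{block} variable, which $Z=Y_{n,1}^{\gamma}$ is; and Lemma \ref{lem:est for iid} applied to $\{\eta_k\}$ requires only that the mollified scheme solve \eqref{PDE} with generator $p(a)=\hat{\mathbb{E}}[\langle a,\eta_1\rangle]$, which Lemma \ref{Lem: 1226-1} supplies because $p_{Y_{n,1}^{\gamma}}=p_{\eta_1}$. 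This is exactly how the paper argues (it never claims the block variable and the maximal variable are identically distributed). Replace the parenthetical by this observation and your proof is complete.
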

	
	\begin{proof}
		For any $\phi\in C_{lip}(\mathbb{R}^d)$ with the Lipschitz constant $l_{\phi}$, any bounded closed convex subset $\Gamma\subset \mathbb{R}^d$ and any $Z\in \mathcal{H}^d$ on a sublinear expectation space $(\Omega, \mathcal{H}, \hat{\mathbb{E}})$ with $\hat{\mathbb{E}}[|Z|^{2}]<\infty$, let $v^{\Gamma}(t,x):=\sup_{y\in \Gamma}\phi(x+ty)$ and $v_n^Z(t,x)$ be defined as in \eqref{1225-2}-\eqref{1225-3}. 
		
		For any fixed $0<\gamma, \gamma'<1$, there exist $N_{\gamma,\gamma'}>0$ such that $n_{\gamma}>n_{\gamma,\gamma'}$ for all $n\geq N_{\gamma,\gamma'}$. Let
		\begin{equation*}
			Y_{n}^\gamma=\frac{1}{n_{\gamma}}\sum_{i=1}^{n_{\gamma}-n_{\gamma,\gamma'}}X_i, \ \text{ for } \ n\geq N_{\gamma,\gamma'}.
		\end{equation*}
		Then by Jensen's inequality,
		\begin{equation*}
			\hat{\mathbb{E}}[|Y_{n}^\gamma|^2]\leq\hat{\mathbb{E}}\biggl[\bigg(\frac{\sum_{i=1}^{n_{\gamma}}|X_i|}{n_{\gamma}}\bigg)^2\biggr]\leq \frac{\sum_{i=1}^{n_{\gamma}}\hat{\mathbb{E}}[|X_i|^2]}{n_{\gamma}}=\hat{\mathbb{E}}[|X_1|^2].
		\end{equation*}
		Let $\Gamma^{\gamma}_{n}:=\{\mathbb{E}[Y_{n}^\gamma]: \mathbb{E}\in \Theta\}$ and $\Gamma_{n_{\gamma}}:=\{\mathbb{E}[S_{n_{\gamma}}]: \mathbb{E}\in \Theta\}$. Then by Lemma \ref{Lem: 1226}, for all $n\geq N_{\gamma,\gamma'}$,
		\begin{equation}\label{est LLN ineq 1}
			\big|v^{\Gamma^{\gamma}_{n}}(1,0)-v^{\Gamma_{n_{\gamma}}}(1,0)\big|\leq l_{\phi}\hat{\mathbb{E}}\bigl[|S_{n_{\gamma}}-Y_{n}^\gamma|\bigr]\leq C(\gamma,\gamma')l_{\phi}n^{-\gamma(1-\gamma')}.
		\end{equation}
		Thus, we only need to estimate $|\hat{\mathbb{E}}[\phi(S_n)]-v^{\Gamma^{\gamma}_{n}}(1,0)|$. The estimation is also divided into two steps (from above and from below).
		
		\textbf{Step 1: The estimation from above}. Similar to Step 1 in the proof of Theorem \ref{Thm: LLN for subsequence}, by Corollary \ref{Thm: 1224}, Lemma \ref{Lem: song4.1} and Lemma \ref{Lem: property of convolution}, we have
		\begin{equation}\label{est LLN ineq 2}
			v^{\Gamma_{n}^{\gamma}}(1,0)-\hat{\mathbb{E}}[\phi(S_n)]\geq -C(c_X,\gamma,\gamma',\alpha,\zeta)(1+\hat{\mathbb{E}}[|X_1|^{2}])l_{\phi}\bigl(n^{-\frac{1-\gamma}{2}}+n^{-\gamma(1-\gamma')}\bigr).
		\end{equation}
		
		\textbf{Step 2: The estimation from below}. Consider an i.i.d. $\Gamma_{n}^{\gamma}$-maximally distributed sequence $\{\xi_k\}_{k\geq 1}$. Set $S_n^{\xi}=\frac{\xi_1+\cdots+\xi_n}{n}$. Similar to \eqref{0222-7} and \eqref{0222-6}, and using the fact that $\hat{\mathbb{E}}[|Y_n^{\gamma}|^2]\leq \hat{\mathbb{E}}[|X_1|^2]$, we conclude that for all $n\geq N_{\gamma,\gamma'}$,
		\begin{equation}\label{1128-1}
			\begin{split}
				&\ \ \ \ v^{Y_n^{\gamma}}_{n_{1-\gamma},\epsilon}\Big(1+\epsilon+\frac{1}{n_{1-\gamma}},0\Big)-\hat{\mathbb{E}}\Bigl[v^{Y_n^{\gamma}}_{n_{1-\gamma},\epsilon}\Big(\epsilon+\frac{1}{n_{1-\gamma}},S_{n_{1-\gamma}}^{\xi}\Big)\Bigr]\\
				&\geq -C(\gamma,\gamma',\zeta)\big(1+\hat{\mathbb{E}}[|X_1|^2]\big)l_{\phi}n^{-(1-\gamma)}\epsilon^{-1},
			\end{split}
		\end{equation}
        and
        \begin{equation}\label{1128-3}
			\begin{split}
				\Big|\hat{\mathbb{E}}\Bigl[v^{Y_n^{\gamma}}_{n_{1-\gamma},\epsilon}\Big(\epsilon+\frac{1}{n_{1-\gamma}},S_{n_{1-\gamma}}^{\xi}\Big)\Bigr]-v^{\Gamma_{n}^{\gamma}}(1,0)\Big|\leq (1+2\hat{\mathbb{E}}[|X|])l_{\phi}\epsilon+3\hat{\mathbb{E}}[|X|]l_{\phi}\frac{1}{n_{1-\gamma}}.
			\end{split}
		\end{equation}
        Now define
		\begin{equation*}
			Y_{n,k}^{\gamma}=\frac{X_{(k-1)n_{\gamma}+1}+X_{(k-1)n_{\gamma}+2}+\cdots+X_{kn_{\gamma}-n_{\gamma,\gamma'}}}{n_{\gamma}}, \quad 1\leq k\leq n_{1-\gamma}, \ \ S^Y_{n_{1-\gamma}}=\frac{1}{n_{1-\gamma}}\sum_{k=1}^{n_{1-\gamma}}Y_{n,k}^{\gamma}.
		\end{equation*}
		Then $Y_n^{\gamma}=Y_{n,1}^{\gamma}$ and $\{Y_{n,k}^{\gamma}\}_{1\leq k\leq n_{1-\gamma}}$ is a stationary sequence. Note that
        \begin{equation}
			\Big|v_{n_{1-\gamma}}^{Y_n^{\gamma}}(t,x)-v_{n_{1-\gamma}}^{Y_n^{\gamma}}(t,y)\Big|\leq l_{\phi}|x-y|,
		\end{equation}
		we derive from Lemma \ref{Lem: identical sequence} and the $\alpha$-mixing of $X$ that
		\begin{equation}\label{1227-2}
			\begin{split}
				\Big|v^{Y_n^{\gamma}}_{n_{1-\gamma}}(1,0)-\hat{\mathbb{E}}\big[\phi\bigl(S^Y_{n_{1-\gamma}}\bigr)\big]\Big|\leq \sum_{k=0}^{n_{1-\gamma}-1}c_Xl_{\phi}e^{-\alpha(n_{\gamma,\gamma'}+1)}\leq C(c_X,\gamma,\gamma',\alpha)l_{\phi}n^{-(1-\gamma)}.
			\end{split}
		\end{equation}
        Similar to \eqref{eq:1115-2} and \eqref{0222-5}, we have
        \begin{equation}\label{1227-3}
				\Big|\hat{\mathbb{E}}\big[\phi\bigl(S^Y_{n_{1-\gamma}}\bigr)\big]-\hat{\mathbb{E}}[\phi(S_n)]\Big|\leq C(\gamma,\gamma')\hat{\mathbb{E}}[|X_1|]l_{\phi}\bigl(n^{-(1-\gamma)}+n^{-\gamma(1-\gamma')}\bigr),
		\end{equation}
        and
		\begin{equation}\label{1227-1}
			\begin{split}
				&\ \ \ \ \Big|v^{Y_n^{\gamma}}_{n_{1-\gamma},\epsilon}\Big(1+\epsilon+\frac{1}{n_{1-\gamma}},0\Big)-\hat{\mathbb{E}}[\phi(S_n)]\Big|\\
				&\leq (1+2\hat{\mathbb{E}}[|X_1|])l_{\phi}\epsilon+3\hat{\mathbb{E}}[|X_1|]l_{\phi}\frac{1}{n_{1-\gamma}}+\Big|v^{Y_n^{\gamma}}_{n_{1-\gamma}}(1,0)-\hat{\mathbb{E}}[\phi(S_n)]\Big|.
			\end{split}
		\end{equation}
		Combining \eqref{1227-2}-\eqref{1227-1}, we conclude for any $n\geq N_{\gamma,\gamma'}$ and $\epsilon>0$,
		\begin{equation}\label{1128-2}
			\Big|v^{Y_n^{\gamma}}_{n_{1-\gamma},\epsilon}\Big(1+\epsilon+\frac{1}{n_{1-\gamma}},0\Big)-\hat{\mathbb{E}}[\phi(S_n)]\Big|\leq C(c_X,\gamma,\gamma',\alpha)\bigl(\epsilon+n^{-(1-\gamma)}+n^{-\gamma(1-\gamma')}\bigr).
		\end{equation}
		Then by \eqref{1128-1}, \eqref{1128-3}, \eqref{1128-2} and
		taking $\epsilon=n^{-\frac{1-\gamma}{2}}$, we have for any $n\geq N_{\gamma,\gamma'}$ and $\epsilon>0$,
		\begin{equation}\label{est LLN ineq 3}
			\hat{\mathbb{E}}[\phi(S_n)]-v^{\Gamma_{n}^{\gamma}}(1,0)\geq -C(c_X,\gamma,\gamma',\alpha,\zeta)(1+\hat{\mathbb{E}}[|X_1|^{2}])l_{\phi}\bigl(n^{-\frac{1-\gamma}{2}}+n^{-\gamma(1-\gamma')}\bigr).
		\end{equation}
		
		Now, according to \eqref{est LLN ineq 1}, \eqref{est LLN ineq 2} and \eqref{est LLN ineq 3}, we obtain \eqref{eq:intro main1} for $n\geq N_{\gamma,\gamma'}$. For $n<N_{\gamma,\gamma'}$, we can prove this using a method similar to that of \eqref{eq:new1117}.
		Thus, \eqref{eq:intro main1} holds for all $n\geq 1$.
	\end{proof}

    \begin{proof}[Proof of Theorem \ref{thm:LLN}]
        For any $k\geq 1$, by the convexity, we have that $\Gamma_{2k}\subset \Gamma_k$. Hence, $\{\Gamma_{2^{n-1}k}\}_{n\geq 1}$ is a decreasing sequence of sets, i.e.,
        \begin{equation*}
            \Gamma_{2^{n}k}\subset \Gamma_{2^{n-1}k}, \ \text{ for all } \ n\geq 1.
        \end{equation*}
        Denote
        \begin{equation*}
            \Gamma^k_*:=\lim_{n\to \infty}\Gamma_{2^{n-1}k}=\bigcap_{n\geq 1}\Gamma_{2^{n-1}k}, \ \text{ for all } \ k\geq 1.
        \end{equation*}
        Since $\Gamma_{2^{n-1}k}$ is convex and closed for all $n\geq 1$, we know that $\Gamma_*^k$ is also convex and closed.
        
        Let $S_n=\frac{1}{n}\sum_{i=1}^nX_i$ and fix $0<\delta<1/4$. Taking $\gamma'=1/2$ in \eqref{eq:intro main1} and maximizing the right side of equation \eqref{eq:intro main1} with respect to the parameter $\gamma\in [1/2-2\delta,1/2+2\delta]$, we conclude that there exists $C>0$ (which may vary between lines in the following) such that for any $\phi\in C_{lip}(\mathbb{R}^d)$ with Lipschitz constant $l_{\phi}$,
        \begin{equation}\label{eq:1117}
            \big|\hat{\E}[\phi(S_n)]-\max_{x\in \Gamma_m}\phi(x)\big|\leq Cl_{\phi}n^{-1/4+\delta}, \ \text{ for all } \ \lfloor n^{1/2-2\delta}\rfloor \leq m\leq \lfloor n^{1/2+2\delta}\rfloor.
        \end{equation}
        Thus, for any $\lfloor n^{1/2-2\delta}\rfloor\leq m_1,m_2\leq \lfloor n^{1/2+2\delta}\rfloor$, we have
        \begin{equation}\label{eq:1117-1}
            \big|\max_{x\in \Gamma_{m_1}}\phi(x)-\max_{x\in \Gamma_{m_2}}\phi(x)\big|\leq Cl_{\phi}n^{-1/4+\delta}.
        \end{equation}
        Fix $k\geq 1$. Note that for any $\phi\in C_{lip}(\mathbb{R}^d)$,
        \begin{equation*}
            \max_{x\in \Gamma_{2^{n-1}k}}\phi(x)\downarrow \max_{x\in \Gamma^k_*}\phi(x), \ \text{ as } \ n\to \infty, 
        \end{equation*}
        Then there exists $l_{k,\delta}>0$ such that for any $l\geq l_{k,\delta}$ and choosing $n=\lfloor (2^lk)^{2/(1-4\delta)}\rfloor$, we have $\lfloor n^{1/2-2\delta}\rfloor\leq2^lk<2^{l+1}k\leq \lfloor n^{1/2+2\delta}\rfloor$.
        Hence, it follows from \eqref{eq:1117-1} that
        \begin{equation}\label{eq:1117-4}
            \big|\max_{x\in \Gamma_m}\phi(x)-\max_{x\in \Gamma_{2^lk}}\phi(x)\big|\leq Cl_{\phi}(2^lk)^{-1/2}, \ \text{ for all $2^lk\leq m\leq 2^{l+1}k$.}
        \end{equation}
        Thus,
        \begin{equation}\label{eq:1117-2}
            \lim_{m\to \infty}\max_{x\in \Gamma_m}\phi(x)=\max_{x\in \Gamma_*^k}\phi(x).
        \end{equation}
       Since the left hand of \eqref{eq:1117-2} is independent of $k$,  for any $k_1,k_2\geq 1$, we conclude 
       \begin{equation}\label{eq:14.192024}
           \text{$\max_{x\in \Gamma_*^{k_1}}\phi(x)=\max_{x\in \Gamma_*^{k_2}}\phi(x)$ for all $\phi\in C_{lip}(\mathbb{R}^d)$}.
       \end{equation} Let $\phi_i(x)=\inf_{y\in \Gamma_*^{k_i}}d(x,y)$, $i=1,2$. Then $\phi_i(x)\in  C_{lip}(\mathbb{R}^d)$, $i=1,2$. Applying \eqref{eq:14.192024} to $\phi_1$ and $\phi_2$, we obtain that 
       \[0=\sup_{x\in \Gamma_*^{k_1}}\inf_{y\in \Gamma_*^{k_2}}d(x,y)=\sup_{x\in \Gamma_*^{k_2}}\inf_{y\in \Gamma_*^{k_1}}d(x,y),\]
       that is, the Hausdorff metric between two compact subsets  $\Gamma_*^{k_1}$ and $\Gamma_*^{k_2}$ is zero. The arbitrariness of $k_1,k_2\ge 1 $ ensures us to denote
        \[
        \Gamma_*:=\Gamma_*^{k}\text{ for some (and for any) }k\ge 1.
        \]
        Note that $\Gamma^k_*\subset \Gamma_k$ for all $k\geq 1$, and hence
        \begin{equation}\label{eq:1117-3}
            \bigcap_{n\geq 1}\Gamma_n\subset \Gamma_*^1=\Gamma_*\subset\bigcap_{n\geq 1}\Gamma_n.
        \end{equation}
        So $\Gamma_*=\cap_{n\geq 1}\Gamma_n$. Moreover, according to \eqref{eq:1117-4}-\eqref{eq:1117-3}, we have for all $l\geq l_{1,\delta}$,
        \begin{equation*}
                \big|\max_{x\in \Gamma_{2^l}}\phi(x)-\max_{x\in \Gamma_*}\phi(x)\big|\leq \sum_{k=1}^{\infty}\big|\max_{x\in \Gamma_{2^{l+k-1}}}\phi(x)-\max_{x\in \Gamma_{2^{l+k}}}\phi(x)\big|\leq Cl_{\phi}2^{-l/2},
        \end{equation*}
        and thus 
        \begin{equation}\label{eq:1117-6}
                \big|\max_{x\in \Gamma_{m}}\phi(x)-\max_{x\in \Gamma_*}\phi(x)\big|\leq Cl_{\phi}2^{-l/2}, \ \text{ for all } \ 2^l\leq m\leq 2^{l+1}.
        \end{equation}
        Then \eqref{eq:thm-LLN} follows from \eqref{eq:1117} and \eqref{eq:1117-6}.
    \end{proof}

 \subsection{Strong law of large numbers}
In this subsection, we prove the following SLLN, that is, Theorem \ref{Thm:SLLN}.

 We first give some notations. For any subset $\Gamma\subset \mathbb{R}^d$ and any point $\lambda=(\l_1,\ldots,\l_d)\in \mathbb{R}^d$, set
 \begin{equation*}
     \Gamma^{\lambda}:=\Big\{x^{\lambda}=\langle\lambda, x\rangle=\sum_{i=1}^d\lambda_ix_i: x\in \Gamma\Big\}.
 \end{equation*}
 Denote by $\mathbb Q^d_1:=\{\lambda\in \mathbb{R}^d: \lambda_i \text{ is rational and } |\lambda_i|\leq 1,~1\le i\le d\}$. Then $\mathbb Q^d_1$ is countable and we have the following lemma.
 \begin{lemma}\label{Lem:equiv Gamma and Gamma-lambda}
     Suppose that $\Gamma\subset \mathbb{R}^d$ is a non-empty bounded closed convex subset. Then $\Gamma$ is uniquely determinated by $\{\Gamma^{\lambda}\}_{\lambda\in \mathbb Q^d_1}$, i.e.,
     \begin{equation*}
         x\in \Gamma \text{ if and only if } x^{\lambda}\in \Gamma^{\lambda} \text{ for all } \lambda\in \mathbb Q^d_1.
     \end{equation*}
 \end{lemma}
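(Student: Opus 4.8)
The plan is to prove the two implications separately. For each $\lambda$, since $\Gamma$ is compact and convex and $y\mapsto\langle\lambda,y\rangle$ is linear, the image $\Gamma^{\lambda}$ is a compact convex subset of $\mathbb R$, i.e.\ the closed interval $[m_{\lambda},M_{\lambda}]$ with $m_{\lambda}=\min_{y\in\Gamma}\langle\lambda,y\rangle$ and $M_{\lambda}=\max_{y\in\Gamma}\langle\lambda,y\rangle$. The ``only if'' direction is then immediate from the definition: if $x\in\Gamma$, then $x^{\lambda}=\langle\lambda,x\rangle\in\Gamma^{\lambda}$ for every $\lambda\in\mathbb Q^d_1$.

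For the converse I would argue by contraposition. Assuming $x\notin\Gamma$, the goal is to produce a single $\lambda\in\mathbb Q^d_1$ with $x^{\lambda}\notin\Gamma^{\lambda}$. Since $\Gamma$ is a nonempty closed convex set and $\{x\}$ is a disjoint compact convex set, the strict separating hyperplane theorem yields a nonzero $p\in\mathbb R^d$ with
\[
\langle p,x\rangle>\max_{y\in\Gamma}\langle p,y\rangle=M_{p}.
\]
In particular $x^{p}>M_{p}$, so $x^{p}\notin\Gamma^{p}$. The only remaining work is to replace the (possibly irrational, possibly large) vector $p$ by one lying in the countable box $\mathbb Q^d_1$, while preserving strict separation.

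The key observation is that the separating inequality is invariant under multiplication of $p$ by any positive scalar, so I would first rescale $p$ so that $\|p\|_{\infty}\le 1/2$; the strict inequality $\langle p,x\rangle-M_{p}>0$ persists. Next I would use that the gap function $q\mapsto\langle q,x\rangle-\max_{y\in\Gamma}\langle q,y\rangle$ is continuous in $q$ (indeed Lipschitz, being a difference of linear functionals and a supremum of linear functionals over the bounded set $\Gamma$). Since this function is strictly positive at $p$, it stays positive on a neighborhood of $p$; I would then pick a rational $\lambda$ in that neighborhood with $\|\lambda-p\|_{\infty}<1/2$. Consequently $|\lambda_i|\le\|\lambda-p\|_{\infty}+\|p\|_{\infty}<1$ for each $i$, so $\lambda\in\mathbb Q^d_1$, and $\langle\lambda,x\rangle>M_{\lambda}$ gives $x^{\lambda}\notin\Gamma^{\lambda}$, which completes the contrapositive.

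The main obstacle is the bookkeeping in the rational approximation: ensuring simultaneously that the perturbed direction $\lambda$ remains inside the box defining $\mathbb Q^d_1$ \emph{and} that the strict separation survives the perturbation. Both constraints are resolved at once by first scaling $p$ down to $\|p\|_{\infty}\le 1/2$ and then invoking the continuity (Lipschitz property) of the separation gap to allow a controlled rational perturbation.
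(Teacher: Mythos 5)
Your proposal is correct and follows essentially the same route as the paper: the forward direction is immediate from the definition, and the converse uses the separating hyperplane theorem followed by a rational approximation of the separating direction, with the boundedness of $\Gamma$ guaranteeing that strict separation survives the perturbation. Your rescaling of $p$ to $\|p\|_{\infty}\le 1/2$ and the Lipschitz gap-function argument just make explicit the bookkeeping that the paper compresses into the phrase ``$v$ can be approximated by elements in $\mathbb Q^d_1$.''
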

 \begin{proof}
     ($\Rightarrow$) By the definition of $\Gamma^{\lambda}$, we know that $x^{\lambda}\in \Gamma^{\lambda}$ if $x\in \Gamma$ and $\lambda\in \mathbb Q^d_1$.

     ($\Leftarrow$) We only need to show that if $x\notin \Gamma$, there exists $\lambda\in \mathbb Q^d_1$ such that $x^{\lambda}\notin \Gamma^{\lambda}$. Since $\Gamma$ is a closed convex set and $x\notin \Gamma$, by the hyperplane separation theorem, there exixts a unit vector $v\in \mathbb{R}^d$ and $c\in \mathbb R$ such that
     \begin{equation*}
         x^{v}=\langle v, x\rangle>c>\langle v, y\rangle=y^{v}, \text{ for all } y\in \Gamma.
     \end{equation*}
     Note that $\Gamma$ is bounded and $v$ can be approximately by elements in $\mathbb Q^d_1$, and then there exists a $\lambda\in \mathbb Q^d_1$ such that
     \begin{equation*}
         x^{\lambda}>c>y^{\lambda}, \text{ for all } y\in \Gamma.
     \end{equation*}
     Hence, $x^{\lambda}\notin \Gamma^{\lambda}$.
 \end{proof}
 Now we give the proof of Theorem \ref{Thm:SLLN}.
 \begin{proof}[Proof of Theorem \ref{Thm:SLLN}]
     Let $\mathcal{P}$ be given by \eqref{Prob set for X} and $S_n=\frac{1}{n}\sum_{i=1}^nX_i$. We only need to show that for any $P\in\mathcal{P}$,
    \begin{equation}\label{Ineq:SLLN1}
         \lim_{n\to\infty}dist(S_n,\Gamma_*)=0, \ \text{$P$-a.s.}
     \end{equation}
     
     By Theorem \ref{thm:LLN} and Corollary \ref{coro:Charact of Gamma}, there exists $C>0$  such that for any $\phi\in C_{lip}(\mathbb R^d)$ with Lipschitz constant $l_{\phi}$,
		\begin{equation}\label{0605-1}
			\Big|\hat{\mathbb{E}}[\phi(S_n)]-\max_{x\in \Gamma_{*}}\phi(x) \Big|\leq Cl_{\phi}n^{-1/6}.
		\end{equation}
      
     Our proof will be divided into the cases $d=1$ and $d>1$.

     \noindent\textbf{Case $d=1$:} Notice that \eqref{Ineq:SLLN1} is equivalent to 
     \begin{equation}\label{eq:SLLN-1-dim}
        \inf_{x\in \Gamma_*}x\leq \liminf_{n\to\infty}S_{n}\leq \limsup_{n\to\infty}S_{n}\leq \sup_{x\in \Gamma_*}x, \ \text{$P$-a.s., for any $P\in\mathcal{P}$.}
     \end{equation}
     The proof borrows from the idea in  \cite[Theorem 3.1]{Song2022}. For any $m,n\in \mathbb N$, set $S^{m}_n:=\frac{X_{m+1}+\cdots+X_{m+n}}{n}$. Let $\beta_n$ be the biggest integer less than or equal to $\beta^{n}$ for some $\beta>1$ and $m_n$ be a positive integer for each $n\in\mathbb{N}$. Note that $\{X_{m+k}\}_{k\geq 1}$ is still a $\alpha$-mixing stationary sequence with the same  positive parameter $c_X$ for all $m\geq 1$, then for any $P\in \mathcal{P}$ and $\epsilon>0$, we conclude from \eqref{0605-1} by setting $\phi(x)=\sup_{y\in \Gamma_*}|x-y|$ that
     \begin{equation*}
         \sum_{n=1}^{\infty}P\bigg\{\sup_{y\in [\underline{\mu}, \overline{\mu}]}|S^{m_n}_{\beta_n}-y|>\epsilon\bigg\}\leq \frac{1}{\epsilon}\sum_{n=1}^{\infty}\hat{\mathbb E}\big[\phi\big(S^{m_n}_{\beta_n}\big)\big]\leq \frac{C}{\epsilon}\sum_{n=1}^{\infty}\beta_n^{-1/6}<\infty.
     \end{equation*}
     Hence, it follows from Borel–Cantelli lemma and the arbitrary of $\epsilon>0$ that
     \begin{equation*}
         \underline{\mu}\leq \liminf_{n\to\infty}S^{m_n}_{\beta_n}\leq \limsup_{n\to\infty}S^{m_n}_{\beta_n}\leq \overline{\mu}, \ \text{$P$-a.s.}
     \end{equation*}
     Then following ideas in \cite[STEP 1 and STEP 2 in Theorem 3.1]{Song2022}, we obtain \eqref{eq:SLLN-1-dim}.

     \noindent\textbf{Case $d>1$:} For any $\lambda\in \mathbb R^d$, set $X_k^{\lambda}:=\langle\lambda, X_k\rangle$. It is easy to check that $\{X_k^{\lambda}\}_{k\geq 1}$ is a $1$-dimensional $\alpha$-mixing stationary sequence. Moreover, choosing $\phi(x)=\varphi(\langle \lambda,x\rangle)$ for all $\varphi\in C_{lip}(\mathbb{R})$ in \eqref{0605-1}, we deduce
     \begin{equation*}
		\Big|\hat{\mathbb{E}}[\varphi(S^{\lambda}_n)]-\max_{x\in \Gamma^{\lambda}_{*}}\varphi(x) \Big|\leq Cl_{\varphi}n^{-1/6},
	\end{equation*}
    where $S_n^{\lambda}:=\frac{1}{n}\sum_{i=1}^nX_i^{\lambda}$ and $\Gamma^{\lambda}_{*}:=\{\langle \lambda,x\rangle: x\in \Gamma_*\}$. 
     By the result in Case $d=1$, for any $P\in \mathcal{P}$, we have
     \begin{equation*}
         \inf_{x\in\Gamma_*^{\lambda}}x\leq \liminf_{n\to\infty}\langle\lambda, S_n\rangle\leq \limsup_{n\to\infty}\langle\lambda, S_n\rangle\leq   \sup_{x\in\Gamma_*^{\lambda}}x, \ \text{$P$-a.s.}
     \end{equation*}
     Assume that \eqref{Ineq:SLLN1} does not hold true, then there exists a measurable set $A$ with $P(A)>0$, such that
     \begin{equation*}
         \limsup_{n\to \infty}dist(S_n,\Gamma_*)>0, \ \text{ for all } \ \omega\in A,
     \end{equation*}
     i.e., for any $\omega\in A$, there exist  $\epsilon(\omega)>0$ and a subsequence $\{n_k(\omega)\}_{k\geq 1}\subset \mathbb N$ such that 
     \begin{equation}\label{0605-3}
         dist(S_{n_k(\omega)}(\omega),\Gamma_*)>\epsilon(\omega), \ \text{ for all } \ k\geq 1.
     \end{equation}
     Since $\mathbb Q^d_1$ is countable, there exists a measurable set $A_0\subset \Omega$ with $P(A_0)=0$ such that for all $\lambda\in \mathbb Q^d_1, \omega\notin A_0$,
     \begin{equation}\label{0605-2}
         \inf_{x\in\Gamma_*^{\lambda}}x\leq \liminf_{n\to\infty}\langle\lambda, S_n(\omega)\rangle\leq \limsup_{n\to\infty}\langle\lambda, S_n(\omega)\rangle\leq \sup_{x\in\Gamma_*^{\lambda}}x.
     \end{equation}
     Now choose $\omega\in A\setminus A_0$. By \eqref{0605-2}, we conclude, by letting $\lambda=e_i$ the $i$-th standard basis of $\mathbb R^d$, that the $i$-th component of $\{S_n(\omega)\}_{n\geq 1}$ is bounded. Hence, the sequence $\{S_n(\omega)\}_{n\geq 1}$ is bounded. We choose a subsequence of $\{n_k(\omega)\}_{k\geq 1}$, which is still denoted by $\{n_k(\omega)\}_{k\geq 1}$, so that $\lim_{k\to \infty}S_{n_k(\omega)}(\omega)=x_0$. Then \eqref{0605-3} yields that $x_0\notin \Gamma_*$. By Lemma \ref{Lem:equiv Gamma and Gamma-lambda}, there exists $\lambda_0\in \mathbb Q^d_1$ such that $\langle \lambda_0,x_0\rangle\notin \Gamma_*^{\lambda_0}$. As $\Gamma_*^{\lambda_0}$ is convex, this contradicts \eqref{0605-2}, and therefore \eqref{Ineq:SLLN1} holds true.
 \end{proof}
	
	\section{Sublinear Markovian systems generated by $G$-SDEs}\label{sec:GSDE}
In this section, we review sublinear Markovian systems in a general setting and establish a condition that ensures the system is $\alpha$-mixing. We then demonstrate that sublinear Markovian systems generated by $G$-SDEs under some dissipative condition meet this criterion, thereby confirming their $\alpha$-mixing property. Consequently, all the aforementioned results can be applied to these $G$-SDEs. Finally, we prove that a broad class of iterative sequences derived from the aforementioned equations are not independent (but exhibit $\alpha$-mixing behavior). Thus, even when we shift our focus from abstract dynamical systems to the study of $G$-SDEs, our results allow the LLN and SLLN to hold within a more general framework.

	\subsection{Sublinear Markovian systems and its $\a$-mixing}\label{Sec 6.1}
	In this subsection, we recall the definition of sublinear Markovian systems, and provide a condition that guarantees their $\alpha$-mixing property.
 
	Let $(\mathbb{X},d)$ be a metric space. For any $m\geq 1$, denote by $C_{lip}(\mathbb{X}^m)$  the collection of all Lipschitz continuous functions from $\mathbb{X}^m$ to $\mathbb{R}$, where a function $h$ is Lipschitz continuous means
	\begin{equation}\label{2020-2}
		|h(\mathbf{x})-h(\mathbf{y})|\leq \sum_{i=1}^ml^i_h d(x_i,y_i), \ \text{ for all } \mathbf{x}=(x_1,\cdots,x_m), \mathbf{y}=(y_1,\cdots,y_m)\in \mathbb{X}^m.
	\end{equation}
	Here  $l^i_{h}$ is the smallest non-negative real number such that \eqref{2020-2} holds. We call $l_h^i$ the Lipschitz constant of $h$ with respect to the $i$-th component, and $l_h:=\max\{l^1_h,\cdots,l^m_h\}$ the Lipschitz constant of $h$.

Let us recall the definition of sublinear Markovian systems (see Peng \cite{Peng2005}). 
	
	\begin{definition}\label{def: markov}
		A map $T: \mathbb{R}_+\times C_{lip}(\mathbb{X})\to C_{lip}(\mathbb{X})$ is called a sublinear Markovian system if
		\begin{itemize}
			\item [(i)] for each fixed $(t,x)\in \mathbb{R}_+\times\mathbb{X}$, $T_t[\cdot](x)$ is a sublinear expectation on $C_{lip}(\mathbb{X})$;
			\item [(ii)] $T_0[\phi](x)=\phi(x)$ for each $\phi\in C_{lip}(\mathbb{X})$;
			\item [(iii)] $\{T_t\}_{t\geq 0}$ satisfies the following Chapman semigroup formula:
			\begin{equation*}
				T_t\circ T_s[\phi]=T_{t+s}[\phi], \ \text{ for all } \ \phi\in C_{lip}(\mathbb{X})\text{ and }s,t\ge0.
			\end{equation*}
		\end{itemize}
	\end{definition}
Now we give the definition of invariant sublinear expectation with respect to a sublinear Markovnian system.
	\begin{definition}
		A sublinear expectation $\tilde{T}: C_{lip}(\mathbb{X})\to \mathbb{R}$ is said to be an invariant sublinear expectation under a sublinear Markovian system $\{T_t\}_{t\geq 0}$ if $\tilde{T}(T_t[\phi])=\tilde{T}(\phi)$ for all $t\geq 0, \phi\in C_{lip}(\mathbb{X})$.
	\end{definition}
	
For a given invariant sublinear expectation $\tilde{T}$ of a sublinear Markovian system $\{T_t\}_{t\geq 0}$. Let $\Omega:=C(\mathbb{R};\mathbb{X})$  be a metric space equipped with the metric defined by
	$$\rho(\omega^1, \omega^2):=\sum_{i=1}^{\infty}2^{-i}[(\max_{t\in[-i,i]}d(\omega^1_t,\omega^2_t))\wedge 1], \ \text{ for all } \ \omega^1, \omega^2\in \Omega.$$
Define the shift $\theta: \mathbb{R}\times \Omega\to \Omega$ by $\theta_t(\omega)(s)=\omega(t+s)$ for all $t,s\in \mathbb{R}, \omega\in \Omega$. Set
	\begin{equation*}
		Lip(\Omega):=\{\xi: \xi(\omega)=\phi(\omega_{t_1},\cdots,\omega_{t_n}), \ n\geq 1, t_1<\cdots<t_n, \phi\in C_{lip}(\mathbb{X}^{n}), \omega\in \Omega\}.
	\end{equation*}
	For any fixed $\xi(\omega)=\phi(\omega_{t_1},\cdots,\omega_{t_n})\in Lip(\Omega)$ with $t_1<\cdots<t_n$, we define $\phi_k\in C_{lip}(\mathbb{X}^{n-k})$ as follows: 
	\begin{equation*}
		\phi_0:=\phi, \ \ \phi_k(x_1,\cdots,x_{n-k}):=T_{t_{n-k+1}-t_{n-k}}[\phi_{k-1}(x_1,\cdots,x_{n-k},\cdot)](x_{n-k}), \ \ 1\leq k\leq n-1.
	\end{equation*}
	Then for the given invariant sublinear expectation $\tilde{T}$, we define 
	\begin{equation*}
		\hat{\mathbb{E}}^{\tilde{T}}[\xi]=\tilde{T}(\phi_{n-1}(\cdot)), \ \text{ for any } \ \xi(\omega)=\phi(\omega_{t_1},\cdots,\omega_{t_n})\in Lip(\Omega).
	\end{equation*}
	Since $\tilde{T}$ is an invariant sublinear expectation and $T_t$ satisfies the Chapman semigroup property, we can easily prove that $\hat{\mathbb{E}}^{\tilde{T}}$ is a sublinear expectation on $Lip(\Omega)$.
	Let $U_t$ be the linear operator generated by the shift $\theta_t$, i.e., $U_t: Lip(\Omega)\mapsto Lip(\Omega)$ is given by
	\begin{equation*}
		U_t\xi(\omega)=\phi(\omega_{t+t_1},\cdots,\omega_{t+t_n}), \ \text{ for any } \ \xi(\omega)=\phi(\omega_{t_1},\cdots,\omega_{t_n})\in Lip(\Omega).
	\end{equation*}
	Then $\tilde{T}$ being invariant and $\theta$ satisfying the semigroup property imply that $U_t$ preserves the sublinear expectation $\hat{\mathbb{E}}^{\tilde{T}}$ and satisfies $U_t\circ U_s=U_{t+s}$. Now for any $p\geq 1$, we denote by $L_G^p(\Omega)$ the completion of $Lip(\Omega)$ under norm $\big(\hat{\mathbb{E}}^{\tilde{T}}[|\cdot|^p]\big)^{\frac{1}{p}}$.
	Then we have the following proposition.
	
	\begin{proposition}
		If $\tilde{T}$ is an invariant sublinear expectation of a sublinear Markovian system $\{T_t\}_{t\geq 0}$, then $(\Omega, Lip(\Omega), (U_t)_{t\geq 0}, \hat{\mathbb{E}}^{\tilde{T}})$ and $(\Omega, L^p_G(\Omega), (U_t)_{t\geq 0}, \hat{\mathbb{E}}^{\tilde{T}})$ are sublinear expectation systems.
	\end{proposition}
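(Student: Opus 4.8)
The plan is to verify, for each of the two spaces, the two defining requirements of a sublinear expectation system: that $\hat{\mathbb{E}}^{\tilde{T}}$ is a sublinear expectation on the relevant vector lattice, and that $(U_t)_{t\geq 0}$ is a semigroup of linear transformations preserving it. Since the paragraph preceding the statement has already recorded that $\hat{\mathbb{E}}^{\tilde{T}}$ is a sublinear expectation on $Lip(\Omega)$ and that each $U_t$ preserves $\hat{\mathbb{E}}^{\tilde{T}}$ and satisfies $U_t\circ U_s=U_{t+s}$, the task is largely to assemble these facts and then to push the construction from $Lip(\Omega)$ to its completion $L^p_G(\Omega)$.

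First I would treat $Lip(\Omega)$. The most delicate point, hidden in the word ``easily'', is that $\hat{\mathbb{E}}^{\tilde{T}}$ is well defined, i.e. that $\tilde{T}(\phi_{n-1})$ does not depend on the representation $\xi=\phi(\omega_{t_1},\dots,\omega_{t_n})$. I would establish this by checking invariance under the two elementary moves connecting any two representations: inserting a redundant time point and extending the list of times. For an interior or terminal redundant point one uses the constant-preserving property of $T_r$ to see that the extra iterate acts trivially, together with the Chapman semigroup identity $T_{t_{k+1}-s}\circ T_{s-t_k}=T_{t_{k+1}-t_k}$ (Definition~\ref{def: markov}) to collapse the two new iterates back into one; for an initial redundant point one additionally invokes the invariance $\tilde{T}(T_r[g])=\tilde{T}(g)$ to absorb the leading iterate. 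Once well-definedness is in hand, the four axioms of a sublinear expectation follow by a straightforward induction on $k$: after representing finitely many elements over a common grid $s_1<\cdots<s_m$, monotonicity, sub-additivity and positive homogeneity of each $T_r[\cdot](x)$ and of $\tilde{T}$ propagate through the recursion $\phi_{k-1}\mapsto\phi_k$ (monotonicity being what lets one combine the pointwise estimates), while $T_r[c]=c$ together with $\tilde{T}(c)=c$ gives constant preservation. The same common-grid argument shows $Lip(\Omega)$ is a vector lattice closed under composition with $C_{lip}$ maps, since a Lipschitz function of Lipschitz functions is Lipschitz; hence $(\Omega,Lip(\Omega),\hat{\mathbb{E}}^{\tilde{T}})$ is a sublinear expectation space.

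Next I would verify the semigroup structure. Directly from $U_t\xi(\omega)=\phi(\omega_{t+t_1},\dots,\omega_{t+t_n})$ one sees that $U_t$ maps $Lip(\Omega)$ into itself, is linear, $U_0=\mathrm{Id}$, and $U_t\circ U_s=U_{t+s}$. For preservation I would give the clean argument that shifting every time point by $t$ leaves all consecutive increments $t_{i+1}-t_i$ unchanged, so the iterated functions $\phi_0,\dots,\phi_{n-1}$ built in the definition of $\hat{\mathbb{E}}^{\tilde{T}}$ are literally identical for $\xi$ and for $U_t\xi$; hence $\hat{\mathbb{E}}^{\tilde{T}}[U_t\xi]=\tilde{T}(\phi_{n-1})=\hat{\mathbb{E}}^{\tilde{T}}[\xi]$. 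This settles the claim for $(\Omega,Lip(\Omega),(U_t)_{t\geq 0},\hat{\mathbb{E}}^{\tilde{T}})$.

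Finally, for $L^p_G(\Omega)$ I would extend by density. Writing $U_t\xi=\xi\circ\theta_t$, one has $|U_t\xi|^p=U_t(|\xi|^p)$, and since the preservation argument above depends only on invariance of the increment structure it applies to the integrand $|\xi|^p$ as well; thus $\|U_t\xi\|_p=\|\xi\|_p$, so each $U_t$ is a $\|\cdot\|_p$-isometry on $Lip(\Omega)$ and extends uniquely to a bounded linear operator on $L^p_G(\Omega)$, with $U_0=\mathrm{Id}$ and $U_t\circ U_s=U_{t+s}$ surviving by continuity and density. The functional $\hat{\mathbb{E}}^{\tilde{T}}$ extends to $L^p_G(\Omega)$ because it is $\|\cdot\|_1$-Lipschitz, $|\hat{\mathbb{E}}^{\tilde{T}}[X]-\hat{\mathbb{E}}^{\tilde{T}}[Y]|\leq\hat{\mathbb{E}}^{\tilde{T}}[|X-Y|]=\|X-Y\|_1\leq\|X-Y\|_p$, and it remains a sublinear expectation that is $U_t$-invariant on the completion by passing to the limit. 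I expect the well-definedness of $\hat{\mathbb{E}}^{\tilde{T}}$ in the second step to be the only genuinely substantive point; everything else is routine propagation of the sublinear-expectation axioms and a density argument.
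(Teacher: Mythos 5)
Your proposal is correct and takes essentially the same route the paper does: the paper states this proposition as an immediate consequence of the facts recorded in the paragraph preceding it (well-definedness and sublinearity of $\hat{\mathbb{E}}^{\tilde{T}}$ via the invariance of $\tilde{T}$ and the Chapman identity, the semigroup and preservation properties of $U_t$, and the passage to $L^p_G(\Omega)$ by completion), all of which it leaves as routine verification. Your write-up supplies exactly those details—most substantively the representation-independence of the backward recursion, handled by redundant-point insertions, Chapman collapsing, and absorbing an initial iterate into $\tilde{T}$—and does so correctly.
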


   The following theorem provides a sufficient condition to guarantee the $\alpha$-mixing (resp. mixing)  of $(\Omega, Lip(\Omega), (U_t)_{t\geq 0}, \hat{\mathbb{E}}^{\tilde{T}})$ (resp. $(\Omega, L^p_G(\Omega), (U_t)_{t\geq 0}, \hat{\mathbb{E}}^{\tilde{T}})$).
	
	\begin{theorem}\label{Thm: alpha-mixing system by T_t}
		Let $\tilde{T}$ be an invariant sublinear expectation of a sublinear Markovian system $\{T_t\}_{t\geq 0}$. If $T_t$, $\tilde{T}$ satisfy
		\begin{itemize}
			\item [(i)] $|T_t[\phi](x)-T_t[\phi](y)|\leq l_{\phi}d(x,y)$, for all $ t\geq 0, \ x,y\in \mathbb{X}$ and $\phi\in C_{lip}(\mathbb{X})$ with the Lipschitz constant $l_{\phi}>0$;
			\item [(ii)] there exist $c>0, \alpha>0$ and $x_0\in \mathbb{X}$ such that
			\begin{equation*}
				|T_t[\phi](x)-\tilde{T}(\phi)|\leq cl_{\phi}(1+d(x,x_0))e^{-\alpha t}, \ \text{ for all } \ t\geq 0, \ x\in \mathbb{X} \ \text{ and } \ \phi\in C_{lip}(\mathbb{X}).
			\end{equation*}
		\end{itemize}
		Then for any $m\geq 1$, $(\Omega, (Lip(\Omega))^m, (U_t)_{t\geq 0}, \hat{\mathbb{E}}^{\tilde{T}})$  is an $m$-dimensional $\alpha$-mixing sublinear expectation system and hence $(\Omega, L^p_G(\Omega), (U_t)_{t\geq 0}, \hat{\mathbb{E}}^{\tilde{T}})$ is an $m$-dimensional mixing sublinear expectation system.
	\end{theorem}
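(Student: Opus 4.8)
The plan is to prove the $\alpha$-mixing property for the finite-dimensional system $(\Omega, (Lip(\Omega))^m, (U_t)_{t\geq 0}, \hat{\mathbb{E}}^{\tilde T})$ by verifying Definition \ref{def: alpha-mixing} directly, and then to upgrade to mixing on $L_G^p(\Omega)$ by combining the $\alpha$-mixing estimate with an $L^1$-density argument. First I would fix $X\in (Lip(\Omega))^m$ and, recalling that each coordinate of $X$ is of the form $\phi(\omega_{t_1},\dots,\omega_{t_n})$, observe that $U_tX$ depends only on the values of $\omega$ on the shifted time window $t+[t_1,t_n]$. The essential feature is that once two such windows are separated by a gap, the nested Chapman--Kolmogorov definition of $\hat{\mathbb{E}}^{\tilde T}$ will let me evaluate the expectation of a function of $\bar X_{\Lambda^1}$ and $U_t$-shifted $\bar X_{\Lambda^2}$ by first integrating over the far block using the semigroup $T_s$, and hypothesis (ii) measures precisely how fast $T_s[\phi](x)$ converges to the constant $\tilde T(\phi)$ (equivalently, how close the conditional evaluation is to the unconditional one), uniformly with an exponential rate $e^{-\alpha s}$.

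The key computational step is to write the difference
\[
\hat{\mathbb{E}}^{\tilde T}[\phi(\bar X_{\Lambda^1}, \bar X_{\Lambda^2})] - \hat{\mathbb{E}}^{\tilde T}[\hat{\mathbb{E}}^{\tilde T}[\phi(x, \bar X_{\Lambda^2})]|_{x=\bar X_{\Lambda^1}}]
\]
as an expectation of a quantity controlled by applying $T_s$ at the point where the two time-blocks are joined, with $s$ equal to the temporal gap between the blocks. Since each $\bar X_{\Lambda^i}$ is an average of shifted Lipschitz cylinder functions, its Lipschitz dependence on the underlying path is controlled, and hypothesis (i) guarantees that the semigroup $T_s$ does not inflate Lipschitz constants. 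Thus the joint expectation factorizes up to an error that, by hypothesis (ii) applied at the junction time, is bounded by $c\, l_\phi\, (1+d(\cdot,x_0))\, e^{-\alpha s}$; taking $\hat{\mathbb{E}}^{\tilde T}$ of the remaining $(1+d(\cdot,x_0))$ term yields a finite constant $c_X$ (finite because $X\in Lip(\Omega)$ has finite moments under $\hat{\mathbb{E}}^{\tilde T}$), giving exactly the bound $c_X l_\phi e^{-\alpha|\Lambda^2_{\min}-\Lambda^1_{\max}|}$ required in Definition \ref{def: alpha-mixing}. I would carry out this estimate first for a single-term (non-averaged) pair to isolate the mechanism, then average over $\Lambda^1,\Lambda^2$ using subadditivity of $\hat{\mathbb{E}}^{\tilde T}$ and the fact that the minimal gap between the blocks dominates.

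For the second assertion, that $(\Omega, L_G^p(\Omega), (U_t)_{t\geq 0}, \hat{\mathbb{E}}^{\tilde T})$ is mixing, I would first note the reduction already recorded in the excerpt (it suffices to treat $p=1$, since $L_G^p(\Omega)\subset L_G^1(\Omega)$ and Remark \ref{Mixing of U_t implies mixing of U} reduces the continuous system to the $U_\tau$ case). The $\alpha$-mixing bound established above, when specialized to the full windows $\Lambda^1=\Lambda^2$ being singletons and letting the gap $t\to\infty$, already gives the defining limit \eqref{Equation in the definition of mixing} for elements of $Lip(\Omega)$. To pass to general $X,Y\in (L_G^1(\Omega))^{d_i}$, I would approximate $X,Y$ in $\hat{\mathbb{E}}^{\tilde T}[|\cdot|]$-norm by cylinder functions $X_\epsilon,Y_\epsilon\in Lip(\Omega)$ and run the same two-part estimate as in \eqref{eq:mixing-tw0-part-1}--\eqref{eq:mixing-tw0-part-2}, using that $U_t$ is a contraction and that the conditional evaluation $x\mapsto \hat{\mathbb{E}}^{\tilde T}[\phi(x,Y_\epsilon)]$ is Lipschitz with the same constant $l_\phi$.

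The main obstacle I anticipate is the careful bookkeeping in the factorization step: the definition of $\hat{\mathbb{E}}^{\tilde T}$ is an iterated application of the $T_t$ operators along the ordered time marks, so to isolate the junction at which hypothesis (ii) applies I must track how the Lipschitz constant propagates backward through the nested semigroup evaluations and confirm that it is never amplified (this is exactly what hypothesis (i) secures). A secondary subtlety is verifying that the ``remainder'' term genuinely carries the factor $(1+d(\cdot,x_0))$ with an argument whose $\hat{\mathbb{E}}^{\tilde T}$-expectation is finite and independent of the gap; handling the averaging over $\Lambda^1$ without losing the uniform exponential rate $e^{-\alpha|\Lambda^2_{\min}-\Lambda^1_{\max}|}$ requires that the gap estimate be applied at the single closest pair of time indices rather than block-by-block, which is where I would spend the most care.
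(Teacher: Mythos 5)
Your proposal follows the same route as the paper's proof: fix a cylinder $\xi\in(Lip(\Omega))^m$, view $\phi(\bar\xi_{\Lambda_1},\bar\xi_{\Lambda_2})$ through the nested Chapman--Kolmogorov definition of $\hat{\mathbb{E}}^{\tilde T}$, integrate the second block out backward through the semigroup, apply hypothesis (ii) exactly once at the single junction between the two time windows, control Lipschitz propagation by hypothesis (i), bound the residual $(1+d(\cdot,x_0))$ factor by $1+\tilde T(d(\cdot,x_0))<\infty$, and then pass from $Lip(\Omega)$ to $L_G^p(\Omega)$ by an $L^1$-density argument.

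However, three points in your plan need repair before the argument closes. First, your assertion that hypothesis (i) guarantees the Lipschitz constant is ``never amplified'' through the nested evaluations is false as stated: each backward application of the semigroup adds the Lipschitz constant of the variable just integrated out to that of the next variable in line, so after all $|\Lambda_2|n$ steps the constant at the junction is of order $|\Lambda_2|n$ times the per-variable constant of the composite cylinder function $\psi$. What makes the final bound uniform over all block pairs --- as required for a single constant $c_X$ in Definition \ref{def: alpha-mixing} --- is that each per-variable constant of $\psi$ already carries the averaging factor $1/|\Lambda_2|$, so the accumulated constant is $n(\max_k l_{\phi_k})l_\phi$, independent of $|\Lambda_2|$; if you expect no amplification you will miss why this compensation is essential. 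Second, ``averaging over $\Lambda^1,\Lambda^2$ using subadditivity'' after a single-term computation cannot work: $\phi$ is a nonlinear function of the block averages, so $\phi(\bar X_{\Lambda^1},\bar X_{\Lambda^2})$ does not decompose into single-pair terms. The paper instead treats the whole expression as one cylinder function of all $(|\Lambda_1|+|\Lambda_2|)n$ time marks and lets the averaging enter only through the per-variable Lipschitz constants. Third, you only treat separated windows, but the definition requires the estimate for every pair of blocks with $\Lambda^1_{\max}\le\Lambda^2_{\min}$, including those with $|\Lambda^2_{\min}-\Lambda^1_{\max}|\le t_n-t_1$, where the shifted windows interleave and no junction exists; there one uses the trivial bound $4l_\phi\hat{\mathbb{E}}^{\tilde T}[|\xi|]$ and absorbs the factor $e^{\alpha(t_n-t_1)}$ into $C_\xi$, as in \eqref{0227-9}. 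None of these defects is fatal, but each must be addressed for the proof to be complete.
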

	
	\begin{proof}
		For any fixed $\xi\in (Lip(\Omega))^m$, there exist $t_1<\cdots<t_n$ and $\phi_1,\cdots,\phi_m\in C_{lip}(\mathbb{X}^n)$ such that
		\begin{equation*}
			\xi(\omega)=(\phi_k(\omega_{t_1},\cdots,\omega_{t_n}))_{k=1}^m.
		\end{equation*}
		We only need to show that there exists $C_{\xi}>0$ depending on $\xi$ such that for any $\phi\in C_{lip}(\mathbb{R}^{2m})$, and $\Lambda^1,\Lambda^2\subset \mathbb{R}_+$ with $\Lambda^1\leq \Lambda^2$, we have
		\begin{equation}\label{Ineq: alpha-mixing}
			\begin{split}
				|\hat{\mathbb{E}}^{\tilde{T}}[\phi(\bar{\xi}_{\Lambda_1}, \bar{\xi}_{\Lambda_2})] - \hat{\mathbb{E}}^{\tilde{T}}[\hat{\mathbb{E}}^{\tilde{T}}[\phi(x, \bar{\xi}_{\Lambda_2})]|_{x=\bar{\xi}_{\Lambda_1}}]| \leq C_{\xi} l_{\phi} e^{-\alpha |\Lambda^2_{min}-\Lambda^1_{max}|},
			\end{split}
		\end{equation}
		where $\bar{\xi}_{\Lambda_i}=\frac{1}{|\Lambda_i|}\sum_{u_i\in \Lambda_i}U_{u_i}\xi$, $i=1,2$.
		
		Note that
		\begin{equation}\label{2020-1}
			\begin{split}
				&\ \ \ \ \phi(\bar{\xi}_{\Lambda_1}, \bar{\xi}_{\Lambda_2})(\omega)\\
				&=\phi\Bigl(\frac{1}{|\Lambda_1|}\sum_{u_1\in \Lambda_1}(\phi_k(\omega_{t_1+u_1},\cdots,\omega_{t_n+u_1}))_{k=1}^m, \frac{1}{|\Lambda_2|}\sum_{u_2\in \Lambda_2}(\phi_k(\omega_{t_1+u_2},\cdots,\omega_{t_n+u_2}))_{k=1}^m\Bigr).
			\end{split}
		\end{equation}
		We first prove \eqref{Ineq: alpha-mixing} in the case $|\Lambda^2_{min}-\Lambda^1_{max}|>t_n-t_1$. In this case, $t_1+u_2>t_n+u_1$ for any $u_i\in \Lambda_i, \ i=1,2$. We reorder $(t_l+u_1)_{1\leq l\leq n, u_1\in \Lambda_1}$ and $(t_l+u_2)_{1\leq l\leq n, u_2\in \Lambda_2}$ respectively by
		\begin{equation*}
			(r_1,r_2,\cdots,r_{|\Lambda_1|n}) \ \text{ and } \ (s_1,s_2,\cdots,s_{|\Lambda_2|n}).
		\end{equation*}
		Then we have $r_1\leq \cdots\leq r_{|\Lambda_1|n}<s_1\leq \cdots\leq s_{|\Lambda_2|n}$ with 
		\begin{equation}\label{difference s-r}
			s_1-r_{|\Lambda_1|n}=|\Lambda^2_{min}-\Lambda^1_{max}|-(t_n-t_1).
		\end{equation}
		Let
		\begin{equation*}
			\psi(\omega_{r_1},\cdots,\omega_{r_{|\Lambda_1|n}}, \omega_{s_1},\cdots,\omega_{s_{|\Lambda_2|n}})=\phi(\bar{\xi}_{\Lambda_1}, \bar{\xi}_{\Lambda_2})(\omega)
		\end{equation*}
		Then \eqref{2020-1} implies that for any $\mathbf{x}=(x_1,\cdots,x_{|\Lambda_1|n})\in \mathbb{X}^{|\Lambda_1|n}$ and $\mathbf{y}=(y_1,\cdots,y_{|\Lambda_2|n}), \mathbf{z}=(z_1,\cdots,z_{|\Lambda_2|n}) \in \mathbb{X}^{|\Lambda_2|n}$,
		\begin{equation*}
			\begin{split}
				|\psi(\mathbf{x},\mathbf{y})-\psi(\mathbf{x},\mathbf{z})|\leq \frac{1}{|\Lambda_2|}\big(\max_{1\leq k\leq m}l_{\phi_k}\big)l_{\phi}\sum_{i=1}^{|\Lambda_2|n}d(y_i,z_i).
			\end{split}
		\end{equation*}
		This gives 
		\begin{equation*}
			l_{\psi(\mathbf{x},\cdot)}\leq \frac{1}{|\Lambda_2|}\big(\max_{1\leq k\leq m}l_{\phi_k}\big)l_{\phi} \ \text{ for all } \ \mathbf{x}\in \mathbb{X}^{|\Lambda_1|n}.
		\end{equation*}
		Notice
		\begin{equation}\label{0227-3}
			\hat{\mathbb{E}}^{\tilde{T}}[\phi(\bar{\xi}_{\Lambda_1}, \bar{\xi}_{\Lambda_2})]=\hat{\mathbb{E}}^{\tilde{T}}[\psi(\omega_{r_1},\cdots,\omega_{r_{|\Lambda_1|n}}, \omega_{s_1},\cdots,\omega_{s_{|\Lambda_2|n}})],
		\end{equation}
		and 
		\begin{equation}\label{0227-4}
			\hat{\mathbb{E}}^{\tilde{T}}[\hat{\mathbb{E}}^{\tilde{T}}[\phi(x, \bar{\xi}_{\Lambda_2})]|_{x=\bar{\xi}_{\Lambda_1}}]=\hat{\mathbb{E}}^{\tilde{T}}\Bigl[\hat{\mathbb{E}}^{\tilde{T}}[\psi(\mathbf{x}, \omega_{s_1},\cdots,\omega_{s_{|\Lambda_2|n}})]\big|_{\mathbf{x}=(\omega_{r_1},\cdots,\omega_{r_{|\Lambda_1|n}})}\Bigr].
		\end{equation}
		Now, for any $\mathbf{x}\in \mathbb{X}^{|\Lambda_1|n}$, we define $\psi_{k}(\mathbf{x},\cdot)\in C_{lip}(\mathbb{X}^{|\Lambda_2|n-k})$ by
		\begin{equation*}
			\begin{split}
				\psi_0(\mathbf{x},y_1,\cdots,y_{|\Lambda_2|n})=\psi(\mathbf{x},y_1,\cdots,y_{|\Lambda_2|n})
			\end{split}
		\end{equation*}
		and for any $1\leq k\leq |\Lambda_2|n-1$,
		\begin{equation*}
			\psi_k(\mathbf{x},y_1,\cdots,y_{|\Lambda_2|n-k})=T_{s_{|\Lambda_2|n-k+1}-s_{|\Lambda_2|n-k}}[\psi_{k-1}(\mathbf{x},y_1,\cdots,y_{|\Lambda_2|n-k},\cdot)](y_{|\Lambda_2|n-k}).
		\end{equation*}
		Let
		\begin{equation*}
			\psi_{|\Lambda_2|n}(\mathbf{x})=T_{s_1-r_{|\Lambda_1|n}}[\psi_{|\Lambda_2|n-1}(\mathbf{x},\cdot)](x_{|\Lambda_1|n}),
		\end{equation*}
		and
		\begin{equation*}
			\tilde{\psi}_{|\Lambda_2|n}(\mathbf{x})=\tilde{T}\bigl(\psi_{|\Lambda_2|n-1}(\mathbf{x},\cdot)\bigr).
		\end{equation*}
		Then we conclude
		\begin{equation}\label{0227-5}
			\hat{\mathbb{E}}^{\tilde{T}}[\psi(\omega_{r_1},\cdots,\omega_{r_{|\Lambda_1|n}}, \omega_{s_1},\cdots,\omega_{s_{|\Lambda_2|n}})]=\hat{\mathbb{E}}^{\tilde{T}}[\psi_{|\Lambda_2|n}(\omega_{r_1},\cdots,\omega_{r_{|\Lambda_1|n}})],
		\end{equation}
		and
		\begin{equation}\label{0227-6}
			\hat{\mathbb{E}}^{\tilde{T}}\Bigl[\hat{\mathbb{E}}^{\tilde{T}}[\psi(\mathbf{x}, \omega_{s_1},\cdots,\omega_{s_{|\Lambda_2|n}})]\big|_{\mathbf{x}=(\omega_{r_1},\cdots,\omega_{r_{|\Lambda_1|n}})}\Bigr]=\hat{\mathbb{E}}^{\tilde{T}}[\tilde{\psi}_{|\Lambda_2|n}(\omega_{r_1},\cdots,\omega_{r_{|\Lambda_1|n}})].
		\end{equation}
		Note that for any $1\leq k\leq |\Lambda_2|n-1$, any $\mathbf{x}\in \mathbb{X}^{|\Lambda_1|n}$ and $(y_1,\cdots,y_{|\Lambda_2|n-k}), (z_1,\cdots,z_{|\Lambda_2|n-k})\in \mathbb{X}^{|\Lambda_2|n-k}$,
		\begin{equation*}
			\begin{split}
				&\ \ \ \ |\psi_k(\mathbf{x},y_1,\cdots,y_{|\Lambda_2|n-k})-\psi_k(\mathbf{x},z_1,\cdots,z_{|\Lambda_2|n-k})|\\
				&\leq \big|T_{\Delta_k s}[\psi_{k-1}(\mathbf{x},y_1,\cdots,y_{|\Lambda_2|n-k},\cdot)](y_{|\Lambda_2|n-k})-T_{\Delta_k s}[\psi_{k-1}(\mathbf{x},y_1,\cdots,y_{|\Lambda_2|n-k},\cdot)](z_{|\Lambda_2|n-k})\big|\\
				&\ \ \ \ +\big|T_{\Delta_k s}[\psi_{k-1}(\mathbf{x},y_1,\cdots,y_{|\Lambda_2|n-k},\cdot)](z_{|\Lambda_2|n-k})-T_{\Delta_k s}[\psi_{k-1}(\mathbf{x},z_1,\cdots,z_{|\Lambda_2|n-k},\cdot)](z_{|\Lambda_2|n-k})\big|\\
				&\leq l^{|\Lambda_2|n-k+1}_{\psi_{k-1}(\mathbf{x},\cdot)}d(y_{|\Lambda_2|n-k},z_{|\Lambda_2|n-k})\\
				&\ \ \ \ +\sup_{y\in \mathbb{X}}|\psi_{k-1}(\mathbf{x},y_1,\cdots,y_{|\Lambda_2|n-k},y)-\psi_{k-1}(\mathbf{x},z_1,\cdots,z_{|\Lambda_2|n-k},y)|\\
				&\leq \sum_{i=1}^{|\Lambda_2|n-k-1}l^i_{\psi_{k-1}(\mathbf{x},\cdot)}d(y_i,z_i)+\big(l^{|\Lambda_2|n-k}_{\psi_{k-1}(\mathbf{x},\cdot)}+l^{|\Lambda_2|n-k+1}_{\psi_{k-1}(\mathbf{x},\cdot)}\big)d(y_{|\Lambda_2|n-k},z_{|\Lambda_2|n-k}),
			\end{split}
		\end{equation*}
		where $\Delta_k s:=s_{|\Lambda_2|n-k+1}-s_{|\Lambda_2|n-k}$. Hence, for any $1\leq k\leq |\Lambda_2|n-1$,
		\begin{equation}\label{0227-1}
			l^i_{\psi_k(\mathbf{x},\cdot)}\leq l^i_{\psi_{k-1}(\mathbf{x},\cdot)}, \ 1\leq i\leq |\Lambda_2|n-k-1, \ \text{ and } \ l^{|\Lambda_2|n-k}_{\psi_k(\mathbf{x},\cdot)}\leq l^{|\Lambda_2|n-k}_{\psi_{k-1}(\mathbf{x},\cdot)}+l^{|\Lambda_2|n-k+1}_{\psi_{k-1}(\mathbf{x},\cdot)}.
		\end{equation}
		Note that 
		\begin{equation}\label{0227-2}
			l^i_{\psi_0(\mathbf{x},\cdot)}=l^i_{\psi(\mathbf{x},\cdot)}\leq l_{\psi(\mathbf{x},\cdot)} \ \text{ for all } \ 1\leq i\leq |\Lambda_2|n.
		\end{equation}
		Then \eqref{0227-1} and \eqref{0227-2} imply that for any $1\leq k\leq |\Lambda_2|n-1$,
		\begin{equation*}
			l^i_{\psi_k(\mathbf{x},\cdot)}\leq l_{\psi(\mathbf{x},\cdot)},  \ 1\leq i\leq |\Lambda_2|n-k-1, \ \text{ and } \ l^{|\Lambda_2|n-k}_{\psi_k(\mathbf{x},\cdot)}\leq (k+1)l_{\psi(\mathbf{x},\cdot)}.
		\end{equation*}
		Thus,
		\begin{equation*}
			l_{\psi_{|\Lambda_2|n-1}(\mathbf{x},\cdot)}=l^1_{\psi_{|\Lambda_2|n-1}(\mathbf{x},\cdot)}\leq |\Lambda_2|n l_{\psi(\mathbf{x},\cdot)}\leq n\big(\max_{1\leq k\leq m}l_{\phi_k}\big)l_{\phi}.
		\end{equation*}
		Therefore, for any $\mathbf{x}\in \mathbb{X}$,
		\begin{equation}\label{0227-7}
			\begin{split}
				|\psi_{|\Lambda_2|n}(\mathbf{x})-\tilde{\psi}_{|\Lambda_2|n}(\mathbf{x})|&=\big|T_{s_1-r_{|\Lambda_1|n}}[\psi_{|\Lambda_2|n-1}(\mathbf{x},\cdot)](x_{|\Lambda_1|n})-\tilde{T}\bigl(\psi_{|\Lambda_2|n-1}(\mathbf{x},\cdot)\bigr)\big|\\
				&\leq cn\big(\max_{1\leq k\leq m}l_{\phi_k}\big)l_{\phi}\bigl(1+d(x_{|\Lambda_1|n},x_0)\bigr)e^{-\alpha(s_1-r_{|\Lambda_1|n})}.
			\end{split}
		\end{equation}
		Then according to \eqref{difference s-r}, \eqref{0227-3}-\eqref{0227-6} and \eqref{0227-7}, we conclude that for $|\Lambda^2_{min}-\Lambda^1_{max}|>t_n-t_1$,
		\begin{equation}\label{0227-8}
			\begin{split}
				&\ \ \ \ \big|\hat{\mathbb{E}}^{\tilde{T}}[\phi(\bar{\xi}_{\Lambda_1}, \bar{\xi}_{\Lambda_2})] - \hat{\mathbb{E}}^{\tilde{T}}[\hat{\mathbb{E}}^{\tilde{T}}[\phi(x, \bar{\xi}_{\Lambda_2})]|_{x=\bar{\xi}_{\Lambda_1}}]\big|\\
				&\leq cn\big(\max_{1\leq k\leq m}l_{\phi_k}\big)l_{\phi}\bigl(1+\tilde{T}(d(\cdot,x_0))\bigr)e^{-\alpha\big(|\Lambda^2_{min}-\Lambda^1_{max}|-(t_n-t_1)\big)}.
			\end{split}
		\end{equation}
   It is easy to see that $d(\cdot,x_0)\in C_{lip}(\mathbb{X})$, and hence $\tilde{T}(d(\cdot,x_0))<\infty$.
		Note that for $|\Lambda^2_{min}-\Lambda^1_{max}|\leq t_n-t_1$, 
		\begin{equation}\label{0227-9}
			\begin{split}
				&\ \ \ \ \big|\hat{\mathbb{E}}^{\tilde{T}}[\phi(\bar{\xi}_{\Lambda_1}, \bar{\xi}_{\Lambda_2})] - \hat{\mathbb{E}}^{\tilde{T}}[\hat{\mathbb{E}}^{\tilde{T}}[\phi(x, \bar{\xi}_{\Lambda_2})]|_{x=\bar{\xi}_{\Lambda_1}}]\big|\\
				&\leq \big|\hat{\mathbb{E}}^{\tilde{T}}[\phi(\bar{\xi}_{\Lambda_1}, \bar{\xi}_{\Lambda_2})] - \phi(0,0)\big|+\hat{\mathbb{E}}^{\tilde{T}}\big[\big|\hat{\mathbb{E}}^{\tilde{T}}[\phi(x, \bar{\xi}_{\Lambda_2})]-\phi(0,0)\big|_{x=\bar{\xi}_{\Lambda_1}}\big]\\
				&\leq 2l_{\phi}\bigl({\mathbb{E}}^{\tilde{T}}[|\bar{\xi}_{\Lambda_1}|]+{\mathbb{E}}^{\tilde{T}}[|\bar{\xi}_{\Lambda_2}|]\bigr)\\
				&\leq 4l_{\phi}{\mathbb{E}}^{\tilde{T}}[|\xi|].
			\end{split}
		\end{equation}
		By \eqref{0227-8} and \eqref{0227-9}, we obtain \eqref{Ineq: alpha-mixing} by choosing 
		\begin{equation*}
			C_{\xi}=\max\Big\{cn\big(\max_{1\leq k\leq m}l_{\phi_k}\big)\bigl(1+\tilde{T}(d(\cdot,x_0))\bigr)e^{\alpha(t_n-t_1)}, 4{\mathbb{E}}^{\tilde{T}}[|\xi|]e^{\alpha(t_n-t_1)}\Big\}<\infty.
		\end{equation*}
  The proof is completed.
	\end{proof}
\subsection{The $\a$-mixing of Markovian systems generated by $G$-SDEs}\label{subsec:application to GSDE}
In this subsection, we prove that Markovian systems generated by $G$-SDEs, under certain dissipative conditions, are $\alpha$-mixing. This ensures that the aforementioned results, including Birkhoff's ergodic theorem, LLN and SLLN, are applicable.

We will begin this section by introducing some notations and concepts. Let $G: \mathbb{S}_d\to \mathbb{R}$ be a given monotonic and sublinear function, i.e., 
    \begin{equation}\label{Set for G-BM}
       G(A)=\frac{1}{2}\sup_{q\in Q}Tr[Aq], \text{ for some bounded convex and closed subset } Q\subset \mathbb{S}_d^+
    \end{equation}
     where $\mathbb{S}_d$ is the collection of all $d\times d$ symmetric matrices, $\mathbb{S}_d^+$ is the collection of all positive semi-definite matrices in $\mathbb{S}_d$ and $B_t=\{B^i_t\}_{i=1}^d$ is the corresponding $d$-dimensional two-sided $G$-Brownian motion on a $G$-expectation space $(\hat{\Omega},L^p_G(\hat{\Omega}), (L^p_G(\hat{\Omega}_t))_{t\in \mathbb R}, \hat{\mathbb E})$ for any $p\geq 1$,  where $\hat{\Omega}:=C_0(\mathbb R; \mathbb R^d)$, $\hat{\Omega}_t:=\{\hat{\omega}\in \hat{\Omega}: \hat{\omega}_{\cdot}=\hat{\omega}'_{\cdot\wedge t}, \text{ for some } \hat{\omega}'\in \hat{\Omega}\}$ and $L^p_G(\hat{\Omega}), L^p_G(\hat{\Omega}_t)$ are given as in Example \ref{Example:G-BM}. We consider the following $G$-SDE: for any $\xi\in (L_G^p(\hat{\Omega}_s))^n$,
	\begin{equation}\label{G-SDE}
		X_t^{s,\xi}=\xi+\int_s^tb(X_r^{s,\xi})dr+\sum_{i,j=1}^d\int_s^th_{ij}(X_r^{s,\xi})d\langle B^i,B^j\rangle_r+\int_0^t\sigma(X_r^{s,\xi})dB_r, \ t\geq s,
	\end{equation}
	where $b,h_{ij}=h_{ji}: \mathbb{R}^n\to \mathbb{R}^n$ and $\sigma: \mathbb{R}^n\to \mathbb{R}^{n\times d}$ are deterministic continuous functions  and the mutual variation process $\langle B^i,B^j\rangle_t$ of $B^i, B^j$ is defined by
 \begin{equation}\label{eq:23.15}
      \langle B^i,B^j\rangle_t:{=}L_G^2-\lim_{|\pi|\to 0}\sum_{k=0}^{N-1}(B^i_{t_{k+1}}-B^i_{t_k})(B^j_{t_{k+1}}-B^j_{t_k})=B_t^iB_t^j-\int_0^tB_s^idB_s^j-\int_0^tB_s^jdB_s^i,
 \end{equation}
 where $\pi=\{0=t_0<t_1<\cdots<t_n=t\}$ is a partition of $[0,t]$ with $|\pi|:=\max_{0\leq k\leq N-1}|t_{k+1}-t_k|$.

    Under Assumption \ref{Assumption G-SDE}, according to Theorem 4.5 in \cite{Li-Lin-Lin2016} (choose the Lyapunov function $V(t,x)=|x|^p$), we know that for any $p\geq 2$ and $\xi\in (L^p_G(\hat{\Omega}_s))^n$ with starting time $s\in \mathbb R$, the equation \eqref{G-SDE} has a unique solution $X_t^{s,\xi}\in (L^p_G(\hat{\Omega}_t))^n$. Moreover, we have the following lemma.

    \begin{lemma}\label{lem:est for G-SDE}
        Suppose Assumption \ref{Assumption G-SDE} holds. Then we have the following results:
        \begin{itemize}
            \item [(i)] For any $\xi,\eta\in (L_G^2(\hat{\Omega}_s))^n$ and $t\geq s$,
            \begin{equation}\label{eq:contract of solution}
		      \hat{\mathbb{E}}[|X_t^{s,\xi}-X_t^{s,\eta}|^2]\leq e^{-2\alpha(t-s)}\hat{\mathbb{E}}[|\xi-\eta|^2].
	        \end{equation}
            \item [(ii)] For any $p\geq 2$, there exists $C_{\alpha,p}>0$ depending also on $|b(0)|,|h_{ij}(0)|$ and the bound of $\sigma$ such that 
            \begin{equation}\label{eq:p-bdd solution}
		      \hat{\mathbb{E}}[|X_t^{s,\xi}|^p]\leq C_{\alpha,p}(1+\hat{\mathbb{E}}[|\xi|^p]), \text{ for all } t\geq s, \ \xi\in (L_G^p(\hat{\Omega}_s))^n.
	        \end{equation}
        \end{itemize}
    \end{lemma}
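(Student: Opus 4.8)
The plan is to derive both bounds from Peng's $G$-It\^o formula combined with the dissipativity condition \eqref{eq:dissipative assump}; the key analytic input beyond It\^o's formula is the $G$-domination of the mutual variation, namely that for any bounded symmetric-matrix-valued integrand $\Gamma_r$ one has
\begin{equation*}
  \sum_{i,j=1}^d\int_s^t\Gamma_r^{ij}\,d\langle B^i,B^j\rangle_r\le\int_s^t 2G(\Gamma_r)\,dr,\quad\hat{\mathbb{E}}\text{-q.s.},
\end{equation*}
which holds because under each $P\in\mathcal{P}$ the density $d\langle B\rangle_r/dr$ takes values in the set $Q$ of \eqref{Set for G-BM}, so that the integrand equals $\mathrm{Tr}[\Gamma_r\,(d\langle B\rangle_r/dr)]\le\sup_{q\in Q}\mathrm{Tr}[\Gamma_r q]=2G(\Gamma_r)$ for $dr$-a.e.\ $r$ (see \cite{Pengbook}).

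For (i) I would set $Y_r:=X_r^{s,\xi}-X_r^{s,\eta}$, $\beta_r:=b(X_r^{s,\xi})-b(X_r^{s,\eta})$, $\gamma_r^{ij}:=h_{ij}(X_r^{s,\xi})-h_{ij}(X_r^{s,\eta})$ and $\Sigma_r:=\sigma(X_r^{s,\xi})-\sigma(X_r^{s,\eta})$, and apply $G$-It\^o's formula to $e^{2\alpha r}|Y_r|^2$. Writing $\Gamma_r$ for the symmetric matrix with entries $\Gamma_r^{ij}=2\langle Y_r,\gamma_r^{ij}\rangle+(\Sigma_r^\top\Sigma_r)_{ij}$, this yields
\begin{equation*}
  e^{2\alpha t}|Y_t|^2=|\xi-\eta|^2+\int_s^t e^{2\alpha r}\big(2\alpha|Y_r|^2+2\langle Y_r,\beta_r\rangle\big)\,dr+\sum_{i,j=1}^d\int_s^t e^{2\alpha r}\Gamma_r^{ij}\,d\langle B^i,B^j\rangle_r+M_t,
\end{equation*}
where $M_t:=2\int_s^t e^{2\alpha r}Y_r^\top\Sigma_r\,dB_r$ is a symmetric $G$-martingale, so $\hat{\mathbb{E}}[M_t]=0$. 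Substituting $x_1=X_r^{s,\xi}$, $x_2=X_r^{s,\eta}$ into \eqref{eq:dissipative assump} gives exactly $\langle Y_r,\beta_r\rangle+G(\Gamma_r)\le-\alpha|Y_r|^2$; hence, after applying the $G$-domination above, the whole finite-variation part is bounded $\hat{\mathbb{E}}$-q.s.\ by $\int_s^t e^{2\alpha r}\big(2\alpha|Y_r|^2+2\langle Y_r,\beta_r\rangle+2G(\Gamma_r)\big)\,dr\le0$. Thus $e^{2\alpha t}|Y_t|^2\le|\xi-\eta|^2+M_t$, $\hat{\mathbb{E}}$-q.s., and taking $\hat{\mathbb{E}}$ together with monotonicity, sub-additivity and $\hat{\mathbb{E}}[M_t]=0$ yields \eqref{eq:contract of solution}.

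For (ii) I would run the same scheme on $\phi(x):=(1+|x|^2)^{p/2}$ evaluated along $X_r^{s,\xi}$, working with $e^{c_p r}\phi(X_r^{s,\xi})$ for a suitable constant $c_p>0$; the power $(1+|x|^2)^{p/2}$ is used in place of $|x|^p$ to keep the integrand smooth at the origin. Applying \eqref{eq:dissipative assump} with $x_2=0$ and reinserting the constants $b(0),h_{ij}(0),\sigma(0)$, whose contributions are controlled by $|b(0)|$, $|h_{ij}(0)|$ and the uniform bound of $\sigma$, one shows that the $G$-generator of $\phi$ satisfies a pointwise bound of the form $\mathcal{G}\phi(x)\le-c_p\phi(x)+C_{\alpha,p}$, the $\kappa$-polynomial growth of $b$ and $h_{ij}$ being absorbed into the strictly negative dissipative term $-\alpha|x|^2$ via Young's inequality. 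After the same $G$-domination and zero-mean-martingale arguments this gives $e^{c_p t}\hat{\mathbb{E}}[\phi(X_t^{s,\xi})]\le e^{c_p s}\hat{\mathbb{E}}[\phi(\xi)]+C_{\alpha,p}\int_s^t e^{c_p r}\,dr$, and \eqref{eq:p-bdd solution} follows upon dividing by $e^{c_p t}$ and comparing $|x|^p$ with $\phi(x)$. (Alternatively, this estimate is already contained in the Lyapunov-function argument of \cite[Theorem 4.5]{Li-Lin-Lin2016} with $V(t,x)=|x|^p$.)

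The main obstacle is the handling of the $d\langle B^i,B^j\rangle_r$ terms. Because $\hat{\mathbb{E}}$ is only sub-additive, one cannot split the expectation of the post-It\^o identity term by term; the argument must instead be organized so that the $G$-domination converts every mutual-variation term into a $2G(\cdot)\,dr$ term and the entire finite-variation part becomes non-positive $\hat{\mathbb{E}}$-q.s.\ \emph{before} any expectation is taken, leaving only the zero-mean stochastic integral to be discarded by monotonicity. For (ii) the secondary difficulty is the bookkeeping of the lower-order constant terms for general $p\ge2$ and checking that the dissipative term genuinely dominates the polynomial growth of the coefficients.
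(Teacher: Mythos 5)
Your proposal is correct and follows essentially the same route as the paper: for part (ii) the paper likewise applies the $G$-It\^o formula to $e^{p\alpha(t-s)/2}|X_t^{s,\xi}|^p$, dominates the mutual-variation integral by $2\int G(\cdot)\,dr$ before taking any expectation, invokes \eqref{eq:dissipative assump} with $x_2=0$ to absorb the growth of $b,h_{ij}$ into the $-\alpha|x|^2$ term, and discards the stochastic integral by its zero $G$-expectation. The only difference is in part (i), where the paper simply cites \cite[Lemma 3.2(i)]{Hu-Li-Wang-Zheng2015} while you write out the underlying standard dissipativity argument (which is the proof of that cited result), so no genuine gap exists in either part.
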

    \begin{proof}
        \eqref{eq:contract of solution} is follows from (i) in \cite[Lemma 3.2]{Hu-Li-Wang-Zheng2015}.  Next we show that \eqref{eq:p-bdd solution} holds true for all $p\geq 2$. Applying $G$-It{\^o}'s formula to $e^{p\alpha(t-s)/2}|X_t^{s,\xi}|^p$ yields
        \begin{equation}\label{eq:0616-1}
            \begin{split}
                e^{p\alpha(t-s)/2}|X_t^{s,\xi}|^p&=|\xi|^p+\frac{p\alpha}{2}\int_s^te^{p\alpha(r-s)/2}|X_r^{s,\xi}|^pdr\\
                &\ \ \ \ +p\int_s^te^{p\alpha(r-s)/2}|X_r^{s,\xi}|^{p-2}\langle X_r^{s,\xi}, b(X_r^{s,\xi})\rangle dr
                \\
                &\ \ \ \ +\frac{p}{2}\int_s^te^{p\alpha(r-s)/2}|X_r^{s,\xi}|^{p-2}Tr[\zeta_r d\langle B\rangle_r]\\
                &\ \ \ \ +p\int_s^te^{p\alpha(r-s)/2}|X_r^{s,\xi}|^{p-2}\langle X_r^{s,\xi}, \sigma(X_r^{s,\xi})dB_r\rangle,
            \end{split}
        \end{equation}
        where $\langle B\rangle_t:=[\langle B^i,B^j\rangle_t]_{i,j=1}^d$ and
        \begin{equation*}
            \zeta_r=\frac{p-2}{|X_r^{s,\xi}|^2}\sigma^{\top}(X_r^{s,\xi})\big(X_r^{s,\xi}\cdot(X_r^{s,\xi})^{\top}\big)\sigma(X_r^{s,\xi})+\sigma^{\top}(X_r^{s,\xi})\sigma(X_r^{s,\xi})+2[\langle X_r^{s,\xi}, h_{ij}(X_r^{s,\xi})\rangle]_{i,j=1}^d.
        \end{equation*}
        Note that 
        \begin{equation}\label{eq:0616-2}
            \begin{split}
                &\ \ \ \ \int_s^te^{p\alpha(r-s)/2}|X_r^{s,\xi}|^{p-2}Tr(\zeta_r d\langle B\rangle_r)\\
            &\leq 2\int_s^te^{p\alpha(r-s)/2}|X_r^{s,\xi}|^{p-2}G(\zeta_r) dr\\
            &\leq 2\int_s^te^{p\alpha(r-s)/2}|X_r^{s,\xi}|^{p-2}\big(G(2[\langle X_r^{s,\xi}, h_{ij}(X_r^{s,\xi})\rangle]_{i,j=1}^d)+C_{p,|\sigma|_{\infty}}\big) dr,
            \end{split}
        \end{equation}
        where $|\sigma|_{\infty}:=\sup_{x\in \mathbb R^d}Tr(\sigma^{\top}(x)\sigma(x))$. On the other hand, since $G:\mathbb S_d\to \mathbb R$ is a sublinear expectation, \eqref{eq:dissipative assump} in Assumption \ref{Assumption G-SDE} gives
        \begin{equation}\label{eq:0616-3}
            \begin{split}
               &\ \ \ \ \langle x,b(x)\rangle+G(2[\langle x, h_{ij}(x)\rangle]_{i,j=1}^d)\\
               &\leq -\alpha |x|^2+\langle x,b(0)\rangle+G(2[\langle x, h_{ij}(x)\rangle]_{i,j=1}^d-(\sigma(x)-\sigma(0))^{\top}(\sigma(x)-\sigma(0)))\\
               &\leq -\alpha |x|^2+C_{|b(0)|,|h_{ij}(0)|}|x|+C_{|\sigma|_{\infty}}
               \\
               &\leq -\frac{2\alpha}{3} |x|^2+C_{\alpha,|b(0)|,|h_{ij}(0)|,|\sigma|_{\infty}}.
            \end{split}
        \end{equation}
        It follows from \eqref{eq:0616-1}-\eqref{eq:0616-3} that
        \begin{equation}\label{eq:0616-4}
            \begin{split}
                e^{p\alpha(t-s)/2}|X_t^{s,\xi}|^p&\leq|\xi|^p-\frac{p\alpha}{6}\int_s^te^{p\alpha(r-s)/2}|X_r^{s,\xi}|^pdr\\
                &\ \ \ \ +C_{\alpha,p,|b(0)|,|h_{ij}(0)|,|\sigma|_{\infty}}\int_s^te^{p\alpha(r-s)/2}|X_r^{s,\xi}|^{p-2}dr\\
                &\ \ \ \ +p\int_s^te^{p\alpha(r-s)/2}|X_r^{s,\xi}|^{p-2}\langle X_r^{s,\xi}, \sigma(X_r^{s,\xi})dB_r\rangle\\
                &\leq |\xi|^p+C_{\alpha,p,|b(0)|,|h_{ij}(0)|,|\sigma|_{\infty}}e^{p\alpha(r-s)/2}\\
                &\ \ \ \ +p\int_s^te^{p\alpha(r-s)/2}|X_r^{s,\xi}|^{p-2}\langle X_r^{s,\xi}, \sigma(X_r^{s,\xi})dB_r\rangle.
            \end{split}
        \end{equation}
        Then we derive \eqref{eq:p-bdd solution} by taking sublinear expectation $\hat{\mathbb E}$ on both side of \eqref{eq:0616-4}.
    \end{proof}

    Recall that $C_{l,lip}(\mathbb R^n)$ denotes the collection of all functions $\phi: \mathbb R^n\to \mathbb R$ satisfying there exist $L>0, k\geq 1$ such that
    \[
    |\phi(x)-\phi(y)|\leq L(1+|x|^k+|y|^k)|x-y|.
    \]
	For any $t\geq 0$, define $T_t$ by
	\begin{equation}\label{def: T_t by G-SDE}
		T_t\phi(x):=\hat{\mathbb{E}}[\phi(X_t^{0,x})], \ \text{ for all } \ x\in \mathbb{R}^n \ \text{ and  } \ \phi\in C_{l,lip}(\mathbb{R}^n).
	\end{equation}
    By \eqref{eq:contract of solution} and \eqref{eq:p-bdd solution} in Lemma \ref{lem:est for G-SDE}, 
    \begin{equation*}
        \begin{split}
            |T_t\phi(x)-T_t\phi(y)|&\leq \hat{\mathbb{E}}\big[\big|\phi\big(X_t^{0,x}\big)-\phi\big(X_t^{0,y}\big)\big|\big]\\
            &\leq L\hat{\mathbb{E}}\big[\big(1+\big|X_t^{0,x}\big|^{k}+\big|X_t^{0,y}\big|^{k}\big)\big|X_t^{0,x}-X_t^{0,y}\big|\big]\\
            &\leq 3L\Big(1+\big(\hat{\mathbb{E}}\big[\big|X_t^{0,x}\big|^{2k}\big]\big)^{\frac{1}{2}}+\big(\hat{\mathbb{E}}\big[\big|X_t^{0,y}\big|^{2k}\big]\big)^{\frac{1}{2}}\Big)\Big(\hat{\mathbb{E}}\big[\big|X_t^{0,x}-X_t^{0,y}\big|^2\big]\Big)^{\frac{1}{2}}\\
            &\leq C_{L,\alpha,k}(1+|x|^k+|y|^k)|x-y|,
        \end{split}
    \end{equation*}
     where $C_{L,\alpha,k}=3L(2\sqrt{C_{\alpha,2k}}+1)$ and $C_{\alpha,2k}$ is from \eqref{eq:p-bdd solution} for $p=2k$.
	Hence, $\{T_t\}_{t\geq 0}$ is a sublinear Markovian system on $C_{l,lip}(\mathbb{R}^n)$.

    Set
    \[
    \mathcal{M}:=\bigcap_{p\geq 2}\mathcal{M}_p, \text{ where } \mathcal{M}_p:=\big\{\{\xi_{t}\}_{t\in \mathbb R}: \xi_t\in (L_G^p(\hat{\Omega}_t))^n \text{ for all } t\in \mathbb R \text{ and } \sup_{t\in \mathbb R} \hat{\mathbb{E}}[|\xi_t|^p]<\infty\big\}.
    \]
    
    \begin{definition}
        A (two-sided) process $\{\xi_{t}\}_{t\in \mathbb R}\in \mathcal{M}_2$ is called a stationary solution of \eqref{G-SDE} if 
        \[
        \{\xi_t\}_{t\in \mathbb R} \text{ is a $n$-dimensional stationary process and } X_t^{s,\xi_s}=\xi_t, \ \text{$\hat{\mathbb E}$-q.s. for all }  t\geq s.
        \]
        If in addition, $\xi_{t}$ is continuous in $t$ $\hat{\mathbb E}$-q.s., we call $\{\xi_{t}\}_{t\in \mathbb R}$ is a continuous stationary solution of \eqref{G-SDE}.
    \end{definition}

    Set
    \[
    \mathcal{T}:=\{\text{all sublinear expectations } \tilde T \text{ on } C_{lip}(\mathbb R^n)\}.
    \]
    
    \begin{theorem}\label{Thm:stationary solution}
        Suppose Assumption \ref{Assumption G-SDE} holds. Then equation \eqref{G-SDE} has a unique continuous stationary solution $\{\xi_{t}\}_{t\in \mathbb R}$ in $\mathcal{M}_2$. Moreover, $\{\xi_{t}\}_{t\in \mathbb R}\in \mathcal{M}$ and 
        \begin{equation}\label{eq:construction of ise}
            \tilde T[\phi]:=\hat{\mathbb E}[\phi(\xi_0)], \ \text{ for all } \ \phi\in C_{lip}(\mathbb R^n)
        \end{equation}
        is the unique invariant sublinear expectation of $\{T_t\}_{t\geq 0}$ in $\mathcal{T}$.
    \end{theorem}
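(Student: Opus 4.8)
The plan is to construct the stationary solution by the pullback (backward‑in‑time) method, exploiting the dissipativity encoded in the contraction estimate \eqref{eq:contract of solution} together with the uniform moment bound \eqref{eq:p-bdd solution}. First I would fix the initial datum $0$ and push the starting time to $-\infty$. For $r\le s\le t$, uniqueness of solutions to \eqref{G-SDE} yields the cocycle identity $X_t^{r,0}=X_t^{s,X_s^{r,0}}$, so \eqref{eq:contract of solution} and \eqref{eq:p-bdd solution} (with $p=2$, $\xi=0$) give
\[
\hat{\mathbb{E}}\big[|X_t^{r,0}-X_t^{s,0}|^2\big]=\hat{\mathbb{E}}\big[|X_t^{s,X_s^{r,0}}-X_t^{s,0}|^2\big]\le e^{-2\alpha(t-s)}\hat{\mathbb{E}}\big[|X_s^{r,0}|^2\big]\le C_{\alpha,2}\,e^{-2\alpha(t-s)}.
\]
Hence $\{X_t^{s,0}\}_{s\le t}$ is Cauchy in $(L_G^2(\hat{\Omega}_t))^n$ as $s\to-\infty$, and I set $\xi_t:=\lim_{s\to-\infty}X_t^{s,0}\in(L_G^2(\hat{\Omega}_t))^n$. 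Passing \eqref{eq:p-bdd solution} to the limit (the uniform $L_G^p$ bounds survive an $L_G^2$‑limit via uniform integrability) produces $\sup_{t}\hat{\mathbb{E}}[|\xi_t|^p]\le C_{\alpha,p}$ for every $p\ge 2$, so $\{\xi_t\}_{t\in\mathbb{R}}\in\mathcal{M}$.

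Next I would verify that $\{\xi_t\}$ solves \eqref{G-SDE} and is stationary. The identity $X_t^{s,\xi_s}=\xi_t$ follows by combining continuity in the initial condition (again \eqref{eq:contract of solution}) with the cocycle property:
\[
X_t^{s,\xi_s}=\lim_{r\to-\infty}X_t^{s,X_s^{r,0}}=\lim_{r\to-\infty}X_t^{r,0}=\xi_t,\quad \hat{\mathbb{E}}\text{-q.s.}
\]
Stationarity of $\{\xi_t\}$ comes from the time‑homogeneity of the coefficients together with the invariance of $\hat{\mathbb{E}}$ under the $G$‑Brownian shift $U_\tau$: one checks $U_\tau X_t^{s,0}=X_{t+\tau}^{s+\tau,0}$, so $(X_{t_1}^{s,0},\dots,X_{t_k}^{s,0})\deq(X_{t_1+\tau}^{s+\tau,0},\dots,X_{t_k+\tau}^{s+\tau,0})$, and these finite‑dimensional distributions are preserved as $s\to-\infty$. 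For the $t$‑continuity of the limiting process I would establish an increment estimate $\hat{\mathbb{E}}[|X_t^{s,0}-X_{t'}^{s,0}|^{2m}]\le C|t-t'|^{m}$ uniform in $s$ (via the $G$‑It\^o formula and the moment bounds), apply a Kolmogorov‑type continuity criterion under $\hat{\mathbb{E}}$ on each compact interval, and transfer the resulting continuous modification to $\xi$. This continuity step is the main analytic obstacle. Uniqueness among stationary solutions is then immediate: if $\{\eta_t\}\in\mathcal{M}_2$ is another, \eqref{eq:contract of solution} gives
\[
\hat{\mathbb{E}}\big[|\xi_t-\eta_t|^2\big]\le e^{-2\alpha(t-s)}\hat{\mathbb{E}}\big[|\xi_s-\eta_s|^2\big]\le 2e^{-2\alpha(t-s)}\sup_{r\in\mathbb{R}}\big(\hat{\mathbb{E}}[|\xi_r|^2]+\hat{\mathbb{E}}[|\eta_r|^2]\big)\longrightarrow 0
\]
as $s\to-\infty$, so $\xi_t=\eta_t$, $\hat{\mathbb{E}}$‑q.s., for every $t$.

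Finally I would treat the invariant sublinear expectation. That $\tilde{T}$ in \eqref{eq:construction of ise} is invariant uses that the increments of $B$ on $[0,t]$ are independent of $\hat{\Omega}_0$: by the definition \eqref{def: T_t by G-SDE} of $T_t$,
\[
\tilde{T}(T_t\phi)=\hat{\mathbb{E}}\big[\hat{\mathbb{E}}[\phi(X_t^{0,x})]\big|_{x=\xi_0}\big]=\hat{\mathbb{E}}\big[\phi(X_t^{0,\xi_0})\big]=\hat{\mathbb{E}}[\phi(\xi_t)]=\hat{\mathbb{E}}[\phi(\xi_0)]=\tilde{T}[\phi],
\]
where $X_t^{0,\xi_0}=\xi_t$ and $\xi_t\deq\xi_0$ by the previous step. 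For uniqueness of $\tilde{T}$ in $\mathcal{T}$, the contraction yields exponential relaxation of $T_t$ to the constant $\tilde{T}[\phi]$: from $\hat{\mathbb{E}}[|X_t^{0,x}-\xi_t|^2]\le e^{-2\alpha t}\hat{\mathbb{E}}[|x-\xi_0|^2]$ and the $L_G^2$‑bound on $\xi_0$,
\[
|T_t\phi(x)-\tilde{T}[\phi]|=\big|\hat{\mathbb{E}}[\phi(X_t^{0,x})]-\hat{\mathbb{E}}[\phi(\xi_t)]\big|\le l_\phi\big(\hat{\mathbb{E}}[|X_t^{0,x}-\xi_t|^2]\big)^{1/2}\le C l_\phi(1+|x|)e^{-\alpha t}.
\]
Then for any invariant $\tilde{T}'\in\mathcal{T}$, invariance and the fact that $(1+|\cdot|)\in C_{lip}(\mathbb{R}^n)$ (so $\tilde{T}'(1+|\cdot|)<\infty$) give
\[
|\tilde{T}'[\phi]-\tilde{T}[\phi]|=|\tilde{T}'(T_t\phi)-\tilde{T}'(\tilde{T}[\phi])|\le\tilde{T}'\big(|T_t\phi-\tilde{T}[\phi]|\big)\le Cl_\phi e^{-\alpha t}\,\tilde{T}'(1+|\cdot|)\xrightarrow[t\to\infty]{}0,
\]
whence $\tilde{T}'=\tilde{T}$, completing the proof modulo the continuity argument flagged above.
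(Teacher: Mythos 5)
Your proposal is correct and follows essentially the same route as the paper: a pullback construction $\xi_t=\lim_{s\to-\infty}X_t^{s,0}$ using the contraction \eqref{eq:contract of solution} and moment bound \eqref{eq:p-bdd solution}, stationarity from time-homogeneity, continuity via a Kolmogorov-type criterion (the paper estimates $\hat{\mathbb{E}}[|\xi_t-\xi_s|^4]$ directly on $\xi$ rather than uniformly in the starting time, but this is the same idea), uniqueness by letting $s\to-\infty$ in the contraction, and uniqueness of $\tilde T$ via the exponential relaxation bound $|T_t\phi(x)-\tilde T[\phi]|\le Cl_\phi(1+|x|)e^{-\alpha t}$. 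The only cosmetic difference is that the paper invokes $G$-Fatou's lemma where you invoke uniform integrability to carry the higher moments to the limit.
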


    \begin{proof}
        \textbf{Existence of continuous stationary solution:} For any fixed $t\in \mathbb R$, we consider the process $\{X_t^{s,0}\}_{s\leq t}$. By \eqref{eq:contract of solution} and \eqref{eq:p-bdd solution} in Lemma \ref{lem:est for G-SDE}, for any $s_1\leq s_2\leq t$,
        \begin{equation*}
                \hat{\mathbb E}[|X_t^{s_1,0}-X_t^{s_2,0}|^2]=\hat{\mathbb E}\big[\big|X_t^{s_2,X_{s_2}^{s_1,0}}-X_t^{s_2,0}\big|^2\big]\leq e^{-2\alpha(t-s_2)}\hat{\mathbb E}[|X_{s_2}^{s_1,0}|^2]\leq C_{\alpha}e^{-2\alpha(t-s_2)}.
        \end{equation*}
        Hence $\{X_t^{s,0}\}_{s\leq t}$ is a Cauchy process in $(L_G^2(\hat{\Omega}_t))^n$. Therefore, there exists $\xi_t\in (L_G^2(\hat{\Omega}_t))^n$ such that
        \begin{equation}\label{0617-1}
                \hat{\mathbb E}[|X_t^{s,0}-\xi_t|^2]\leq C_{\alpha}e^{-2\alpha(t-s)}, \ \text{ for all } \ t\geq s.
        \end{equation}
        Then for all $r\leq s\le t$, \eqref{eq:p-bdd solution} and \eqref{0617-1} give
        \begin{equation*}
                \hat{\mathbb E}[|X_t^{s,\xi_s}-\xi_t|^2]\leq 2\hat{\mathbb E}\big[\big|X_t^{s,\xi_s}-X_t^{s,X_{s}^{r,0}}\big|^2\big]+2\hat{\mathbb E}[|X_t^{r,0}-\xi_t|^2]\leq C_{\alpha}e^{-2\alpha(t-r)}.
        \end{equation*}
        Letting $r\to -\infty$, we conclude $\hat{\mathbb E}[|X_t^{s,\xi_s}-\xi_t|^2]=0$, i.e., $X_t^{s,\xi_s}=\xi_t$, $\hat{\mathbb E}$-q.s. On the other hand, by $G$-Fatou's lemma,
        \[
        \hat{\mathbb E}[|\xi_t|^p]\leq \liminf_{s\to -\infty}\hat{\mathbb E}[|X_t^{s,0}|^p]\leq C_{\alpha,p}.
        \]
        Thus, $\{\xi_t\}_{t\in \mathbb R}\in \mathcal{M}$. Since the equation \eqref{G-SDE} is time-homogeneous,  for any $(t_1,\cdots,t_n)\in \mathbb{R}^n$ and $t\in \mathbb{R}$, we know that
        \begin{equation}\label{eq:mynew1117}
            (X_{t_1}^{t_1-T,0}, \cdots, X_{t_n}^{t_n-T,0})\deq (X_{t_1+t}^{t_1+t-T,0}, \cdots, X_{t_n+t}^{t_n+t-T,0}), \ \text{ for all } \ T>0.
        \end{equation}
        Letting $T\to \infty$ in \eqref{eq:mynew1117}, together with \eqref{0617-1}, we conclude
        \begin{equation*}
            (\xi_{t_1}, \cdots, \xi_{t_n})\deq (\xi_{t_1+t}, \cdots, \xi_{t_n+t}).
        \end{equation*}
        Hence, $\{\xi_t\}_{t\in \mathbb{R}}$ is a stationary solution. 

        It remains to show that $\xi_{\cdot}$ is continuous $\hat{\E}$-q.s. Noting that $\xi_t=X_t^{s,\xi_s}$ for all $t\geq s$, 
        \begin{align}\label{eq:0617n}
                    \hat{\mathbb E}[|\xi_t-\xi_s|^4]&\leq C_d\Big(\hat{\mathbb E}\Big[\Big|\int_s^tb(\xi_r)dr\Big|^4\Big]+\sum_{i,j=1}^d\hat{\mathbb E}\Big[\Big|\int_s^th_{ij}(\xi_r)d\langle B\rangle_r\Big|^4\Big]+\hat{\mathbb E}\Big[\Big|\int_s^t\sigma(\xi_r)dB_r\Big|^4\Big]\Big)\notag\\
                    &\leq C_d\Big(|t-s|^3\int_s^t\big(1+\hat{\mathbb E}[|\xi_r|^{4\kappa}]\big)dr+|t-s|^2\Big)\\
                    &\leq C_{d,\alpha,\kappa}(1+|t-s|^2)|t-s|^2.\notag
        \end{align}
        By Kolmogorov's continuity criterion under sublinear expectation framework (see e.g. \cite[Theorem 6.1.40]{Pengbook}), $\{\xi_t\}_{t\in \mathbb R}$ admits a continuous modification, which still denotes $\{\xi_t\}_{t\in \mathbb R}$. Hence, $\{\xi_t\}_{t\in \mathbb R}$ is a continuous stationary solution.

        \noindent\textbf{Uniqueness of continuous stationary solution:} Assume there is another continuous stationary solution $\{\tilde \xi_t\}_{t\in \mathbb R}\in \mathcal{M}_2$.
        Then for any $s\le t$,
        \[
        \hat{\mathbb E}[|\xi_t-\tilde \xi_t|^2]=\hat{\mathbb E}[|X_t^{s,\xi_s}-X_t^{s,\tilde \xi_s}|^2]\leq 2e^{-2\alpha(t-s)}\sup_{r\in \mathbb R}\big(\hat{\mathbb E}[|\xi_r|^2]+\hat{\mathbb E}[|\tilde \xi_r|^2]\big).
        \]
        Letting $s\to -\infty$, we conclude that $\xi_t=\tilde \xi_t$, $\hat{\mathbb E}$-q.s. Then the continuity of $\xi, \tilde \xi$ gives the uniqueness.

        Now we only need to prove that $\tilde T$ defined in \eqref{eq:construction of ise} is the unique invariant sublinear expectation of $\{T_t\}_{t\geq 0}$ in $\mathcal{T}$. Note that for any $t\geq 0$,
        \[
        \tilde T[T_t\phi]=\hat{\mathbb E}[T_t\phi(\xi_0)]=\hat{\mathbb E}[\phi(X_t^{0,\xi_0})]=\hat{\mathbb E}[\phi(\xi_t)]=\hat{\mathbb E}[\phi(\xi_0)]=\tilde T[\phi], \ \text{ for all } \phi\in C_{lip}(\mathbb R^n).
        \]
        Hence, $\tilde T$ is an invariant sublinear expectation of $\{T_t\}_{t\geq 0}$ in $\mathcal{T}$.

        If there is another such a sublinear expectation $\tilde T'$,
        according to \eqref{eq:contract of solution} and \eqref{0617-1}, we know that for any $\phi\in C_{lip}(\mathbb R^n)$ with Lipschitz constant $l_{\phi}$,
        \begin{equation}\label{0617-2}
            |T_t\phi(x)-\tilde T[\phi]|=\big|\hat{\mathbb E}[\phi(X_0^{-t,x})]-\hat{\mathbb E}[\phi(\xi_0)]\big|\leq l_{\phi}\hat{\mathbb E}[|X_0^{-t,x}-\xi_0|]\leq C_{\alpha}l_{\phi}(1+|x|)e^{-\alpha t}.
        \end{equation}
        Then for any $\phi\in C_{lip}(\mathbb R^n)$,
        \[
        |\tilde T'[\phi]-\tilde T[\phi]|=|\tilde T'[T_t\phi]-\tilde T[\phi]|\leq \tilde T'[|T_t\phi-\tilde T[\phi]|]\leq C_{\alpha}l_{\phi}(1+\tilde T'[|\cdot|])e^{-\alpha t}.
        \]
        We derive $\tilde T'[\phi]=\tilde T[\phi]$ by letting $t\to \infty$.
    \end{proof}

    Let  $(\Omega, Lip(\Omega), (U_t)_{t\geq 0}, \hat{\mathbb{E}}^{\tilde{T}})$ and $(\Omega, L_G^p(\Omega), (U_t)_{t\geq 0}, \hat{\mathbb{E}}^{\tilde{T}})$ be sublinear expectation systems generated by the sublinear Markovian system $\{T_t\}_{t\geq 0}$ defined by \eqref{def: T_t by G-SDE} and its invariant sublinear expectation $\tilde{T}$ as in Section \ref{Sec 6.1}. We have the following result.

	\begin{theorem}\label{Thm:mixing of G-SDE}
		Suppose Assumption \ref{Assumption G-SDE} holds. Then for any $m\geq 1$, the $m$-dimensional sublinear expectation system $(\Omega, (Lip(\Omega))^m, (U_t)_{t\geq 0}, \hat{\mathbb{E}}^{\tilde{T}})$ (resp. $(\Omega, (L_G^p(\Omega))^m, (U_t)_{t\geq 0}, \hat{\mathbb{E}}^{\tilde{T}})$ for each $p\geq 1$) is $\alpha$-mixing (resp. mixing), continuous and regular.
	\end{theorem}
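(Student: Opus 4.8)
The plan is to obtain the $\alpha$-mixing and mixing assertions as a direct application of Theorem \ref{Thm: alpha-mixing system by T_t} to the sublinear Markovian system $\{T_t\}_{t\geq 0}$ defined in \eqref{def: T_t by G-SDE}, equipped with the invariant sublinear expectation $\tilde T$ produced by Theorem \ref{Thm:stationary solution}, and then to treat continuity and regularity of the generated system separately. Thus the first task is to verify hypotheses (i) and (ii) of Theorem \ref{Thm: alpha-mixing system by T_t} with $\mathbb X=\mathbb R^n$, $d$ the Euclidean distance, and $x_0=0$.

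For (i), fix $\phi\in C_{lip}(\mathbb R^n)$ with Lipschitz constant $l_\phi$ and combine monotonicity and subadditivity of $\hat{\mathbb E}$ with the contraction estimate \eqref{eq:contract of solution}:
\[
|T_t\phi(x)-T_t\phi(y)|\leq \hat{\mathbb{E}}\big[|\phi(X_t^{0,x})-\phi(X_t^{0,y})|\big]\leq l_\phi\big(\hat{\mathbb{E}}[|X_t^{0,x}-X_t^{0,y}|^2]\big)^{1/2}\leq l_\phi e^{-\alpha t}|x-y|,
\]
which gives (i) at once (and also shows $T_t$ maps $C_{lip}(\mathbb R^n)$ into itself). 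For (ii), I would use the time-homogeneity of \eqref{G-SDE}, so that $T_t\phi(x)=\hat{\mathbb E}[\phi(X_t^{0,x})]=\hat{\mathbb E}[\phi(X_0^{-t,x})]$, together with $\tilde T[\phi]=\hat{\mathbb E}[\phi(\xi_0)]$ from \eqref{eq:construction of ise}; then the estimate \eqref{0617-2} already derived in the proof of Theorem \ref{Thm:stationary solution} yields
\[
|T_t\phi(x)-\tilde T[\phi]|=\big|\hat{\mathbb{E}}[\phi(X_0^{-t,x})]-\hat{\mathbb{E}}[\phi(\xi_0)]\big|\leq C_\alpha l_\phi(1+|x|)e^{-\alpha t},
\]
which is exactly (ii) with $c=C_\alpha$. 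With (i) and (ii) in hand, Theorem \ref{Thm: alpha-mixing system by T_t} gives that $(\Omega,(Lip(\Omega))^m,(U_t)_{t\geq 0},\hat{\mathbb E}^{\tilde T})$ is $\alpha$-mixing and that $(\Omega,(L_G^p(\Omega))^m,(U_t)_{t\geq 0},\hat{\mathbb E}^{\tilde T})$ is mixing, for every $m\geq 1$ and $p\geq 1$.

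For continuity, i.e. \eqref{eq:continuous}, I would first check it on the dense subspace $Lip(\Omega)$: for $X=\phi(\omega_{t_1},\dots,\omega_{t_n})$, the consistency of $\hat{\mathbb E}^{\tilde T}$ with the finite-dimensional distributions of the stationary solution gives
\[
\hat{\mathbb E}^{\tilde T}[|U_tX-X|]\leq l_\phi\sum_{i=1}^n\hat{\mathbb E}[|\xi_{t+t_i}-\xi_{t_i}|]\leq l_\phi\sum_{i=1}^n\big(\hat{\mathbb E}[|\xi_{t+t_i}-\xi_{t_i}|^4]\big)^{1/4},
\]
and the modulus estimate \eqref{eq:0617n} forces the right-hand side to $0$ as $|t|\to0$. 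Since each $U_t$ preserves $\hat{\mathbb E}^{\tilde T}$ and is therefore $\|\cdot\|_1$-contractive, a standard $3\varepsilon$-approximation extends continuity from $Lip(\Omega)$ to all of $L_G^p(\Omega)$, exactly as in the continuity lemma for $G$-Brownian motion proved earlier.

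The main obstacle is regularity of $(\Omega,(L_G^p(\Omega))^m,\hat{\mathbb E}^{\tilde T})$, which I plan to transfer from the $G$-Brownian path space, where it is available from Proposition \ref{prop:Gregular}. Concretely, I would realize the stationary solution as a measurable functional $\Phi:\hat\Omega\to\Omega=C(\mathbb R;\mathbb R^n)$, $\Phi(\hat\omega)=\xi_\cdot(\hat\omega)$, built as the $L_G^2$-limit $\xi_t=\lim_{s\to-\infty}X_t^{s,0}$ from the proof of Theorem \ref{Thm:stationary solution}, and identify $\hat{\mathbb E}^{\tilde T}[X]=\hat{\mathbb E}[X\circ\Phi]$ first on the cylinder functions $Lip(\Omega)$ and then, by isometry of $\|\cdot\|_p$, on all of $L_G^p(\Omega)$. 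Granting this, if $X_i\downarrow0$ pointwise on $\Omega$ then $X_i\circ\Phi\downarrow0$ pointwise on $\hat\Omega$, so the regularity of $(\hat\Omega,L_G^p(\hat\Omega),\hat{\mathbb E})$ gives $\hat{\mathbb E}^{\tilde T}[X_i]=\hat{\mathbb E}[X_i\circ\Phi]\to0$, with Lemma \ref{lem:extend-sub-expectation} upgrading regularity to the enlarged lattice if needed. An equivalent route is to push the weakly compact representing family of $\hat{\mathbb E}$ (Lemma \ref{lem:regualr<=>compact}) forward by $\Phi$ and to prove the resulting family on $C(\mathbb R;\mathbb R^n)$ is tight, hence weakly compact, using the uniform moment bound \eqref{eq:p-bdd solution} and the Kolmogorov-type estimate \eqref{eq:0617n}. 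Either way the genuinely delicate point, which I expect to require the most care, is justifying the measurability of $\Phi$ and the identity $\hat{\mathbb E}^{\tilde T}[\,\cdot\,]=\hat{\mathbb E}[\,\cdot\circ\Phi]$ beyond cylinder functionals, since $\Phi$ is defined through a limit over the infinite past rather than as a continuous map of finitely many increments.
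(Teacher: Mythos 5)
Your treatment of the $\alpha$-mixing/mixing and continuity assertions coincides with the paper's proof: the paper verifies hypotheses (i) and (ii) of Theorem \ref{Thm: alpha-mixing system by T_t} exactly as you do, using the contraction estimate \eqref{eq:contract of solution} (the paper bounds $\hat{\mathbb{E}}[|X_t^{0,x}-X_t^{0,y}|]\leq |x-y|$, you use the Cauchy--Schwarz variant with the decay factor; both suffice) together with the convergence estimate \eqref{0617-2}, and it proves continuity first on $Lip(\Omega)$ via \eqref{eq:0617n} and then extends by the same density argument you describe.

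For regularity, however, your primary route has a genuine gap, which you only partly acknowledge. The map $\Phi=\xi_\cdot$ is not an everywhere-defined measurable functional on $\hat\Omega$: each $\xi_t$ is an element of the completion $L_G^2(\hat\Omega_t)$, hence defined only quasi-surely, and the continuous modification produced in Theorem \ref{Thm:stationary solution} is likewise fixed only outside a polar set. Consequently the identity $\hat{\mathbb{E}}^{\tilde{T}}[X]=\hat{\mathbb{E}}[X\circ\Phi]$ holds on cylinder functions and extends to $L_G^p(\Omega)$ only as an abstract isometric extension, not as literal pointwise composition; and even granting that identification, a sequence $X_i\downarrow 0$ pointwise on $\Omega$ yields $X_i\circ\Phi\downarrow 0$ only quasi-surely on $\hat\Omega$, which is not the hypothesis of Definition \ref{def:regular}. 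Closing this would require the downward monotone convergence theorem for $L_G^1$-elements under quasi-sure convergence, a consequence of weak compactness rather than of Definition-level regularity, so you would effectively be re-proving the compactness facts anyway. The paper avoids all of this by taking exactly your ``equivalent route'': it pushes the dominated family $\hat{\mathcal{P}}$ forward by $\xi_\cdot$ to get $\mathcal{P}_1$ on $C(\mathbb{R};\mathbb{R}^n)$, proves tightness via \eqref{eq:p-bdd solution}, \eqref{eq:0617n} and the Kolmogorov criterion (Lemma \ref{lem:Kolmogorov criterion}), takes the weak closure to obtain a weakly compact representing family, and then gets regularity on $Lip_b(\Omega)$ from Lemma \ref{lem:regualr<=>compact}, on $Lip(\Omega)$ from Lemma \ref{lem:extend-sub-expectation}, and on the completion $L_G^1(\Omega)$ from \cite[Theorem 6.1.35]{Pengbook}. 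So your plan is sound provided you execute the tightness/weak-compactness route and discard the composition route, or else supply the quasi-continuity and quasi-sure monotone-convergence machinery that the composition route silently requires.
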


	\begin{proof}
        \textbf{The $\alpha$-mixing (mixing):} Note that for any $\phi\in C_{lip}(\mathbb R^n)$ with Lipschitz constant $l_{\phi}$, we have
        \begin{equation*}
            |T_t\phi(x)-T_t\phi(y)|\leq l_{\phi}\hat{\mathbb E}[|X_t^{0,x}-X_t^{0,y}|]\leq l_{\phi}|x-y|, \ \text{ for all } x,y\in \mathbb R^n,
        \end{equation*}
        which together with  \eqref{0617-2}, implies the conditions (i) and  (ii) in Theorem \ref{Thm: alpha-mixing system by T_t} hold. Thus the system $(\Omega, (Lip(\Omega))^m, (U_t)_{t\geq 0}, \hat{\mathbb{E}}^{\tilde{T}})$ (resp. $(\Omega, (L_G^p(\Omega))^m, (U_t)_{t\geq 0}, \hat{\mathbb{E}}^{\tilde{T}})$ for each $p\geq 1$) is $\alpha$-mixing (resp. mixing).
        
		\noindent\textbf{Continuity:} We only  show that $(\Omega, (L_G^1(\Omega))^m, (U_t)_{t\geq 0}, \hat{\mathbb{E}}^{\tilde{T}})$ is continuous, as $Lip(\O), L_G^p(\O)\subset L_G^1(\O)$, $p\ge 2$, and moreover, we only consider the case for $m=1$. Fix $X\in L_G^1(\Omega)$. For any $\epsilon>0$, there exists $\eta\in Lip(\Omega)$ with 
        \[
        \eta(\omega)=\phi(w_{t_1},\cdots,\omega_{t_k}), \ \text{ for some } \ k\geq 1, -\infty<t_1<\cdots<t_k<\infty,\ \phi\in C_{lip}(\mathbb R^{n\times k}),
        \]
        such that $\hat{\mathbb{E}}^{\tilde{T}}[|X-\eta|]<\epsilon$.
        By the definition of $\hat{\mathbb E}^{\tilde T}$, we have
        \begin{equation}\label{0617-3}
           \hat{\mathbb E}^{\tilde T}[\eta]=\hat{\mathbb E}[\phi(\xi_{t_1},\cdots,\xi_{t_k})],
        \end{equation}
        where $\{\xi_t\}_{t\in \mathbb R}$ is the unique continuous stationary solution in Theorem \ref{Thm:stationary solution}. 
        Then by \eqref{eq:0617n},
        \begin{equation*}
            \begin{split}
                \hat{\mathbb E}^{\tilde T}[|U_t\eta-\eta|]&=\hat{\mathbb E}[|\phi(\xi_{t+t_1},\cdots,\xi_{t+t_k})-\phi(\xi_{t_1},\cdots,\xi_{t_k})|]\\
                &\leq l_{\phi}\sum_{i=1}^k\hat{\mathbb E}[|\xi_{t+t_i}-\xi_{t_i}|]\\
                &\leq l_{\phi}\sum_{i=1}^k(\hat{\mathbb E}[|\xi_{t+t_i}-\xi_{t_i}|^4])^{\frac{1}{4}}\\
                &\leq C_{l_{\phi},k,d,\alpha,\kappa}(t+t^{\frac{1}{2}}).
            \end{split}
        \end{equation*}
        Hence
        \[
        \limsup_{t\to 0}\hat{\mathbb E}^{\tilde T}[|U_tX-X|]\leq 2\epsilon+\limsup_{t\to 0}\hat{\mathbb E}^{\tilde T}[|U_t\eta-\eta|]=2\epsilon.
        \]
        As $\epsilon>0$ is arbitrary, so
        \[
        \lim_{t\to 0}\hat{\mathbb E}^{\tilde T}[|U_tX-X|]=0.
        \]

        To prove the regularity, we need the following Kolmogorov's criterion, which is similar to \cite[Theorem (1.8) in Page 517]{Revuz-Yor1999} (see also \cite[Theorem B.2.9]{Pengbook})
    \begin{lemma}\label{lem:Kolmogorov criterion}
    Let $\{(X^n_{t})_{t\in \mathbb R}\}_{n\geq 1}$ be a sequence of $\mathbb R^d$-valued continuous processes defined on  probability spaces $(\Omega^n,\mathcal{F}^n,P^n)$ such that
    \begin{itemize}
        \item the family $\{P^n\circ (X^n_0)^{-1}\}_{n\geq 1}$ is tight \footnote{A family $\{P_i\}_{i\in I}$ of probabilities on $\mathbb{R}^d$ is said to be tight if for any $\epsilon>0$, there exists a compact subset $K$ of $\mathbb{R}^d$ such that $\sup_{i\in I}P_i(K^c)<\epsilon$.} on $\mathbb R^d$;
        \item there exist $C>0,\alpha>0,\beta>0$ such that for any $t,s\in \mathbb R$ and any $n\geq 1$
        \[
        \mathbb E_{P^n}[|X_t^n-X_s^n|^{\alpha}]\leq C|t-s|^{1+\beta}.
        \]
    \end{itemize}
    Then the family $\{P^n\circ (X^n)^{-1}\}_{n\geq 1}$ is a   relatively compact subset of the space of probability measures on $C(\mathbb{R},\mathbb{R}^d)$ with respect to the weak convergence topology.
    \end{lemma}

    Now we back to the proof of the regularity in Theorem \ref{Thm:mixing of G-SDE}.
    
    \noindent\textbf{Regularity:} We first show that $(\Omega, Lip(\Omega), \hat{\mathbb{E}}^{\tilde{T}})$ is regular. Note that the $G$-expectation space $(\hat{\Omega},L_G^1(\hat{\Omega}),\hat{\mathbb E})$ is regular. Let $\hat{\mathcal{F}}$ be the $\sigma$-algebra generated by $L_G^1(\hat{\Omega})$ and denote by $\hat{\mathcal{P}}$ the collection of all probabilities $\hat{P}$ on $(\hat{\Omega},\hat{\mathcal{F}})$ such that the linear expectation $\mathbb{E}_{\hat{P}}$ is dominated by $\hat{\mathbb E}$. By \eqref{0617-3}, we know that
        \begin{equation*}
            \hat{\mathbb{E}}^{\tilde{T}}[X]=\sup_{P\in \mathcal{P}_1}\mathbb E_P[X], \ \text{ for all } \ X\in Lip(\Omega),
        \end{equation*}
        where 
        \[
        \mathcal{P}_1:=\{\hat{P}\circ \xi_{\cdot}^{-1}: \hat{P}\in \hat{\mathcal{P}}\}.
        \]
        Note that
        \[
        \sup_{\hat{P}\in \hat{\mathcal{P}}}\mathbb E_{\hat{P}\circ \xi_{0}^{-1}}[|\cdot|]=\hat{\mathbb E}[|\xi_0|]=\tilde T[|\cdot|]<\infty.
        \]
        Hence, $\{\hat{P}\circ \xi_{0}^{-1}: \hat{P}\in \hat{\mathcal{P}}\}$ is tight. Then according to \eqref{eq:0617n} and Lemma \ref{lem:Kolmogorov criterion}, we know that $\mathcal{P}_1$ is weakly relatively compact. Let $\mathcal{P}=\overline{\mathcal{P}_1}$ the closure of  $\mathcal{P}_1$ under the topology of weak convergence. Then $\mathcal{P}$ is weakly compact.   Let 
        \[
        Lip_b(\Omega):=\{\psi(\xi): \text{for all } \psi\in C_{b,lip}(\mathbb{R}), \ \xi\in Lip(\Omega)\}.
        \]
        Then it is easy to see that $Lip_b(\Omega)\subset Lip(\Omega)\cap C_b(\Omega)$. By Lemma \ref{lem:regualr<=>compact}, we know that $(\Omega, Lip_b(\Omega), \hat{\mathbb{E}}^{\tilde{T}})$ is regular. Then we conclude from Lemma \ref{lem:extend-sub-expectation} that $(\Omega, Lip(\Omega), \hat{\mathbb{E}}^{\tilde{T}})$ is regular.

        Now we show that $(\Omega, L_G^1(\Omega), \hat{\mathbb{E}}^{\tilde{T}})$ is regular. For any $X\in Lip(\Omega)$, we know that $|X-X_n|\downarrow 0$ as $n\to \infty$ with $X_n=(X\wedge n)\vee (-n)\in Lip_b(\Omega)$ and $|X-X_n|\in Lip(\Omega)$. The regularity of $(\Omega, Lip(\Omega), \hat{\mathbb{E}}^{\tilde{T}})$ gives $\hat{\mathbb{E}}^{\tilde{T}}[|X-X_n|]\downarrow 0$. Hence, $L_G^1(\Omega)$ is also the completion of $Lip_b(\Omega)$ under $\hat{\mathbb{E}}^{\tilde{T}}[|\cdot|]$. Then \cite[Theorem 6.1.35]{Pengbook} entails that $(\Omega, L_G^1(\Omega), \hat{\mathbb{E}}^{\tilde{T}})$ is regular.
	\end{proof}

 According to the Theorem \ref{Thm:mixing of G-SDE}, we know that $(\Omega, L_G^1(\Omega), \hat{\mathbb{E}}^{\tilde{T}})$ is regular. Let $\Omega:=C(\mathbb{R};\mathbb{R}^n)$, $\mathcal{F}:=\sigma(L_G^1(\Omega))=\mathcal{B}(\Omega)$ and
 \begin{equation}\label{eq:def-P-GSDE}
     \mathcal{P}:=\{\text{all probabilities $P$ on $(\Omega,\mathcal{F})$ such that $\mathbb{E}_p$ is dominated by $\hat{\mathbb{E}}^{\tilde{T}}$}\}.
 \end{equation}
 Recall $\theta_t:\Omega\to \Omega$ is defined by $\theta_t\omega (s)=\omega(t+s)$ for all $\omega\in \Omega, t,s\in \mathbb{R}$.
 The following result indicates that there are  $\theta$-mixing probabilities in $\mathcal{P}$.

 \begin{theorem}\label{thm: infinitely many ergodic probablities in P}
     Suppose Assumption \ref{Assumption G-SDE} holds and $\mathcal{P}$ is given as in \eqref{eq:def-P-GSDE}. Then there is at least one $\theta$-mixing probability in $\mathcal{P}$.

    If the dimensions $n=d=1$ in \eqref{G-SDE} and the set $Q$ from \eqref{Set for G-BM} can be written as $Q=[\underline{\sigma}^2, \overline{\sigma}^2]$. Assume $0<\underline{\sigma}^2<\overline{\sigma}^2$, $b(x)\neq 0$ for some $x\in \mathbb{R}$ and $\inf_{x\in \mathbb{R}}\sigma^2(x)>0$. Then there are uncountably many $\theta$-mixing probabilities in $\mathcal{P}$.
 \end{theorem}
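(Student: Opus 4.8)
The plan is to produce the required mixing measures as the laws of the stationary solution under the \emph{constant} extremal scenarios $P_q$ of the $G$-Brownian motion. Fix $q\in Q$ and recall from Theorem~\ref{thm:uncountable many mixing measure} that under $P_q$ the canonical process $B$ is a genuine Brownian motion with deterministic covariance $\langle B^i,B^j\rangle_t=q_{ij}t$. Let $\{\xi_t\}_{t\in\mathbb R}$ be the unique continuous stationary solution of \eqref{G-SDE} from Theorem~\ref{Thm:stationary solution}; since $\xi_\cdot$ is a continuous $\mathbb R^n$-valued process it induces a measurable map $\hat\Omega\to\Omega=C(\mathbb R;\mathbb R^n)$, and I set $P^q:=P_q\circ\xi_\cdot^{-1}$. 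First I would check $P^q\in\mathcal P$: for a cylinder $X=\phi(\omega_{t_1},\dots,\omega_{t_n})\in Lip(\Omega)$ one has $\mathbb E_{P^q}[X]=\mathbb E_{P_q}[\phi(\xi_{t_1},\dots,\xi_{t_n})]\le\hat{\mathbb E}[\phi(\xi_{t_1},\dots,\xi_{t_n})]=\hat{\mathbb E}^{\tilde T}[X]$, because $\mathbb E_{P_q}$ is dominated by the $G$-expectation $\hat{\mathbb E}$ and by the very definition of $\hat{\mathbb E}^{\tilde T}$ (see \eqref{0617-3}); domination then extends to $L^1_G(\Omega)$ by continuity in the $\hat{\mathbb E}^{\tilde T}[|\cdot|]$-norm. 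The stationarity of $\xi$ gives $(\theta_t)_*P^q=P^q$, so $P^q$ is $\theta$-invariant.

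The heart of the argument is that each $P^q$ is strongly $\theta$-mixing. Under $P_q$ the process $\{\xi_t\}$ is a time-homogeneous stationary Markov process with transition semigroup $T^q_t\phi(x)=\mathbb E_{P_q}[\phi(X^{0,x}_t)]$ and one-dimensional marginal $\mu_q=\mathcal L_{P_q}(\xi_0)$, which is invariant for $T^q_t$. The key analytic input is the contraction \eqref{eq:contract of solution}: together with $\mathbb E_{P_q}\le\hat{\mathbb E}$ it yields $\mathbb E_{P_q}[|X^{0,x}_t-X^{0,y}_t|^2]\le e^{-2\alpha t}|x-y|^2$, whence for every $\phi\in C_{lip}(\mathbb R^n)$
\[
\Big|T^q_t\phi(x)-\int\phi\,d\mu_q\Big|=\Big|\int\big(T^q_t\phi(x)-T^q_t\phi(y)\big)\,d\mu_q(y)\Big|\le l_\phi e^{-\alpha t}\Big(|x|+\int|y|\,d\mu_q(y)\Big).
\]
To deduce mixing it suffices, by a standard density argument, to treat bounded Lipschitz cylinder functions $F=f(\xi_{a_1},\dots,\xi_{a_k})$ and $G=g(\xi_{b_1},\dots,\xi_{b_l})$ with $a_k<b_1$. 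For large $t$ I would condition the future factor on the natural filtration $\mathcal F_{a_k}$ and use the Markov property to write $\mathbb E_{P_q}[g(\xi_{b_1+t},\dots,\xi_{b_l+t})\mid\mathcal F_{a_k}]=(T^q_{b_1+t-a_k}\Phi)(\xi_{a_k})$, where $\Phi(z):=\mathbb E_{P_q}[g(z,X^{b_1,z}_{b_2},\dots,X^{b_1,z}_{b_l})]\in C_{lip}(\mathbb R^n)$ (its Lipschitz constant again controlled by \eqref{eq:contract of solution}) and $\int\Phi\,d\mu_q=\mathbb E_{P^q}[G]$ by invariance and stationarity. The displayed convergence gives $(T^q_{b_1+t-a_k}\Phi)(\xi_{a_k})\to\mathbb E_{P^q}[G]$ pointwise and boundedly, so dominated convergence yields $\mathbb E_{P^q}[F\cdot(G\circ\theta_t)]\to\mathbb E_{P^q}[F]\,\mathbb E_{P^q}[G]$, which is exactly strong mixing. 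This already proves the first assertion for any single $q\in Q$.

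For the second assertion I would specialize to $n=d=1$ and $Q=[\underline\sigma^2,\overline\sigma^2]$, and obtain uncountably many mixing measures by letting $q$ range over $Q$ and showing $q\mapsto P^q$ is injective. Under $P_q$ one has $\langle B\rangle_t=qt$, so $\xi$ is the stationary, uniformly elliptic (since $\inf_x\sigma^2(x)>0$) one-dimensional diffusion
\[
dX_t=\big(b(X_t)+q\,h(X_t)\big)\,dt+\sqrt q\,\sigma(X_t)\,dW_t,
\]
whose invariant density, by the classical scale/speed formula, is proportional to $\sigma(x)^{-2}\exp\!\big(\tfrac2q B(x)+2H(x)\big)$ with $B(x)=\int_0^x b/\sigma^2$ and $H(x)=\int_0^x h/\sigma^2$; the dissipativity \eqref{eq:dissipative assump} guarantees this is a genuine probability of full support. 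If $\mu_{q_1}=\mu_{q_2}$ for $q_1\ne q_2$, the ratio of the two densities is constant in $x$, forcing $(\tfrac1{q_1}-\tfrac1{q_2})B(x)$ to be constant, i.e.\ $b\equiv0$, contradicting the hypothesis $b(x)\ne0$ for some $x$. Hence the marginals $\mu_q$, and therefore the measures $P^q$, are pairwise distinct; since $[\underline\sigma^2,\overline\sigma^2]$ is uncountable, this furnishes uncountably many $\theta$-mixing probabilities in $\mathcal P$. I expect the main obstacle to be the mixing step: one must carefully justify that $\xi$ is Markov under $P_q$ with $\mu_q$ invariant, propagate the contraction estimate to bound the Lipschitz constants of the auxiliary functions $\Phi$, and reduce general cylinder correlations to the two-time transition estimate. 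The injectivity step is then a classical scale/speed computation whose only delicate points, normalizability and full support of $\mu_q$, both follow from \eqref{eq:dissipative assump} and $\inf_x\sigma^2(x)>0$.
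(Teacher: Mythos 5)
Your proposal is correct and follows the same core strategy as the paper: for each constant $q\in Q$ you pass to the Brownian scenario $P_q$ from Theorem \ref{thm:uncountable many mixing measure}, under which \eqref{G-SDE} becomes a strongly dissipative classical SDE; you take the law of the resulting stationary diffusion as the candidate element of $\mathcal{P}$; and in dimension one you separate the measures for different $q$ via the explicit Fokker--Planck density $\frac{1}{Z_q}\sigma^{-2}(x)\exp\{2\int_0^x (b+qh)/(q\sigma^2)\}$, using $b\not\equiv 0$ exactly as the paper does. The differences are in packaging, and they are worth noting. First, the paper builds its candidate measure $\mathbb{P}^{\mu_q}$ abstractly on cylinder functions from the linear Markov semigroup $P^q_t$ and its unique invariant measure $\mu_q$, and verifies domination via the pointwise comparison $P^q_t\phi\le T_t\phi$ together with the long-time convergence $P^q_t\phi(x)\to\int\phi\,d\mu_q$ cited from Hu--Li--Wang--Zheng; you instead realize the same measure as the pushforward $P_q\circ\xi_\cdot^{-1}$ of the stationary solution and get domination in one line from $\mathbb{E}_{P_q}\le\hat{\mathbb{E}}$ and the identity $\hat{\mathbb{E}}^{\tilde T}[\phi(\omega_{t_1},\dots,\omega_{t_k})]=\hat{\mathbb{E}}[\phi(\xi_{t_1},\dots,\xi_{t_k})]$ (the paper's \eqref{new0617-1}/\eqref{0617-3}), which is cleaner and avoids the external citation. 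Second, where the paper simply invokes the well-known fact that strongly dissipative SDEs have a unique \emph{mixing} invariant measure and asserts that the stationary path-space law is then $\theta$-mixing, you prove strong mixing directly: the contraction \eqref{eq:contract of solution} transferred under $P_q$ gives $|T^q_t\phi(x)-\int\phi\,d\mu_q|\le l_\phi e^{-\alpha t}(|x|+\int|y|\,d\mu_q)$, and conditioning cylinder correlations through the Markov property plus dominated convergence yields mixing. This makes your argument more self-contained, at the cost of having to justify the standard but nontrivial facts that $\xi$ is Markov under $P_q$, that $\mathcal{L}_{P_q}(\xi_0)$ coincides with $\mu_q$ by uniqueness, and that bounded Lipschitz cylinder functions are dense in $L^2(P^q)$; none of these is a gap, since they follow from adaptedness of the stationary solution, independence of Brownian increments, and strong dissipativity.
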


 \begin{remark}
     For general dimension $n\geq 2$, similar to the proof of 1-dimensional case, we can also prove there are many equations, whose $\mathcal{P}$  has uncountably many $\theta$-mixing probabilities. For example,
     the following gradient systems generated by $G$-Brownian motion have uncountably many $\theta$-mixing probabilities in $\mathcal{P}$:
     \[
     d X_t=-\nabla V(X_t)d t+\sigma(X_t)d B_t,
     \]
     where $Q$ is not a singleton and contains a positive definite matrix, and $\s$ is uniformly non-degenerate.
 \end{remark}

 \begin{proof}[Proof of Theorem \ref{thm: infinitely many ergodic probablities in P}]
     According to Theorem \ref{thm:uncountable many mixing measure}, for any $q\in Q$, we know that there exists a probability $\hat{P}_q$ on $(\hat{\Omega},\mathcal{B}(\hat{\Omega}))$ such that $\{B_t\}_{t\geq 0}$ is a Brownian motion under $\hat{P}_q$ with the following quadratic variance process $\langle B\rangle_t=qt$, $\hat{P}_q$-a.s. Hence, equation \eqref{G-SDE} is an SDE under $\hat{P}_q$ which can be written as the following form
     \begin{equation}\label{eq:SDE}
		X_t^{s,\xi}=\xi+\int_s^t\Big(b(X_r^{s,\xi})+Tr\big([h_{ij}(X_r^{s,\xi})]_{i,j=1}^dq\big)\Big)dr+\int_s^t\sigma(X_r^{s,\xi})dB_t, \ t\geq s\geq 0.
	\end{equation}
    Moreover, SDE \eqref{eq:SDE} is strongly dissipative since we deduce from \eqref{eq:dissipative assump} that for any $x,y\in \mathbb{R}^n$,
    \begin{equation}\label{eq:str-dissip}
        \langle x-y, b(x)+ Tr([h_{ij}(x)-h_{ij}(y)]_{i,j=1}^dq)\rangle +\frac{1}{2} Tr\big((\sigma(x)-\sigma(y))^{\top}(\sigma(x)-\sigma(y))q\big)\leq -\alpha |x-y|^2.
    \end{equation}
    Under the strongly dissipative condition \eqref{eq:str-dissip}, it is well-known that SDE \eqref{eq:SDE} has a unique mixing probability measure $\mu_q$ of the (linear) Markov transition semigroup $P^q_t: C_{lip}(\mathbb{R}^n)\to C_{lip}(\mathbb{R}^n)$ which is defined by
    \[
    P^q_t\phi (x):=\mathbb{E}_{\hat{P}_q}[\phi(X_t^{0,x})], \ t\geq 0.
    \]
    Then $\mathbb{P}^{\mu_q}$ is a $\theta$-mixing probability on $(\Omega,\mathcal{F})$ which is given by the unique  linear expectation $\mathbb{E}_{\mathbb{P}^{\mu_q}}$ in the following way: for any fixed $\xi(\omega)=\phi(\omega_{t_1},\cdots,\omega_{t_n})\in Lip(\Omega)$ with $t_1<\cdots<t_n$, 
    \begin{equation*}
		\mathbb{E}_{\mathbb{P}^{\mu_q}}[\xi]=\int_{\mathbb{R}^n}\phi_{n-1}(x_1)d\mu^q(x_1), \ \text{ for any } \ \xi(\omega)=\phi(\omega_{t_1},\cdots,\omega_{t_n})\in Lip(\Omega),
	\end{equation*}
    where $\{\phi_{k}\}_{0\leq k\leq n-1}$ is defined recursively by 
	\begin{equation*}
		\phi_0:=\phi, \ \ \phi_k(x_1,\cdots,x_{n-k}):=P^q_{t_{n-k+1}-t_{n-k}}[\phi_{k-1}(x_1,\cdots,x_{n-k},\cdot)](x_{n-k}), \ \ 1\leq k\leq n-1.
	\end{equation*}
    Note that $P^q_t\phi(x)\leq T_t\phi(x)$ for all $\phi\in C_{lip}(\mathbb{R}^n)$, $x\in \mathbb{R}^n$ and $t\geq 0$. Then by \cite[Theorem 3.3]{Hu-Li-Wang-Zheng2015},
    \[
    \int_{\mathbb{R}^n}\phi(y)d\mu^q(y)=\lim_{t\to \infty}P^q_t\phi(x)\leq \lim_{t\to \infty}T_t\phi(x)=\tilde{T}[\phi], \ \text{ for all } \ x\in \mathbb{R}^n.
    \]
    Thus, the construction of $\hat{\mathbb{E}}^{\tilde{T}}$ implies that $\mathbb{E}_{\mathbb{P}^{\mu_q}}[\xi]\leq \hat{\mathbb{E}}^{\tilde{T}}[\xi]$ for all $\xi\in Lip(\Omega)$. Hence, the $\theta$-mixing probability $\mathbb{P}^{\mu_q}$ is in $\mathcal{P}$.

    In the $1$-dimensional case, since $Q=[\underline{\sigma}^2,\overline{\sigma}^2]$ has uncountable elements, it is enough to show that $\mathbb{P}^{\mu_{q_1}}\neq \mathbb{P}^{\mu_{q_2}}$ for all $q_1,q_2\in [\underline{\sigma}^2,\overline{\sigma}^2]$ with $q_1\neq q_2$. It suffices to prove $\mu_{q_1}\neq \mu_{q_2}$ for all $q_1,q_2\in [\underline{\sigma}^2,\overline{\sigma}^2]$ with $q_1\neq q_2$. It is well known that for any $q\in [\underline{\sigma}^2,\overline{\sigma}^2]$, the unique invariant measure $\mu_q$ satisfies the following Fokker-Planck equation:
    \[
    \mathcal{L}^*\mu_q=0,
    \]
    where $\mathcal{L}^*$ is the dual operator of the following $\mathcal{L}$:
    \[
    \mathcal{L}\varphi(x)=\frac{1}{2}\sigma^2(x)q\partial^2_x \varphi(x)+\big(b(x)+h(x)q\big)\partial_x\varphi(x).
    \]
    By a simple calculation, $\mu_q$ has the following density
    \begin{equation*}
        \mu_q(dx)=\frac{1}{Z_q}\frac{1}{\sigma^2(x)}\exp\bigg\{2\int_0^x\frac{b(y)+h(y)q}{\sigma^2(y)q}d y\bigg\}dx,
    \end{equation*}
    where
    \begin{equation*}
        Z_q=\int_{\mathbb{R}}\frac{1}{\sigma^2(x)}\exp\bigg\{2\int_0^x\frac{b(y)+h(y)q}{\sigma^2(y)q}d y\bigg\}dx.
    \end{equation*}
   Since the term $b$ is not a zero function, by a simple calculation, we conclude that $\mu_{q_1}\neq \mu_{q_2}$ for all $q_1,q_2\in [\underline{\sigma}^2,\overline{\sigma}^2]$ with $q_1\neq q_2$. This shows that there are uncountable $\theta$-mixing probabilities $\{\mathbb{P}^{\mu_q}\}_{q\in [\underline{\sigma}^2,\overline{\sigma}^2]}$ in $\mathcal{P}$.
 \end{proof}

\subsection{Systems of $\a$-mixing but not independent generated by $G$-SDEs}

In this subsection, we prove that there exists a class of $\a$-mixing process generated by the above equations  that are not i.i.d.

    According to Theorem \ref{Thm:mixing of G-SDE}, we know that the canonical process  $\tilde \xi_t(\omega):=\omega_t$ for all $t\in \mathbb R$ on $(\Omega, Lip(\Omega), \hat{\mathbb E}^{\tilde T})$ is an $\alpha$-mixing process and $\{\tilde \xi_t\}_{t\in \mathbb R}$ and the stationary solution $\{\xi_t\}_{t\in \mathbb R}$ has the same distribution, i.e., for any $t_1<\cdots<t_k$,
    \begin{equation}\label{new0617-1}
        \hat{\mathbb E}^{\tilde T}[\phi(\tilde \xi_{t_1},\cdots, \tilde \xi_{t_k})]=\hat{\mathbb E}[\phi(\xi_{t_1},\cdots, \xi_{t_k})], \ \text{ for all } \ \phi\in C_{lip}(\mathbb R^{n\times k}).
    \end{equation}
    Since $\{\xi_t\}_{t\in \mathbb R}\in \mathcal{M}$, then \eqref{new0617-1} holds for all $\phi\in C_{l,lip}(\mathbb R^{n\times k})$. The following theorem demonstrates that there is a class of $\alpha$-mixing processes arising from the aforementioned equations that do not exhibit independence. 
    
    \begin{theorem}\label{Thm:not independent}
        Suppose Assumption \ref{Assumption G-SDE}  and the following statments hold:
        \begin{itemize}
            \item $b,h_{ij}$ are Lipschitz continuous;
            \item there is a positive definite $q\in Q$ where $Q$ is given in \eqref{Set for G-BM};
            \item $\sigma$ is uniformly nondegenerate, i.e., there exists  a positive definite matrix $\Sigma\in \mathbb S_d$ such that
        \[
        \sigma^{\top}(x)\sigma(x)\geq \Sigma, \text{ for all } x\in \mathbb R^n.
        \]
        \end{itemize}
        Then for any $t>s$, $\tilde \xi_t$ is NOT independent from $\tilde \xi_s$.
    \end{theorem}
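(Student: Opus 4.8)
The plan is to argue by contradiction. Fix $t>s$, write $\tau:=t-s>0$, and suppose $\tilde\xi_t$ is independent from $\tilde\xi_s$ in the sense of Definition \ref{def:independent}. First I would feed product test functions $\phi(u,v)=\psi(u)\chi(v)$, with $\psi,\chi\ge 0$ in $C_{b,lip}(\mathbb R^n)$, into the independence identity. Using \eqref{new0617-1} to pass from $\hat{\mathbb E}^{\tilde T}$ to the stationary solution, the stationarity $\xi_s\overset{d}{=}\xi_0$, and the flow/Markov structure $\xi_t=X_t^{s,\xi_s}$ together with the independence of the post-$s$ solution map from $\xi_s$ (independence of future $G$-Brownian increments from $\hat\Omega_s$), the left-hand side becomes $\tilde T[\psi\cdot T_\tau\chi]$ after pulling the nonnegative factor $\psi(x)$ out of the conditional expectation, while the right-hand side becomes $\tilde T[\psi]\,\tilde T[\chi]$. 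Since $\tilde T$ is $T_\tau$-invariant, $\tilde T[T_\tau\chi]=\tilde T[\chi]$ as well. Thus independence yields
\[
\tilde T[\psi\cdot T_\tau\chi]=\tilde T[\psi]\,\tilde T[\chi]\qquad\text{for all nonnegative }\psi,\chi\in C_{b,lip}(\mathbb R^n).
\]

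Next I would prove a rigidity lemma: if $w\in C_{lip}(\mathbb R^n)$ is bounded and nonnegative with $\tilde T[\psi w]=c\,\tilde T[\psi]$ for all $\psi\ge 0$, where $c:=\tilde T[w]\ge 0$, then $w=c$, $\tilde T$-q.s. Indeed, testing with $\psi=g(w)\in C_{b,lip}(\mathbb R^n)$ for a nonnegative decreasing $g$ vanishing on $[c-\epsilon,\infty)$ gives the pointwise bound $g(w)w\le (c-\epsilon)g(w)$, so $c\,\tilde T[g(w)]=\tilde T[g(w)w]\le (c-\epsilon)\tilde T[g(w)]$, forcing $\tilde T[g(w)]=0$; by regularity this means $w\ge c-\epsilon$ quasi surely, and letting $\epsilon\downarrow 0$ gives $w\ge c$, $\tilde T$-q.s. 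Combined with $\tilde T[w-c]=0$ and $w-c\ge 0$ q.s., this yields $w=c$, $\tilde T$-q.s. Applying the lemma with $w=T_\tau\chi$ (bounded, nonnegative, continuous, with $\tilde T[w]=\tilde T[\chi]$) shows that independence forces $T_\tau\chi$ to be $\tilde T$-q.s. constant for every nonnegative $\chi\in C_{b,lip}(\mathbb R^n)$.

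Finally I would contradict this by exhibiting a nonnegative $\chi\in C_{b,lip}(\mathbb R^n)$ for which the continuous function $x\mapsto T_\tau\chi(x)=\hat{\mathbb E}[\chi(X_\tau^{0,x})]$ is genuinely non-constant, and then noting that the full-support invariant measure $\mu_q$ of the nondegenerate diffusion attached to the positive-definite $q\in Q$ (constructed in the proof of Theorem \ref{thm: infinitely many ergodic probablities in P}, and dominated by $\hat{\mathbb E}^{\tilde T}$) charges the set where $T_\tau\chi$ differs from its alleged constant value; hence $T_\tau\chi$ is not $\tilde T$-q.s. constant, a contradiction. The main obstacle is exactly this non-constancy for every fixed $\tau>0$: since the system is mixing, the dependence of $X_\tau^{0,x}$ on $x$ decays to $0$ as $\tau\to\infty$, so there is no uniform-in-$\tau$ gap to exploit and a soft small-$\tau$ argument is unavailable. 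I would overcome it using hypotheses (ii)--(iii): the positive-definite $q$ and the uniform nondegeneracy $\sigma^\top\sigma\ge\Sigma>0$ make the nonlinear generator of $\{T_\theta\}$ uniformly elliptic, so $u(\theta,x):=T_\theta\chi(x)$ solves a uniformly parabolic fully nonlinear $G$-equation with $u(0,\cdot)=\chi$, and a strong maximum principle together with backward uniqueness for this equation forces $\chi$ to be constant whenever $u(\tau,\cdot)$ is; thus any non-constant $\chi$ produces a non-constant $T_\tau\chi$. The global Lipschitz regularity of $b,h_{ij}$ in hypothesis (i) guarantees the coefficients are regular enough for the interior-regularity and maximum-principle machinery to apply.
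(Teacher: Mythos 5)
Your first half is correct and in fact takes a cleaner route than the paper. Both you and the paper reduce, via the Markov property \eqref{new0617-3}, stationarity, and positive homogeneity (to pull out the nonnegative factor $\psi$), to the factorization $\tilde T[\psi\, T_\tau\chi]=\tilde T[\psi]\,\tilde T[\chi]$ for nonnegative $\psi,\chi\in C_{b,lip}(\mathbb R^n)$, and both conclude from it that $T_\tau\chi$ must be $\tilde T$-q.s.\ equal to the constant $\tilde T[\chi]$. The paper gets there by iterating the identity to obtain $\tilde T[p(T_\tau\chi)]=p(\tilde T[T_\tau\chi])$ for polynomials, then Stone--Weierstrass, then the self-independence rigidity of Remark \ref{rem:X is constant}; your rigidity lemma (testing with $\psi=g(T_\tau\chi)$, $g(r)=(c-\epsilon-r)^+$, then using sub-additivity to upgrade $w\ge c$ q.s.\ and $\tilde T[w-c]=0$ to $w=c$ q.s.) is a valid and more elementary substitute, and all the estimates in it check out.

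The genuine gap is in your final step. To contradict q.s.-constancy you restrict yourself to \emph{bounded} test functions $\chi$ and therefore must show that $T_\tau\chi$ is a non-constant function for some bump $\chi$ at the \emph{fixed} time $\tau=t-s$. Your proposed tool --- ``strong maximum principle together with backward uniqueness'' for the fully nonlinear parabolic $G$-equation --- is asserted, not proved, and is not available off the shelf: the strong maximum principle alone does not do the job (the constant value of $u(\tau,\cdot)$ need not be the extremum of $u$ over the whole cylinder, so constancy at the terminal time cannot be propagated backward by a maximum-principle argument), and backward uniqueness for fully nonlinear, Bellman-type parabolic equations with viscosity-solution regularity is a delicate matter with no standard citation (it is nontrivial even for linear equations with non-smooth coefficients). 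The paper avoids this entirely by exploiting that the factorization-forced constancy also applies to the truncations $\varphi(y)=|y|^p\wedge N$: letting $N\to\infty$ gives $\hat{\mathbb E}[|X_\tau^{0,x}|^p]=\hat{\mathbb E}[|\xi_0|^p]\le C_{\alpha,p}$ for $\tilde T$-q.s.\ $x$, while the Lipschitz continuity of $b,h_{ij}$ (your hypothesis (i)) yields, via $G$-It{\^o}, the lower bound $\hat{\mathbb E}[|X_{\tau}^{0,x}|^2]\ge e^{-2L_1\tau}|x|^2-L_2/L_1$; together these force the stationary law to be supported in a bounded set, $|\xi_t|\le C$ $\hat{\mathbb E}$-q.s. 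This compact support is then contradicted by the irreducibility of the uniformly nondegenerate SDE under $P_q$ (hypotheses (ii)--(iii)), which charges every open set. If you replace your backward-uniqueness step by this moment-growth argument (your rigidity lemma already extends to the truncated powers by the same proof, or by the paper's limiting argument), your proof closes correctly.
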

    \begin{proof}
        Assume to find  a contradiction that $\tilde \xi_t$ is independent from $\tilde \xi_s$ for some $t>s$. Then \eqref{new0617-1}  shows that the stationary solution $\xi_t$ is independent from $\xi_s$. Since $\{\xi_t\}_{t\in \mathbb R}\in \mathcal{M}$, it follows that
        \begin{equation}\label{new0617-2}
            \hat{\mathbb E}[\phi(\xi_{s}, \xi_{t})]=\hat{\mathbb E}\big[\hat{\mathbb E}[\phi(x, \xi_{t})]\big|_{x=\xi_s}\big], \ \text{ for all } \ \phi\in C_{l,lip}(\mathbb R^{2n}).
        \end{equation}
        On the other hand, by the strong Markovian property of solution (see e.g. \cite[Theorem 4.2]{Hu-Ji-Liu2021}),
        \begin{equation}\label{new0617-3}
            \hat{\mathbb E}[\phi(\xi_{s}, \xi_{t})]=\hat{\mathbb E}\big[\phi(\xi_{s}, X_t^{s,\xi_s})\big]=\hat{\mathbb E}\big[\hat{\mathbb E}[\phi(x, X_t^{s,x})]\big|_{x=\xi_s}\big], \ \text{ for all } \ \phi\in C_{b,lip}(\mathbb R^{2n}).
        \end{equation}
        Fix a non-negative $\varphi\in C_{b,lip}(\mathbb R^n)$. For any non-negative $\phi_1\in C_{b,lip}(\mathbb R^n)$, letting $\phi(x,y)=\phi_1(x)\varphi(y)$ in \eqref{new0617-2} and \eqref{new0617-3} and since $(\xi_t)_{t\in \mathbb{R}}$ is a stationary solution, we have
        \begin{equation*}
            \hat{\mathbb E}[\phi_1(\xi_{s})T_{t-s}\varphi(\xi_{s})]=\hat{\mathbb E}[\phi_1(\xi_{s})\varphi(\xi_{t})]=\hat{\mathbb E}[\phi_1(\xi_{s})]\hat{\mathbb E}[\varphi(\xi_{t})]=\hat{\mathbb E}[\phi_1(\xi_{s})]\hat{\mathbb E}[T_{t-s}\varphi(\xi_{s})].
        \end{equation*}
        Since $\xi_s, \xi_t$ have the same distribution $\tilde T$ and  $\tilde T$ is invariant with respect to $\{T_t\}_{t\geq 0}$, we conclude that
        \begin{equation}\label{new0617-4}
            \tilde T[\phi_1T_{t-s}\varphi]=\tilde T[\phi_1]\tilde T[T_{t-s}\varphi], \ \text{ for all non-negative } \ \phi_1\in C_{b,lip}(\mathbb R^n).
        \end{equation}
        Let $\phi_2:=T_{t-s}\varphi$. We first prove the following equality by induction: for any $k$-polynomial $p(x)=a_kx^{k}+a_{k-1}x^{k-1}+\cdots+a_0$ with $a_k>0$, we have
        \begin{equation}\label{new0617-5}
            \tilde T[p(\phi_2)]=p(\tilde T[\phi_2]).
        \end{equation}
        It is obvious that \eqref{new0617-5} holds for $1$-polynomial $p(x)=a_1x+a_0$ with $a_1>0$. 
        Assume that \eqref{new0617-5} holds for all $k$-polynomials with positive coefficient $a_k$, then for any $p(x)=a_{k+1}x^{k+1}+a_{k}x^{k}+\cdots+a_0$ with $a_{k+1}>0$,
        \begin{equation*}
             \tilde T[p(\phi_2)]=\tilde T[\phi_2\tilde p(\phi_2)]+a_0=\tilde T[\phi_2]\tilde T[\tilde p(\phi_2)]+a_0=\tilde T[\phi_2]\tilde p(\tilde T[\phi_2])+a_0=p(\tilde T[\phi_2]),
        \end{equation*}
        where $\tilde p(x)=a_{k+1}x^{k}+a_{k}x^{k-1}+\cdots+a_1$. Then \eqref{new0617-5} holds for all polynomials with positive leading coefficient by induction. 
        
        Now we prove that \eqref{new0617-5} holds for all polynomials $p$ (may be with negative leading coefficient). Suppose that the degree of $p$ is $k$. For any $\epsilon>0$, define
        \[
        p_{\epsilon}(x)=\epsilon x^{k+1}+p(x).
        \]
        It follows that 
        \begin{equation}\label{new0617-6}
            \tilde T[p_\epsilon(\phi_2)]=p_{\epsilon}(\tilde T[\phi_2])=\epsilon (\tilde T[\phi_2])^{k+1}+p(\tilde T[\phi_2]).
        \end{equation}
        Since $\phi_2=T_{t-s}\varphi$ is bounded, i.e., $|\phi_2|\leq M$ for some $M>0$, then $|\tilde T[p_\epsilon(\phi_2)]-\tilde T[p(\phi_2)]|\leq \epsilon M^{k+1}$ and $|p_{\epsilon}(\tilde T[\phi_2])-p(\tilde T[\phi_2])|\leq \epsilon M^{k+1}$, we conclude \eqref{new0617-5} for all polynomial $p$ by letting $\epsilon\to 0$ in \eqref{new0617-6}. Since for any $h\in C(\mathbb R^n)$, Stone–Weierstrass theorem yields that there exist polynomials $\{p_n\}_{n\geq 1}$ converges to $h$ uniformly on $[0,M]$. We derive
        \begin{equation*}
            \tilde T[h(\phi_2)]=\lim_{n\to \infty}\tilde T[p_n(\phi_2)]=\lim_{n\to \infty}p_n(\tilde T[\phi_2])=h(\tilde T[\phi_2]).
        \end{equation*}
        Then for any $\varphi_1,\varphi_2\in C(\mathbb R^n)$,
        \begin{equation*}
            \tilde T[\varphi_1(\phi_2)\varphi_2(\phi_2)]=\varphi_1(\tilde T[\phi_2])\varphi_2(\tilde T[\phi_2])=\tilde T[\varphi_1(\phi_2)]\tilde T[\varphi_2(\phi_2)].
        \end{equation*}
        By Remark \ref{rem:X is constant}, we have
        \begin{equation*}
            T_{t-s}\varphi=\phi_2=\tilde T[\phi_2]=\tilde T[\varphi], \text{ $\tilde T$-q.s.}
        \end{equation*}
        Hence, for all non-negative $\varphi\in C_{b,lip}(\mathbb R^n)$,
        \begin{equation*}
            \hat{\mathbb E}[\varphi(X_{t-s}^{0,x})]=\hat{\mathbb E}[\varphi(\xi_0)], \text{ $\tilde T$-q.s. } x\in \mathbb{R}^n. 
        \end{equation*}
        Since $X_{t-s}^{0,x}, \xi_0\in L_G^p(\hat{\Omega})$ for all $p\geq 1$, choosing $\varphi(y)=|y|^p\wedge N$ and letting $N\to \infty$ give
        \begin{equation}\label{0618-1}
            \hat{\mathbb E}[|X_{t-s}^{0,x}|^p]=\hat{\mathbb E}[|\xi_0|^p]\leq C_{\alpha,p}, \text{ $\tilde T$-q.s. } x\in \mathbb{R}^n.
        \end{equation}
        As $b, h_{ij}$ are Lipschitz continuous, there exists $L_1>0,L_2>0$ such that
        \begin{equation*}
				\langle x, b(x)\rangle-G\bigl(-\sigma^{\top}(x)\sigma(x)-2[\langle x, h_{ij}(x)\rangle]_{i,j=1}^d\bigr)\geq -L_1|x|^2-L_2, \ \text{ for any } \ x\in \mathbb{R}^n.
		\end{equation*}
    Applying $G$-It{\^o}'s formula to $e^{2L_1u}|X_{u}^{0,x}|^2$ on interval $[0,t-s]$ yields
        \begin{equation}\label{0618-2}
			\hat{\mathbb E}[|X_{t-s}^{0,x}|^2]\geq e^{-2L_1(t-s)}|x|^2-\frac{L_2}{L_1}, \ \text{ for all } \ x\in \mathbb R^n.
		\end{equation}
  It follows from \eqref{0618-1} and \eqref{0618-2} that there exists $C>0$ such that $|x|\leq C$, $\tilde{T}$-q.s., i.e., $\tilde{T}((|\cdot|-C)^+)=0$.
    Note that $\xi_t$ has distribution $\tilde{T}$ for all $t\in \mathbb{R}$, and then we have
    \[
    \hat{\mathbb E}[(|\xi_t|-C)^+]=\tilde{T}((|\cdot|-C)^+)=0,
    \]
    i.e.,
    \begin{equation}\label{0618-3}
        |\xi_t|\leq C, \text{ $\hat{\mathbb E}$-q.s.}, \text{ for all } t\in \mathbb R.
    \end{equation}
    Consider the positive definite $q\in Q$ and let $P_{q}$ be given in \eqref{eq: represent G_BM by BM}. By Theorem \ref{thm:uncountable many mixing measure}, we know that $\{B_t\}_{t\geq 0}$ is a classical Brownian motion with covariance $\{q t\}_{t\geq 0}$. Note that \eqref{0618-3} implies 
    \begin{equation}\label{0618-4}
        |\xi_t|\leq C, \text{ $P_{q}$-a.s.}  \text{ for all } t\in \mathbb R.
    \end{equation}
   However, since equation \eqref{G-SDE} is uniformly nondegenerate under $P_{q}$, it is well known that the solution is irreducible, and for any $t>0$
    \[
    P_{q}(\xi_t\in \mathcal{O})=P_{q}(X_t^{0,\xi_0}\in \mathcal{O})=\int_{\mathbb R^n}P_{q}(X_t^{0,x}\in \mathcal{O})d P_{q}\circ \xi_0^{-1}(x)>0, \ \text{ for any open set } \ \mathcal{O}\subset \mathbb R^n.
    \]
    This gives a contradiction to \eqref{0618-4}. The proof is completed.
    \end{proof}

    \begin{remark}
       Let $Q$ contain a positive definite matrix $q$. Then the $G$-OU process in Remark \ref{Rem:example G-OU} satisfies all conditions in Theorem \ref{Thm:not independent}.
    \end{remark}

	\section{Applications}\label{sec:applications}
 In this section, we apply the results obtained through sublinear expectation systems to classical ergodic theory and capacity theory. Although these topics are secondary to our main objectives, we believe they offer significant independent research interest.
	\subsection{Application to classical ergodic theory}\label{application:ergodic}
	With the help of consequences in sublinear expectation, we can obtain some new results and new proofs in classical ergodic theory. Let us begin with some notations. Let $(\O,f)$ be a topological dynamical system, where $\O$ is a compact metric space and $f:\O\to\O$  is a homeomorphism. A topological dynamical system is said to be point transitive if there exists $\o\in\O$ such that $\mathrm{O}(f,\o):=\{f^n\o:n\in\mathbb{Z}\}$ is dense in $\O$.
	
It is well known that for any topological dynamical system $(\O,f)$, there exist Borel probabilities which are invariant with respect to $f$. If the invariant probability is unique, then this system is said to be uniquely ergodic.	As the first application to classical ergodic theory, we provide a characterization of the unique ergodicity of topological dynamical systems, which can be viewed as an extension of  \cite[Theorem 4.10]{Ward2011}.
	\begin{theorem}\label{thm:char. unique ergodic}
		Let $(\O,f)$ be a topological dynamical system, which is point transitive. Then the following two statements are equivalent:
		\begin{enumerate}[(i)]
			\item the system $(\O,f)$ is uniquely ergodic;
			\medskip
			\item for any $X\in C(\O)$, we have $\overline{X}\in C(\O)$.
		\end{enumerate}
	\end{theorem}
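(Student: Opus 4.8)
The plan is to realize $(\Omega,f)$ inside the sublinear framework via the ``maximal'' sublinear expectation, and then read off the two implications from results already in place: the classical uniform ergodic theorem for uniquely ergodic systems for one direction, and Corollary \ref{cor:P unique} for the other. Concretely, I would take $\mathcal{H}=C(\Omega)$ and
$$\hat{\mathbb{E}}[X]:=\sup_{\omega\in\Omega}X(\omega)=\sup_{P\in M(\Omega)}\int X\,dP,$$
where $M(\Omega)$ denotes the set of all Borel probabilities on $\Omega$. Exactly as in Example \ref{ex:not quasi}, this is a sublinear expectation on $C(\Omega)$; it is $U_f$-invariant because the homeomorphism $f$ is onto; and since every Borel probability is dominated by $\hat{\mathbb{E}}$, its associated family is $\mathcal{P}=M(\Omega)$, so that $\mathcal{M}(f)\cap\mathcal{P}=\mathcal{M}(f)$. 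Because $\Omega$ is a compact metric space, $M(\Omega)$ is weakly compact, whence $\hat{\mathbb{E}}$ is regular by Lemma \ref{lem:regualr<=>compact}.

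For the implication (i) $\Rightarrow$ (ii) I would not need this apparatus at all. If $m$ is the unique $f$-invariant probability, the classical uniform ergodic theorem for uniquely ergodic systems (see \cite[Theorem 6.20]{Walters1982}) shows that $\frac1n\sum_{i=0}^{n-1}X\circ f^i$ converges uniformly on $\Omega$ to the constant $\int X\,dm$. Consequently $\overline{X}=\underline{X}\equiv\int X\,dm$, a constant and hence continuous function, so $\overline{X}\in C(\Omega)$.

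For the implication (ii) $\Rightarrow$ (i) the crux is that the system $(\Omega,C(\Omega),U_f,\hat{\mathbb{E}})$ is ergodic. Here the capacity $C(A)=\sup_{P\in M(\Omega)}P(A)$ vanishes only when $A=\emptyset$, so the relation $U_fX=X$, $\hat{\mathbb{E}}$-q.s., is equivalent to $X(f\omega)=X(\omega)$ for every $\omega\in\Omega$; evaluating along a dense two-sided orbit $\mathrm{O}(f,\omega^*)$ and invoking the continuity of $X$ forces $X$ to be constant, precisely as in Example \ref{ex:not quasi}. Thus $\hat{\mathbb{E}}$ is ergodic and regular, and assumption (ii) is exactly the standing hypothesis $\overline{X}\in\mathcal{H}$ for all $X\in\mathcal{H}$. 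Corollary \ref{cor:P unique} then produces a probability $P$ with $\mathcal{M}(f)\cap\mathcal{P}=\{P\}$; since $\mathcal{P}=M(\Omega)\supseteq\mathcal{M}(f)$, this reads $\mathcal{M}(f)=\{P\}$, i.e.\ the system is uniquely ergodic. (Equivalently one may apply the ``in particular'' clause of Corollary \ref{cor:P unique} contrapositively: were $(\Omega,f)$ not uniquely ergodic, some $X\in C(\Omega)$ would satisfy $\overline{X}\notin C(\Omega)$.)

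The step I expect to carry the real weight is the ergodicity verification in the last implication, as it is the only place point transitivity is used; the remaining items are bookkeeping that repackages the regularity lemma and Corollary \ref{cor:P unique}. A secondary point requiring care is the identification $\mathcal{P}=M(\Omega)$, and hence $\mathcal{M}(f)\cap\mathcal{P}=\mathcal{M}(f)$, since it is this equality that converts the abstract uniqueness conclusion of Corollary \ref{cor:P unique} into the statement of unique ergodicity.
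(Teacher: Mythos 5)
Your proposal is correct and follows essentially the same route as the paper's proof: the same maximal sublinear expectation $\hat{\mathbb{E}}[X]=\sup_{\omega\in\Omega}X(\omega)$ over $\mathcal{P}=M(\Omega)$, regularity via Lemma \ref{lem:regualr<=>compact}, ergodicity from point transitivity plus continuity exactly as in Example \ref{ex:not quasi}, and Corollary \ref{cor:P unique} combined with $\mathcal{M}(f)\cap\mathcal{P}=\mathcal{M}(f)$ to conclude unique ergodicity. The only cosmetic difference is citing \cite[Theorem 6.20]{Walters1982} rather than \cite[Theorem 4.10]{Ward2011} for the implication (i) $\Rightarrow$ (ii); both are the same classical uniform ergodic theorem.
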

	\begin{proof}
		(i) $\Rightarrow$ (ii). It is a direct consequence of  \cite[Theorem 4.10]{Ward2011}.
		
		(ii) $\Rightarrow$ (i). We apply Theorem \ref{thm:ergodic theorem for some function} on $U_f$ induced by $f$, i.e., $U_fX:=X\circ f$, and $\mathcal{H}=C_b(\O)$ to prove this result.

		Let $\mathcal{P}$ be the set of all Borel probabilities on $\O$. Then it is weakly compact, as $\O$ is compact. Define an sublinear expectation $\hat{\mathbb{E}}$ on $C_b(\O)$ via
		\[\hat{\mathbb{E}}[X]:=\sup_{P\in\mathcal{P}}\int XdP,\text{ for any }X\in C_b(\O).\] 
		Since $\mathcal{P}$ is weakly compact, it follows from Lemma \ref{lem:regualr<=>compact} that $\hat{\mathbb{E}}$ is regular. Note that for any $\o\in\O$, $\delta_\o\in\mathcal{P}$, where $\d_\o$ is the Dirac measure at $\o$. Thus, 
		\[\hat{\mathbb{E}}[X]=\sup_{\o\in\O}X(\o).\]
		So, $L^1_{C_b(\O)}=C_b(\O)=C(\O)$. Now we prove the sublinear expectation $\hat{\E}$ is invariant and ergodic with respect to $U_f$. Indeed, for any $X\in C(\O)$, $\hat{\mathbb{E}}[X\circ f]=\sup_{\o\in\O}X(f\o)=\sup_{\o\in\O}X(\o)=\hat{\mathbb{E}}[X].$
	 Thus, $\hat{\mathbb{E}}$ is invariant. 
For any $X\in\mathcal{I}$, i.e., $X\circ f=X$, $\hat{\mathbb{E}}$-q.s., it follows that $X(f\o)=X(\o)$ for any $\o\in \O$. Since $(\O,f)$ is point transitive, there exists $\o_0\in \O$ such that $\mathrm{O}(f,\o_0)$ is dense in $\O$.  On the one hand, if $\o\in\mathrm{O}(f,\o_0)$ then $X(\o)=X(f^n\o_0)$ for some $n\in\mathbb{N}$, and hence by invariance of $X$, we have $X(\o)=X(\o_0)$. On the other hand, if $\o\notin \mathrm{O}(f,\o_0)$, there exists a strictly increasing sequence $\{n_i\}_{i=1}^\infty $ such that $\lim_{i\to\infty}f^{n_i}\o_0=\o$, which together with the continuity and invariance of $X$, implies that 
		\[X(\o)=\lim_{i\to\infty}X(f^{n_i}\o_0)=X(\o_0).\]
		That is, $X$ is constant. Therefore, $(\O,C(\O),U_f,\hat{\mathbb{E}})$ is an ergodic sublinear expectation system. 
 
For any $X\in C(\O)$, we have $-X\in C(\O)$, and hence by (ii), we obtain that 
\[\underline{X}=-\limsup_{n\to\infty}\frac{1}{n}\sum_{i=0}^{n-1}(-X)\circ f^i\in C(\O).\]
Thus, by Corollary \ref{cor:P unique},  there is a unique ergodic probability in $\mathcal{P}$. However, $\mathcal{P}$ is the set of all probabilities on $\O$, which implies that $(\O,f)$ is uniquely ergodic.
	\end{proof}
	
	The following result was obtained by Downarowicz and Weiss \cite[Theorem 4.9]{downarowicz2020all}.
	\begin{theorem}\label{thm:continuous ergodic }
		Let $(\O,f)$ be a topological dynamical system. Then the following statements are equivalent:
		\begin{enumerate}[(i)]
			\item  for each $X\in C(\O)$, the limit $\lim_{n\to\infty}\frac{1}{n}\sum_{i=0}^{n-1}X(f^i\o)$ exists for all $\o\in\O$, and belongs to $C(\O)$;
			\medskip
			\item for each $X\in C(\O)$, the limit $\lim_{n\to\infty}\frac{1}{n}\sum_{i=0}^{n-1}X(f^i\o)$ converges uniformly on $\O$.
		\end{enumerate}
	\end{theorem}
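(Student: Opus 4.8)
The plan is to prove the equivalence of (i) and (ii) in Theorem~\ref{thm:continuous ergodic } by passing through the sublinear expectation machinery developed above. The direction (ii)~$\Rightarrow$~(i) is essentially immediate: uniform convergence of the Birkhoff averages to a pointwise limit automatically forces the limit function to be continuous, since a uniform limit of continuous functions is continuous, and pointwise existence is a weaker statement than uniform convergence. So the genuine content lies in (i)~$\Rightarrow$~(ii), and this is where I would focus the argument.

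For (i)~$\Rightarrow$~(ii), first I would set up the sublinear expectation system exactly as in the proof of Theorem~\ref{thm:char. unique ergodic}: take $\mathcal{H}=C(\O)$ on the compact metric space $\O$, let $\mathcal{P}$ be the set of \emph{all} Borel probabilities on $\O$ (weakly compact by compactness of $\O$), and define $\hat{\mathbb{E}}[X]=\sup_{P\in\mathcal{P}}\int X\,dP=\sup_{\o\in\O}X(\o)$, which is regular by Lemma~\ref{lem:regualr<=>compact}. Since $f$ is assumed continuous and $\O$ compact, $U_f X:=X\circ f$ maps $C(\O)$ into $C(\O)$ and preserves $\hat{\mathbb{E}}$, so $(\O,C(\O),U_f,\hat{\mathbb{E}})$ is a regular sublinear expectation system. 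The hypothesis (i) says precisely that for every $X\in C(\O)$ the limit $\lim_{n\to\infty}\frac1n\sum_{i=0}^{n-1}U_f^iX$ exists pointwise and lies in $C(\O)$; in particular $\overline{X}=\underline{X}=X^*\in\mathcal{H}$ for all $X\in\mathcal{H}$, so the hypothesis ``$\overline{X}\in\mathcal{H}$ for any $X\in\mathcal{H}$'' needed in Corollary~\ref{cor:ergodic theorem for T} and its neighbours is available.

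The main obstacle is upgrading pointwise convergence to \emph{uniform} convergence, and I would extract this from the quasi-sure structure rather than from equicontinuity. The key observation is that, because $\hat{\mathbb{E}}[X]=\sup_{\o\in\O}X(\o)$, a statement holding $\hat{\mathbb{E}}$-q.s. is exactly a statement holding at every point of $\O$: indeed $\hat{\mathbb{E}}[(Y)^-]=0$ forces $Y\ge 0$ everywhere by continuity. Set $g_n:=\frac1n\sum_{i=0}^{n-1}U_f^iX$ and $X^*:=\lim_n g_n\in C(\O)$. I would show $\hat{\mathbb{E}}[|g_n-X^*|]\to 0$, which by the representation of $\hat{\mathbb{E}}$ as a supremum over points is \emph{identical} to $\sup_{\o\in\O}|g_n(\o)-X^*(\o)|\to 0$, i.e.\ uniform convergence. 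To obtain the $\hat{\mathbb{E}}[|\cdot|]$ convergence I would run the monotone-class/$L^1$ argument used in the proof of Corollary~\ref{cor:ergodic theorem for T}: pointwise convergence together with $X^*\in\mathcal{H}$ gives, for each invariant $P\in\mathcal{P}$, convergence in $L^1(P)$ by dominated convergence (the $g_n$ are uniformly bounded by $\|X\|_\infty$ since $\O$ is compact), and since $\mathcal{P}$ consists of all probabilities one deduces the uniform estimate across $\mathcal{P}$.

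The one point requiring care is that $\hat{\mathbb{E}}[|g_n-X^*|]=\sup_{\o}|g_n(\o)-X^*(\o)|$ need not tend to $0$ merely from pointwise convergence for each fixed $P$; the supremum over \emph{all} of $\O$ is a uniform statement and is exactly what must be justified. Here I would argue by contradiction using compactness: if $\sup_\o|g_{n_k}(\o)-X^*(\o)|\ge\varepsilon$ along a subsequence, pick witnesses $\o_{n_k}$ and pass to a convergent subsequence $\o_{n_k}\to\o_\infty$; the subtlety is that $g_{n_k}$ and $\o_{n_k}$ both vary, so I would combine the continuity of each $g_n$ and of $X^*$ with a diagonal/Cesàro-telescoping estimate controlling $\sup_\o|g_{n+1}(\o)-g_n(\o)|\le \tfrac{2}{n}\|X\|_\infty$, which shows $\{g_n\}$ has controlled oscillation in $n$ uniformly in $\o$. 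This telescoping bound is the analogue of the $\frac{2}{n}\hat{\mathbb{E}}[|X|]$ estimate appearing in the proof of Theorem~\ref{thm:existence of inv}, and it is what converts the pointwise-plus-continuity hypothesis into genuine uniform convergence. Assembling these pieces yields (ii), completing the equivalence.
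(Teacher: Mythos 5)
First, note that the paper itself does not prove this theorem: it is quoted as an external result of Downarowicz and Weiss \cite[Theorem 4.9]{downarowicz2020all}, and the surrounding text only \emph{uses} it. So your proposal has to stand on its own merits, and while your direction (ii)~$\Rightarrow$~(i) is correct and your reduction of (i)~$\Rightarrow$~(ii) to the statement $\sup_{\omega\in\Omega}|g_n(\omega)-X^*(\omega)|\to 0$ is fine (the sublinear-expectation dressing adds nothing, since $\hat{\mathbb{E}}$-q.s.\ here literally means ``everywhere''), the final step of your argument has a genuine gap. You correctly flag that per-measure dominated convergence cannot give a bound uniform over $\mathcal{P}$ (indeed over Dirac measures that would already be the conclusion), but the repair you propose --- compactness of $\Omega$ applied to witness points $\omega_{n_k}$, plus the telescoping bound $\sup_\omega|g_{n+1}(\omega)-g_n(\omega)|\le \tfrac{2}{n+1}\|X\|_\infty$ --- does not suffice. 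The ingredients you actually invoke (continuity of each $g_n$, continuity of $X^*$, pointwise convergence, and $O(1/n)$ consecutive oscillation) do \emph{not} imply uniform convergence: since $\sum_n 1/n$ diverges, one can build a sequence of continuous functions on $[0,1]$ consisting of travelling bumps of height $1$ whose widths shrink and whose heights change by at most $C/n$ per step, arranged in blocks so that every point is covered by only finitely many bumps; such a sequence converges pointwise to $0$, has the $O(1/n)$ oscillation property, yet $\|g_n\|_\infty=1$ infinitely often. Moreover, passing to a limit point $\omega_\infty$ of the witnesses gains nothing, because the $g_{n_k}$ are not equicontinuous (their moduli of continuity degrade with $n$), so $|g_{n_k}(\omega_{n_k})-X^*(\omega_{n_k})|\ge\varepsilon$ transfers no information to $\omega_\infty$.

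The missing idea is measure-theoretic, not topological: the compactness one must exploit is weak-$*$ compactness of the \emph{empirical measures}, not of the witness points. Concretely, suppose uniform convergence fails for $X$, and pick $\varepsilon>0$, $n_k\to\infty$ and $\omega_k$ with $|g_{n_k}(\omega_k)-X^*(\omega_k)|\ge\varepsilon$. Set $\nu_k:=\frac{1}{n_k}\sum_{i=0}^{n_k-1}\delta_{f^i\omega_k}$, so that $g_{n_k}(\omega_k)=\int X\,d\nu_k$, while the telescoping bound shows $X^*\circ f=X^*$, whence $\int X^*\,d\nu_k=X^*(\omega_k)$; thus $|\int(X-X^*)\,d\nu_k|\ge\varepsilon$. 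Passing to a weak-$*$ limit $\nu_k\to\nu$ (here compactness of $\Omega$ enters), the standard estimate $|\int\phi\circ f\,d\nu_k-\int\phi\,d\nu_k|\le 2\|\phi\|_\infty/n_k$ shows $\nu$ is $f$-invariant, and continuity of $X-X^*$ gives $|\int(X-X^*)\,d\nu|\ge\varepsilon$. But for \emph{any} invariant probability $\nu$, the classical Birkhoff theorem (the paper's Theorem \ref{thm:Birkhoff for measures}) gives $g_n(X)\to\mathbb{E}_\nu[X|\mathcal{I}]$ $\nu$-a.e., while hypothesis (i) gives $g_n(X)\to X^*$ everywhere, so $X^*=\mathbb{E}_\nu[X|\mathcal{I}]$ $\nu$-a.e.\ and hence $\int(X-X^*)\,d\nu=0$ --- a contradiction. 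This is the step your outline cannot reach: nothing in your argument produces an invariant measure against which Birkhoff's theorem can be applied, and without the identity $\int X\,d\nu=\int X^*\,d\nu$ for all invariant $\nu$ the contradiction never materializes. (As a bonus, this corrected argument works function-by-function and uses only the hypothesis for the single $X$ in question.)
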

	
	Applying the Theorem \ref{thm:char. unique ergodic} to Theorem \ref{thm:continuous ergodic }, we have a little extension of Theorem \ref{thm:continuous ergodic }.
	\begin{theorem}
		Let $(\O,f)$ be a topological dynamical system. Then the following statements are equivalent:
		\begin{enumerate}[(i)]
			\item  for each $X\in C(\O)$, we have $\overline{X}\in C(\O)$;
			\medskip
			\item for each $X\in C(\O)$, the limit $\lim_{n\to\infty}\frac{1}{n}\sum_{i=0}^{n-1}X\circ f^i$ converges uniformly on $\O$.
		\end{enumerate}
	\end{theorem}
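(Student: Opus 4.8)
The plan is to prove the equivalence by combining the characterization of unique ergodicity from Theorem \ref{thm:char. unique ergodic} with the known equivalence in Theorem \ref{thm:continuous ergodic }. The subtlety is that Theorem \ref{thm:char. unique ergodic} requires the system to be point transitive, whereas the present statement makes no such hypothesis, so the main work is to reduce to Theorem \ref{thm:continuous ergodic } directly rather than forcing unique ergodicity onto a possibly non-transitive system.

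First I would show (i) $\Rightarrow$ (ii). Assume that $\overline{X}\in C(\O)$ for every $X\in C(\O)$. Since $-X\in C(\O)$ whenever $X\in C(\O)$, we also get $\underline{X}=-\overline{-X}\in C(\O)$ for every $X\in C(\O)$. The strategy is to apply the ergodic machinery of Section \ref{sec:ergodic} to the sublinear expectation system $(\O,C(\O),U_f,\hat{\mathbb E})$ with $\hat{\mathbb E}[X]=\sup_{P\in\mathcal{P}}\int X\,dP=\sup_{\o\in\O}X(\o)$, exactly as in the proof of Theorem \ref{thm:char. unique ergodic}; here $\mathcal{P}$ is the (weakly compact, hence regular by Lemma \ref{lem:regualr<=>compact}) set of all Borel probabilities on $\O$, and $L^1_{C(\O)}=C(\O)$. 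With $\overline{X},\underline{X}\in C(\O)$ in hand, Theorem \ref{thm:ergodic theorem for some function invariant} yields $X^*\in C(\O)$ with $\lim_{n\to\infty}\frac{1}{n}\sum_{i=0}^{n-1}X\circ f^i=X^*$, $\hat{\mathbb E}$-q.s. Because $\hat{\mathbb E}[Y]=\sup_{\o\in\O}Y(\o)$, quasi-sure convergence to a continuous limit is precisely pointwise convergence at \emph{every} $\o\in\O$ with a continuous limit function; thus for each $X\in C(\O)$ the Birkhoff averages converge pointwise everywhere to a continuous function. This is exactly condition (i) of Theorem \ref{thm:continuous ergodic }, whose conclusion (ii) gives uniform convergence on $\O$, which is condition (ii) here.

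Next I would show (ii) $\Rightarrow$ (i). If the averages $\frac{1}{n}\sum_{i=0}^{n-1}X\circ f^i$ converge uniformly on $\O$ for each $X\in C(\O)$, then the uniform limit of continuous functions is continuous, so the limit function lies in $C(\O)$; this is condition (i) of Theorem \ref{thm:continuous ergodic }. But the limit function is precisely $\overline{X}=\underline{X}$ (the $\limsup$ and $\liminf$ coincide pointwise because the genuine limit exists at every point), hence $\overline{X}\in C(\O)$. This direction is essentially immediate once one observes that uniform convergence forces $\overline{X}=\underline{X}$ pointwise and continuity of the common value.

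The main obstacle I anticipate is the careful bookkeeping of the translation between ``$\hat{\mathbb E}$-q.s.\ convergence with continuous limit'' and ``pointwise convergence at all points with continuous limit'' in the (i) $\Rightarrow$ (ii) direction: one must verify that with $\hat{\mathbb E}=\sup_{\o\in\O}(\cdot)$, the quasi-sure statement delivered by Theorem \ref{thm:ergodic theorem for some function invariant} is strong enough to hand to Theorem \ref{thm:continuous ergodic } at \emph{every} point of $\O$, not merely off a capacity-null set. The point is that $C(A^c)=0$ for $A\in\sigma(C(\O))$ together with a continuous candidate limit and a dense orbit structure need not immediately give equality everywhere for a general non-transitive system; the clean resolution is that here the limit $X^*$ is continuous and the averaged sequence converges on a set of full capacity, and continuity plus the definition of $\hat{\mathbb E}$ upgrades this to everywhere, so that Theorem \ref{thm:continuous ergodic }(i) applies verbatim. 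Verifying this upgrade rigorously is the one step that requires genuine care, but it is routine given the explicit form of $\hat{\mathbb E}$; the remaining steps are purely formal applications of the quoted results.
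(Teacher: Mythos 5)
Your proof is correct, but your (i) $\Rightarrow$ (ii) argument takes a genuinely different route from the paper's. The paper fixes $\o\in\O$, restricts to the orbit closure $\O_\o=\overline{\mathrm{O}(f,\o)}$, observes that hypothesis (i) passes to this point-transitive subsystem, and invokes Theorem \ref{thm:char. unique ergodic} to conclude that $(\O_\o,f)$ is uniquely ergodic, whence $\overline{X}(\o)=\underline{X}(\o)$ at the chosen point; you instead apply the sublinear Birkhoff theorem (Theorem \ref{thm:ergodic theorem for some function invariant}) directly to the regular system $(\O,C(\O),U_f,\hat{\mathbb E})$ with $\hat{\mathbb E}[\cdot]=\sup_{\o\in\O}(\cdot)$, and use the fact that quasi-sure statements under this expectation hold at every point — indeed, since every Dirac mass lies in $\mathcal{P}$, one has $\hat{\mathbb E}[|\overline{X}-\underline{X}|]=\sup_{\o\in\O}|\overline{X}(\o)-\underline{X}(\o)|$, so the q.s. conclusion of that theorem forces $\overline{X}=\underline{X}$ everywhere (in fact any nonempty set has full capacity here, so continuity is not even needed for this upgrade, only for membership in $\mathcal{H}$). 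Both arguments then feed "pointwise convergence everywhere with continuous limit" into Theorem \ref{thm:continuous ergodic } to get uniform convergence, and both treat (ii) $\Rightarrow$ (i) as immediate. Your route is more direct: it bypasses the orbit-closure reduction and, in particular, the implicit Tietze-extension step the paper needs, since applying Theorem \ref{thm:char. unique ergodic} to $(\O_\o,f)$ formally requires $\overline{Z}\in C(\O_\o)$ for \emph{every} $Z\in C(\O_\o)$, not just for $X|_{\O_\o}$. What the paper's route buys in exchange is the structural byproduct that under (i) every orbit closure is uniquely ergodic, linking the statement to the unique-ergodicity characterization proved just before it. One point you should make explicit: invariance of $\hat{\mathbb E}$ under $U_f$ (i.e. $\sup_{\o}X(f\o)=\sup_{\o}X(\o)$) uses surjectivity of $f$, which holds because in this section a topological dynamical system has $f$ a homeomorphism — the same convention the paper's proof of Theorem \ref{thm:char. unique ergodic} relies on.
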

	\begin{proof}
		(i) $\Rightarrow$ (ii). Fix $X\in C(\O)$. Given $\o\in\O$, let $\O_\o=\overline{\mathrm{O}(f,\o)}$. Let $X|_{\O_\o}=Y$. Since $\O_\o$ is a $f$-invariant closed subset, it follows that $\overline{Y}=\overline{X}|_{\O_\o}\in C(\O_\o)$. Since $\O_\o$ is point transitive, by Theorem \ref{thm:char. unique ergodic}, we have $(\O_\o,f)$ is uniquely ergodic. Therefore, 
		\[\overline{X}(\o)=\overline{Y}(\o)=\underline{Y}(\o)=\underline{X}(\o).\]
		As $\o\in\O$ is arbitrary, we have $\overline{X}=\underline{X}\in C(\O)$, which together with Theorem \ref{thm:continuous ergodic }, implies (ii).
		
		(ii) $\Rightarrow$ (i). This is a direct corollary of Theorem \ref{thm:continuous ergodic }.
	\end{proof}
	
	It is well known that there are many parallels between topological dynamics and ergodic theory. In particular, ergodicity reflects the indivisibility of a measure-preserving system, corresponding to minimality reflecting the indivisibility of a topological dynamical system. Specifically, a system is called minimal if there is no non-empty, proper closed invariant subset. Now we prove that the ergodicity of a sublinear expectation system is equivalent to the minimality of a topological dynamical system, when we take the following sublinear expectation system: given a compact metric space $\O$, we take a candidate vector lattice
	\begin{equation}\label{123}
	    \mathcal{H}:=\{X-Y:X,Y:\O\to\mathbb{R}\text{ are bounded and upper semi-continuous}\},
	\end{equation}
	and the sublinear expectation \begin{equation}\label{eq:7.2621:37}
	    \hat{\mathbb{E}}[X]:=\sup_{P\in\mathcal{P}}\int XdP,\text{ for any }X\in \mathcal{H},
	\end{equation}
	where $\mathcal{P}$ is the set of all Borel probabilities on $\O$.

    \begin{proposition}
        $\mathcal{H}$ is a vector lattice.
    \end{proposition}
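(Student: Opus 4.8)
The plan is to verify the three requirements in the definition of a vector lattice given in the introduction: that $\mathcal{H}$ is a linear space of real-valued functions, that it contains every constant, and that it is closed under taking absolute values. Write $\mathrm{USC}(\O)$ for the collection of bounded upper semi-continuous real functions on $\O$, so that by \eqref{123} we have $\mathcal{H}=\mathrm{USC}(\O)-\mathrm{USC}(\O)$. Throughout I would use the standard facts that $\mathrm{USC}(\O)$ is closed under addition, under multiplication by nonnegative scalars, and under the pointwise operations $\vee$ and $\wedge$ (for $\vee$, note that $\{\max(f,g)\ge a\}=\{f\ge a\}\cup\{g\ge a\}$ is closed whenever $\{f\ge a\},\{g\ge a\}$ are), together with the observation that $-\mathrm{USC}(\O)$ is exactly the space of bounded lower semi-continuous functions.

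First I would check linearity. For additivity, $(U_1-V_1)+(U_2-V_2)=(U_1+U_2)-(V_1+V_2)\in\mathcal{H}$, since sums of bounded USC functions are bounded USC. For scalar multiplication by $\l\ge0$ one has $\l(U-V)=(\l U)-(\l V)\in\mathcal{H}$, while for $\l<0$ the trick is to swap the two terms and write $\l(U-V)=(|\l|V)-(|\l|U)\in\mathcal{H}$; this makes $\mathcal{H}$ a linear space. Constants are immediate: any constant $c$ is continuous, hence bounded USC, so $c=c-0\in\mathcal{H}$.

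The only substantive point, and the defining lattice property, is closure under $|\cdot|$. Given $X=U-V\in\mathcal{H}$ with $U,V\in\mathrm{USC}(\O)$, the key is the elementary pointwise identity
\[
|U-V|=2(U\vee V)-(U+V),
\]
which one checks by splitting into the cases $U\ge V$ and $U<V$. Since $U\vee V$ and $U+V$ are both bounded USC functions, the right-hand side exhibits $|X|$ as a difference of two elements of $\mathrm{USC}(\O)$, whence $|X|\in\mathcal{H}$. I do not expect any real obstacle here: the content of the statement is simply that the difference space of bounded upper semi-continuous functions forms a sublattice of the bounded functions, and the displayed absolute-value identity makes this transparent; the only mild subtleties are the behaviour under negative scalars (handled by the swap $U\leftrightarrow V$) and the preservation of upper semi-continuity and boundedness under $\vee$.
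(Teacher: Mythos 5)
Your proof is correct and follows essentially the same route as the paper: both arguments hinge on the closure of bounded upper semi-continuous functions under $\vee$ and addition, and your identity $|U-V|=2(U\vee V)-(U+V)$ is exactly what one gets by composing the paper's two steps ($|X|=X\vee 0+(-X)\vee 0$ together with $(X_1-X_2)\vee 0=(X_1\vee X_2)-X_2$). The only cosmetic differences are that you verify linearity explicitly and characterize upper semi-continuity via closed superlevel sets rather than via $\limsup$, neither of which changes the substance.
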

    \begin{proof}
    It is obvious that $\mathcal{H}$ is a linear space and $c\in \mathcal{H}$ for all $c\in \mathbb R$. Thus, we only need to prove $|X|\in \mathcal{H}$ if $X\in \mathcal{H}$. Note that $|X|=X\vee 0+(-X)\vee 0$ and $\mathcal{H}$ is a linear space, and so it suffices to prove $X\vee 0\in \mathcal{H}$ if $X\in \mathcal{H}$.
    Let
    \[
    \mathcal{H}^+:=\{X\in \mathcal{H}:X\text{ is bounded and upper semi-continuous}\}.
    \]
   Then $$\mathcal{H}=\mathcal{H}^+-\mathcal{H}^+:=\{X_1-X_2: X_1,X_2\in \mathcal{H}^+\}.$$
    We first show that $X_1\vee X_2\in \mathcal{H}^+$ if $X_1,X_2\in \mathcal{H}^+$.
    Note that $X$ is upper semi-continuous if and only if 
    \[
    \limsup_{n\to \infty}X(\omega_n)\leq X(\omega), \ \text{ for any } \ \omega\in \Omega  \text{ and } \omega_n\to \omega \text{ as } n\to \infty.
    \]
    Then if $X_1,X_2\in \mathcal{H}^+$, for any $\omega\in \Omega$ and $\omega_n\to \omega$ as $n\to \infty$,
    \[
    \limsup_{n\to \infty}(X_1\vee X_2)(\omega_n)\leq (\limsup_{n\to \infty}X_1(\omega_n))\vee (\limsup_{n\to \infty}X_2(\omega_n))\leq (X_1\vee X_2)(\omega),
    \]
    which gives $X_1\vee X_2\in \mathcal{H}^+$.

    Now we prove that $X\vee 0\in \mathcal{H}$ if $X\in \mathcal{H}$. Suppose $X=X_1-X_2$ for some $X_1,X_2\in \mathcal{H}^+$, then
    \[
    X\vee 0=(X_1-X_2)\vee (X_2-X_2)=(X_1\vee X_2)-X_2\in \mathcal{H}.
    \]
    Therefore, $\mathcal{H}$ is a vector lattice.
    \end{proof}
	
	\begin{proposition}
		Let $(\O,f)$ be a topological dynamical system. Then the following two statements are equivalent:
		\begin{enumerate}[(i)]
			\item $(\O,f)$ is minimal;
			\item $(\O,\mathcal{H},U_f,\hat{\mathbb{E}})$ is ergodic, where  $\mathcal{H}$ and $\hat{\mathbb{E}}$ are defined as in \eqref{123} and \eqref{eq:7.2621:37}, respectively.
		\end{enumerate}
	\end{proposition}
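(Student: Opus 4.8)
The plan is to prove the two implications of the proposition by exploiting the specific structure of the sublinear expectation $\hat{\mathbb{E}}[X]=\sup_{P\in\mathcal{P}}\int X\,dP$ built from the set $\mathcal{P}$ of \emph{all} Borel probabilities. The key observation, to be established first, is that for the vector lattice $\mathcal{H}$ of differences of bounded upper semi-continuous functions, one has $\hat{\mathbb{E}}[X]=\sup_{\omega\in\Omega}X(\omega)$. Indeed, since every Dirac measure $\delta_\omega$ lies in $\mathcal{P}$, we get $\hat{\mathbb{E}}[X]\ge\sup_{\omega}X(\omega)$, while for any $P\in\mathcal{P}$ clearly $\int X\,dP\le\sup_\omega X(\omega)$. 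Consequently, as in Example~\ref{ex:not quasi}, the relation $U_fX=X$, $\hat{\mathbb{E}}$-q.s.\ is equivalent to the pointwise identity $X(f\omega)=X(\omega)$ for \emph{all} $\omega\in\Omega$. This equivalence is what links the algebraic notion of $\mathcal{I}$ to genuine invariance along orbits.

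Next I would verify that $(\Omega,\mathcal{H},U_f,\hat{\mathbb{E}})$ is indeed a sublinear expectation system, i.e.\ that $U_f$ maps $\mathcal{H}$ into $\mathcal{H}$ and preserves $\hat{\mathbb{E}}$. Since $f$ is a homeomorphism, $X\circ f$ is upper semi-continuous whenever $X$ is, so $U_f\mathcal{H}\subset\mathcal{H}$; and $\hat{\mathbb{E}}[U_fX]=\sup_\omega X(f\omega)=\sup_\omega X(\omega)=\hat{\mathbb{E}}[X]$ because $f$ is onto. For the implication (i)$\Rightarrow$(ii), assume minimality and take $X\in\mathcal{I}$, so $X(f\omega)=X(\omega)$ for all $\omega$. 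Writing $X=X_1-X_2$ with $X_1,X_2$ upper semi-continuous, I would argue that $X$ is constant: fix $\omega_0$ and set $c=X(\omega_0)$; the level considerations combined with the density of every orbit $\mathrm{O}(f,\omega)=\{f^n\omega:n\in\mathbb{Z}\}$ (guaranteed by minimality) force $X$ to agree with $c$ on a dense set. Here the upper semi-continuity lets me pass limits only in one direction, so I would apply the argument to both $X$ and $-X$, using upper semi-continuity of each to sandwich $X(\omega)$ between $\limsup$ along an orbit approximating $\omega$ and the constant value, yielding $X\equiv c$. Thus every invariant element is constant and the system is ergodic.

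For the converse (ii)$\Rightarrow$(i), I would argue by contraposition: suppose $(\Omega,f)$ is \emph{not} minimal, so there is a non-empty proper closed invariant subset $K\subsetneq\Omega$ with $f(K)=K$. Then $\mathbf{1}_K$ is upper semi-continuous (as $K$ is closed), hence $\mathbf{1}_K\in\mathcal{H}^+\subset\mathcal{H}$, and it is genuinely $f$-invariant pointwise, so $\mathbf{1}_K\in\mathcal{I}$. Since $K$ is neither empty nor all of $\Omega$, the function $\mathbf{1}_K$ is non-constant, contradicting ergodicity. Therefore ergodicity of the system implies minimality.

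The main obstacle I anticipate lies in the first implication, specifically in deducing that a pointwise-invariant \emph{difference} of upper semi-continuous functions must be constant on a minimal system. Continuity of $X$ would make this immediate via the density of orbits, but $\mathcal{H}$ only provides semi-continuity, which controls limits in a single direction. The careful point is to run the approximation argument symmetrically on $X$ and $-X$: writing $X=X_1-X_2$, an orbit sequence $f^{n_i}\omega_0\to\omega$ gives $\limsup_i X_1(f^{n_i}\omega_0)\le X_1(\omega)$ and $\limsup_i X_2(f^{n_i}\omega_0)\le X_2(\omega)$, and since $X$ is constant along the orbit one must combine these inequalities with the corresponding bounds for $-X$ to pin down $X(\omega)=c$ exactly, rather than merely one-sidedly. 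Handling this sandwich correctly, and checking that it indeed closes for every $\omega\in\Omega$ and not just for points in a single dense orbit, is the delicate step; the rest of the proof is routine verification.
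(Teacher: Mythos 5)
Your implication (ii)$\Rightarrow$(i) is correct and is essentially the paper's argument: the indicator of a nonempty proper closed invariant set is bounded upper semi-continuous, hence lies in $\mathcal{H}$, is pointwise invariant, and is non-constant, contradicting ergodicity. The genuine gap is in (i)$\Rightarrow$(ii), exactly at the step you flagged, and the sandwich you propose does not close it. Writing $X=X_1-X_2$ with $X_1,X_2$ upper semi-continuous and taking an orbit sequence $f^{n_i}\omega_0\to\omega$, you only get the one-sided bounds $\limsup_i X_j(f^{n_i}\omega_0)\le X_j(\omega)$ for $j=1,2$; passing to $-X=X_2-X_1$ produces the \emph{same} two inequalities, not their reverses, because upper semi-continuity of $X_1$ and $X_2$ is all you have in either case. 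Concretely, extract a subsequence with $X_1(f^{n_i}\omega_0)\to a$ and $X_2(f^{n_i}\omega_0)\to b$; then $c=a-b$, $a\le X_1(\omega)$, $b\le X_2(\omega)$, and these three facts are compatible with $X(\omega)=X_1(\omega)-X_2(\omega)$ being either larger or smaller than $c$. So knowing that $X\equiv c$ on the dense orbit of $\omega_0$ pins down nothing at other points: a bounded function can agree with $c$ on a dense set and still be non-constant, and semi-continuity of the two pieces gives no two-sided control.

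The missing idea is a \emph{continuity point}, which is where the specific structure of $\mathcal{H}$ is genuinely used. Since $X$ is a difference of semi-continuous functions on a compact metric space, it has at least one point of continuity $\omega_0$ (Fort's theorem, cited in the paper as \cite[Theorem 1]{fort1951points}; equivalently a Baire-category argument). This hypothesis is what excludes, say, the indicator of a dense orbit in an irrational rotation — a pointwise invariant, non-constant function that is of Baire class two and hence not in $\mathcal{H}$. With the continuity point in hand, the approximation must run in the direction opposite to yours: for an \emph{arbitrary} $\omega$, minimality makes the orbit of $\omega$ dense, so $f^{n_i}\omega\to\omega_0$ along some sequence; pointwise invariance gives $X(f^{n_i}\omega)=X(\omega)$ for all $i$, and continuity of $X$ at $\omega_0$ then yields $X(\omega)=\lim_i X(f^{n_i}\omega)=X(\omega_0)$. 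Your version approximates $\omega$ by the orbit of a fixed $\omega_0$, which would require regularity of $X$ at the target point $\omega$ — precisely what is unavailable.
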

	\begin{proof}
		(i) $\Rightarrow$ (ii). Suppose that $(\O,f)$ is minimal. Given any function $X\in \mathcal{H}$ with $U_fX=X$, let $X_1,X_2$ be two bounded upper semi-continuous functions  such that $X=X_1-X_2$. By \cite[Theorem 1]{fort1951points}, $X$ has a continuous point, denoted  by $\o_0$.  Now we prove that for any $\o\in\O$, $X(\o)=X(\o_0)$. Indeed, as $(\O,f)$ is minimal, there exists an infinite sequence $\{n_i\}_{i=1}^\infty$ such that $\lim_{i\to\infty}f^{n_i}\o=\o_0$. Since $\o_0$ is the continuous point of $X$, it follows that $X(\o)=\lim_{i\to\infty}X(f^{n_i}\o)=X(\o_0)$. Thus, $X$ is constant, which shows that  $(\O,\mathcal{H},U_f,\hat{\mathbb{E}})$ is ergodic.
		
		(ii) $\Rightarrow$ (i).  By the definition of minimality, we only need to prove that for any $f$-invariant closed subset $A$, one has $A=\emptyset$ or $\O$. Indeed, as $\one_A$ is a bounded upper semi-continuous function, i.e., $\one_A\in\mathcal{H}$. Since  $(\O,\mathcal{H},U_f,\hat{\mathbb{E}})$ is ergodic, it follows that $\one_A=c$, $\hat\E$-q.s., and hence by the definition of $\hat\E$, we have $\one_A(\o)=c$ for any $\o\in\O$. It is easy to see that $c=0$ or $1$. Thus, $A$ is equal to either $\O$ or $\emptyset$. According to the arbitrariness of $A$, we finish the proof.
	\end{proof}

 \subsection{Application to capacity theory}\label{application:capacity}
In this subsection, we apply our results to capacities, and obtain Birkhoff's ergodic theorem for capacities induced by regular sublinear expectations. Let us begin with some notations. For a given measurable space $(\Omega,\mathcal{F})$, a set-valued function $\mu:\mathcal{F}\to[0,1]$ is called a capacity if
$\mu(\O)=1$, $\mu(\emptyset)=0$, and for any $A,B\in\mathcal{F}$ with $A\subset B$, $\mu(A)\le \mu(B)$. Furthermore, if a capacity satisfies that for any $A,B\in\mathcal{F}$, $\mu(A\cup B)\le \mu(A)+\mu(B)$, then it is said to be subadditive. 
 In \cite{FWZ2020}, Feng, Wu and Zhao introduced the ergodicity of capacities.   For a given measurable space $(\Omega,\mathcal{F})$ and a measurable transformation $f: (\Omega,\mathcal{F})\to (\Omega,\mathcal{F})$, we call a subadditive capacity $C$ on $(\Omega,\mathcal{F})$ is $f$-ergodic if for any $f$-invariant set $A\in \mathcal{F}$, we have $C(A)=0$ or $C(A^c)=0$. Meanwhile, we call capacity is continuous from above (resp. below) if for any $A_n,A\in\mathcal{F}$ with $A_n\downarrow A$ (resp. with $A_n\uparrow A$), then $\lim_{n\to\infty}\mu(A_n)=\mu(A)$. If it continuous from above and below, it is said to be continuous. In particular, a subadditive capacity is continuous if and only if for any $A_n\downarrow \emptyset$, as $n\to\infty$, $\lim_{n\to\infty}\mu(A_n)=0$.
 
Note that each regular sublinear expectation $\hat{\mathbb E}[X]=\sup_{P\in\mathcal{P}}\int XdP$ on $(\O,\mathcal{H})$ induces a subadditive capacity by
 \[C(A)=\sup_{P\in\mathcal{P}}P(A),\text{ for any }A\in\s(\mathcal{H}).\]
Conversely, if a  capacity $\mu$ on a measurable space $(\O,\mathcal{\mathcal{F}})$ with the form $\mu(A)=\sup_{P\in\mathcal{P}}P(A)$ for any $A\in\mathcal{F}$, then it induces an sublinear expectation on $B_b(\O,\mathcal{F})$,
\[\hat{\mathbb E}[X]=\sup_{P\in\mathcal{P}}\int XdP, \text{ for any }X\in B_b(\O,\mathcal{F}).\]
In this case, $\mu$ is continuous if and only if $\hat{\mathbb E}$ is regular on $B_b(\O,\mathcal{F})$.
 
 The authors in \cite{FWZ2020} proved that if a  capacity $\mu$  with the form $\mu=\sup_{P\in\mathcal{P}}P$ is continuous, then $\mu$ is ergodic if and only if for any $X\in B_b(\O,\mathcal{F})$, there exists a constant $a_X\in\mathbb{R}$ such that 
 \[\mu(\{\o\in\O:\lim_{n\to\infty}\frac{1}{n}\sum_{i=0}^{n-1}X(f^i\o)=a_X\}^c)=0.\]
 Under the same condition, Feng, Huang, Liu and Zhao \cite{FHLZ2023} proved that there exists an ergodic probability $P$ such that for any $X\in B_b(\O,\mathcal{F})$, $a_X=\int XdP$. By the argument above, this result can be rewritten under the sublinear expectation setting as follows. Let $\hat{\mathbb E}$ be a regular sublinear expectation on $(\O,B_b(\O,\mathcal{F}))$. Then $\hat{\mathbb E}$ is ergodic if and only if  there exists an ergodic probability $P$ on $\mathcal{F}$ such that for any $X\in B_b(\O,\mathcal{F})$,
 \[\lim_{n\to\infty}\frac{1}{n}\sum_{i=0}^{n-1}X(f^i\o)=\int XdP, \text{ }\hat{\mathbb E}\text{-q.s.}\]
Recall that the capacity induced by regular sublinear expectation in Example \ref{ex:not continuous} is not continuous, i.e., not regular on the vector lattice $B_b(\O,\mathcal F)$, but the Birkhoff's ergodic theorem holds for this capacity. 

More generally, the following result shows that the Birkhoff's ergodic theorem holds for all capacities generated by regular sublinear expectation on any vector lattice.
 \begin{theorem}\label{thm:ergodic theorem for capacity}
     Let $(\O,\mathcal{H},U_f,\hat{\mathbb E})$ be a regular sublinear expectation system, where $U_f$ is induced by a measurable transform $f:\O\to\O$. Let $C$ be the capacity induced by $\hat{\mathbb E}$ on $(\O,\s(\mathcal{H}))$. Then the following two statements are equivalent:
     \begin{enumerate}[(i)]
         \item $C$ is ergodic;
         \item there exists an ergodic probability $P\in\mathcal{P}$ such that for any $X\in L^1(\O,\mathcal{\s(\mathcal{H})},P)$, 
 \[C\bigg(\Big\{\o\in\O:\lim_{n\to\infty}\frac{1}{n}\sum_{i=0}^{n-1}X(f^i\o)=\int XdP\Big\}^c\bigg)=0.\]
     \end{enumerate}
  When the equivalent statements (i) and (ii) hold, one has $\mathcal{P}\cap \mathcal{M}(T)=\mathcal{P}\cap \mathcal{M}^e(T)=\{P\}$. 
 \end{theorem}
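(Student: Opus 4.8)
The plan is to reduce Theorem~\ref{thm:ergodic theorem for capacity} to the already-established sublinear-expectation version of Birkhoff's theorem, namely Corollary~\ref{cor:ergodic theorem for T} and Corollary~\ref{cor:P unique}, by translating between the capacity language of \cite{FWZ2020,FHLZ2023} and our sublinear-expectation framework. The essential observation is that a property ``$C(A^c)=0$'' for the induced capacity $C(A)=\sup_{P\in\mathcal P}P(A)$ is exactly the statement that a certain $\sigma(\mathcal H)$-measurable event holds $\hat{\mathbb E}$-quasi surely. However, there is a genuine subtlety that must be confronted before any reduction: the equivalence in Corollary~\ref{cor:ergodic theorem for T} was proved under the extra hypothesis that $\overline X\in\mathcal H$ for all $X\in\mathcal H$, whereas here we want the ergodic theorem to hold for \emph{all} $X\in L^1(\Omega,\sigma(\mathcal H),P)$, which is a much larger class than $\mathcal H$. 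So the heart of the argument is to pass from the abstract ergodicity of $\hat{\mathbb E}$ (equivalently, of the capacity $C$) to a genuine classical ergodic probability $P\in\mathcal P$, and then invoke the \emph{classical} Birkhoff theorem (Theorem~\ref{thm:Birkhoff for measures}) on $L^1(\Omega,\sigma(\mathcal H),P)$.

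First I would prove the implication (i)$\Rightarrow$(ii). Assuming $C$ is ergodic, I would first show that $\hat{\mathbb E}$ is ergodic in the sense of Definition~\ref{Definition of ergodicity}: given $X\in\mathcal I$ (so $U_fX=X$ $\hat{\mathbb E}$-q.s.), the events $\{X>\alpha\}$ lie in $\mathcal C\subset\sigma(\mathcal H)$ and are $f$-invariant modulo a $C$-null set, so ergodicity of $C$ forces $C(\{X>\alpha\})\in\{0,1\}$ for every $\alpha$, whence $X$ is constant $\hat{\mathbb E}$-q.s. Next, using Theorem~\ref{thm:existence of inv} and Proposition~\ref{re:existence of invariant measure}, each $\mathbb E\in\Theta$ has an invariant skeleton $P_{\mathrm{inv}}\in\mathcal P\cap\mathcal M(f)$, so $\mathcal P\cap\mathcal M(f)$ is nonempty. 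I would then argue that any $P\in\mathcal P\cap\mathcal M(f)$ is in fact $f$-ergodic: for an invariant set $A\in\mathcal I$ we have $\mathbf 1_A$ is $f$-invariant, and ergodicity of $C$ together with $P\le C$ (as set functions on $\sigma(\mathcal H)$) gives $P(A)\in\{0,1\}$. A uniqueness argument as in Corollary~\ref{cor:P unique} — via Lemma~\ref{lem:invariant meausre } and the fact that $\mathcal M^e(f)$ members are mutually singular — shows $\mathcal P\cap\mathcal M(f)$ is the singleton $\{P\}$. Finally, with this genuine ergodic $P$ in hand, the \emph{classical} Birkhoff theorem (Theorem~\ref{thm:Birkhoff for measures}) applied to $X\in L^1(\Omega,\sigma(\mathcal H),P)$ yields $\frac1n\sum_{i=0}^{n-1}X\circ f^i\to\int X\,dP$ $P$-a.s.; the nontrivial point is to upgrade this ``$P$-a.s.'' statement to ``$C(\cdot^c)=0$,'' i.e.\ to show the convergence holds quasi surely, not merely $P$-almost surely.

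That upgrade is the step I expect to be the main obstacle. A priori the limit set depends on which $Q\in\mathcal P$ one tests against, and $Q$-a.s.\ convergence to $\int X\,dP$ for $Q\neq P$ is not automatic — indeed Example~\ref{ex:not quasi} shows such convergence can fail for some measures in $\mathcal P$. The resolution should mirror the idea in \cite{FHLZ2023}: for a fixed $X$, the event $E=\{\omega:\lim_n\frac1n\sum_{i<n}X(f^i\omega)=\int X\,dP\}$ is $f$-invariant, hence $E\in\mathcal I$; for \emph{any} $Q\in\mathcal P\cap\mathcal M(f)=\{P\}$ we have $Q(E)=1$, but one must also control the non-invariant $Q\in\mathcal P$. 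Here I would pass to the invariant skeleton: for arbitrary $Q\in\mathcal P$ with $Q=\mathbb E_Q$, the skeleton $Q_{\mathrm{inv}}\in\mathcal P\cap\mathcal M(f)$ equals $P$ by uniqueness, and since $E\in\mathcal I$ we have $Q(E)=\mathbb E_Q[\mathbf 1_E]$ agrees with $Q_{\mathrm{inv}}(E)=P(E)=1$ because invariant skeletons preserve integrals of $\mathcal I$-measurable functions (the defining property $\mathbb E[X]=\mathbb E_{\mathrm{inv}}[X]$ for $X\in\mathcal I$ from Theorem~\ref{thm:existence of inv}). Thus $Q(E)=1$ for every $Q\in\mathcal P$, i.e.\ $C(E^c)=\sup_{Q\in\mathcal P}Q(E^c)=0$, which is precisely (ii). Care is needed to ensure $\mathbf 1_E$ and the Birkhoff averages are genuinely $\sigma(\mathcal H)$-measurable so that this manipulation is legitimate; measurability of $E$ follows from that of the partial averages $\frac1n\sum_{i<n}X\circ f^i$.

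For the converse (ii)$\Rightarrow$(i), the argument is short: given an $f$-invariant set $A\in\sigma(\mathcal H)$, apply (ii) to $X=\mathbf 1_A\in L^1(\Omega,\sigma(\mathcal H),P)$. Invariance of $A$ makes the Birkhoff averages identically equal to $\mathbf 1_A$, so the quasi-sure limit $\int\mathbf 1_A\,dP=P(A)$ must coincide with $\mathbf 1_A$ $C$-quasi surely; since $\mathbf 1_A$ takes only the values $0$ and $1$, this forces $P(A)\in\{0,1\}$ and, more usefully, that $C(A^c)=0$ when $P(A)=1$ and $C(A)=0$ when $P(A)=0$. Either way $C(A)\in\{0,1\}$ with the correct null-set behaviour, giving ergodicity of $C$. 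The concluding identity $\mathcal P\cap\mathcal M(f)=\mathcal P\cap\mathcal M^e(f)=\{P\}$ is then exactly the uniqueness already extracted in the first implication, so I would simply record it rather than reprove it. The one place demanding vigilance throughout is to never assume $\overline X\in\mathcal H$ for the enlarged class $L^1(\Omega,\sigma(\mathcal H),P)$; all convergence at that level of generality must be sourced from the classical Theorem~\ref{thm:Birkhoff for measures} applied to the single ergodic $P$, with the quasi-sure conclusion obtained only afterwards via the invariant-skeleton transfer described above.
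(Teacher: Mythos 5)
Your overall architecture matches the paper's proof quite closely (invariant skeleton to get $P\in\mathcal{P}\cap\mathcal{M}(f)$, ergodicity of $P$ from ergodicity of $C$, classical Birkhoff for $X\in L^1(\Omega,\sigma(\mathcal{H}),P)$, indicators of invariant sets for the converse), and your treatment of (ii)$\Rightarrow$(i) and of uniqueness is sound. However, the step you yourself flag as the heart of the matter --- upgrading ``$P$-a.s.'' to ``$C$-quasi-surely'' --- is justified by a claim that is false. You assert that for an arbitrary $Q\in\mathcal{P}$ and an invariant measurable set $E$, one has $Q(E)=Q_{\mathrm{inv}}(E)$ ``because invariant skeletons preserve integrals of $\mathcal{I}$-measurable functions.'' Theorem \ref{thm:existence of inv} gives no such thing: it guarantees $\mathbb{E}[X]=\mathbb{E}_{\mathrm{inv}}[X]$ only for invariant elements $X$ \emph{of the vector lattice} $\mathcal{H}$, and $\mathbf{1}_E$ is in general not in $\mathcal{H}$ (think of $\mathcal{H}=C_b(\Omega)$ or a Lipschitz-type lattice). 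Worse, the transfer is not merely unjustified but genuinely false as a general mechanism: take the full shift $(\{0,1\}^{\mathbb N},\sigma)$ with $\mathcal{H}=C(\Omega)$, let $\mu$ be a non-atomic ergodic measure, let $\omega$ be a $\mu$-generic non-periodic point, and let $Q=\delta_\omega$. Then $Q_{\mathrm{inv}}=\mu$ (the Birkhoff averages of every $X\in C(\Omega)$ along $\omega$ converge to $\int X\,d\mu$, so the Banach--Mazur limit is forced), yet the invariant set $E$ of $\mu$-generic points with the grand orbit of $\omega$ removed satisfies $Q(E)=0$ while $Q_{\mathrm{inv}}(E)=\mu(E)=1$.

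The conclusion you want \emph{is} true under hypothesis (i), but the proof must use the ergodicity of $C$ at exactly this point --- which your upgrade step never does. This is what the paper's proof does, and it is shorter than your detour: the set $D_X=\{\omega:\lim_n\frac1n\sum_{i<n}X(f^i\omega)=\int X\,dP\}$ is $f$-invariant and lies in $\sigma(\mathcal{H})$, so ergodicity of $C$ forces $C(D_X)=0$ or $C(D_X^c)=0$; since $P\leq C$ and Theorem \ref{thm:Birkhoff for measures} gives $P(D_X)=1$, the first alternative is impossible, hence $C(D_X^c)=0$. (Equivalently, in your notation: if some $Q\in\mathcal{P}$ had $Q(E)<1$, then $C(E^c)>0$, so ergodicity of $C$ would give $C(E)=0$, contradicting $P(E)=1$.) With that one repair --- replacing the skeleton-transfer argument by a direct appeal to hypothesis (i) on the invariant set $D_X$ --- your proof becomes correct and essentially coincides with the paper's; the preliminary observation that $\hat{\mathbb{E}}$ is ergodic is then superfluous and can be dropped.
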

 \begin{proof}
    (i) $\Rightarrow$ (ii). Since $\hat{\mathbb E}$ is regular, it follows from Proposition \ref{re:existence of invariant measure} that there exists an invariant probability $P\in\mathcal{P}$. Furthermore, $P$ is ergodic. Indeed, for any $A\in\mathcal{I}$, we have that $C(A)=0$ or $C(A^c)=0$, as $C$ is ergodic. Thus, $P(A)=0$ or $P(A^c)=0$, which implies that $P$ is ergodic. 

  Let $P\in \mathcal{P}$ be the ergodic probability given above and
  \begin{equation}\label{eq:1118}
      D_X=\Big\{\o\in\O:\lim_{n\to\infty}\frac{1}{n}\sum_{i=0}^{n-1}X(f^i\o)=\int XdP\Big\}, \ \text{ for any $X\in L^1(\O,\mathcal{\s(\mathcal{H})},P)$.}
  \end{equation}
  Now we only need to prove $C(D_X^c)=0$ for all $X\in L^1(\O,\mathcal{\s(\mathcal{H})},P)$.
 By contradiction, we may assume that there exists $X\in L^1(\O,\mathcal{\s(\mathcal{H})},P)$ such that $C(D_X^c)>0$.
 Since $C$ is ergodic and $D_X\in\mathcal{I}$, we have that $C(D_X)=0$. Hence, $P(D_X)\leq C(D_X)=0$ as $P\in\mathcal{P}$.
    However, by Theorem \ref{thm:Birkhoff for measures}, we know that $P(D_X)=1$, which is a contradiction. Thus, the statement (ii) holds.

      (ii) $\Rightarrow$ (i). For any $A\in\s(\mathcal{H})$ with $f^{-1}A=A$, by (ii), we have 
      \[C(\{\o\in\O:\one_A(\o)=P(A)\}^c)=0.\]
      Since $P$ is ergodic, we know that $P(A)\in\{0,1\}$. If $P(A)=0$, then $C(A)=0$; if $P(A)=1$, then $C(A^c)=0$. Thus, $C$ is ergodic. 

      Now we prove the uniqueness. If there exists another invariant probability $P'\in\mathcal{P}$, then by (ii), we have $P'(D_X^c)=0$ for all $X\in L^1(\O,\mathcal{\s(\mathcal{H})},P)$. This together with Theorem \ref{thm:Birkhoff for measures} implies that $P=P'$.
 \end{proof}
 According to Theorem \ref{thm:uncountable many mixing measure}, we know that the capacity induced by the sublinear expectation generated by a $G$-Brownian motion is not continuous, as its $\mathcal{P}$ has uncountablely many ergodic probabilities. Since we have proven that the  sublinear expectation generated by a $G$-Brownian motion is ergodic. Thus, we know that the ergodicity of an sublinear expectation cannot imply the ergodicity of its corresponding capacity. However, the converse is true.
 \begin{theorem}
     Let $(\O,\mathcal{H},U_f,\hat{\mathbb E})$ be a sublinear expectation system, where $U_f$ is induced by a measurable transform $f:\O\to\O$. Let $C$ be the capacity induced by $\hat{\mathbb E}$ on $(\O,\s(\mathcal{H}))$. If $C$ is ergodic, then $\hat{\mathbb E}$ is ergodic.
 \end{theorem}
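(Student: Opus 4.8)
The plan is to argue by contraposition: assuming $\hat{\mathbb{E}}$ is \emph{not} ergodic, I will manufacture an \emph{exactly} $f$-invariant set $A\in\sigma(\mathcal{H})$ with $C(A)>0$ and $C(A^c)>0$, which contradicts the ergodicity of $C$. The first step is to record two structural facts about $C$. Writing $C(A)=\sup_{P\in\mathcal{P}}P(A)$ and $\hat{\mathbb{E}}[X]=\sup_{P\in\mathcal{P}}\int X\,dP$, countable subadditivity of $C$ is immediate, since $\sup_P P(\bigcup_n A_n)\le\sup_P\sum_n P(A_n)\le\sum_n C(A_n)$. More importantly, I would establish the one-sided quasi-invariance $C(f^{-1}E)\le C(E)$: for $P\in\mathcal{P}$ the pushforward $f_*P:=P\circ f^{-1}$ again lies in $\mathcal{P}$, because for every $X\in\mathcal{H}$ one has $\int X\,d(f_*P)=\int U_fX\,dP\le\hat{\mathbb{E}}[U_fX]=\hat{\mathbb{E}}[X]$, using that $U_f$ maps $\mathcal{H}$ into $\mathcal{H}$ and preserves $\hat{\mathbb{E}}$. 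Hence $f_*\mathcal{P}\subseteq\mathcal{P}$ and $C(f^{-1}E)=\sup_{P}(f_*P)(E)\le\sup_{Q\in\mathcal{P}}Q(E)=C(E)$; iterating gives $C(f^{-k}E)\le C(E)$ for all $k\ge0$.

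Next I would locate a suitable threshold. If $\hat{\mathbb{E}}$ is not ergodic there is $X\in\mathcal{I}$, i.e.\ $U_fX=X$ $\hat{\mathbb{E}}$-q.s., which is not constant $\hat{\mathbb{E}}$-q.s. I claim there exists $c\in\mathbb{R}$ with $C(\{X\le c\})>0$ and $C(\{X>c\})>0$. To see this, consider $S:=\{t:C(\{X\le t\})=0\}$, a downward ray, and $T:=\{t:C(\{X>t\})=0\}$, an upward ray. They are disjoint (a common point $t$ would force $1=C(\Omega)\le C(\{X\le t\})+C(\{X>t\})=0$) and both nonempty (if $S=\emptyset$ then $X\le t$ fails to hold q.s.\ but also $C(\{X>t\})=0$ for all $t$, forcing $\hat{\mathbb{E}}[X]\le t$ for all $t$, impossible since $\hat{\mathbb{E}}[|X|]<\infty$; symmetrically $T\neq\emptyset$). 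If no admissible $c$ existed, then $S\cup T=\mathbb{R}$, so $S$ and $T$ meet at a single finite cut point $c_0$; then $\{X<c_0\}=\bigcup_n\{X\le c_0-\tfrac1n\}$ and $\{X>c_0\}=\bigcup_n\{X>c_0+\tfrac1n\}$ have capacity $0$ by countable subadditivity, whence $C(\{X\neq c_0\})=0$, i.e.\ $X=c_0$ q.s., contradicting non-constancy. This yields the desired $c$; set $A:=\{X\le c\}\in\sigma(\mathcal{H})$, so $C(A)>0$ and $C(A^c)=C(\{X>c\})>0$.

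Finally I would upgrade $A$ from almost-invariant to genuinely invariant. Since $U_fX=X$ $\hat{\mathbb{E}}$-q.s., the set $N:=\{\omega:X(f\omega)\neq X(\omega)\}$ satisfies $C(N)=0$, and on $N^c$ one has $f^{-1}A=A$, so $f^{-1}A\,\triangle\,A\subseteq N$ and $C(f^{-1}A\,\triangle\,A)=0$. Put $\tilde A:=\limsup_n f^{-n}A=\bigcap_{m\ge0}\bigcup_{n\ge m}f^{-n}A$, which satisfies $f^{-1}\tilde A=\tilde A$ exactly and lies in $\sigma(\mathcal{H})$ (as $f$ is $\sigma(\mathcal{H})$-measurable). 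From the telescoping inclusion $f^{-n}A\,\triangle\,A\subseteq\bigcup_{k=0}^{n-1}f^{-k}\big(f^{-1}A\,\triangle\,A\big)$, together with $C(f^{-k}\cdot)\le C(\cdot)$ and subadditivity, I get $C(f^{-n}A\,\triangle\,A)=0$ for every $n$. A direct check shows $\tilde A\,\triangle\,A\subseteq\bigcup_n\big(f^{-n}A\,\triangle\,A\big)$, so $C(\tilde A\,\triangle\,A)=0$; subadditivity then gives $C(\tilde A)=C(A)>0$ and $C(\tilde A^c)=C(A^c)>0$. This contradicts the ergodicity of $C$ and completes the argument.

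The routine measure-theoretic bookkeeping (the threshold rays, the telescoping symmetric differences) is standard; the point requiring care is the capacity quasi-invariance, where only the single inequality $C(f^{-1}\cdot)\le C(\cdot)$ is available from $f_*\mathcal{P}\subseteq\mathcal{P}$, and this is exactly enough because the $\limsup$ construction needs control of $C(f^{-k}E)$ only for the null set $E=f^{-1}A\,\triangle\,A$. I expect the main conceptual obstacle to be the passage from the $\hat{\mathbb{E}}$-q.s.\ invariance of the \emph{function} $X$ to an exactly $f$-invariant \emph{set} of controlled capacity without assuming continuity of $C$; the resolution is that countable subadditivity of $C=\sup_P P$ (rather than continuity from above) suffices throughout, and regularity is used only to identify the q.s.\ relations $U_fX=X$ and $X\neq c_0$ with the capacity-null sets $N$ and $\{X\neq c_0\}$.
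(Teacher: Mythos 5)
Your proposal is correct in substance, but it takes a genuinely different route from the paper's. The paper proves the implication directly: it invokes its capacity ergodic theorem (Theorem \ref{thm:ergodic theorem for capacity}, itself built on invariant skeletons and the classical Birkhoff theorem), which under ergodicity of $C$ produces an ergodic probability $P\in\mathcal{P}$ such that the Birkhoff averages of every $X\in L^1(\Omega,\sigma(\mathcal{H}),P)$ converge to $\int X\,dP$ outside a capacity-null set; for $X\in\mathcal{I}$ the averages coincide with $X$ quasi-surely, whence $X=\int X\,dP$ q.s.\ and $X$ is constant. You instead argue by contraposition and never touch any ergodic theorem or invariant measure: from a non-constant invariant function you extract a threshold $c$ with $C(\{X\le c\})>0$ and $C(\{X>c\})>0$, and then upgrade the almost-invariant set $A=\{X\le c\}$ to an exactly invariant set $\tilde A=\limsup_n f^{-n}A$, using only countable subadditivity and the quasi-invariance $C(f^{-1}\cdot)\le C(\cdot)$, which you correctly derive from the pushforward stability $f_*\mathcal{P}\subseteq\mathcal{P}$ (a fact the paper uses only implicitly, e.g.\ when it passes from the null set $\{X\circ f\neq X\}$ to the invariance of Birkhoff averages). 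Your argument is more elementary and self-contained; the paper's is shorter given its machinery and yields the extra information that the constant equals $\int X\,dP$ for the unique invariant $P\in\mathcal{P}$. Both proofs, like the statement itself, tacitly assume regularity --- needed for $\mathcal{P}$ to exist, for $\hat{\mathbb{E}}=\sup_{P\in\mathcal{P}}\int\cdot\,dP$, and for identifying q.s.\ relations with capacity-null sets --- and you flag this correctly.

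One local repair is needed: your claim that the rays $S=\{t:C(\{X\le t\})=0\}$ and $T=\{t:C(\{X>t\})=0\}$ are \emph{both nonempty} is false in general (take $\mathcal{P}=\{P\}$ with $X$ Gaussian under $P$: then $S=T=\emptyset$, and an admissible $c$ exists trivially). Nonemptiness is only needed, and only true, inside your contradiction hypothesis $S\cup T=\mathbb{R}$: there, $S=\emptyset$ forces $T=\mathbb{R}$, so $C(\{X>t\})=0$ for all $t$, hence $\int X\,dP\le t$ for every $P\in\mathcal{P}$ and every $t$, giving $\hat{\mathbb{E}}[X]\le t$ for all $t$, impossible since $\hat{\mathbb{E}}[X]\ge-\hat{\mathbb{E}}[|X|]>-\infty$; symmetrically for $T=\emptyset$. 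This is exactly the argument you already wrote --- it just belongs inside the case analysis rather than before it. With that relocation, the finite cut point $c_0$ exists in the remaining case, your countable-subadditivity argument shows $X=c_0$ q.s., and the proof is complete.
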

 \begin{proof}
Since $C$ is ergodic, by Theorem \ref{thm:ergodic theorem for capacity}, there exists an ergodic probability $P$ in $\mathcal{P}$ such that for any $X\in L^1(\O,\mathcal{\s(\mathcal{H})},P)$, 
 \[C\bigg(\Big\{\o\in\O:\lim_{n\to\infty}\frac{1}{n}\sum_{i=0}^{n-1}X(f^i\o)=\int XdP\Big\}^c\bigg)=0.\]
 Note that for any $X\in\mathcal{H}$, $X\in  L^1(\O,\mathcal{\s(\mathcal{H})},P)$, as $\int|X|dP\le \hat{\mathbb E}[|X|]<\infty.$ Thus, for any $X\in\mathcal{I}$, 
 \[C\bigg(\Big\{\o\in\O:X=\int XdP\Big\}^c\bigg)=0,\]
 which implies that $X$ is constant, $\hat{\mathbb E}$-q.s. Thus, $\hat{\mathbb E}$ is ergodic.  
 \end{proof}
 
\begin{appendices}
\section{Mixing of Brownian motion}
\label{Sec: appendix}
We prove the mixing of systems generated by the classical Brownian motion, that is, Theorem \ref{Thm: strong-mixing-of-BM}. 
\begin{proof}[Proof of Theorem \ref{Thm: strong-mixing-of-BM}]
    In the case that $\mathbb{T}=\mathbb{R}_+$, we can extend the system  $(\Omega,\mathcal{F}, (\theta_t)_{t\geq 0}, P)$ to a   system  $(\tilde \Omega,\tilde{\mathcal{F}}, (\tilde{\theta}_t)_{t\geq 0}, \tilde P)$ generated by a two-sided Brownian motion $(\tilde{B}_t)_{t\in \mathbb R}$ with the same covariance matrix $\Sigma$. We consider the map $I:\tilde \Omega\to \Omega$ by
    \[
    I((\tilde{\omega}_t)_{t\in \mathbb R})=(\tilde{\omega}_t)_{t\geq 0}.
    \]
    Then $I: (\tilde \Omega,\tilde{\mathcal{F}})\to (\Omega,\mathcal{F})$ is measurable and
    \[
    I\circ \tilde{\theta}_t=\theta_t\circ I, \ \text{ for all }  \ t\geq 0, \ \text{ and } \ P=\tilde P\circ I^{-1}.
    \]
    Then the strongly mixing of $(\tilde \Omega,\tilde{\mathcal{F}}, (\tilde{\theta}_t)_{t\geq 0}, \tilde P)$ implies that of $(\Omega,\mathcal{F}, (\theta_t)_{t\geq 0}, P)$. Hence, we only need to prove the two-sided Brownian motion case $\mathbb{T}=\mathbb{R}$.  To prove the strongly mixing of $(\Omega,\mathcal{F}, (\theta_t)_{t\geq 0}, P)$, it is equivalent to show that 
	\begin{equation}
		\label{ergodic equivalent def}
		\lim_{t\to \infty}\mathbb E[X\circ \theta_t\cdot Y]=\mathbb E[X]\mathbb E[Y], \ \text{ for all } \ X,Y\in L^2(\Omega,\mathcal{F},P).
	\end{equation}
    Let
    \[
    \mathcal{F}^{t}_{-\infty}:=\sigma(B_u-B_v: u,v\leq t), \ \ \mathcal{F}_{t}^{\infty}:=\sigma(B_u-B_v: u,v\geq t), \ \text{ for any } \ t\in \mathbb R.
    \]
   For any fixed $X,Y\in L^2(\Omega,\mathcal{F},P)$, denote
	$$X_m=\mathbb E[X|\mathcal{F}_{-m}^{\infty}], \ \ Y_m=\mathbb E[Y|\mathcal{F}_{-\infty}^m], \ \ m\geq 1.$$
	Since $\mathcal{F}_{-m}^{\infty} \uparrow\mathcal{F}$ and $ \mathcal{F}_{-\infty}^m \uparrow\mathcal{F}$ as $m \to \infty$ it follows that  $\big(X_m\big)_{m\geq 1}, \big(Y_m\big)_{m\geq 1}$ are martingales with respect to filtrations $\big(\mathcal{F}_{-m}^{\infty}\big)_{m\geq 1}, \big(\mathcal{F}_{-\infty}^m\big)_{m\geq 1}$ respectively. Note that
    \begin{equation*}
        \sup_{m\geq 1}\mathbb E[|X_m|^2]\leq \mathbb E[|X|^2]<\infty, \ \text{ and } \ \sup_{m\geq 1}\mathbb E[|Y_m|^2]\leq \mathbb E[|Y|^2]<\infty.
    \end{equation*}
    Then the martingale convergence theorem gives
    \begin{equation*}
        \lim_{m\to \infty}\|X-X_m\|_{L^2}=\lim_{m\to \infty}\|Y-Y_m\|_{L^2}=0.
    \end{equation*}
    Notice for any $m\geq 1$,
    \begin{equation*}
        \begin{split}
            \mathbb E[X\circ \theta_t\cdot Y]&=\mathbb E[X_m\circ \theta_t\cdot Y_m]+\mathbb E[(X-X_m)\circ \theta_t\cdot Y_m]+\mathbb E[X\circ \theta_t\cdot (Y-Y_m)],
        \end{split}
    \end{equation*}
    and
    \begin{equation*}
        \mathbb E[X]\mathbb E[Y]=\mathbb E[X_m]\mathbb E[Y_m]+\mathbb E[X-X_m]\mathbb E[Y_m]+\mathbb E[X]\mathbb E[Y-Y_m].
    \end{equation*}
    Since $\theta_t$ preserves $P$, then we have
    \begin{equation*}
		\begin{split}
			\big|\mathbb E[X\circ \theta_t\cdot Y]-\mathbb E[X]\mathbb E[Y]\big|&\leq \big|\mathbb E[X_m\circ \theta_t\cdot Y_m]-\mathbb E[X_m]\mathbb E[Y_m]\big|\\
			&\ \ \ \ +2\big(\|X-X_m\|_{L^2}\|Y\|_{L^2}+\|Y-Y_m\|_{L^2}\|X\|_{L^2}\big).
		\end{split}
	\end{equation*}
	Note that $X_m\circ\theta_t\in (\theta_t)^{-1}\mathcal{F}_{-m}^{\infty}=\mathcal{F}_{-m+t}^{\infty}, \ Y_m\in \mathcal{F}_{-\infty}^m$, then for all $t\geq 2m$, $X_m\circ\theta_t$ and $Y_m$ are independent, i.e., 
	$$\mathbb E[X_m\circ\theta_t\cdot Y_m]=\mathbb E[X_m\circ\theta_t]\mathbb E[Y_m]=\mathbb E[X_m]\mathbb E[Y_m], \text{ for all } t\geq 2m.$$
	Hence for any $m\geq 1$,
	\begin{equation*}
		\begin{split}
			\limsup_{t\to \infty}\big|\mathbb E[X\circ \theta_t\cdot Y]-\mathbb E[X]\mathbb E[Y]\big|\leq 2\big(\|X-X_m\|_{L^2}\|Y\|_{L^2}+\|Y-Y_m\|_{L^2}\|X\|_{L^2}\big).
		\end{split}
	\end{equation*}
	Therefore,
    \begin{equation*}
		\begin{split}
			\limsup_{t\to \infty}\big|\mathbb E[X\circ \theta_t\cdot Y]-\mathbb E[X]\mathbb E[Y]\big|\leq \liminf_{m\to \infty}2\big(\|X-X_m\|_{L^2}\|Y\|_{L^2}+\|Y-Y_m\|_{L^2}\|X\|_{L^2}\big)=0.
		\end{split}
	\end{equation*}
    This proves \eqref{ergodic equivalent def}.
\end{proof}

\begin{remark}
    $(\Omega,\mathcal{F}, (\theta_t)_{t\geq 0}, P)$ is  mixing implies $(\Omega,\mathcal{F}, \theta_{\tau}, P)$ is   mixing for any $\tau>0$. The ergodicity of $(\Omega,\mathcal{F}, (\theta_t)_{t\geq 0}, P)$ and $(\Omega,\mathcal{F}, \theta_{\tau}, P)$ for any $\tau>0$ follows immediately.
\end{remark}
\end{appendices}

	\addtolength{\itemsep}{-1.5 em} 
	\setlength{\itemsep}{-3pt}
	\footnotesize
	\addcontentsline{toc}{section}{References}
	\bibliographystyle{siam}
	\bibliography{ErgodicGBM}
\end{document}